\date{\today}
\title[Singularities of spacelike mean curvature one surfaces]{%
Singularities of spacelike 
mean curvature one surfaces in de Sitter space}
\author[A.~Honda]{Atsufumi Honda}
\address{%
   Department of Applied Mathematics, 
   Yokohama National University, 
   Yokohama 240-8501, Japan
}
\email{honda-atsufumi-kp@ynu.ac.jp}
\author[H.~Sato]{Himemi Sato}
\address{%
   Graduate School of Engineering Science, 
   Yokohama National University, 
   Yokohama 240-8501, Japan
}
\email{sato-himemi-hr@ynu.jp}
\curraddr{%
    Asahi Mutual Life Insurance Company,
    Tokyo 160-8570, Japan
}
\thanks{The first author is supported by 
JSPS KAKENHI Grant Number 19K14526 and 20H01801.}
\subjclass[2010]{%
Primary 
53A10;
Secondary 
57R45, 
53A35, 
53C42,
53B30%
%
}
\keywords{%
surface of constant mean curvature,
CMC surface,
maximal surface,
singular point,
duality.
}
\theoremstyle{plain}
 \newtheorem{theorem}{Theorem}[section]
 \newtheorem{introtheorem}{Theorem}
 \newtheorem{proposition}[theorem]{Proposition}
 \newtheorem{fact}[theorem]{Fact}
 \newtheorem*{fact*}{Fact}
 \newtheorem{lemma}[theorem]{Lemma}
 \newtheorem{corollary}[theorem]{Corollary}
 \theoremstyle{remark}
 \newtheorem{definition}[theorem]{Definition}
 \newtheorem{remark}[theorem]{Remark}
 \newtheorem*{acknowledgements}{Acknowledgements}
 \newtheorem{example}[theorem]{Example}
\numberwithin{equation}{section}
\newcommand{\Z}{\boldsymbol{Z}}
\newcommand{\R}{\boldsymbol{R}}
\newcommand{\C}{\boldsymbol{C}}
\newcommand{\Herm}{\operatorname{Herm}}
\newcommand{\SL}{\operatorname{SL}}
\newcommand{\SU}{\operatorname{SU}}
\newcommand{\SO}{\operatorname{SO}}
\newcommand{\bmath}[1]{\mbox{\boldmath $#1$}}
\renewcommand{\L}{{\bmath{L}}}
\newcommand{\trace}{\operatorname{trace}}
\newcommand{\inner}[2]{\langle{#1},{#2}\rangle}
\renewcommand{\Re}{\operatorname{Re}}
\renewcommand{\Im}{\operatorname{Im}}
\newcommand{\vect}[1]{\boldsymbol{#1}}
\newcommand{\ep}{E}
\newcommand{\calE}{\mathcal{E}}
\newcommand{\bz}{\bar{z}}
\newcommand{\const}{\kappa}
\newcommand{\constant}{c}
\newcommand{\Imphi}{c_1}
\newcommand{\reDphi}{c_2}
\newcommand{\DD}{\beta}
\newcommand{\vf}{V}
\newcommand{\redl}{\textcolor{Black}{\lambda}}
\newcommand{\homega}{h}
\newcommand{\ce}{{f}_{\rm ce}}
\newcommand{\sw}{{f}_{\rm sw}}
\newcommand{\ccr}{{f}_{\rm ccr}}
\newcommand{\cone}{{f}_{\rm cone}}
\newcommand{\fold}{{f}_{\rm fold}}
\newcommand{\rce}{{f}_{\rm rce}}
\newcommand{\csk}{{f}_{{\rm cs},k}}
\newcommand{\csone}{{f}_{{\rm cs},1}}
\newcommand\innerrule{%
   \omit{\let\strut\relax
     \hrulefill\vspace*{-\arrayrulewidth}%
   }%
}
\newcommand\vcentertext[1]{%
   \let\strut\relax
   \vbox to0pt{\vss \hbox{#1} \vss}%
}
\begin{document}
\begin{abstract}
%
In this paper, we study the singularities 
of spacelike constant mean curvature one (CMC $1$) surfaces 
in the de Sitter $3$-space.
We prove 
the duality between 
generalized conelike singular points and $5/2$-cuspidal edges
on spacelike CMC $1$ surfaces.
To describe the duality 
between $A_{k+3}$ singularities and
cuspidal $S_k$ singularities,
we introduce two invariants,
called the {\it $\alpha$-invariant} and {\it $\sigma$-invariant},
of spacelike CMC $1$ surfaces
at their singular points.
Moreover, we give a classification of non-degenerate singular points
on spacelike CMC $1$ surfaces.
\end{abstract}
\maketitle


\section{Introduction}

\subsection{Background and Motivation}
We denote by $\L^3$ the Lorentz-Minkowski $3$-space
$\L^3=(\R^3,~\inner{~}{~}=dx^2+dy^2-dz^2)$.
A spacelike surface with zero mean curvature in $\L^3$
is called a {\it spacelike maximal surface}.
While the local properties of spacelike maximal surfaces in $\L^3$
are similar to those of minimal surfaces in the Euclidean space $\R^3$,
their global behaviors are not the same.
It is known that {\it complete spacelike maximal surfaces must be spacelike planes}
(Calabi \cite{C}).
So it is natural to investigate the spacelike maximal surfaces with singular points.
Umehara-Yamada \cite{UY_complete} introduced the notion of {\it maxfaces}
which is the class of spacelike maximal surfaces with admissible singular points in $\L^3$,
and proved several global properties, such as the Osserman-type inequality
\cite[Theorem 4.11]{UY_complete}.

As in the case of minimal surfaces in $\R^3$,
maxfaces in $\L^3$ have the Weierstrass-type representation formula
(\cite{UY_complete}, \cite{Kobayashi}):
\[\begin{minipage}{0.8\linewidth}
{\it 
Let $D$ be a simply connected domain of $\C$,
and $(g,\omega)$ be a pair of 
meromorphic function $g$,
and holomorphic $1$-form $\omega$ on $D$
such that $(1+|g|^2)^2|\omega|^2$ is a Riemannian metric on $D$
and $(1-|g|^2)^2$ does not vanish identically.
Then $f := \Re F$ defines a maxface $f:D\to \L^3$,
where $F : D \to \C^3$ is given by
$$
  F(z) := \int_{z_0}^z (1+g^2,\sqrt{-1}(1-g^2),-2g)\omega
  \quad (z\in D),
$$
and $z_0\in D$ is a base point.
Conversely, 
any maxface is locally obtained in this manner.
}\end{minipage}\]
The pair $(g,\omega)$ is called the {\it Weierstrass data}.
As in the case of minimal surfaces in $\R^3$,
the maxface $f^\sharp : D\to \L^3$ defined by 
$
  f^\sharp := \Im F
$
is called the {\it conjugate} of $f$.
The corresponding Weierstrass data is $(g,-\sqrt{-1}\omega)$.

We say that maxfaces have the 
{\it duality between singular points of type $X$ and type $Y$},
if the following holds:
A maxface $f$ has a singular point of type $X$ 
(resp.\ $Y$)
at a point $p$ if and only if 
the conjugate maxface $f^\sharp$ has a singular point of type $Y$
(resp.\ $X$) at $p$.
The following dualities (I)--(III) are shown in 
\cite{UY_maxface, FSUY, KimYang, FRUYY, FKKRSUYY, OT}:
\begin{itemize}
\item[(I)]
The cuspidal edge singularity is self-dual.
The duality between swallowtail singularity
and cuspidal cross cap singularity
(\cite{UY_maxface, FSUY}).
\item[(II)]
The duality between 
generalized conelike singularity and 
fold singularity
(\cite{KimYang, FRUYY, FKKRSUYY}).
\item[(III)]
The duality between cuspidal $S_1^-$ singularity and 
cuspidal butterfly singularity
(\cite{OT}).
\end{itemize}
Such a duality is also known for several classes of surfaces
\cite{Honda, Honda2, INS, IS, Takahashi, Yasumoto}.

\begin{figure}[htb]
\begin{center}
 \begin{tabular}{{c@{\hspace{8mm}}c}}
  \resizebox{4.2cm}{!}{\includegraphics{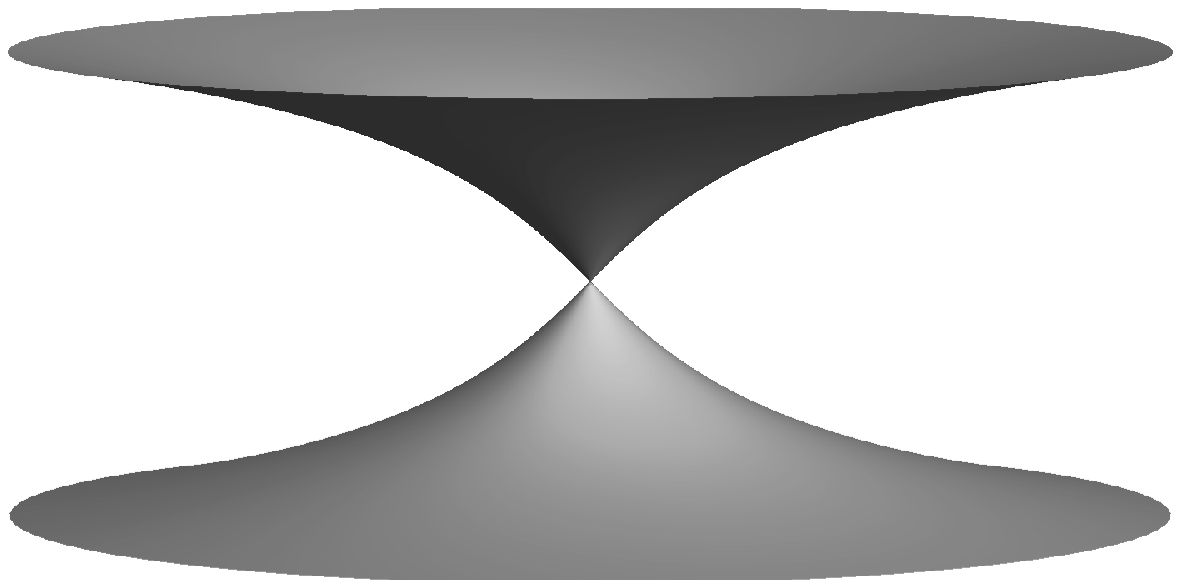}}&
  \resizebox{3.3cm}{!}{\includegraphics{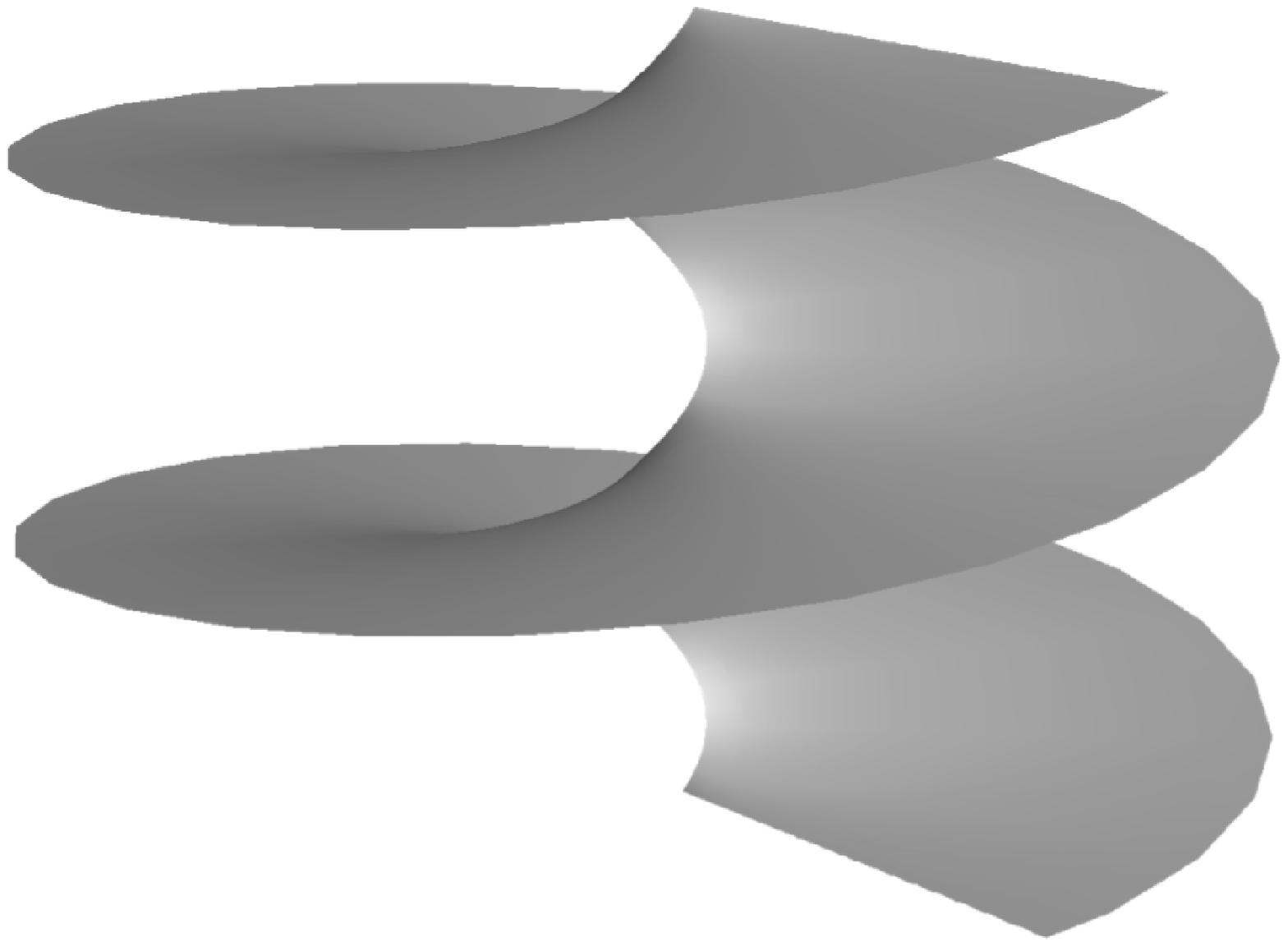}}
 \end{tabular}
\end{center}
\caption{Left: maximal catenoid, which is 
a maxface having generalized conelike singularities.
This maximal catenoid is given by the Weierstrass data 
$(g,\omega)=(e^z , e^{-z}dz)$.
Right: maximal helicoid, which is 
the conjugate of the maximal catenoid.
The maximal helicoid has fold singular points.
These maxfaces verify the duality (II).
}
\label{fig:maxfaces}
\end{figure}

We remark that the singularities appearing in the above 
dualities (I)--(III)
have several important properties.
With respect to (I),
Fujimori-Saji-Umehara-Yamada \cite{FSUY}
proved that
the singularities of maxfaces in $\L^3$ generically consist of 
cuspidal edges, swallowtails and cuspidal cross caps.
On the duality (II),
Kim-Yang \cite{KimYang} showed that,
if a maxface admits a generalized conelike singular point,
then the maxface has a point symmetry
(the reflection principle, \cite[Theorem 4.3]{KimYang}).
Moreover, it is known that, 
if a maxface admits a fold singular point,
then it can be extended to a timelike minimal surface analytically
(\cite{KKSY, FKKRSUYY}).
With respect to the duality (III),
we remark that
cuspidal edges $(A_2)$, swallowtails $(A_3)$ and cuspidal butterflies $(A_4)$
are in the series of the {\it $A_k$ singularities} 
(cf.\ \cite{SUY} and Section \ref{sec:AkSk}),
and that
the cuspidal cross cap is 
in the series of the {\it cuspidal $S_k$ singularities} 
(cf.\ \cite{S}).
It is also proved that
maxfaces in $\L^3$ do not admit
cuspidal $S_1^+$ singularities \cite{OT}
and cuspidal $S_k$ singularities for $k\geq2$ \cite{MNT}.

On the other hand,
let $S^3_1=S^3_1(1)$ be the {\it de Sitter $3$-space},
namely, 
$S^3_1$ is the complete simply-connected and connected 
Lorentzian $3$-manifold with constant
sectional curvature $1$.
There is a Lawson-type isometric correspondence 
(cf.\ \cite{Lawson})
between spacelike maximal surfaces in $\L^3$
and spacelike constant mean curvature $1$ 
(spacelike CMC $1$) surfaces in $S^3_1$,
where $S^3_1$ is the {\it de Sitter $3$-space}
of constant sectional curvature $1$.
It should be remarked that,
as in the relationship between
spacelike maximal surfaces in $\L^3$
and minimal surfaces in $\R^3$,
spacelike CMC $1$ surfaces in $S^3_1$
correspond to CMC $1$ surfaces in the hyperbolic $3$-space $H^3$
(cf.\ \cite{B, UY_complete}).
As Calabi's theorem \cite{C}, it holds that
{\it complete spacelike CMC $1$ surfaces in $S^3_1$ 
must be totally umbilic}
(Akutagawa \cite{Akutagawa}, Ramanathan \cite{Ramanathan}).
So, to discuss the non-trivial global properties,
we need to consider the class of spacelike CMC $1$ surfaces with singular points.
Fujimori \cite{Fujimori} introduced {\it CMC $1$ faces} in $S^3_1$,
and gave the representation formula 
(\cite[Theorem 1.9]{Fujimori}, \cite{AA}, cf.\ Fact \ref{fact:BW-representation}).
CMC $1$ faces in $S^3_1$ can be regarded 
as a corresponding class of maxfaces in $\L^3$.
In \cite{FSUY}, it is proved that
the singularities of CMC $1$ faces in $S^3_1$ generically consist of 
cuspidal edges, swallowtails and cuspidal cross caps.
Moreover, the duality of singularities 
for cuspidal edges, swallowtails and cuspidal cross caps
on CMC $1$ faces which is corresponding to (I)
in \cite{FSUY} (see also Fact \ref{fact:duality-generic-CMC1}).
So the following questions naturally arise:
{\it 
What kind of singularities appear on CMC $1$ faces in $S^3_1$?
Does the duality of singularities 
corresponding to {\rm (II)} or {\rm (III)}
also hold in the case of CMC $1$ faces in $S^3_1$?
}


\subsection{Results}
In this paper, 
we investigate the singularities of CMC $1$ faces 
in the de Sitter $3$-space $S^3_1$.
First, we prove the criterion for $5/2$-cuspidal edges
on CMC $1$ faces
in terms of Weierstrass data (Theorem \ref{thm:to-prove}),
which yields the following.

\begin{introtheorem}\label{thm:A}
CMC $1$ faces in $S^3_1$
have the duality between 
generalized conelike singularity
and $5/2$-cuspidal edge singularity.
\end{introtheorem}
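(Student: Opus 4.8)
The plan is to deduce Theorem \ref{thm:A} from the $5/2$-cuspidal edge criterion (Theorem \ref{thm:to-prove}) together with a companion criterion for generalized conelike singular points, by tracking how the Weierstrass data transform under passage to the conjugate CMC $1$ face. First I would fix notation using the Bryant--Fujimori representation (Fact \ref{fact:BW-representation}), writing $f$ in terms of its holomorphic data $(g,\omega)$, where $g$ is the secondary Gauss map. The conjugate CMC $1$ face $f^{\sharp}$ (the one appearing in the duality of Fact \ref{fact:duality-generic-CMC1}) is obtained by the phase rotation analogous to the maxface rule $(g,\omega)\mapsto(g,-\sqrt{-1}\,\omega)$, so that $g$ is left unchanged. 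The essential preliminary observation is that the induced metric degenerates exactly on the locus $\Sigma=\{|g|=1\}$, a condition untouched by the conjugation; hence $f$ and $f^{\sharp}$ share the \emph{same} singular set $\Sigma\subset D$, and ``$p$ is a singular point'' is a notion common to both. This is precisely what makes the pointwise biconditional of Theorem \ref{thm:A} along $\Sigma$ meaningful.

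Next I would render both singularity types as conditions on the data restricted to $\Sigma$. On one side, Theorem \ref{thm:to-prove} already supplies the $5/2$-cuspidal edge criterion for $f$ at $p\in\Sigma$ as a finite-jet condition on $(g,\omega)$: a first-order nondegeneracy of $\Sigma$ together with the higher-order vanishing that upgrades a generic cuspidal edge (normal form $(u,v^2,v^3)$) to the order-$5/2$ cusp (normal form $(u,v^2,v^5)$). On the other side I would establish the companion criterion for a generalized conelike singular point of $f$ at $p$, namely the condition forcing the image of the singular curve to collapse so that $f$ is cone-shaped there; as in the flat model worked out by Kim--Yang for maxfaces, this is a reality/vanishing condition on $(g,\omega)$ and their derivatives along $\Sigma$. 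The guiding heuristic is the Lawson-type correspondence together with the maxface duality (II) (generalized conelike $\leftrightarrow$ fold), which tells me the expected formal shape of these conditions and that the de Sitter counterpart of the fold should be the $5/2$-cuspidal edge.

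Finally I would match the two criteria across the conjugation. The substitution $\omega\mapsto -\sqrt{-1}\,\omega$ multiplies the relevant holomorphic quantities by $-\sqrt{-1}$, interchanging their real and imaginary parts along $\Sigma$; the reality condition characterizing a generalized conelike point of $f$ then becomes exactly the vanishing condition characterizing a $5/2$-cuspidal edge of $f^{\sharp}$, and conversely. Since $\Sigma$ is common to $f$ and $f^{\sharp}$, this interchange delivers the biconditional of Theorem \ref{thm:A} at each $p\in\Sigma$. The main obstacle I anticipate is the bookkeeping of the \emph{higher-order} data: because a $5/2$-cuspidal edge is strictly more degenerate than a generic cuspidal edge, one must check that the order-$5/2$ jet conditions coming out of Theorem \ref{thm:to-prove} correspond precisely---not merely to leading order---to the cone-collapse conditions, and that the de Sitter curvature terms, which are absent in the flat $\L^3$ picture of duality (II), do not spoil this correspondence.
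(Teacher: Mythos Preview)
Your approach is correct and essentially identical to the paper's: the paper also deduces Theorem~\ref{thm:A} by combining Theorem~\ref{thm:to-prove} with the companion criterion for generalized conelike singularities (Proposition~\ref{prop:conelike-CMC1}: $\Re\varphi(p)\ne0$ and $\Im\varphi\equiv0$ along the singular curve through $p$), and then observing that the conjugation $\omega\mapsto-\sqrt{-1}\,\omega$ sends $\varphi\mapsto\sqrt{-1}\,\varphi$, which swaps the two conditions exactly. One small correction: the criterion in Theorem~\ref{thm:to-prove} is not a finite-jet condition but an \emph{identically-along-the-singular-curve} condition ($\Im\varphi(p)\ne0$ and $\Re\varphi\equiv0$ on the singular curve), and likewise for the conelike side; once you state the two criteria in this form the swap under $\varphi\mapsto\sqrt{-1}\,\varphi$ is immediate, so your anticipated higher-order bookkeeping and curvature-correction worries do not arise at this stage (all of that is already absorbed into the proof of Theorem~\ref{thm:to-prove}).
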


We also show that 
CMC $1$ faces in $S^3_1$ do not admit fold singular points
(Corollary \ref{cor:CMC1-foldsing}).
Hence Theorem \ref{thm:A} can be regarded 
as the de Sitter counterpart of the duality (III).

\begin{figure}[htb]
\begin{center}
 \begin{tabular}{{c@{\hspace{8mm}}c}}
  \resizebox{4cm}{!}{\includegraphics{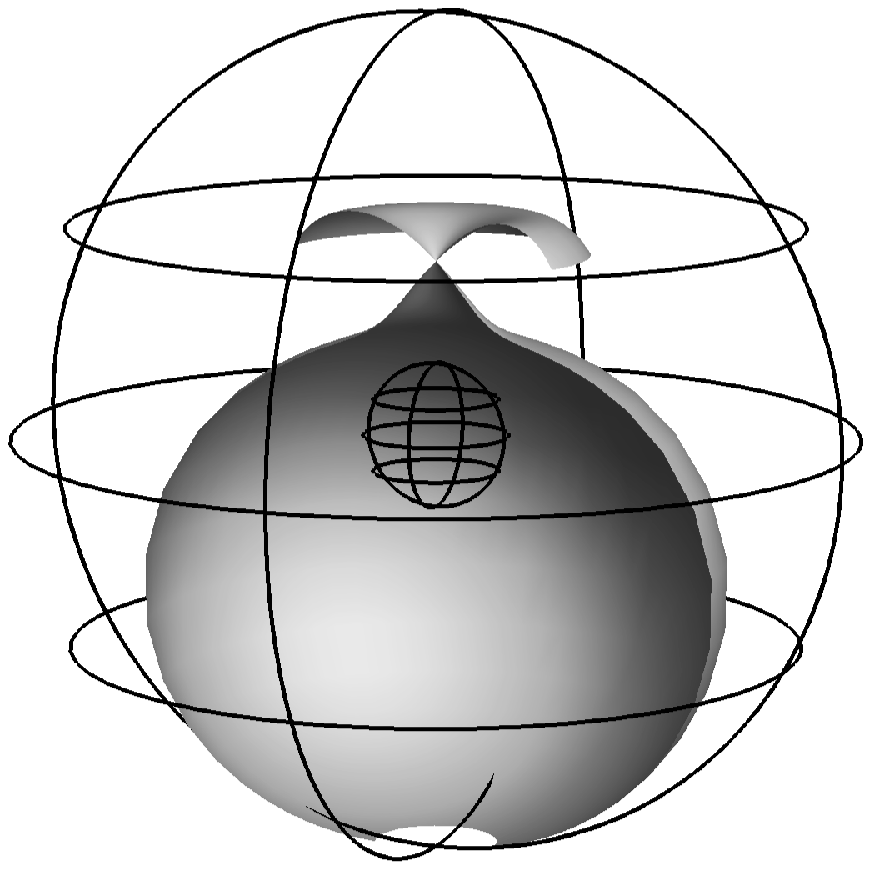}}&
  \resizebox{4cm}{!}{\includegraphics{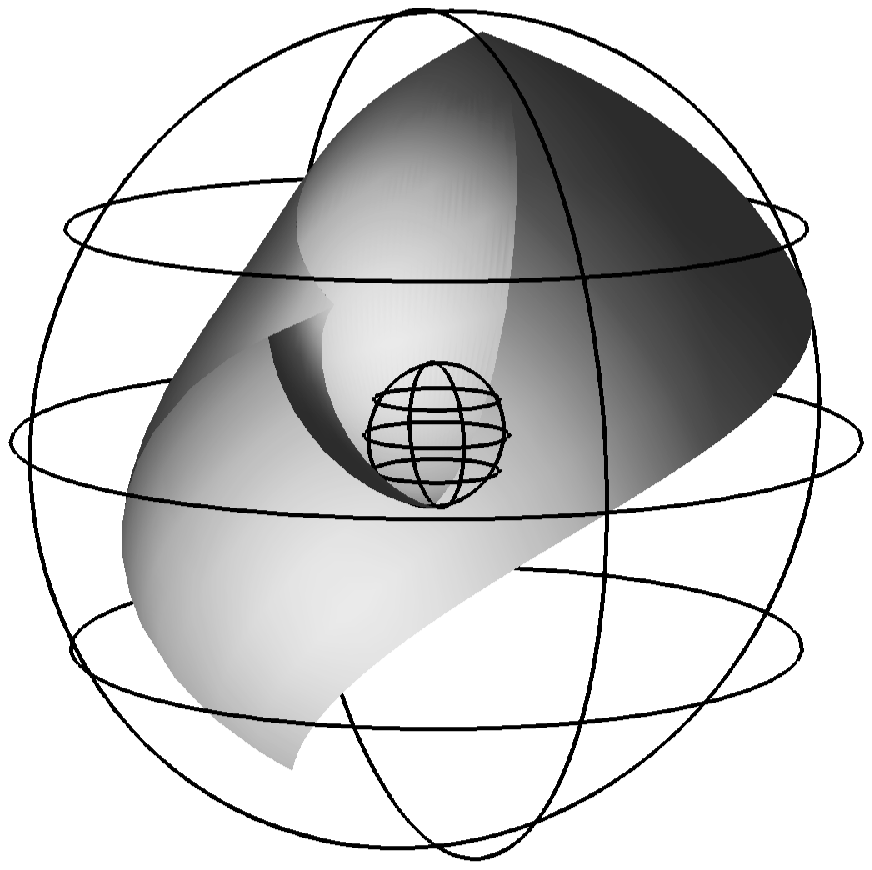}}
 \end{tabular}
\end{center}
\caption{Left: a CMC $1$ face $f:\C\to S^3_1$ 
with the Weierstrass data 
$(g,\omega)=(e^z , e^{-z}dz)$.
The singular set of $f$ 
consists of generalized conelike singular points.
Right: the conjugate CMC $1$ face $f^\sharp$.
In Example \ref{ex:catenoid-relative},
we see that $f^\sharp$ has $5/2$-cuspidal edge singular points.
These surfaces correspond to the maximal catenoid
and the maximal helicoid shown in Figure \ref{fig:maxfaces}, respectively.
To visualize the figures, we use the hollow ball model \cite{FNSSY}, cf.\ \eqref{eq:hollow}.}
\label{fig:catenoid-cousin}
\end{figure}

Next, we show the criteria 
for cuspidal $S_k$ singularities,
$A_4$-type and generalized $A_k$ singularities
on CMC $1$ faces
in terms of Weierstrass data
(Theorems \ref{thm:Ak-criteria-CMC1} and \ref{thm:Sk-criteria-CMC1}).
To derive the duality between cuspidal $S_k$ singularity
and $A_{k+3}$ singularity as in (III),
we introduce two invariants of CMC $1$ faces
$$
  \alpha=\alpha(f,p),\qquad
  \sigma=\sigma(f,p)
$$
(Definition \ref{def:invariants}).
These $\alpha$- and $\sigma$-invariants
are defined at singular points satisfying 
the conditions \eqref{eq:condition-A} and \eqref{eq:condition-S},
respectively (cf.\ Definition \ref{def:invariants}).
For example, for $k\geq1$,
the $A_{k+3}$ singularities satisfies the condition \eqref{eq:condition-A},
and the cuspidal $S_k$ singularities satisfies the condition \eqref{eq:condition-S}.
See Theorem \ref{thm:invariants} for details 
(cf.\ Remark \ref{rem:rhamphoid-invariant}).
Then we have the following.

\begin{introtheorem}\label{thm:B}
CMC $1$ faces in $S^3_1$ have
the following dualities.
\begin{itemize}
\item[{\rm (1)}]
The duality between 
admissible cuspidal $S_1$ singularity
and admissible cuspidal butterfly.
\item[{\rm (2)}] 
The duality between 
cuspidal $S_k$ singularity
and 
generalized $A_{k+3}$ singularity,
where $k$ is an integer satisfying $k\geq 2$.
\end{itemize}
\end{introtheorem}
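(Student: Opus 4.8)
The plan is to reduce both dualities to the Weierstrass-data criteria of Theorems \ref{thm:Ak-criteria-CMC1} and \ref{thm:Sk-criteria-CMC1}, and then to track how these criteria transform under conjugation. By Fact \ref{fact:BW-representation} a CMC $1$ face $f$ is determined by Weierstrass data $(g,\omega)$, and, exactly as in the maxface case, its conjugate $f^\sharp$ is obtained by the substitution $\omega\mapsto-\sqrt{-1}\,\omega$, leaving the meromorphic function $g$ unchanged. First I would rewrite the criteria for a cuspidal $S_k$ singularity and for a generalized $A_{k+3}$ singularity as conditions on the Taylor coefficients, at the singular point $p$, of $g$ and of a primitive of $\omega$, together with the vanishing or non-vanishing of the invariants $\sigma=\sigma(f,p)$ and $\alpha=\alpha(f,p)$ from Definition \ref{def:invariants}. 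By Theorem \ref{thm:invariants} these two invariants are precisely the quantities distinguishing the condition \eqref{eq:condition-S} governing cuspidal $S_k$ singularities from the condition \eqref{eq:condition-A} governing generalized $A_{k+3}$ singularities.

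For part (2), where $k\geq2$, the heart of the argument is the interchange of real and imaginary parts produced by $\omega\mapsto-\sqrt{-1}\,\omega$. I would show that the orders of contact appearing in the two criteria are invariant under conjugation, so that $f$ satisfies \eqref{eq:condition-S} at $p$ if and only if $f^\sharp$ satisfies \eqref{eq:condition-A} at $p$, and conversely. The essential step is then a transformation rule exchanging the two invariants under conjugation, namely relating $\sigma(f,p)$ to $\alpha(f^\sharp,p)$ and $\alpha(f,p)$ to $\sigma(f^\sharp,p)$; once this is established, the non-degeneracy clause certifying a cuspidal $S_k$ singularity for $f$ becomes exactly the clause certifying a generalized $A_{k+3}$ singularity for $f^\sharp$, and the stated duality follows by matching the criteria coefficient by coefficient.

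For part (1), the borderline case $k=1$, this matching is not by itself sufficient: both the cuspidal $S_1$ and the cuspidal butterfly ($A_4$) conditions require one additional non-degeneracy constraint, which is encoded by the word \emph{admissible}. Here I would check that the admissibility condition for a cuspidal $S_1$ singularity of $f$ is carried, under $\omega\mapsto-\sqrt{-1}\,\omega$, precisely onto the admissibility condition for a cuspidal butterfly of $f^\sharp$. This reduces to verifying that the single extra coefficient constraint is real-analytic in the data and is interchanged with its conjugate counterpart, so that it holds for $f$ exactly when the dual constraint holds for $f^\sharp$.

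The step I expect to be the main obstacle is the bookkeeping of the invariants under conjugation. One must first confirm that the defining conditions \eqref{eq:condition-A} and \eqref{eq:condition-S} are themselves preserved by $f\mapsto f^\sharp$, so that $\alpha$ and $\sigma$ are simultaneously defined on both surfaces, and then establish the precise transformation rule relating them. This requires isolating the lowest-order data on which $\alpha$ and $\sigma$ depend and showing that multiplication of $\omega$ by $-\sqrt{-1}$ effects exactly the exchange predicted by the duality. Granting this rule, the remainder is the routine Taylor-coefficient comparison already supplied by Theorems \ref{thm:Ak-criteria-CMC1} and \ref{thm:Sk-criteria-CMC1}.
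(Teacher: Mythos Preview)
Your strategy is correct and matches the paper's: reduce to the Weierstrass-data criteria of Theorems \ref{thm:Ak-criteria-CMC1} and \ref{thm:Sk-criteria-CMC1} and track their behaviour under $\omega\mapsto-\sqrt{-1}\,\omega$. However, you are making the argument harder than it is. There is no need to pass to Taylor coefficients of $g$ and of a primitive of $\omega$, and the ``bookkeeping of the invariants'' you anticipate as the main obstacle is in fact a one-line computation. The criteria are already phrased in terms of the characteristic pair $(\varphi,\vf)=(dg/(g^2\omega),\,g\,d/dg)$ of Definition \ref{def:ch-pair}, and under conjugation this pair becomes $(\varphi^\sharp,\vf^\sharp)=(\sqrt{-1}\,\varphi,\,\vf)$. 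Hence $\vf^j\varphi^\sharp=\sqrt{-1}\,\vf^j\varphi$ for every $j\geq0$, so
\[
\Im\bigl(\sqrt{-1}^{\,j}(\vf^\sharp)^j\varphi^\sharp\bigr)=\Re\bigl(\sqrt{-1}^{\,j}\vf^j\varphi\bigr),
\qquad
\Re\bigl(\sqrt{-1}^{\,j}(\vf^\sharp)^j\varphi^\sharp\bigr)=-\Im\bigl(\sqrt{-1}^{\,j}\vf^j\varphi\bigr).
\]
This single identity carries the $cS_k$ conditions for $f$ term-by-term onto the generalized $A_{k+3}$ conditions for $f^\sharp$, and immediately gives the invariant exchange $\alpha(f^\sharp,p)=\sigma(f,p)$ and $\sigma(f^\sharp,p)=-\alpha(f,p)$, so that admissibility on one side is equivalent to admissibility on the other. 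The paper's proof is literally the sentence ``By Theorems \ref{thm:Ak-criteria-CMC1} and \ref{thm:Sk-criteria-CMC1}''; all the content lies in those criteria, not in the duality step itself.
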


While the assertion (1) of Theorem \ref{thm:B} seems to 
correspond to the duality (III) for maxfaces in $\L^3$, 
but there are differences:
\begin{itemize}
\item
The first differences is that we need to assume the {\it admissibility},
which is defined in terms of $\alpha$- and $\sigma$-invariants
(see Definition \ref{def:generic-A4}).
The reason is that, if a CMC $1$ face $f$ has 
a cuspidal $S_1$ singular point $p$,
then the $\sigma$-invariant $\sigma(f,p)$ is not equal to $12$
(Corollary \ref{cor:cS1-sigma-invariant}).
\item
The second difference is that
there are CMC $1$ faces which admit 
cuspidal $S_1^+$ singular points 
(Example \ref{ex:cuspidal-butterfly}, Figure \ref{fig:cS1-plus}),
while maxfaces in $\L^3$ do not admit them \cite{OT}.
\end{itemize}
Moreover, the assertion (2) of Theorem \ref{thm:B}
have no counterparts in the case of maxfaces in $\L^3$,
since maxfaces cannot have cuspidal $S_k$ singularities for $k\geq2$
\cite{MNT}.
See Definition \ref{def:generalized-Ak} for the definition of 
generalized $A_{k}$ singularities.

Finally, using our criteria
(Theorems \ref{thm:to-prove}, 
\ref{thm:Ak-criteria-CMC1} 
and \ref{thm:Sk-criteria-CMC1}),
we obtain the classification of non-degenerate singular points 
on CMC $1$ faces in $S^3_1$.

\begin{introtheorem}\label{thm:C}
A non-degenerate singular point on CMC $1$ faces in $S^3_1$
must be one of the followings{\rm :}
\begin{enumerate}
\item an $A_k$-type singular point $(k=2,3,4)$,
\item a generalized $A_k$ singular point $(k\geq5)$, 
\item a generalized conelike singular point,
\item a cuspidal $S_k$ singular point $(k\geq0)$, 
\item a singular point satisfying 
the condition \eqref{eq:condition-S} with $\sigma=12$, or 
\item a $5/2$-cuspidal edge singular point.
\end{enumerate}
\end{introtheorem}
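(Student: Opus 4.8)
The plan is to prove Theorem \ref{thm:C} by a local analysis at the singular point, combining the representation of Fact \ref{fact:BW-representation} with the three criteria obtained above (Theorems \ref{thm:to-prove}, \ref{thm:Ak-criteria-CMC1} and \ref{thm:Sk-criteria-CMC1}). First I would fix a CMC $1$ face $f$ with Weierstrass data $(g,\omega)$ and a non-degenerate singular point $p$. Non-degeneracy guarantees that the singular set is, near $p$, a regular curve $\gamma(t)$ equipped with a null vector field $\eta(t)$ spanning $\ker df$ along $\gamma$. The singularity type is governed by the order of contact of $\eta$ with $\gamma$ together with the front/frontal behaviour of $f$, and both are read off from the Weierstrass data. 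Accordingly, I would set up the Taylor expansion of $f$ (equivalently of the holomorphic map underlying $F$) in adapted coordinates along $\gamma$ and along $\eta$ at $p$, so that each singularity in the list corresponds to a prescribed pattern of vanishing of the expansion coefficients, as dictated by the criteria theorems.

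The core is then a case analysis. I would dispose of the generic (lowest-order) types first: if $\eta$ is transverse to $\gamma$ at $p$, then $p$ is a cuspidal edge ($A_2$, case (1)) when $f$ is a front there and a cuspidal cross cap (a cuspidal $S_0$, case (4)) in the generic non-front situation; if $\eta$ is tangent to $\gamma$ at $p$ with lowest-order contact and $f$ is a front, then $p$ is a swallowtail ($A_3$, again case (1)). For every remaining, more degenerate configuration I would show that $p$ satisfies exactly one of the conditions \eqref{eq:condition-A} and \eqref{eq:condition-S}. Under \eqref{eq:condition-A} the $\alpha$-invariant of Definition \ref{def:invariants} is defined, and Theorems \ref{thm:Ak-criteria-CMC1} and \ref{thm:to-prove} identify $p$, according to the value of $\alpha$ and the first non-vanishing coefficient, as a cuspidal butterfly ($A_4$, case (1)), a generalized $A_k$ singular point with $k\ge5$ (case (2)), a generalized conelike singular point (case (3)), or a $5/2$-cuspidal edge (case (6)); here the last two arise precisely at the exceptional values of $\alpha$ at which the $A_k$-criterion degenerates, and they form the dual pair of Theorem \ref{thm:A}. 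Under \eqref{eq:condition-S} the $\sigma$-invariant is defined, and Theorem \ref{thm:Sk-criteria-CMC1} identifies $p$ as a cuspidal $S_k$ singular point with $k\ge1$ (case (4)) when $\sigma\neq12$, while the remaining value $\sigma=12$ is exactly case (5). The values of $\alpha$ and $\sigma$ attached to the named singularities are the ones recorded in Theorem \ref{thm:invariants}.

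The main obstacle I anticipate is the exhaustiveness step: proving that every non-degenerate singular point which is not one of the three generic types is \emph{forced} to satisfy condition \eqref{eq:condition-A} or condition \eqref{eq:condition-S}, with no third alternative slipping through. Concretely this reduces to showing that, once the lowest-order coefficients in the expansions along $\gamma$ and $\eta$ have vanished, the next obstruction is controlled by a single quantity whose two complementary regimes are precisely \eqref{eq:condition-A} and \eqref{eq:condition-S} (morally, the front versus non-front alternative). The delicate bookkeeping lies in reconciling the holomorphicity of $g$ and $\omega$ with the reality constraints that define the singular curve, and in checking that the criteria of Theorems \ref{thm:to-prove}, \ref{thm:Ak-criteria-CMC1} and \ref{thm:Sk-criteria-CMC1} collectively leave no gap. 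Finally, the boundary cases — the generalized conelike and $5/2$-cuspidal edge points under \eqref{eq:condition-A}, and the $\sigma=12$ points under \eqref{eq:condition-S} — must be confirmed to be genuinely the only outcomes when the respective generic criteria fail, which is what produces the separate entries (3), (5) and (6) in the list.
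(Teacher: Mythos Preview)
Your overall architecture---first strip off the lowest-order singularities (cuspidal edge, swallowtail, cuspidal cross cap) and then split what remains by the dichotomy \eqref{eq:condition-A} versus \eqref{eq:condition-S}---is sound, and the exhaustiveness you worry about is indeed automatic: at a non-degenerate singular point $\varphi(p)\neq 0$, so once cuspidal edge is excluded exactly one of $\Im\varphi(p)$, $\Re\varphi(p)$ vanishes, and once swallowtail/cuspidal cross cap are excluded the corresponding first-order condition in \eqref{eq:condition-A} or \eqref{eq:condition-S} follows.

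The genuine gap is that you have placed the $5/2$-cuspidal edge in the wrong branch. Condition \eqref{eq:condition-A} forces $\Im\varphi(p)=0$, hence $\Re\varphi(p)\neq 0$, hence $f$ is a \emph{front} at $p$ (Fact~\ref{fact:singular-FSUY}). But the $5/2$-cuspidal edge is never a front; Theorem~\ref{thm:to-prove} requires $\Im\varphi(p)\neq 0$ and $\Re\varphi\equiv 0$ along the singular curve. Thus the $5/2$-cuspidal edge lives in the \eqref{eq:condition-S} branch (with $\sigma=0$, cf.\ Remark~\ref{rem:rhamphoid-invariant}), not in the \eqref{eq:condition-A} branch, and it does not arise ``at an exceptional value of $\alpha$''. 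Correspondingly, your \eqref{eq:condition-S} analysis is incomplete: the statement ``$cS_k$ with $k\ge 1$ when $\sigma\neq 12$'' is wrong, since $cS_1$ requires $\sigma\neq 0$ as well (Theorem~\ref{thm:Sk-criteria-CMC1}), and when $\sigma=0$ one must look further---either $\Re\varphi$ has an isolated zero of order $k+1\ge 3$ along $\gamma$ (giving $cS_k$, $k\ge 2$) or $\Re\varphi\equiv 0$ along $\gamma$ (giving the $5/2$-cuspidal edge).

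The paper's organization avoids this pitfall by taking the primary dichotomy to be front versus non-front, i.e.\ $\Re\varphi(p)\neq 0$ versus $\Re\varphi(p)=0$, rather than \eqref{eq:condition-A} versus \eqref{eq:condition-S}. In the front case one analyzes the vanishing order of $\Im\varphi$ along $\gamma$ (identically zero $\Rightarrow$ conelike; isolated zero of order $k-2$ $\Rightarrow$ $A_k$; nonzero $\Rightarrow$ cuspidal edge), and in the non-front case the vanishing order of $\Re\varphi$ along $\gamma$ (identically zero $\Rightarrow$ $5/2$-cuspidal edge; isolated zero of order $k+1$ $\Rightarrow$ $cS_k$ for $k\neq 1$, with the further split by $\sigma$ at $k=1$). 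Proposition~\ref{prop:Sk-induction} converts these vanishing orders into the $V^\ell\varphi$-conditions of the criteria. Once you move the $5/2$-cuspidal edge to the correct branch and refine the $\sigma=0$ case, your argument coincides with this one.
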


We remark that 
singular points of type $A_2$ (resp.\ $A_3$, $A_4$)  
are cuspidal edges (resp.\ swallowtails, cuspidal butterflies),
and
cuspidal $S_0$ singular points
are cuspidal cross caps.
As a corollary,
non-existence results for several singularities on CMC $1$ faces 
are obtained
(cf.\ Corollaries \ref{cor:CMC1-foldsing} and \ref{cor:CMC1-odd-ce}).
Here, a singular point is called {\it non-degenerate}
if the exterior derivative of the signed area density function $\lambda$
(see \eqref{eq:signed-area-density})
does not vanish.
While the set of non-degenerate singular points
forms a regular curve in the source domain,
every degenerate singular point is isolated.
This is because degenerate singular points
occur at the branch point of the meromorphic function $g$
(cf.\ Fact \ref{fact:singular-FSUY}).
On the other hand, the images of degenerate lightlike points 
on zero mean curvature surfaces in $\L^3$
are lightlike line segments
(so-called the {\it line theorem}, 
\cite{Klyachin}, \cite{UY_lightlike1}, \cite{UY_lightlike2}).

This paper is organized as follows.
First, in Section \ref{sec:prelim},
we review the fundamental properties of 
CMC $1$ faces in $S^3_1$
and the criteria for singular points,
such as cuspidal edges, swallowtails and cuspidal cross caps
(Fact \ref{fact:singular-FSUY}). 
Then, in Section \ref{sec:invariants}, 
we consider the equivalence relation on Weierstrass data,
which yields the $\alpha$- and {\it $\sigma$-invariants}
(Definition \ref{def:invariants}).
In Section \ref{sec:rhamphoid},
we prove Theorem \ref{thm:A}.
For the proof, we use the criterion for $5/2$-cuspidal edges
(\cite[Theorem 4.1]{HKS}, cf.\ Fact \ref{fact:rhamphoid}).
Since the criterion is for frontals in $\R^3$,
we cannot apply it to CMC $1$ faces in $S^3_1$ directly.
Hence,
we use the orthogonal projection from $S^3_1$ to the tangent spaces,
see \eqref{eq:orthogonal-proj}.
Here we remark that a duality as in Theorem \ref{thm:A} is observed 
in the case of spacelike Delaunay surfaces
(i.e., generalized CMC surfaces of revolution)
in $\L^3$ \cite[Theorem 1.2]{HKS}.
In Section \ref{sec:AkSk},
we show criteria 
for $A_k$  and cuspidal $S_k$ singularities on CMC $1$ faces
in terms of Weierstrass data
(Theorems \ref{thm:Ak-criteria-CMC1} and \ref{thm:Sk-criteria-CMC1}),
which yield Theorem \ref{thm:B}.
Finally, in Section \ref{sec:final}, we prove Theorem \ref{thm:C}.
In the appendix, we give a proof of Lemma \ref{lem:T5p-body},
which is the key lemma for the proofs of our criteria.

\section{Singularities of CMC $1$ surfaces in de Sitter 3-space}
\label{sec:prelim}

\subsection{Structure of de Sitter space}
We denote by $\L^4$ the Lorentz-Minkowski $4$-space with 
the Lorentz metric 
$$
  \inner{\vect{x}}{\vect{y}}
  = -x_0y_0 + x_1 y_1 + x_2 y_2 + x_3 y_3,
$$
where 
$\vect{x}=(x_0,x_1,x_2,x_3)^T$,
$\vect{y}=(y_0,y_1,y_2,y_3)^T$,
and ${}^T$ means the transpose.
Then the de Sitter $3$-space is given by
\[
  S^3_1=S^3_1(1)=
  \{\vect{x}\in \L^4 \, ; \, \inner{\vect{x}}{\vect{x}}=1\}
\]
with metric induced from $\L^4$, which is 
a complete simply-connected and connected 
Lorentzian $3$-manifold with constant
sectional curvature $1$.
We identify 
the set of $2\times 2$ 
Hermitian matrices $\Herm(2)=\{X^*=X\}$
$(X^*:=\bar{X}^T)$ via
\begin{equation}\label{eq:Psi}
  \Phi:
  \Herm(2) \ni 
  \begin{pmatrix}
    x_0+x_3 & x_1+\sqrt{-1} x_2 \\
    x_1-\sqrt{-1} x_2 & x_0-x_3 
  \end{pmatrix}
  \longmapsto 
  (x_0,x_1,x_2,x_3)^T 
  \in \L^4
\end{equation}
with the metric
$$
  \inner{X}{Y}
  = -\frac1{2}\trace(X \, {\rm adj}(Y))\qquad
  \left(
  {\rm adj} :
  \begin{pmatrix}
    a & b \\
    c & d 
  \end{pmatrix}
  \mapsto
  \begin{pmatrix}
    d & -b \\
    -c & a 
  \end{pmatrix}
  \right).
$$
In particular, $\inner{X}{X}=-\det X$.
Under the identification \eqref{eq:Psi},
the de Sitter $3$-space $S^3_1$
is represented as
$$ 
   S^3_1=\{X\in \Herm(2)\,;\,\det X=-1\}.
$$
Moreover,
under the identification \eqref{eq:Psi},
the basis 
\begin{equation}\label{eq:pauli}
   e_0=\begin{pmatrix}1&0\\0&1\end{pmatrix},
  \quad
   e_1=\begin{pmatrix}0&1\\1&0\end{pmatrix},
  \quad
   e_2=\begin{pmatrix}0&\sqrt{-1}\\-\sqrt{-1}&0\end{pmatrix},
  \quad
   e_3=\begin{pmatrix}1&0\\0&-1\end{pmatrix}
\end{equation}
of $\Herm(2)$
corresponds to the canonical orthonormal basis
$$
  (\varepsilon_0,\varepsilon_1,\varepsilon_2,\varepsilon_3)
  =(\Phi e_0,\Phi e_1,\Phi e_2,\Phi e_3)
$$
of $\L^4$.
The special linear group $\SL(2,\C)$ acts 
isometrically and transitively on $\Herm(2)$ by 
\begin{equation}\label{eq:action}
  \Herm(2) \ni X \longmapsto AXA^* \in \Herm(2),
\end{equation}
where $A\in \SL(2,\C)$.
Since the isotropy subgroup of $\SL(2,\C)$
at $ e_3\in S^3_1$ is $\SU(1,1)$,
we can identify
$$
  S^3_1
  =\SL(2,\C)/\SU(1,1)
  =\{A e_3A^*\,;\,A\in\SL(2,\C)\}.
$$
Since the action \eqref{eq:action}
preserves the metric, orientation, and the time-orientation
of $\Herm(2)$,
for each $A\in \SL(2,\C)$,
there exists a unique element $\check{A}$ 
of the restricted Lorentz group $\SO^+(1,3)$ such that
\begin{equation}\label{eq:double-cover}
  \Phi (AXA^*)= \check{A}\,\Phi (X)\qquad
  (X\in \Herm(2))
\end{equation}
holds,
where $\Phi$ is the identification \eqref{eq:Psi}.
This map $\SL(2,\C)\ni A \mapsto \check{A}\in \SO^+(1,3)$
gives the universal covering of $\SO^+(1,3)$.

To visualize the graphics of surfaces in $S^3_1$,
we use the {\it hollow ball model} of $S^3_1$
introduced in \cite{FNSSY}.
We set 
\begin{equation}\label{eq:hollow}
  \mathcal{H}:=\{ \vect{y}\in \R^3\,;\, \sqrt{2}-1 < ||\vect{y}|| <\sqrt{2}+1 \}
  \quad
  \left( ||\vect{y}||=\sqrt{(y_1)^2+(y_2)^2+(y_3)^2} \right),
\end{equation}
where $\vect{y}=(y_1,y_2,y_3)^T$.
We identify $S^3_1$ and $\mathcal{H}$
via the map:
$$
  S^3_1\ni 
  (x_0,x_1,x_2,x_3)^T
  \longmapsto
  \frac1{x_0+\sqrt{(x_0)^2+(x_1)^2+(x_2)^2+(x_3)^2}}
  (x_1,x_2,x_3)^T\in \mathcal{H}.
$$
The ideal boundary 
$\partial\mathcal{H}$ of $S^3_1$
consists of two components
$\partial\mathcal{H}=\partial \mathcal{H}_+\cup \partial \mathcal{H}_-$,
where we set
$\partial \mathcal{H}_{\pm}:=\{ \vect{y}\in \R^3\,;\, ||\vect{y}|| =\sqrt{2}\pm1\}$.

\subsection{CMC $1$ faces}

Let $D$ be a Riemann surface.
A holomorphic map $F:D\to \SL(2,\C)$ is called {\it null\/}
if $\det dF=0$ holds on $D$.
The projection (cf.\ \eqref{eq:cmc1-face}) 
of null holomorphic immersions 
gives spacelike constant mean curvature one (CMC $1$, for short) 
surfaces with singularities, 
called {\it CMC $1$ faces} in $S^3_1$ 
(see \cite{Fujimori} for details).

\begin{fact}[\cite{Fujimori}]
\label{fact:BW-representation}
Let $D$ be a simply connected domain of $\C$,
and $(g,\omega)$ be a pair of 
meromorphic function $g$,
and holomorphic $1$-form $\omega$ on $D$
such that $(1+|g|^2)^2|\omega|^2$ is a Riemannian metric on $D$
and $(1-|g|^2)^2$ does not vanish identically.
We fix a point $p\in D$.
Take a holomorphic null immersion $F:D\to \SL(2,\C)$ satisfying 
\begin{equation}\label{eq:F^-1dF}
  F^{-1}dF=
   \begin{pmatrix}
    g & -g^2 \\ 1 & -g\hphantom{^2}
   \end{pmatrix} \omega,
   \qquad
   F(p)= e_0.
\end{equation}
Then 
\begin{equation}\label{eq:cmc1-face}
   f:=F e_3F^*
\end{equation}
defines a CMC $1$ face $f:D\to S^3_1$.
Moreover, any CMC $1$ face is locally obtained in this manner.
\end{fact}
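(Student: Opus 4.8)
I would treat the two assertions separately: the construction of a CMC $1$ face from the data $(g,\omega)$, and the local converse.

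For the construction, write $\omega=\hat\omega\,dz$ on the simply connected domain $D$ and set $\Omega:=\begin{pmatrix} g & -g^2 \\ 1 & -g\end{pmatrix}\omega$. This is a trace-free $\mathfrak{sl}(2,\C)$-valued holomorphic $1$-form, and on a one-dimensional complex domain the Maurer--Cartan integrability condition $d\Omega+\Omega\wedge\Omega=0$ holds automatically, since $dz\wedge dz=0$ and the entries are holomorphic. Hence there is a unique holomorphic $F\colon D\to\GL(2,\C)$ solving \eqref{eq:F^-1dF}; because $\trace\Omega=0$ forces $\det F$ to be constant equal to $\det e_0=1$, in fact $F\colon D\to\SL(2,\C)$. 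Moreover $\det\Omega=-g^2+g^2=0$, so $\det dF=\det F\cdot\det\Omega=0$ and $F$ is null; the hypothesis that $(1+|g|^2)^2|\omega|^2$ be a Riemannian metric guarantees $\Omega$ never vanishes, so $F$ is an immersion. Then $f:=Fe_3F^*$ from \eqref{eq:cmc1-face} is Hermitian with $\det f=\det F\cdot\det e_3\cdot\overline{\det F}=-1$, so $f$ takes values in $S^3_1$.

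Next I would read off the geometry. Since $F$ is holomorphic, $f_z=F\begin{pmatrix}g & g^2\\ 1 & g\end{pmatrix}\hat\omega\,F^*$, and the displayed matrix has rank one with vanishing determinant; using $\inner{X}{X}=-\det X$ together with $f_{\bar z}=(f_z)^*$, a direct computation yields $\inner{f_z}{f_z}=0$ and the induced metric $ds^2=(1-|g|^2)^2|\omega|^2$. Thus $f$ is conformal and spacelike, degenerate exactly on the nowhere dense set $\{|g|=1\}$ (everywhere-degeneracy being excluded by $(1-|g|^2)^2\not\equiv0$). Computing the unit normal and the second fundamental form, equivalently the Hopf differential (a constant multiple of $\omega\,dg$), one finds the mean curvature identically $1$; structurally this is the de Sitter analogue of Bryant's representation, the value $1$ being forced by the choice of $e_3$ and the isometric $\SL(2,\C)$-action \eqref{eq:action}. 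Because the null lift $F$ stays an immersion where $f$ degenerates, $f$ extends across $\{|g|=1\}$ as a frontal, that is, a CMC $1$ face.

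For the local converse, I would restrict to a simply connected neighborhood of a regular point, where $f$ is a genuine spacelike CMC $1$ immersion. The Gauss--Codazzi equations for CMC $1$ surfaces in $S^3_1$ are equivalent to the flatness of an associated $\SL(2,\C)$-connection; integrating its holomorphic part produces a holomorphic null lift $F$ with $f=Fe_3F^*$, unique up to right multiplication by the isotropy group $\SU(1,1)$. Using this gauge freedom I would normalize $F^{-1}dF$ into the triangular form \eqref{eq:F^-1dF}, which defines the secondary Gauss map $g$ and the $1$-form $\omega$, with $(1+|g|^2)^2|\omega|^2$ Riemannian encoding that $F$ is an immersion. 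The main obstacle is precisely the passage across the singular set: one must show that the lift $F$, a priori defined only on the regular region, extends holomorphically and remains an immersion over $\{|g|=1\}$, so that the singularities of $f$ reside only in the projection $X\mapsto Xe_3X^*$ and not in $F$ itself. This is the analytic content that upgrades a CMC $1$ immersion-with-singularities to a CMC $1$ face, and it is carried out in \cite{Fujimori}.
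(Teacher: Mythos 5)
You should first note a structural point: the paper offers no proof of this statement at all --- it is stated as a Fact and attributed to \cite{Fujimori}, so there is no in-paper argument to compare against. Judged on its own, your sketch follows the standard route of the cited literature (Fujimori's de Sitter analogue of the Bryant/Umehara--Yamada and Aiyama--Akutagawa representations), and the computational core is sound: integrability of \eqref{eq:F^-1dF} is automatic in one complex variable; $\trace$ of the right-hand side being zero gives $\det F\equiv 1$; the vanishing determinant of the coefficient matrix gives nullity; and $f_z=F\begin{pmatrix} g & g^2\\ 1 & g\end{pmatrix}\hat\omega\,F^*$ together with $\inner{X}{X}=-\det X$ does yield $\inner{f_z}{f_z}=0$ and $ds^2=(1-|g|^2)^2|\omega|^2$, with singular set $\{|g|=1\}$, consistent with Fact \ref{fact:singular-FSUY}. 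One detail you gloss over: $g$ is only meromorphic, so the claim that the entries of the $1$-form are holomorphic needs the observation that the hypothesis on $(1+|g|^2)^2|\omega|^2$ forces $\omega$ to vanish to order exactly $2k$ at a pole of $g$ of order $k$, which makes $g\omega$ and $g^2\omega$ holomorphic there (the same hypothesis gives the non-vanishing of the form that you use for the immersion property, so this is a one-line repair).

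Two places assert rather than prove. First, $H\equiv 1$: your parenthetical that the value $1$ is ``forced by the choice of $e_3$ and the isometric $\SL(2,\C)$-action'' is not an argument --- the action \eqref{eq:action} is transitive and says nothing about mean curvature; one must actually compute the unit normal and second fundamental form from $F$ (or invoke the Bryant-type computation), obtaining $H=1$ and Hopf differential $Q=\omega\,dg$. Second, and more substantively, the passage from ``spacelike CMC $1$ immersion off $\{|g|=1\}$'' to ``CMC $1$ face'' in the direct direction, and the holomorphic extension of the null lift across the singular set in the converse, constitute the genuine analytic content of Fujimori's theorem, and you defer both to \cite{Fujimori}. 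Note that frontality at $|g|=1$ does not follow formally from ``$F$ stays an immersion'': the real span of $\Re f_z,\Im f_z$ degenerates there, and one must exhibit an extended tangent-plane field, e.g.\ via the rescaled normal $(1-|g|^2)\nu$, which stays smooth and non-vanishing (becoming lightlike) along the singular set. Your gauge-normalization in the converse (uniqueness of $F$ up to right multiplication by $\SU(1,1)$, recovering $(g,\omega)$ from the rank-one nilpotent form of $F^{-1}dF$) is correct and matches the equivalence \eqref{eq:W-related} used in Section \ref{sec:invariants}. Since the paper itself quotes the statement without proof, deferring the extension step to the source is defensible; as a self-contained proof, that step is the one real gap.
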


We call $F$ the {\it holomorphic null lift} of $f$.
We remark that $FF^*$ gives a conformal CMC $1$ immersion into 
the hyperbolic $3$-space $H^3$.
Moreover, conformal CMC $1$ immersions 
are always given locally in such a manner
(see \cite{B} and \cite{UY_complete} for details).

The pair $(g,\omega)$ is called 
the {\it Weierstrass data\/} of $f$.
It is known that 
$F$ and $(g,\omega)$
are not uniquely determined from $f$
(\cite{UY_complete}, \cite{Fujimori}, 
cf.\ Section \ref{sec:invariants}).
On the other hand, the {\it hyperbolic Gauss map} $G:D\to \hat{\C}$
$(\hat{\C}:=\C\cup\{\infty\})$ defined by 
$$
  G=dF_{11}/dF_{21}=dF_{12}/dF_{22}
$$
is uniquely determined from $f$,
where $F=(F_{jl})_{jl=1,2}$.
We denote by $Q=\omega dg$ the Hopf differential of $f$.
By \cite[Equation (2.6)]{UY_complete},
it holds that
\begin{equation}\label{eq:QgG}
2Q=S(g)-S(G),
\end{equation}
where $S(g)$ is the Schwarzian differential of $g$
defined by
$$
  S(g)=\hat{S}(g)dz^2 \qquad
  \left(
  \hat{S}(g)=
  \left( \frac{g_{zz}}{g_z} \right)_z
  -\frac{1}{2}\left( \frac{g_{zz}}{g_z} \right)^2
  \right).
$$
Here, $z$ is a local complex coordinate of $D$,
and $g_z=dg/dz$.

Small \cite{Small} gave the following expression:
\begin{equation}
\label{eq:Small}
F=\begin{pmatrix}
\vspace{2mm}
  G\dfrac{da}{dG}-a&G\dfrac{db}{dG}-b \\
\vspace{2mm}
  \dfrac{da}{dG} & \dfrac{db}{dG}
  \end{pmatrix}\quad
\left(
  a=\sqrt{\dfrac{dG}{dg}},\quad
  b=-ga
\right),
\end{equation}
which is called {\it Small's formula}.
See \cite{KUY} for an alternative proof.

\subsection{Generic singularities of CMC $1$ faces}
Let $D$ be a $2$-manifold,
and $f:D\to M$ be a map into 
a $3$-manifold $M=M^3$.
A point $p\in D$ is said to be a {\it singular point}
if $f$ is not an immersion at $p$.
We denote by $\Sigma(f)\,(\subset D)$ 
the set of singular points of $f$.

For $j=1,2$,
let $f_j:D_j\to M_j$ be maps
having singular points $p_j\in D_j$.
Then, 
the map germ $f_1:(D_1,p_1)\to (M_1,f(p_1))$ 
is {\it $\mathcal{A}$-equivalent} 
to $f_2:(D_2,p_2)\to (M_2,f(p_2))$
if there exist diffeomorphism germs
$\phi : (D_1,p_1)\to (D_2,p_2)$,
$\Phi : (M_1,f(p_1))\to (M_2,f(p_2))$
such that $\Phi\circ f_1=f_2\circ \phi$.

A map $f:D\to M$ 
is said to have a {\it cuspidal edge singular point} at $p$
if the map germ $f$ at $p$
is $\mathcal{A}$-equivalent to 
$\ce$ at the origin,
where 
$\ce: \R^2\to \R^3$
is given by $\ce(u,v) = (u,v^2,v^3)$.
Similarly, 
if the map germ $f$ at $p$
is $\mathcal{A}$-equivalent to 
$\sw$ 
(resp.\ $\ccr$)
at the origin,
$f$ is said to have a {\it swallowtail}
(resp.\ {\it cuspidal cross cap}) singular point at $p$,
where 
${\sw}(u,v) = (u, - 4 v^3 -2 u v, 3 v^4+u v^2)$,
${\ccr}(u,v) = (u, v^2, uv^3)$.

\begin{figure}[htb]
\begin{center}
 \begin{tabular}{{c@{\hspace{10mm}}c@{\hspace{8mm}}c}}
  \resizebox{2.4cm}{!}{\includegraphics{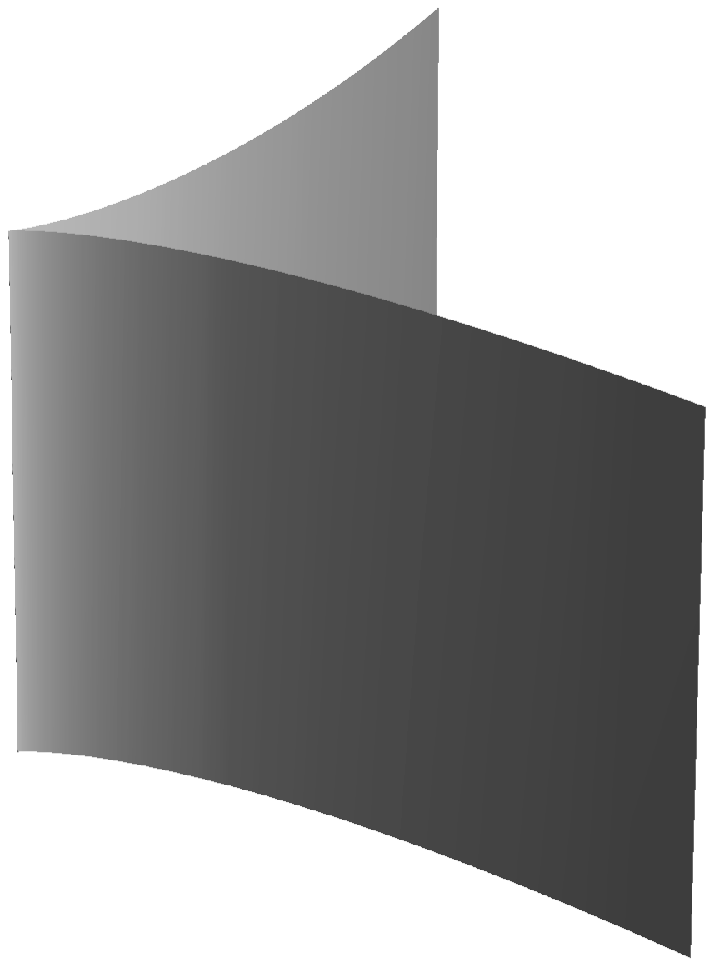}}&
  \resizebox{2.8cm}{!}{\includegraphics{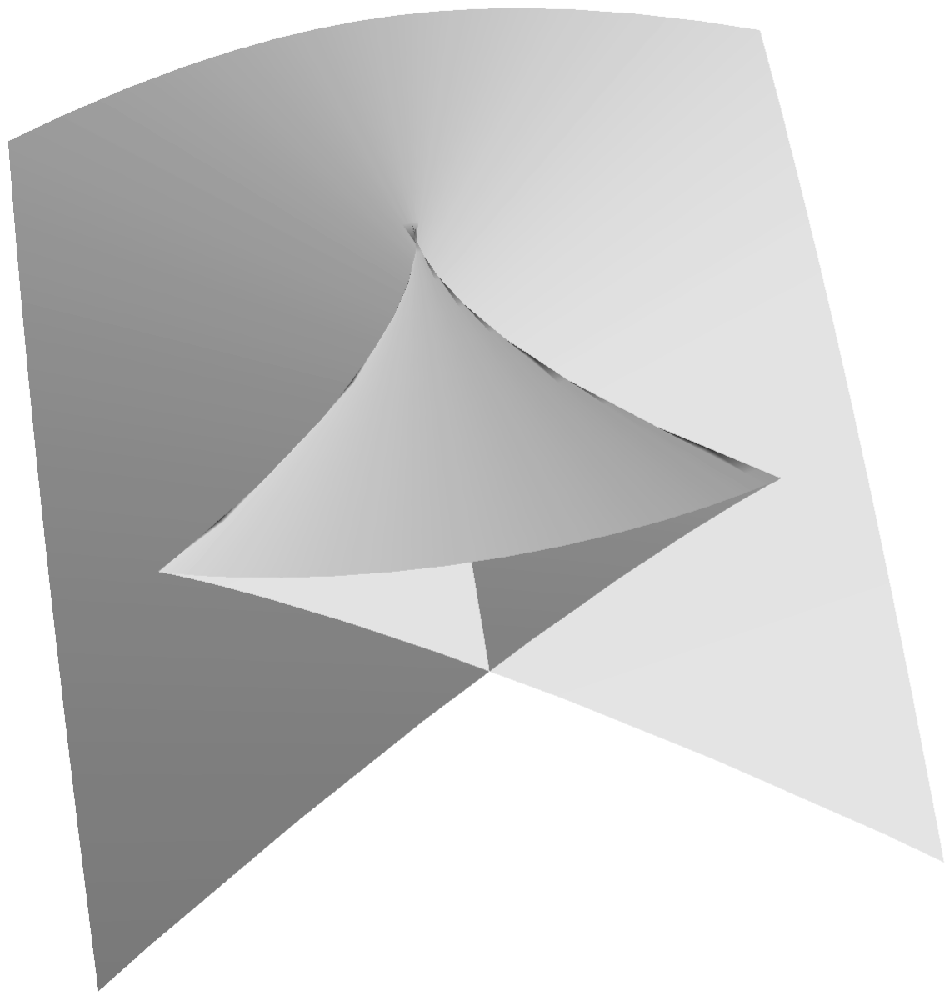}}&
  \resizebox{3.5cm}{!}{\includegraphics{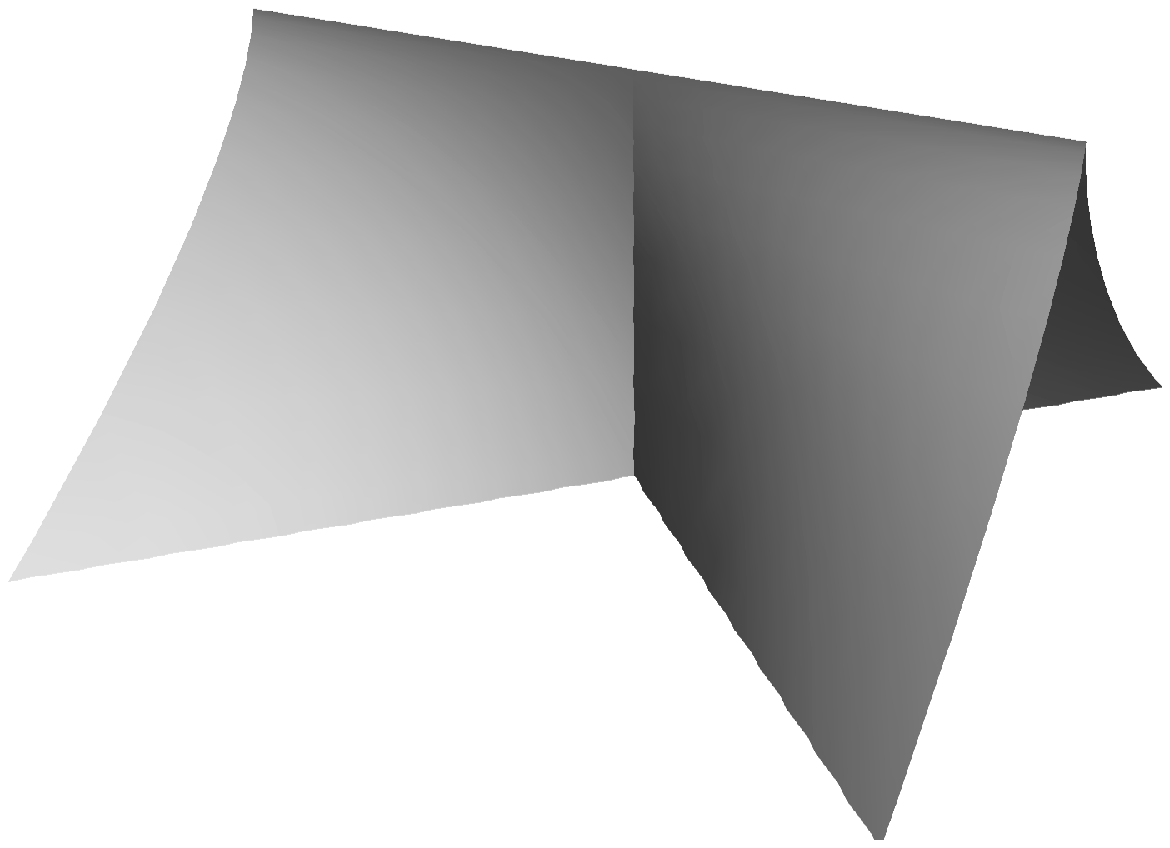}}
 \end{tabular}
\end{center}
\caption{Standard cuspidal edge $\ce(u,v)$,
standard swallowtail $\sw(u,v)$,
standard cuspidal cross cap $\ccr(u,v)$.}
\end{figure}

Such the singular points are shown to be generic singular points
of CMC $1$ faces in $S^3_1$ \cite{FSUY}.
For the proof, criteria for 
cuspidal edge, swallowtail, and cuspidal cross cap singularities
on CMC $1$ faces were shown in \cite{FSUY}.
Here we review them briefly.

We assume that $M$ is equipped with 
a Riemannian metric $d\sigma^2$.
A map $f:D\to M$ 
is said to be a {\it frontal} if,
for each point $p\in D$,
there exist an open neighborhood $(D;u,v)$ of $p$
and a unit vector field $L$ of $M$ along $f$
such that 
$$
  d\sigma^2(\nu,df(W))=0\qquad
  (W\in TD)
$$
on $D$,
where we set $L=(f,\nu)$.
If we can choose $L$ to be an immersion, 
then $f$ is called a {\it wave front}
(or a {\it front}, for short).
For a front (resp.\ frontal) $f$
and a diffeomorphism $\Phi$ of $M$,
the composition $\Phi\circ f$
is also a front (resp.\ frontal),
and hence 
the notions of frontals or fronts 
are independent of the choice
of the Riemannian metric $d\sigma^2$ on $M$.
See \cite{SUY_annals} for details.

Let $f:D\to M$ be a frontal.
We denote by $d\mu$ be the Riemannian volume form
of $(M,d\sigma^2)$.
We set a $C^{\infty}$ function $\lambda$ as
\begin{equation}\label{eq:signed-area-density}
  \lambda=d\mu(f_u,f_v,\nu),
\end{equation}
which is called the {\it signed area density function}.
Then $p\in D$ is a singular point if and only if
$\lambda(p)=0$.
If $d\lambda(p)\ne0$,
then $p$ is said to be a {\it non-degenerate singular point}.
If $p$ is a non-degenerate singular point,
the implicit function theorem yields that 
the singular set $\Sigma(f)$
is a $1$-dimensional submanifold of $D$ near $p$.
That is,
there exist $\varepsilon>0$
and a regular curve 
$\gamma:(-\varepsilon,\varepsilon)\to D$
such that 
the image $\gamma((-\varepsilon,\varepsilon))$
is a subset of $\Sigma(f)$ and $\gamma(0)=p$.
Such the curve $\gamma(t)$ is called a {\it singular curve} at $p$.
A non-vanishing vector field $\xi$ defined on a neighborhood of $p$
is said to be a {\it singular directional vector field}, 
if $\xi_{\gamma(t)}$ is parallel to the tangent vector field $\gamma' (t)$. 
On the other hand,
a non-vanishing vector field $\eta$ defined on a neighborhood of $p$
is said to be a {\it null vector field}, 
if $df_q(\eta)=\vect{0}$ for each $q\in \Sigma(f)$.
The restriction $\eta(t):=\eta_{\gamma(t)}$
is called a {\it null vector field along $\gamma(t)$}.
A non-degenerate singular point $p\in \Sigma(f)$ 
is said to be {\it of the first kind}
if $\xi_p$ and $\eta_p$ are linearly independent.
If a front $f:D\to M$ has a singular point of the first kind $p\in D$,
then $f$ has cuspidal edge at $p$ (\cite{KRSUY}).

\begin{definition}\label{def:ch-pair}
Let $f:D\to S^3_1$ be a CMC $1$ face
with Weierstrass data $(g,\omega)$,
where $D$ is a domain of $\C$.
We set a function $\varphi$
and a vector field $\vf$ as
\begin{equation}\label{eq:varphi}
  \varphi:=\frac{dg}{g^2\omega},
  \qquad
  \vf:= g\,\frac{d}{dg},
\end{equation}
respectively.
We call $(\varphi, \vf)$ the {\it characteristic pair} associated with 
the Weierstrass data $(g,\omega)$.
\end{definition}

The characteristic pair $(\varphi, \vf)$ plays an important role
in the criteria for singularities:

\begin{fact}[{\cite{FSUY}}]
\label{fact:singular-FSUY}
Let $f:D\to S^3_1$ be a CMC $1$ face
with Weierstrass data $(g,\omega)$,
and let $(\varphi, \vf)$ be the characteristic pair 
associated with $(g,\omega)$.
Then, the singular set is given by 
$\Sigma(f)=\{p\in D\,;\, |g(p)|=1\}$.
The vector fields $\xi$, $\eta$
defined by
\begin{equation}\label{eq:xi-eta}
  \xi:=\sqrt{-1}\,\overline{\left( \frac{g_z}{g}\right)} \frac{\partial}{\partial z}
  - \sqrt{-1}\,\left( \frac{g_z}{g}\right)\frac{\partial}{\partial \bar{z}},\quad
  \eta
  :=\frac{\sqrt{-1}}{g\homega}\frac{\partial}{\partial z}
    -\frac{\sqrt{-1}}{\bar{g}\bar{\homega}} \frac{\partial}{\partial \bar{z}}
\end{equation}
give a singular directional vector field and a null vector field
along $f$, respectively.
Here $z$ is a complex coordinate of $D$,
and $\omega=\homega\,dz$.
Moreover, 
\begin{enumerate}
\item
a CMC $1$ face $f$ is a frontal.
\item
a singular point $p \in \Sigma(f)$
is non-degenerate
if and only if $dg(p)\neq0$.
\item
$f$ is a front on a neighborhood of 
$p\in \Sigma(f)$ if and only if
$\Re \varphi(p)\neq 0$.
In particular, $p$ is a non-degenerate singular point.
\item
$p \in \Sigma(f)$ is a singular point of the first kind 
if and only if $\Im \varphi(p)\neq 0$.
\item
$f$ has cuspidal edge at $p\in \Sigma(f)$ 
if and only if 
$\Re \varphi(p)\neq 0$,
$\Im \varphi(p)\neq 0$.
\item
$f$ has swallowtail at $p\in \Sigma(f)$ 
if and only if 
$\varphi(p)\in \R\setminus \{0\}$,
$\Re \vf \varphi(p) \neq 0$.
\item
$f$ has cuspidal cross cap at $p\in \Sigma(f)$ 
if and only if 
$\varphi(p)\in \sqrt{-1}\R\setminus \{0\}$,
$\Im \vf \varphi(p) \neq 0$.
\end{enumerate}
\end{fact}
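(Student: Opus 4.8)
The plan is to reduce each assertion to an explicit computation with the holomorphic null lift $F$ and then to feed the result into the established geometric criteria for the model singularities $\ce$, $\sw$ and $\ccr$ (\cite{KRSUY}, \cite{FSUY}). Writing $\Omega$ for the coefficient matrix in \eqref{eq:F^-1dF}, so that $F_z=\homega\,F\Omega$ and $F_{\bar z}=0$, differentiation of $f=Fe_3F^{*}$ gives
\begin{equation*}
  f_z=\homega\,F(\Omega e_3)F^{*},\qquad
  f_{\bar z}=\bar\homega\,F(e_3\Omega^{*})F^{*},\qquad
  \Omega e_3=\begin{pmatrix} g\\ 1\end{pmatrix}(1,\,g),
\end{equation*}
where the last identity exhibits $\Omega e_3$ (and likewise $e_3\Omega^{*}$) as rank one. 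Using $\inner{X}{X}=-\det X$ one reads off that the induced metric degenerates exactly on $\{|g|=1\}$, which identifies $\Sigma(f)$; writing down a unit normal $\nu$ orthogonal to $f_z,f_{\bar z}$ exhibits $f$ as a frontal and presents the signed area density $\lambda$ as a non-vanishing factor times $(1-|g|^2)$. Since $1-|g|^2$ vanishes on $\Sigma(f)$, the differential $d\lambda$ there is a non-zero multiple of $-d(g\bar g)$, which is non-zero iff $dg\neq0$; this gives assertion (1) (the frontal property) together with assertion (2).

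Next I would derive the two vector fields directly. For the null direction, write $\eta=a\,\partial_z+\bar a\,\partial_{\bar z}$; then $df(\eta)=\vect{0}$ along $\Sigma(f)$ reads $a\homega\,\Omega e_3+\bar a\bar\homega\,e_3\Omega^{*}=0$, and the relation $\bar g=1/g$ valid on $\Sigma(f)$ gives $e_3\Omega^{*}=g^{-2}\,\Omega e_3$, so the matrix equation collapses to $a\homega=-\bar a\bar\homega/g^2$, solved by $a=\sqrt{-1}/(g\homega)$, which is the $\eta$ of \eqref{eq:xi-eta}. For the singular direction, $\Sigma(f)=\{\log|g|^2=0\}$, and requiring $\xi$ to annihilate $d\log|g|^2=(g_z/g)\,dz+\overline{(g_z/g)}\,d\bar z$ produces the stated $\xi$. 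Comparing the $\partial_z$-coefficients of $\xi$ and $\eta$ on $\Sigma(f)$ and substituting $\bar g=1/g$, their ratio equals $\bar\varphi\,|\homega|^2$; as $|\homega|^2>0$, the vectors $\xi_p,\eta_p$ are linearly dependent iff $\varphi(p)\in\R$, which is assertion (4). The front condition (3) then follows by locating where $\nu$ fails to be immersive along the null direction, a computation governed by $\Re\varphi$.

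With (3) and (4) in hand, the abstract criteria finish the proof. The cuspidal-edge criterion of \cite{KRSUY} (a front of the first kind is $\mathcal{A}$-equivalent to $\ce$) gives (5) at once from $\Re\varphi\neq0$ and $\Im\varphi\neq0$. Assertions (6) and (7) rest on the second-order criteria for $\sw$ and for $\ccr$: the former requires a front with $\varphi(p)\in\R\setminus\{0\}$ (a front that is not of the first kind) together with a non-degenerate derivative of $\det(\xi,\eta)$ along the null direction, and the latter requires a genuine non-front frontal with $\varphi(p)\in\sqrt{-1}\,\R\setminus\{0\}$ and the analogous transversality. I expect the principal obstacle to be the clean conversion of these transversality conditions into $\Re\vf\varphi\neq0$ and $\Im\vf\varphi\neq0$: one must differentiate $\varphi$ along the null and singular directions while respecting the constraint $|g|=1$, and then re-express every $z$-derivative through the operator identity $d/dz=(g_z/g)\,\vf$, so that the surviving real, respectively imaginary, part is exactly $\Re\vf\varphi$, respectively $\Im\vf\varphi$. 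Executing this bookkeeping, and confirming that the stated first-order hypotheses are the correct ones, is the computational heart of the argument and yields (6) and (7).
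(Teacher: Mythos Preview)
The paper does not prove this statement: it is recorded as a \emph{Fact} cited from \cite{FSUY} (with the abstract criteria coming from \cite{KRSUY}), and no argument is given in the text. So there is no proof here to compare your proposal against.

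That said, your outline is essentially the strategy of \cite{FSUY}: compute $f_z$, $f_{\bar z}$ through the null lift, read off $\Sigma(f)$, the area density, the null and singular directions, and then plug into the recognition criteria for $\ce$, $\sw$, $\ccr$. One correction worth flagging: for the swallowtail in (6), the relevant derivative of $\det(\gamma',\eta)=\Im\varphi$ is taken along the \emph{singular} direction $\xi$, not the null direction $\eta$. The identity
\[
  \xi(\Im\varphi)=|g\homega\varphi|^{2}\,\Re(\vf\varphi)
\]
(which is just $(\Im\varphi)_z=g\homega\varphi\,\Im(i\vf\varphi)$ paired with the form of $\xi$) then delivers $\Re\vf\varphi(p)\neq0$ directly. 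The cuspidal cross cap case (7) proceeds analogously, with the auxiliary function equal to a positive multiple of $\Re\varphi$ along $\Sigma(f)$ and $\xi(\Re\varphi)=-|g\homega\varphi|^{2}\,\Im(\vf\varphi)$. With this adjustment your plan recovers the argument of \cite{FSUY}.
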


Let $D$ be a simply connected Riemann surface
and $f:D\to S^3_1$ be a CMC $1$ face
with Weierstrass data $(g,\omega)$.
Then a CMC $1$ face $f^\sharp:D\to S^3_1$
given by the Weierstrass data $(g,-\sqrt{-1}\omega)$
is called the {\it conjugate} CMC $1$ face of $f$.

We say that CMC $1$ faces have the 
{\it duality between singular points of type $X$ and type $Y$},
if the following holds:
A CMC $1$ face $f$ has a singular point of type $X$ 
(resp.\ $Y$)
at a point $p$ if and only if 
the conjugate CMC $1$ face $f^\sharp$ has a singular point of type $Y$
(resp.\ $X$) at $p$.
As in the case of maxfaces in $\L^3$ (\cite{UY_maxface, FSUY}, cf.\ the duality (I) in the introduction),
the following holds.

\begin{fact}[{\cite{FSUY}}]\label{fact:duality-generic-CMC1}
The cuspidal edge singularity is self-dual
on CMC $1$ faces in $S^3_1$.
Moreover, CMC $1$ faces in $S^3_1$ have the duality 
between swallowtail singularity
and cuspidal cross cap singularity.
More precisely,
let $f:D\to S^3_1$ be a CMC $1$ face
defined on a simply connected domain $D$,
and $p\in D$ be a singular point.
Then
$f$ has cuspidal edge 
{\rm (}resp.\ swallowtail, cuspidal cross cap{\rm )}
at $p$
if and only if 
$f^\sharp$ has cuspidal edge 
{\rm (}resp.\ cuspidal cross cap, swallowtail{\rm )}
at $p$.
\end{fact}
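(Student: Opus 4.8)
The plan is to reduce the entire statement to the singularity criteria of Fact~\ref{fact:singular-FSUY} by tracking how the characteristic pair $(\varphi,\vf)$ behaves under conjugation. First I would observe that passing from $f$ to $f^\sharp$ replaces the Weierstrass data $(g,\omega)$ by $(g,-\sqrt{-1}\omega)$ while leaving $g$ untouched. Since the singular set is $\Sigma(f)=\{|g|=1\}$ by Fact~\ref{fact:singular-FSUY} and $g$ is unchanged, we have $\Sigma(f^\sharp)=\Sigma(f)$; hence it is meaningful to compare the types of $f$ and $f^\sharp$ at one and the same point $p$.

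Next I would compute the characteristic pair of the conjugate. From $\varphi=dg/(g^2\omega)$ and $\vf=g\,d/dg$, replacing $\omega$ by $-\sqrt{-1}\omega$ gives
\begin{equation*}
  \varphi^\sharp=\frac{dg}{g^2(-\sqrt{-1}\omega)}=\sqrt{-1}\,\varphi,
  \qquad
  \vf^\sharp=\vf,
\end{equation*}
the second equality because $\vf$ depends only on $g$, which is common to $f$ and $f^\sharp$; consequently $\vf^\sharp\varphi^\sharp=\sqrt{-1}\,\vf\varphi$. The single algebraic fact I then need is that multiplication by $\sqrt{-1}$ interchanges real and imaginary parts up to sign, so that $\Re(\sqrt{-1}\,w)=-\Im w$ and $\Im(\sqrt{-1}\,w)=\Re w$ for any complex $w$, and it carries the real axis onto the imaginary axis and vice versa.

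With these identities in hand the three dualities fall out by substituting into Fact~\ref{fact:singular-FSUY}. For the cuspidal edge, the condition $\Re\varphi(p)\neq0$ and $\Im\varphi(p)\neq0$ is symmetric in its real and imaginary parts, and applying it to $\varphi^\sharp=\sqrt{-1}\varphi$ returns exactly the same pair of inequalities; hence $f$ has a cuspidal edge at $p$ if and only if $f^\sharp$ does, proving self-duality. For the swallowtail/cuspidal cross cap duality I would compare the swallowtail condition for $f$, namely $\varphi(p)\in\R\setminus\{0\}$ and $\Re\vf\varphi(p)\neq0$, with the cuspidal cross cap condition for $f^\sharp$, namely $\varphi^\sharp(p)\in\sqrt{-1}\R\setminus\{0\}$ and $\Im\vf^\sharp\varphi^\sharp(p)\neq0$. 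Using $\varphi^\sharp=\sqrt{-1}\varphi$ and $\vf^\sharp\varphi^\sharp=\sqrt{-1}\vf\varphi$, the first pair is converted into the second, and the reverse substitution handles the cuspidal cross cap of $f$ against the swallowtail of $f^\sharp$.

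Since all the analytic input is packaged in Fact~\ref{fact:singular-FSUY}, there is no serious analytic obstacle here; the only point demanding care is the bookkeeping of the factor $\sqrt{-1}$ --- in particular that $\varphi^\sharp=\sqrt{-1}\varphi$ rather than $-\sqrt{-1}\varphi$, and that $\vf$ survives conjugation unchanged. Fortunately the conclusion is insensitive to the sign of this factor: both $\pm\sqrt{-1}$ map the real axis onto the imaginary axis and swap $\Re$ with $\pm\Im$, so each of the three equivalences is preserved regardless. Thus the heart of the argument is simply the transformation rule for the characteristic pair, together with the self-symmetry of the cuspidal edge condition and the $\R\leftrightarrow\sqrt{-1}\R$ symmetry relating the swallowtail and cuspidal cross cap conditions.
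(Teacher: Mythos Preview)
Your argument is correct. The paper itself does not give a proof of this statement: it is recorded as a Fact cited from \cite{FSUY}, with no accompanying argument. Your proof is exactly the natural one and mirrors the method the paper uses for its own new dualities (Theorems~\ref{thm:A} and~\ref{thm:B}): compute how the characteristic pair transforms under $(g,\omega)\mapsto(g,-\sqrt{-1}\omega)$, obtaining $\varphi^\sharp=\sqrt{-1}\varphi$ and $\vf^\sharp=\vf$, and then read off the equivalences from the criteria in Fact~\ref{fact:singular-FSUY}. So there is nothing to compare against in the paper proper, but your reasoning is both sound and in the spirit of the surrounding arguments.
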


\section{Invariants via criteria}
\label{sec:invariants}

It is known that 
a holomorphic null lift $F$ and
Weierstrass data $(g,\omega)$
are not uniquely determined from a CMC $1$ face $f$
(\cite{UY_complete}, \cite{Fujimori}).
In this section, by calculating quantities related to the characteristic pair $(\varphi, \vf)$,
we introduce two invariants $\alpha$, $\sigma$ of CMC $1$ faces at their singular points
(cf.\ Definition \ref{def:invariants}).

Let $f:D\to S^3_1$ be a CMC $1$ face,
and let $F : D\to \SL(2,\C)$ be a holomorphic null lift of $f$
with Weierstrass data $(g,\omega)$.
For a constant matrix $B\in \SU(1,1)$,
the map $\tilde{F}:D\to \SL(2,\C)$
defined by $\tilde{F}:=FB$
is also a holomorphic null lift of $f$.
The Weierstrass data $(\tilde{g},\tilde{\omega})$
associated with $\tilde{F}$ is described as
\begin{equation}\label{eq:W-related}
  \tilde{g}
  =\frac{ag+b}{\bar{b}g+\bar{a}},
  \qquad
  \tilde{\omega}
  =(\bar{b}g+\bar{a})^2\omega,
\end{equation}
where we set
\begin{equation}\label{eq:matB}
B=\begin{pmatrix}
  \bar{a}&-b \\ -\bar{b} & a
  \end{pmatrix} \qquad \left( |a|^2-|b|^2=1 \right).
\end{equation}
Two Weierstrass data $(g,\omega)$
and $(\tilde{g},\tilde{\omega})$
are said to be {\it equivalent} if they satisfy \eqref{eq:W-related}
(\cite{UY_complete}, \cite{Fujimori}).
Let $f_1, f_2:D\to S^3_1$ be two CMC $1$ faces.
We say that {\it $f_1$ is congruent to $f_2$}
if there exists a constant matrix $A\in \SL(2,\C)$
such that
$
  f_2 = Af_1A^*.
$
If $F$ is a holomorphic null lift of $f_1$,
then $AF$ is that of $f_2$.
Since 
$$
(AF)^{-1}d(AF)=F^{-1}dF,
$$
the Weierstrass data associated with $F$
coincides with that of $AF$.
Hence, the equivalence class of $(g,\omega)$
corresponds to the congruence class of the CMC $1$ face $f$
(cf.\ \cite{UY_complete}).

Let $f:D\to S^3_1$ be a CMC $1$ face with Weierstrass data $(g,\omega)$,
and let $(\varphi, \vf)$ be the characteristic pair associated with $(g,\omega)$.
If $(\tilde{g},\tilde{\omega})$ is a Weierstrass data 
which is equivalent to $(g,\omega)$,
then the characteristic pair $(\tilde{\varphi},\tilde{\vf})$
associated with $(\tilde{g},\tilde{\omega})$
is defined by
\begin{equation}\label{eq:varphi-tilde}
  \tilde{\varphi}
  =\dfrac{d\tilde{g}}{\tilde{g}^2\tilde{\omega}},
  \qquad
  \tilde{\vf}= \tilde{g}\,\frac{d}{d\tilde{g}}.
\end{equation}

\begin{lemma}\label{lem:ch-tilde}
Suppose that 
a Weierstrass data $(\tilde{g},\tilde{\omega})$ 
is equivalent to $(g,\omega)$ as \eqref{eq:W-related}.
Then the characteristic pair
$(\tilde{\varphi},\tilde{\vf})$
associated with $(\tilde{g},\tilde{\omega})$
satisfies 
\begin{equation}\label{eq:varphi-delta2}
  \tilde{\varphi}
  =\dfrac{1}{\Delta^2}\varphi,
  \quad
  \tilde{\vf}=\Delta\,\vf,
  \quad
  \text{where we set}
  \quad
  \Delta
  :=(ag+b) \left(\frac{\bar{a}}{g}+\bar{b}\right).
\end{equation}
\end{lemma}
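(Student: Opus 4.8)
The plan is to verify both identities by direct computation, treating $\tilde{g}$ as a Möbius function of $g$ and applying the chain rule together with the normalization $|a|^2-|b|^2=1$ from \eqref{eq:matB}. The single computation that drives everything is the derivative of the transformation in \eqref{eq:W-related}: differentiating $\tilde{g}=(ag+b)/(\bar{b}g+\bar{a})$ with respect to $g$ produces the numerator $a(\bar{b}g+\bar{a})-(ag+b)\bar{b}=a\bar{a}-b\bar{b}=|a|^2-|b|^2=1$, so that
\begin{equation*}
  \frac{d\tilde{g}}{dg}=\frac{1}{(\bar{b}g+\bar{a})^2}.
\end{equation*}
This is the only place where the unimodularity condition enters, and it is the sole nontrivial input.

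For the first identity I would substitute this into $\tilde{\varphi}=d\tilde{g}/(\tilde{g}^2\tilde{\omega})$. The denominator collapses cleanly, since $\tilde{g}^2\tilde{\omega}=\frac{(ag+b)^2}{(\bar{b}g+\bar{a})^2}(\bar{b}g+\bar{a})^2\omega=(ag+b)^2\omega$, so the factors of $(\bar{b}g+\bar{a})^2$ introduced by $\tilde{\omega}$ cancel those in $\tilde{g}^2$. Combining this with $d\tilde{g}=(\bar{b}g+\bar{a})^{-2}dg$ gives $\tilde{\varphi}=dg/\big((\bar{b}g+\bar{a})^2(ag+b)^2\omega\big)$. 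Rewriting $dg/\omega=g^2\varphi$ from the definition of $\varphi$ yields $\tilde{\varphi}=g^2\varphi/\big((ag+b)(\bar{b}g+\bar{a})\big)^2$, and recognizing $\Delta=(ag+b)(\bar{a}/g+\bar{b})=(ag+b)(\bar{b}g+\bar{a})/g$, so that $\Delta^2=(ag+b)^2(\bar{b}g+\bar{a})^2/g^2$, gives exactly $\tilde{\varphi}=\varphi/\Delta^2$.

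For the second identity I would again invoke the chain rule, now as $d/d\tilde{g}=(dg/d\tilde{g})\,d/dg=(\bar{b}g+\bar{a})^2\,d/dg$. Then $\tilde{\vf}=\tilde{g}\,d/d\tilde{g}=\frac{ag+b}{\bar{b}g+\bar{a}}(\bar{b}g+\bar{a})^2\frac{d}{dg}=(ag+b)(\bar{b}g+\bar{a})\frac{d}{dg}$, and substituting $d/dg=g^{-1}\vf$ from $\vf=g\,d/dg$ produces $\tilde{\vf}=\big((ag+b)(\bar{b}g+\bar{a})/g\big)\vf=\Delta\,\vf$, with the same $\Delta$ as above.

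The argument is essentially bookkeeping, and I do not expect a conceptual obstacle. The only point demanding care is organizing the cancellations so that the common factor $\Delta=(ag+b)(\bar{b}g+\bar{a})/g$ emerges, appearing with exponent $-2$ in $\tilde{\varphi}$ and exponent $+1$ in $\tilde{\vf}$; tracking these two powers consistently is the main thing to get right.
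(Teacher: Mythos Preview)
Your proof is correct and follows the same approach as the paper: both rest on the single computation $\tilde{g}_z=g_z/(\bar{b}g+\bar{a})^2$ (equivalently $d\tilde{g}/dg=(\bar{b}g+\bar{a})^{-2}$), after which the identities for $\tilde{\varphi}$ and $\tilde{\vf}$ follow by direct substitution. The paper compresses this into one sentence, while you spell out the cancellations and the identification of $\Delta=(ag+b)(\bar{b}g+\bar{a})/g$ explicitly, but the logic is identical.
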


\begin{proof}
By \eqref{eq:W-related},
we have
$\tilde{g}_z=g_z/(\bar{b}g+\bar{a})^2$,
which yields \eqref{eq:varphi-delta2}.
\end{proof}

We have the following.

\begin{lemma}\label{lem:invariants}
Fix a non-negative integer $m\in \Z$.
Let $f:D\to S^3_1$ be a CMC $1$ face with Weierstrass data $(g,\omega)$.
We take a non-degenerate singular point $p\in \Sigma(f)$.
Suppose that a Weierstrass data $(\tilde{g},\tilde{\omega})$ 
is equivalent to $(g,\omega)$ as \eqref{eq:W-related}.
Let $(\varphi, \vf)$ and $(\tilde{\varphi}, \tilde{\vf})$
be the characteristic pairs 
associated with $(g,\omega)$ and $(\tilde{g},\tilde{\omega})$,
respectively.
Then, there exist real numbers 
$c_0=1$, $c_1,\dots,c_m\in \R$
such that
$$
  |a\,g(p)+b|^{2(m-2)} \vf^m \varphi(p)
  = \sum_{k=0}^m \sqrt{-1}^k c_k \tilde{\vf}^{m-k} \tilde{\varphi}(p).
$$
\end{lemma}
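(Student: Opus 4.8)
The plan is to reduce everything to two pointwise facts about $\Delta$ at the singular point $p$ and then run an induction on $m$ driven by the operator identities $\tilde{\vf}=\Delta\vf$ and $\tilde{\varphi}=\Delta^{-2}\varphi$ of Lemma \ref{lem:ch-tilde}. Since $p$ is a singular point, Fact \ref{fact:singular-FSUY} gives $|g(p)|=1$, hence $\overline{g(p)}=1/g(p)$. Substituting this into the definition of $\Delta$ in \eqref{eq:varphi-delta2} and writing $P:=ag+b$, one finds
\[
  \Delta(p)=\bigl(ag(p)+b\bigr)\Bigl(\tfrac{\bar a}{g(p)}+\bar b\Bigr)=P(p)\overline{P(p)}=|ag(p)+b|^2,
\]
which is a positive real number (it is nonzero, since $|a|^2-|b|^2=1$ rules out $g(p)=-b/a$). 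This already disposes of the case $m=0$, where $\tilde{\varphi}=\Delta^{-2}\varphi$ yields $|ag(p)+b|^{-4}\varphi(p)=\tilde{\varphi}(p)$, so that $c_0=1$. A short computation from $\vf=g\,d/dg$ in \eqref{eq:varphi} gives $\vf\Delta=a\bar bg-b\bar a/g$, $\vf^2\Delta=a\bar bg+b\bar a/g$, and the periodicity $\vf^{j+2}\Delta=\vf^{j}\Delta$ for $j\ge1$; evaluating at $p$ (again using $1/g(p)=\overline{g(p)}$) produces the key reality pattern
\[
  \vf^{j}\Delta(p)\in(\sqrt{-1})^{\,j}\,\R\qquad(j\ge0),
\]
i.e.\ $\vf^{j}\Delta(p)$ is real for even $j$ and purely imaginary for odd $j$.

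The alternating powers of $\sqrt{-1}$ in the statement will be produced by a grading. Let $\mathcal R$ be the algebra generated over $\R$ by $\Delta^{\pm1}$ and the functions $\vf^{j}\Delta$ $(j\ge1)$; it is closed under $\vf$ because $\vf(\vf^{j}\Delta)=\vf^{j+1}\Delta$ and $\vf\Delta^{-1}=-\Delta^{-2}\vf\Delta$. Assign to a monomial $\Delta^{-s}\prod_j(\vf^{j}\Delta)^{a_j}$ the \emph{type} $\sum_j j\,a_j\bmod 2$; by the reality pattern above, a type-$\epsilon$ element of $\mathcal R$ takes a value in $(\sqrt{-1})^{\epsilon}\R$ at $p$. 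The crucial structural fact, checked termwise from the Leibniz rule, is that $\vf$ raises the type by exactly $1$. I then prove by induction on $m$ the operator identity
\[
  \Delta^{m-2}\vf^{m}\varphi=\sum_{k=0}^{m}\rho_{m,k}\,\tilde{\vf}^{\,m-k}\tilde{\varphi},
\]
with $\rho_{m,k}\in\mathcal R$, $\rho_{m,0}=1$, and each $\rho_{m,k}$ type-homogeneous of type $k$. The base case $m=0$ is the identity $\Delta^{-2}\varphi=\tilde{\varphi}$ noted above. For the inductive step, applying $\tilde{\vf}=\Delta\vf$ to the identity and using $\Delta^{m-1}\vf^{m+1}\varphi=\tilde{\vf}(\Delta^{m-2}\vf^{m}\varphi)-(m-2)(\vf\Delta)\Delta^{m-2}\vf^{m}\varphi$ yields the recursion
\[
  \rho_{m+1,k}=\rho_{m,k}+\Delta\,\vf\rho_{m,k-1}-(m-2)(\vf\Delta)\rho_{m,k-1},
\]
in which all three summands are type $k$ (the first by hypothesis, the second because $\vf$ raises type by $1$ while $\Delta$ has type $0$, the third because $\vf\Delta$ has type $1$), and $\rho_{m+1,0}=\rho_{m,0}=1$. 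Evaluating the identity at $p$, replacing $\Delta(p)^{m-2}$ by $|ag(p)+b|^{2(m-2)}$, and setting $c_k:=(\sqrt{-1})^{-k}\rho_{m,k}(p)\in\R$ gives the assertion.

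I expect the main obstacle to be the bookkeeping in the inductive step: one must verify that the coefficients $\rho_{m,k}$ really lie in $\mathcal R$ (so that the notion of type is meaningful) and that the recursion preserves type-homogeneity. Both rest on the observation that, at $p$, the operator $\vf$ acts on $\mathcal R$ as a type-raising operator, which is a consequence of the periodicity $\vf^{j+2}\Delta=\vf^{j}\Delta$ together with the reality pattern for $\vf^{j}\Delta(p)$; this pattern is the genuine source of the factors $(\sqrt{-1})^{k}$ appearing in the statement.
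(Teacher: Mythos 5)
Your proof is correct and takes essentially the same route as the paper's: the same induction on $m$ driven by $\tilde{\vf}=\Delta\,\vf$, the same coefficient recursion $\rho_{m+1,k}=\rho_{m,k}+\Delta\,\vf\rho_{m,k-1}-(m-2)(\vf\Delta)\rho_{m,k-1}$, and the same evaluation at $p$ using $\Delta(p)=|a\,g(p)+b|^2\in\R$, $\vf\Delta(p)\in\sqrt{-1}\,\R$, $\vf^2\Delta(p)\in\R$. Your mod-$2$ ``type'' grading is a cleaner repackaging of the paper's explicit bookkeeping of even versus odd powers of $\rho=\vf\Delta$ in the coefficients $P(m;k)$.
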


\begin{proof}
By the definition of $\varphi$ in \eqref{eq:varphi},
we have $g_z=g^2h\varphi$,
where $\omega=h\,dz$.
For a positive integer $\ell$, 
we have
\begin{equation}\label{eq:D-ell}
  (\vf^{\ell-1} \varphi )_z = g\homega \varphi\,\vf^\ell \varphi,
\end{equation}
since
$$
  \vf^\ell \varphi 
  = g\,\frac{d}{dg}\vf^{\ell-1} \varphi
  = \frac{g}{g_z}(\vf^{\ell-1} \varphi )_z
  = \frac{1}{g\homega \varphi}(\vf^{\ell-1} \varphi )_z.
$$
Similarly, by \eqref{eq:varphi-delta2} and
\eqref{eq:W-related}, we have
\begin{equation}\label{eq:tilde-D-ell}
  (\tilde{\vf}^{\ell-1} \tilde{\varphi} )_z 
  = \tilde{g}\tilde{\homega} \tilde{\varphi}\,\tilde{\vf}^\ell \tilde{\varphi}
  = \frac{g\homega \varphi}{\Delta}\,\tilde{\vf}^\ell \tilde{\varphi},
\end{equation}
where $\Delta$ is the function defined in Lemma \ref{lem:ch-tilde}.
We set $\rho$, $\tau$ as
$$
  \rho
  =a\bar{b}g-\frac{\bar{a}b}{g},\quad
  \tau
  =a\bar{b}g+\frac{\bar{a}b}{g}.
$$
Then, it holds that
\begin{equation}\label{eq:delta-z}
  \Delta_z
  =\tau_z
  =g\homega \varphi \rho,\quad
  \rho_z
  =g\homega \varphi \tau.
\end{equation}

{\allowdisplaybreaks
In the following, we prove, by induction, that there exist $C_{q,r,s}^{m,k}\in \R$ such that
\begin{gather}
\label{eq:tilde-Dm}
  \Delta^{m-2} \vf^m \varphi
  = \sum_{k=0}^m P(m;k) \tilde{\vf}^{m-k} \tilde{\varphi},\\
 \nonumber
  P(m;k) 
  =\left\{
  \begin{array}{cl}
  1 \vphantom{\dfrac1{6}} & (\text{if} ~k=0),\\
  {\displaystyle \sum_{\substack{q+2r+s=k \\ q,r,s\geq0}}
  C_{q,r,s}^{m,k} \Delta^q \rho^{2r} \tau^s}
  \vphantom{\dfrac1{6}} & (\text{if} ~k~ \text{is even and positive}),\\
  {\displaystyle \sum_{\substack{q+2r+s=k-1 \\ q,r,s\geq0}}
  C_{q,r,s}^{m,k} \Delta^q \rho^{2r+1} \tau^s}
  \vphantom{\dfrac1{6}} & (\text{if} ~k~ \text{is odd}).
  \end{array}
  \right. 
\end{gather}
In the case} of $m=0$, Lemma \ref{lem:ch-tilde} yields \eqref{eq:tilde-Dm}. 

For a non-negative integer $m$, 
suppose that \eqref{eq:tilde-Dm} holds.
Taking the differentials of the both sides of \eqref{eq:tilde-Dm},
we have
$$
  (\Delta^{m-2})_z \vf^m \varphi + \Delta^{m-2} (\vf^m \varphi)_z
  = \sum_{k=0}^m 
     \left\{ P(m;k)_z \tilde{\vf}^{m-k} \tilde{\varphi}
    + P(m;k) (\tilde{\vf}^{m-k} \tilde{\varphi})_z \right\}.
$$
By
\eqref{eq:D-ell},
\eqref{eq:tilde-D-ell},
\eqref{eq:delta-z},
we have
\begin{equation*}
  (m-2)\rho \Delta^{m-2} \vf^m \varphi 
  + \Delta^{m-1} \vf^{m+1} \varphi
  = \sum_{k=0}^m 
     \left\{ \frac{\Delta \,P(m;k)_z }{g\homega \varphi} \tilde{\vf}^{m-k} \tilde{\varphi}
    +P(m;k) \tilde{\vf}^{m-k+1} \tilde{\varphi} \right\}.
\end{equation*}
Then \eqref{eq:tilde-Dm} yields that
\begin{equation*}
  \Delta^{m-1} \vf^{m+1} \varphi\\
  = \sum_{k=0}^m 
     \left[\left\{ -(m-2)\rho \, P(m;k)  +\frac{\Delta \,P(m;k)_z }{g\homega \varphi} \right\}
     \tilde{\vf}^{m-k} \tilde{\varphi}
    +P(m;k) \tilde{\vf}^{m-k+1} \tilde{\varphi} \right].
\end{equation*}
By $P(m;0)=1$,
\begin{multline*}
  \Delta^{m-1} \vf^{m+1} \varphi
  = \tilde{\vf}^{m+1} \tilde{\varphi} 
    + \left\{ P(m;1) - (m-2) \rho\right\}\tilde{\vf}^{m} \tilde{\varphi}\\
  + \sum_{k=2}^m 
     \left\{ -(m-2)\rho \, P(m;k-1)  
     +\frac{\Delta \,P(m;k-1)_z }{g\homega \varphi} 
    +P(m;k) \right\}\tilde{\vf}^{m-k+1} \tilde{\varphi}\\ 
    +\left\{ -(m-2)\rho \, P(m;m)  
     +\frac{\Delta \,P(m;m)_z }{g\homega \varphi} \right\}\tilde{\varphi}
\end{multline*}
holds.
Hence, we have
\begin{gather}
\label{eq:tilde-Dm+1}
  \Delta^{m-1} \vf^{m+1} \varphi
  = \sum_{k=0}^{m+1} P(m+1;k) \tilde{\vf}^{m+1-k} \tilde{\varphi},\\
 \nonumber
  P(m+1;k) 
  =\left\{
  \begin{array}{cl}
  1 \vphantom{\dfrac1{6}} & (\text{if} ~k=0),\\
  \vspace{1mm}
  P(m;1) - (m-2) \rho \vphantom{\dfrac1{6}} & (\text{if} ~k=1),\\
  \vspace{1mm}
  {\displaystyle -(m-2)\rho \, P(m;k-1)  
     +\frac{\Delta \,P(m;k-1)_z }{g\homega \varphi} 
    +P(m;k)}
  \vphantom{\dfrac1{6}} & (\text{if} ~k=2,\dots,m),\\
  {\displaystyle -(m-2)\rho \, P(m;m)  
     +\frac{\Delta \,P(m;m)_z }{g\homega \varphi}}
  \vphantom{\dfrac1{6}} & (\text{if} ~k=m+1)
  \end{array}
  \right. 
\end{gather}

\subsubsection*{$\bullet$ The case of $k=1$}
Since $P(m+1;1)=P(m;1) - (m-2) \rho$,
we have
$$
  P(m+1;1)=-\frac1{2}(m+1)(m-4)\rho.
$$

\subsubsection*{$\bullet$ The case that $k$ is even and $2\leq k\leq m$}
Since $k-1$ is odd,
$P(m;k-1)$ is expressed as
$$
  P(m;k-1)
  = \sum_{\substack{q+2r+s=k-2 \\ q,r,s\geq0}}
  C_{q,r,s}^{m,k-1} \Delta^q \rho^{2r+1} \tau^s.
$$
Differentiating the both sides, we have
\begin{align*}
  P(m;k-1)_z
  = g\homega \varphi 
  \sum_{\substack{q+2r+s=k-2 \\ q,r,s\geq0}}
  C_{q,r,s}^{m,k-1} 
  \left\{
    \begin{array}{ll}
    \vspace{2mm}
  q\Delta^{q-1} \rho^{2r+2} \tau^s
  +  (2r+1) \Delta^q\rho^{2r} \tau^{s+1}\\
  +s\Delta^q \rho^{2r+2} \tau^{s-1}
    \end{array}
  \right\}.
\end{align*}
Hence, we obtain
\begin{align*}
  P(m+1;k)
  &=\sum_{\substack{q+2r+s=k \\ q,r,s\geq0}}
  C_{q,r,s}^{m,k} \Delta^q \rho^{2r} \tau^s \\
  &\hspace{4mm}
  +\sum_{\substack{q+2r+s=k-2 \\ q,r,s\geq0}}
  C_{q,r,s}^{m,k-1} 
  \left\{
    \begin{array}{ll}
    \vspace{2mm}
    (q-m+2)\Delta^{q} \rho^{2r+2} \tau^s\\
    +  (2r+1) \Delta^{q+1}\rho^{2r} \tau^{s+1}
    +s\Delta^{q+1} \rho^{2r+2} \tau^{s-1}
    \end{array}
  \right\}.
\end{align*}

\subsubsection*{$\bullet$ The case that $k$ is even and $k=m+1$}
\begin{align*}
  P(m+1;m+1)
  &= \sum_{\substack{q+2r+s=m-1 \\ q,r,s\geq0}}
  C_{q,r,s}^{m,m}\left\{
    \begin{array}{ll}
    \vspace{2mm}
    (q-m+2) 
    \Delta^{q} \rho^{2r+2} \tau^s\\
    +(2r+1)\Delta^{q+1} \rho^{2r} \tau^{s+1}
    +s\Delta^{q+1} \rho^{2r+2} \tau^{s-1}
    \end{array}
  \right\}.
\end{align*}

\subsubsection*{$\bullet$ The case that $k$ is odd and $2\leq k\leq m$}
Since $k-1$ is even, 
$P(m;k-1)$ is expressed as
$$
  P(m;k-1)
  = \sum_{\substack{q+2r+s=k-1 \\ q,r,s\geq0}}
  C_{q,r,s}^{m,k-1} \Delta^q \rho^{2r} \tau^s.
$$
Differentiating the both sides, we have
\begin{align*}
  P(m;k-1)_z
  = g\homega \varphi 
  \sum_{\substack{q+2r+s=k-1 \\ q,r,s\geq0}}
  C_{q,r,s}^{m,k-1} 
  \left(
    \begin{array}{ll}
    \vspace{2mm}
  q\Delta^{q-1} \rho^{2r+1} \tau^s
  +2r\Delta^q \rho^{2r-1} \tau^{s+1}\\
  +s\Delta^q \rho^{2r+1} \tau^{s-1}
    \end{array}
  \right).
\end{align*}
Hence, we obtain
\begin{align*}
  P(m+1;k)
  &= \sum_{\substack{q+2r+s=k-1 \\ q,r,s\geq0}}\left[
    \begin{array}{ll}
    \vspace{2mm}
    \left\{ C_{q,r,s}^{m,k-1}(q-m+2)+C_{q,r,s}^{m,k}\right\} 
    \Delta^{q} \rho^{2r+1} \tau^s\\
    +2r\Delta^{q+1} \rho^{2r-1} \tau^{s+1}
    +s\Delta^{q+1} \rho^{2r+1} \tau^{s-1}
    \end{array}
  \right].
\end{align*}

\subsubsection*{$\bullet$ The case that $k$ is odd and $k=m+1$}
\begin{align*}
  P(m+1;m+1)
  &= \sum_{\substack{q+2r+s=m \\ q,r,s\geq0}}
  C_{q,r,s}^{m,m}\left\{
    \begin{array}{ll}
    \vspace{2mm}
    (q-m+2) 
    \Delta^{q} \rho^{2r} \tau^s\\
    +2r\Delta^{q+1} \rho^{2r-2} \tau^{s+1}
    +s\Delta^{q+1} \rho^{2r} \tau^{s-1}
    \end{array}
  \right\}.
\end{align*}

Therefore, there exist real numbers 
$C_{q,r,s}^{m+1,k}\in \R$ such that
\begin{gather*}
  \Delta^{m-1} \vf^{m+1} \varphi
  = \sum_{k=0}^{m+1} P(m+1;k) \tilde{\vf}^{m+1-k} \tilde{\varphi},\\
 \nonumber
  P(m+1;k) 
  =\left\{
  \begin{array}{cl}
  1 \vphantom{\dfrac1{6}} & (\text{if} ~k=0),\\
  {\displaystyle \sum_{\substack{q+2r+s=k \\ q,r,s\geq0}}
  C_{q,r,s}^{m+1,k} \Delta^q \rho^{2r} \tau^s}
  \vphantom{\dfrac1{6}} & (\text{if} ~k~ \text{is even and positive}),\\
  {\displaystyle \sum_{\substack{q+2r+s=k-1 \\ q,r,s\geq0}}
  C_{q,r,s}^{m+1,k} \Delta^q \rho^{2r+1} \tau^s}
  \vphantom{\dfrac1{6}} & (\text{if} ~k~ \text{is odd})
  \end{array}
  \right.,
\end{gather*}
and hence \eqref{eq:tilde-Dm} holds.

At a singular point $p\in \Sigma(f)$,
we have
$$
  \Delta(p)
  =\left| a\,g(p)+b \right|^2,\quad
  \rho(p)
  =2\sqrt{-1}\Im \left(a\bar{b}\,g(p) \right),\quad
  \tau(p)
  =2\Re \left(a\bar{b}\,g(p) \right),
$$
and hence, it holds that
$P(m;0)(p)=1$,
$P(m;k)(p)\in \R$ (if $k$ is even and positive), and
$P(m;k)(p)\in \sqrt{-1}\R$ (if $k$ is odd).
Evaluating \eqref{eq:tilde-Dm} at $p$,
we have the desired result.
\end{proof}

\begin{theorem}\label{thm:invariants}
Fix a positive integer $m\in \Z$.
Let $f:D\to S^3_1$ be a CMC $1$ face with Weierstrass data $(g,\omega)$.
We take a non-degenerate singular point $p\in \Sigma(f)$,
and take another Weierstrass data $(\tilde{g},\tilde{\omega})$ 
which is equivalent to $(g,\omega)$ as \eqref{eq:W-related}.
Let $(\varphi, \vf)$ and $(\tilde{\varphi}, \tilde{\vf})$
be the characteristic pairs 
associated with $(g,\omega)$ and $(\tilde{g},\tilde{\omega})$,
respectively.
\begin{enumerate}
\item
If $\Re(\sqrt{-1}^k \vf^{k} \varphi)(p)=0$ $(k=0,\dots,m-1)$, then
$$
  |a\,g(p)+b|^{2(m-2)} \Re(\sqrt{-1}^m \vf^m \varphi)(p)
  = \Re(\sqrt{-1}^m \tilde{\vf}^m \tilde{\varphi})(p)
$$
holds.
In particular,
$$
  \left\{
  \begin{array}{ll}
  \Re(\sqrt{-1}^k \vf^{k} \varphi)(p)=0\quad
  (k=0,\dots,m-1),
  \vphantom{\dfrac1{2}}\\
  \Re(\sqrt{-1}^m \vf^m \varphi)(p)
  = \Re(\sqrt{-1}^m \tilde{\vf}^m \tilde{\varphi})(p)
  \vphantom{\dfrac1{2}}
  \end{array}
  \right.
$$
holds if and only if $m=2$.
\item
If $\Im(\sqrt{-1}^k \vf^{k} \varphi)(p)=0$ $(k=0,\dots,m-1)$, then
$$
  |a\,g(p)+b|^{2(m-2)} \Im(\sqrt{-1}^m \vf^m \varphi)(p)
  = \Im(\sqrt{-1}^m \tilde{\vf}^m \tilde{\varphi})(p)
$$
holds. 
In particular,
$$
  \left\{
  \begin{array}{ll}
  \Im(\sqrt{-1}^k \vf^{k} \varphi)(p)=0\quad
  (k=0,\dots,m-1),
  \vphantom{\dfrac1{2}}\\
  \Im(\sqrt{-1}^m \vf^m \varphi)(p)
  = \Im(\sqrt{-1}^m \tilde{\vf}^m \tilde{\varphi})(p)
  \vphantom{\dfrac1{2}}
  \end{array}
  \right.
$$
holds if and only if $m=2$.
\end{enumerate}
\end{theorem}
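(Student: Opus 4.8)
The plan is to deduce both assertions from Lemma \ref{lem:invariants}, which already records how $\vf^m\varphi$ transforms under a change of equivalent Weierstrass data. Writing $\Delta(p)=|a\,g(p)+b|^2$, the lemma supplies real numbers $c_0=1,c_1,\dots,c_m$ with
\[
  \Delta(p)^{m-2}\,\vf^m\varphi(p)
  =\sum_{k=0}^m \sqrt{-1}^{\,k} c_k\,\tilde{\vf}^{m-k}\tilde{\varphi}(p).
\]
First I would multiply both sides by $\sqrt{-1}^{\,m}$ and use $\sqrt{-1}^{\,m+k}=(-1)^k\sqrt{-1}^{\,m-k}$: the $k=0$ term becomes $\sqrt{-1}^{\,m}\tilde{\vf}^m\tilde{\varphi}(p)$ (as $c_0=1$), while the remaining terms form a combination with \emph{real} coefficients $(-1)^kc_k$ of the lower-order quantities $\sqrt{-1}^{\,m-k}\tilde{\vf}^{m-k}\tilde{\varphi}(p)$, $k=1,\dots,m$. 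Taking real parts for (1) (resp.\ imaginary parts for (2)), which commutes with the real coefficients, yields the unconditional identity
\[
  \Delta(p)^{m-2}\,\Re\!\big(\sqrt{-1}^{\,m}\vf^m\varphi\big)(p)
  =\Re\!\big(\sqrt{-1}^{\,m}\tilde{\vf}^m\tilde{\varphi}\big)(p)
  +\sum_{k=1}^m (-1)^k c_k\,\Re\!\big(\sqrt{-1}^{\,m-k}\tilde{\vf}^{m-k}\tilde{\varphi}\big)(p).
\]
The theorem amounts to saying that, under the vanishing hypothesis, the entire correction sum disappears.

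To show this I would first note $\Delta(p)=|a\,g(p)+b|^2>0$: at a singular point $|g(p)|=1$, while $|b|<|a|$ by $|a|^2-|b|^2=1$, so $a\,g(p)+b\neq0$. The crucial step is a \emph{propagation of vanishing}: the hypothesis $\Re(\sqrt{-1}^{\,k}\vf^k\varphi)(p)=0$ for $k=0,\dots,m-1$ forces $\Re(\sqrt{-1}^{\,j}\tilde{\vf}^j\tilde{\varphi})(p)=0$ for $j=0,\dots,m-1$ as well. I would prove this by induction on $j$, reading the displayed identity at level $j$ in place of $m$ (Lemma \ref{lem:invariants} holds for every non-negative integer): it expresses $\Delta(p)^{j-2}\Re(\sqrt{-1}^{\,j}\vf^j\varphi)(p)$ as $\Re(\sqrt{-1}^{\,j}\tilde{\vf}^j\tilde{\varphi})(p)$ plus a real combination of strictly lower tilded quantities; the left side vanishes by hypothesis and the correction terms vanish by the inductive assumption, leaving $\Re(\sqrt{-1}^{\,j}\tilde{\vf}^j\tilde{\varphi})(p)=0$. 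The base case $j=0$ is Lemma \ref{lem:ch-tilde} at $p$, namely $\tilde{\varphi}(p)=\Delta(p)^{-2}\varphi(p)$. Feeding this back into the level-$m$ identity annihilates the sum and leaves exactly
\[
  \Delta(p)^{m-2}\,\Re\!\big(\sqrt{-1}^{\,m}\vf^m\varphi\big)(p)
  =\Re\!\big(\sqrt{-1}^{\,m}\tilde{\vf}^m\tilde{\varphi}\big)(p),
\]
which is the first assertion of (1); the argument for (2) is verbatim with $\Re$ replaced by $\Im$.

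For the ``in particular'' statement I would compare this transformation law with the demanded invariance. Under the vanishing hypothesis, requiring $\Re(\sqrt{-1}^{\,m}\vf^m\varphi)(p)=\Re(\sqrt{-1}^{\,m}\tilde{\vf}^m\tilde{\varphi})(p)$ for every equivalent datum amounts, by the law above, to $\big(1-\Delta(p)^{m-2}\big)\Re(\sqrt{-1}^{\,m}\vf^m\varphi)(p)=0$. Since $\Delta(p)=|a\,g(p)+b|^2$ can be made different from $1$ by a suitable choice of $B\in\SU(1,1)$, while $\Re(\sqrt{-1}^{\,m}\vf^m\varphi)(p)$ depends only on $(g,\omega)$, the equality can hold for all equivalent data precisely when $\Delta(p)^{m-2}\equiv1$, i.e.\ when $m-2=0$. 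Conversely $m=2$ makes the factor $\Delta(p)^{m-2}$ identically $1$, so the invariance holds automatically given the vanishing of the lower-order terms; this is the claimed equivalence, and (2) is identical.

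The step I expect to be the main obstacle is the propagation of vanishing: Lemma \ref{lem:invariants} couples the top term $\vf^m\varphi$ only to a \emph{mixture} of tilded quantities of all lower orders, so the genuinely top-order transformation cannot be isolated until every lower tilded term is controlled. Organizing this as a clean downward feeding of the hypothesis (stated on the untilded side) into vanishing on the tilded side is the delicate bookkeeping on which the argument rests; once it is in place, the reality of the coefficients $c_k$ renders the real- and imaginary-part computations entirely parallel.
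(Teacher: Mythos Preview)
Your proposal is correct and follows essentially the same route as the paper: both start from Lemma~\ref{lem:invariants}, multiply by $\sqrt{-1}^{\,m}$, and separate real and imaginary parts. In fact your write-up is more complete than the paper's own proof. After obtaining
\[
  |a\,g(p)+b|^{2(m-2)}\Re\bigl(\sqrt{-1}^{\,m}\vf^m\varphi\bigr)(p)
  =\Re\bigl(\sqrt{-1}^{\,m}\tilde{\vf}^m\tilde{\varphi}\bigr)(p)
  +\sum_{j=0}^{m-1} c_{m-j}\,\Re\bigl(\sqrt{-1}^{\,j}\tilde{\vf}^{j}\tilde{\varphi}\bigr)(p),
\]
the paper simply asserts that this ``yields the desired results,'' leaving to the reader the passage from the hypothesis (stated on the \emph{untilded} side) to the vanishing of the \emph{tilded} lower-order terms appearing in the sum. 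Your inductive ``propagation of vanishing'' (applying Lemma~\ref{lem:invariants} at each level $j<m$ and using $\Delta(p)>0$) is exactly the clean way to close that gap, and your observation $|a|>|b|$ at a singular point to get $\Delta(p)\ne0$ is the right justification. The ``in particular'' clause is handled the same way in both: once the main identity is established, the factor $|a\,g(p)+b|^{2(m-2)}$ equals $1$ for all admissible $B$ precisely when $m=2$.
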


\begin{proof}
We set $i=\sqrt{-1}$.
Multiplying the both sides of the identity 
in Lemma \ref{lem:invariants} by $i^m$, 
we have
$
  |a\,g(p)+b|^{2(m-2)} i^m \vf^m \varphi(p)=i^m \tilde{\vf}^{m} \tilde{\varphi}(p) 
  +\sum_{j=0}^{m-1}  c_{m-j} i^j \tilde{\vf}^{j} \tilde{\varphi}(p).
$
Hence, 
\begin{align*}
  &|a\,g(p)+b|^{2(m-2)} \Re (i^m \vf^m \varphi)(p)
  =  \Re (i^m \tilde{\vf}^{m} \tilde{\varphi})(p) 
  +\sum_{j=0}^{m-1}  c_{m-j} \Re (i^j \tilde{\vf}^{j} \tilde{\varphi})(p),\\
  &|a\,g(p)+b|^{2(m-2)} \Im (i^m \vf^m \varphi)(p)
  =  \Im (i^m \tilde{\vf}^{m} \tilde{\varphi})(p) 
  +\sum_{j=0}^{m-1}  c_{m-j} \Im (i^j \tilde{\vf}^{j} \tilde{\varphi})(p)
\end{align*}
hold, which yield the desired results.
\end{proof}

By Theorem \ref{thm:invariants}, under the condition
\begin{equation}\label{eq:condition-A}
\tag{$\mathbb{A}$}
  \Im (\varphi(p)) =\Re (\vf \varphi(p))=0,
\end{equation}
$\Im (\vf^2\varphi(p))$ does not depend on the choice of 
the Weierstrass data.
Similarly, under the condition
\begin{equation}\label{eq:condition-S}
\tag{$\mathbb{S}$}
  \Re (\varphi(p)) =\Im (\vf \varphi(p))=0,
\end{equation}
$\Re (\vf^2\varphi(p))$ does not depend on the choice of 
the Weierstrass data.

\begin{definition}
\label{def:invariants}
Let $f:D\to S^3_1$ be a CMC $1$ face,
$p\in \Sigma(f)$ be a non-degenerate singular point,
and $(\varphi, \vf)$ be the characteristic pair 
associated with a Weierstrass data $(g,\omega)$.
If $p$ satisfies the condition \eqref{eq:condition-A},
then 
$$
  \alpha(f,p):=\Im (\vf^2\varphi(p))
$$
does not depend on the choice of the Weierstrass data $(g,\omega)$.
The quantity $\alpha(f,p)$ is called the {\it $\alpha$-invariant} 
of $f$ at $p\in \Sigma(f)$.
Similarly, if 
$p$ satisfies the condition \eqref{eq:condition-S},
then 
$$
  \sigma(f,p):=\Re (\vf^2\varphi(p))
$$
does not depend on the choice of the Weierstrass data $(g,\omega)$.
The quantity $\sigma(f,p)$ 
is called the {\it $\sigma$-invariant} of $f$ at $p\in \Sigma(f)$.
\end{definition}

Let $k\geq 1$.
Since an $A_{k+3}$-type singular point satisfies the condition \eqref{eq:condition-A}
by Theorem \ref{thm:Ak-criteria-CMC1},
the $\alpha$-invariant is defined for $A_{k+3}$ singularities on CMC $1$ faces.
Similarly, a cuspidal $S_k$ singular point 
satisfies the condition \eqref{eq:condition-S} by
Theorem \ref{thm:Sk-criteria-CMC1},
the $\sigma$-invariant is defined for $cS_k$ singularities on CMC $1$ faces.

\subsection{Higher order derivatives}

For latter discussion, we will prove the following.

\begin{proposition}\label{prop:Sk-induction}
Let $k$ be a positive integer,
$(\varphi,V)$ be the characteristic pair 
associated with a Weierstrass data $(g,\omega)$,
and $\xi$ be the singular directional vector field 
given by \eqref{eq:xi-eta}.
Then, there exist real valued functions
$\tau_1^k,\dots, \tau_{k-1}^k$,
$\rho_1^k,\dots, \rho_{k-1}^k$
such that
\begin{align}
\label{eq:Re-xk}
\xi^k(\Re \varphi)
&= \sum_{\ell=1}^{k} \tau_{\ell}^k \Re (\sqrt{-1}^\ell \vf^{\ell} \varphi),\\
\label{eq:Im-xk}
\xi^k(\Im \varphi)
&= \sum_{\ell=1}^{k} \rho_{\ell}^k \Im (\sqrt{-1}^\ell \vf^{\ell} \varphi),
\end{align}
and $\tau_k^k=\rho_k^k=|g\homega \varphi|^{2k}$ hold,
where $\omega=h\,dz$.
\end{proposition}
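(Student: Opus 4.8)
The plan is to reduce the entire statement to a single first-order recursion describing how $\xi$ acts on the complex-valued functions $\sqrt{-1}^{\,\ell}\vf^{\ell}\varphi$, and then to run a routine induction on $k$. The starting point is the observation that $\xi$ is a \emph{real} vector field: setting $A:=\sqrt{-1}\,\overline{(g_z/g)}$, the coefficient of $\partial/\partial\bar z$ in \eqref{eq:xi-eta} is precisely $\overline{A}$, so $\xi=A\,\partial_z+\overline{A}\,\partial_{\bar z}$ and therefore $\xi\overline{\psi}=\overline{\xi\psi}$ for every smooth function $\psi$. In particular $\xi(\Re\psi)=\Re(\xi\psi)$ and $\xi(\Im\psi)=\Im(\xi\psi)$, so once I understand $\xi$ on the complex functions $\vf^{\ell}\varphi$, the real and imaginary statements will follow by taking real and imaginary parts only at the very end.

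The engine of the argument is the identity
\[
  \xi\bigl(\sqrt{-1}^{\,\ell}\vf^{\ell}\varphi\bigr)
  =|g\homega\varphi|^{2}\,\sqrt{-1}^{\,\ell+1}\vf^{\ell+1}\varphi
  \qquad(\ell\ge 0).
\]
To establish it I use that $\vf^{\ell}\varphi$ is meromorphic in $z$, so that $\xi(\vf^{\ell}\varphi)=A\,(\vf^{\ell}\varphi)_z$; combining \eqref{eq:D-ell}, which gives $(\vf^{\ell}\varphi)_z=g\homega\varphi\,\vf^{\ell+1}\varphi$, with the relation $g_z/g=g\homega\varphi$ coming from the definition of $\varphi$, I obtain the crucial simplification $A\,g\homega\varphi=\sqrt{-1}\,\overline{(g_z/g)}\,(g_z/g)=\sqrt{-1}\,|g\homega\varphi|^{2}$. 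Multiplying through by $\sqrt{-1}^{\,\ell}$ then yields the displayed identity. Writing $\Psi_{\ell}:=\sqrt{-1}^{\,\ell}\vf^{\ell}\varphi$, this reads $\xi\Psi_{\ell}=|g\homega\varphi|^{2}\,\Psi_{\ell+1}$; since $|g\homega\varphi|^{2}$ is real and $\xi$ is real, taking real and imaginary parts gives $\xi(\Re\Psi_{\ell})=|g\homega\varphi|^{2}\,\Re\Psi_{\ell+1}$ and $\xi(\Im\Psi_{\ell})=|g\homega\varphi|^{2}\,\Im\Psi_{\ell+1}$.

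With this recursion in hand, the proof of \eqref{eq:Re-xk} is a straightforward induction on $k$. For $k=1$, since $\Re\varphi=\Re\Psi_{0}$, the recursion gives $\xi(\Re\varphi)=|g\homega\varphi|^{2}\,\Re\Psi_{1}$, so $\tau^{1}_{1}=|g\homega\varphi|^{2}$. Assuming $\xi^{k}(\Re\varphi)=\sum_{\ell=1}^{k}\tau^{k}_{\ell}\,\Re\Psi_{\ell}$ with real coefficients and $\tau^{k}_{k}=|g\homega\varphi|^{2k}$, I apply $\xi$ and use the Leibniz rule together with $\xi(\Re\Psi_{\ell})=|g\homega\varphi|^{2}\Re\Psi_{\ell+1}$, obtaining $\xi^{k+1}(\Re\varphi)=\sum_{\ell=1}^{k+1}\tau^{k+1}_{\ell}\Re\Psi_{\ell}$ with
\[
  \tau^{k+1}_{1}=\xi\tau^{k}_{1},\qquad
  \tau^{k+1}_{\ell}=\xi\tau^{k}_{\ell}+|g\homega\varphi|^{2}\,\tau^{k}_{\ell-1}\ \ (2\le\ell\le k),\qquad
  \tau^{k+1}_{k+1}=|g\homega\varphi|^{2}\,\tau^{k}_{k}.
\]
All of these are real because $\xi$ is a real operator and $|g\homega\varphi|^{2}$ is real, and the top coefficient is $\tau^{k+1}_{k+1}=|g\homega\varphi|^{2(k+1)}$, which closes the induction; the identity \eqref{eq:Im-xk} is proved verbatim with $\Re$ replaced by $\Im$. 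The only non-formal input is the reality of $\xi$ together with the algebraic simplification $A\,g\homega\varphi=\sqrt{-1}\,|g\homega\varphi|^{2}$, and I expect this simplification—recognizing that, after the $\sqrt{-1}^{\,\ell}$ normalization, $\xi$ converts each $z$-derivative of $\vf^{\ell}\varphi$ into multiplication by the single real factor $|g\homega\varphi|^{2}$—to be the one genuinely substantive step, after which the bookkeeping of the coefficients $\tau^{k}_{\ell}$ and $\rho^{k}_{\ell}$ is entirely mechanical.
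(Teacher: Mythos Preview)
Your proof is correct and follows essentially the same approach as the paper: both establish the recursion $\xi\bigl(\Re(\sqrt{-1}^{\ell}\vf^{\ell}\varphi)\bigr)=|g\homega\varphi|^{2}\Re(\sqrt{-1}^{\ell+1}\vf^{\ell+1}\varphi)$ (and its imaginary counterpart) and then run the identical induction on $k$, arriving at the same recurrence for the coefficients $\tau^{k}_{\ell}$. Your derivation of the key recursion is in fact a bit cleaner than the paper's---by computing $\xi\Psi_{\ell}$ directly as a complex function and exploiting that $\xi$ is a real vector field, you avoid the paper's intermediate Lemma~\ref{lem:Sk-1} and the subsequent case analysis modulo $4$ in Lemma~\ref{lem:Sk-2}.
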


For the proof of Proposition \ref{prop:Sk-induction},
we prepare two lemmas (Lemmas \ref{lem:Sk-1} and \ref{lem:Sk-2}).

\begin{lemma}\label{lem:Sk-1}
Under the setting in Proposition \ref{prop:Sk-induction},
it holds that
\begin{align}
\label{eq:Re-xi-k}
  \xi\left\{ \Re(\vf^k\varphi) \right\}
  &=- |g\homega \varphi|^2 \Im(\vf^{k+1}\varphi),\\
\label{eq:Im-xi-k}
  \xi\left\{ \Im(\vf^k\varphi) \right\}
  &=|g\homega \varphi|^2 \Re(\vf^{k+1}\varphi).
\end{align}
\end{lemma}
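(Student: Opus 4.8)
The plan is to reduce both identities to a one-line Wirtinger computation after a single algebraic rewriting of the coefficient of $\xi$. From the definition $\varphi=dg/(g^2\omega)$ in \eqref{eq:varphi} together with $\omega=\homega\,dz$ one has $g_z=g^2\homega\varphi$, hence $g_z/g=g\homega\varphi$. Substituting this into \eqref{eq:xi-eta} rewrites the singular directional vector field as
\[
  \xi=\sqrt{-1}\,\overline{g\homega\varphi}\,\frac{\partial}{\partial z}
  -\sqrt{-1}\,g\homega\varphi\,\frac{\partial}{\partial\bar z}.
\]
Setting $A:=g\homega\varphi$, so that $\xi=\sqrt{-1}\,\bar A\,\partial_z-\sqrt{-1}\,A\,\partial_{\bar z}$, I note that $|A|^2=|g\homega\varphi|^2$ is precisely the factor appearing on the right-hand sides of \eqref{eq:Re-xi-k} and \eqref{eq:Im-xi-k}.

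The second ingredient is that each $\vf^k\varphi$ is holomorphic in $z$ along the singular curve. Indeed, near a non-degenerate singular point one has $\homega\neq0$ (since $(1+|g|^2)^2|\omega|^2$ is a Riemannian metric) and $g_z\neq0$ (since $dg\neq0$), so $\varphi=g_z/(g^2\homega)$ is holomorphic there; moreover $\vf=g\,d/dg=(g/g_z)\,d/dz$ preserves holomorphicity, whence $\vf^k\varphi$ is holomorphic by induction. Consequently $\partial_{\bar z}(\vf^k\varphi)=0$, while \eqref{eq:D-ell} applied with $\ell=k+1$ gives $\partial_z(\vf^k\varphi)=g\homega\varphi\,\vf^{k+1}\varphi=A\,\vf^{k+1}\varphi$.

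With these two facts the computation is immediate. For the real part, using $\Re(\vf^k\varphi)=\tfrac12\bigl(\vf^k\varphi+\overline{\vf^k\varphi}\bigr)$ and holomorphicity,
\[
  \partial_z\Re(\vf^k\varphi)=\tfrac12\,A\,\vf^{k+1}\varphi,\qquad
  \partial_{\bar z}\Re(\vf^k\varphi)=\tfrac12\,\bar A\,\overline{\vf^{k+1}\varphi}.
\]
Feeding this into $\xi=\sqrt{-1}\,\bar A\,\partial_z-\sqrt{-1}\,A\,\partial_{\bar z}$, the two contributions share the factor $\tfrac12|A|^2$ and combine as $\tfrac{\sqrt{-1}}{2}|A|^2\bigl(\vf^{k+1}\varphi-\overline{\vf^{k+1}\varphi}\bigr)=-|A|^2\,\Im(\vf^{k+1}\varphi)$, which is \eqref{eq:Re-xi-k}. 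The identity \eqref{eq:Im-xi-k} follows in the same way from $\Im(\vf^k\varphi)=\tfrac{1}{2\sqrt{-1}}\bigl(\vf^k\varphi-\overline{\vf^k\varphi}\bigr)$, where this time the conjugate terms add rather than cancel, producing $+|A|^2\,\Re(\vf^{k+1}\varphi)$.

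I do not expect a genuine obstacle here: once $g_z/g=g\homega\varphi$ and the holomorphicity of $\vf^k\varphi$ are in place, holomorphicity collapses each Wirtinger derivative to a single term and the identities drop out in three lines each. The only point requiring care is the bookkeeping of the powers of $\sqrt{-1}$ and the complex conjugations, which is exactly what fixes the sign discrepancy between \eqref{eq:Re-xi-k} and \eqref{eq:Im-xi-k}.
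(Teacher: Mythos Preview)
Your proof is correct and follows essentially the same route as the paper: both rewrite $\xi$ as $\sqrt{-1}\,\overline{g\homega\varphi}\,\partial_z-\sqrt{-1}\,g\homega\varphi\,\partial_{\bar z}$, use the holomorphicity of $\vf^k\varphi$ together with $(\vf^k\varphi)_z=g\homega\varphi\,\vf^{k+1}\varphi$, and then read off the identities by expanding $\Re$ and $\Im$ in terms of $\vf^k\varphi$ and its conjugate. Your write-up is slightly more explicit about why $\vf^k\varphi$ is holomorphic, but the underlying computation is the same.
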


\begin{proof}
We set $i=\sqrt{-1}$.
Since $\xi=i\overline{(g\homega \varphi)}\partial/\partial z -ig\homega \varphi\, \partial/\partial \bar{z}$,
we have
\begin{align*}
\xi\{\Im(\vf^k \varphi)\}
&=i\overline{(g\homega \varphi)} \{\Im(\vf^k \varphi)\}_z-i(g\homega \varphi)\{\Im(\vf^k \varphi)\}_{\bar{z}} \\
&=\dfrac{1}{2}\{\overline{(g\homega \varphi)}(\vf^k\varphi)_z +g\homega \varphi \overline{(\vf^k\varphi)_z}\} 
=|g\homega \varphi|^2\dfrac{\vf^{k+1}\varphi +\overline{\vf^{k+1}\varphi}}{2},
\end{align*}
which yields \eqref{eq:Re-xi-k}.
By a similar calculation, we have \eqref{eq:Im-xi-k}.
\end{proof}

\begin{lemma}\label{lem:Sk-2}
Under the setting in Proposition \ref{prop:Sk-induction},
it holds that
\begin{align}
\label{eq:Re-ik}
  \xi\left\{ \Re(i^k\vf^k\varphi) \right\}
  &=|g\homega \varphi|^2 \Im(i^{k+1}\vf^{k+1}\varphi),\\
\label{eq:Im-ik}
  \xi\left\{ \Im(i^k\vf^k\varphi) \right\}
  &=|g\homega \varphi|^2 \Re(i^{k+1}\vf^{k+1}\varphi).
\end{align}
\end{lemma}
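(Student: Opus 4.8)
The statement to prove is Lemma~\ref{lem:Sk-2}, which records how the singular directional vector field $\xi$ acts on the real and imaginary parts of $i^k\vf^k\varphi$. The plan is to reduce everything to Lemma~\ref{lem:Sk-1}, which already supplies the action of $\xi$ on $\Re(\vf^k\varphi)$ and $\Im(\vf^k\varphi)$ without the extra factor of $i^k$. The only work is to keep track of how the constant phase $i^k$ interacts with the real and imaginary parts, and to check that the phase in the conclusion indeed shifts to $i^{k+1}$.

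First I would treat the parity of $k$ separately, since $i^k$ is real when $k$ is even and purely imaginary when $k$ is odd. Concretely, when $k$ is even write $i^k=\pm1$, so that $\Re(i^k\vf^k\varphi)=\pm\Re(\vf^k\varphi)$ and $\Im(i^k\vf^k\varphi)=\pm\Im(\vf^k\varphi)$; then apply Lemma~\ref{lem:Sk-1} directly to the right-hand sides. When $k$ is odd write $i^k=\pm i$, so that $\Re(i^k\vf^k\varphi)=\mp\Im(\vf^k\varphi)$ and $\Im(i^k\vf^k\varphi)=\pm\Re(\vf^k\varphi)$; again feed these into Lemma~\ref{lem:Sk-1}. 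In both parities the factor $|g\homega\varphi|^2$ comes out unchanged, and the real/imaginary swap produced by multiplication by $i$ is exactly what converts the $\vf^k$ output of Lemma~\ref{lem:Sk-1} into the $i^{k+1}\vf^{k+1}$ form demanded here.

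A cleaner alternative, which I would actually prefer to write up, avoids the case split entirely. Since $i^k$ is a constant (independent of the coordinates $z,\bz$), the operator $\xi$ passes through it: for any complex-valued function $W$ one has $\xi\{\Re(i^kW)\}=\Re(i^k\,\xi^{\C}W)$ and $\xi\{\Im(i^kW)\}=\Im(i^k\,\xi^{\C}W)$, where $\xi^{\C}$ denotes the natural $\C$-linear extension of $\xi$ acting on $W=\vf^k\varphi$. The computation inside the proof of Lemma~\ref{lem:Sk-1} in fact shows $\xi^{\C}(\vf^k\varphi)=i\,|g\homega\varphi|^2\,\vf^{k+1}\varphi$; substituting this and using $i\cdot i^k=i^{k+1}$ gives
\begin{align*}
  \xi\{\Re(i^k\vf^k\varphi)\}
  &=\Re\!\left(i^{k+1}|g\homega\varphi|^2\vf^{k+1}\varphi\right)
  =|g\homega\varphi|^2\,\Re(i^{k+1}\vf^{k+1}\varphi),\\
  \xi\{\Im(i^k\vf^k\varphi)\}
  &=\Im\!\left(i^{k+1}|g\homega\varphi|^2\vf^{k+1}\varphi\right)
  =|g\homega\varphi|^2\,\Im(i^{k+1}\vf^{k+1}\varphi),
\end{align*}
which are exactly \eqref{eq:Re-ik} and \eqref{eq:Im-ik}.

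I do not expect a genuine obstacle here; the lemma is essentially a bookkeeping corollary of Lemma~\ref{lem:Sk-1}. The one point that requires care is the sign/phase discipline: one must verify that the intermediate identity is $\xi^{\C}(\vf^k\varphi)=i\,|g\homega\varphi|^2\vf^{k+1}\varphi$ rather than its conjugate, by re-reading the displayed calculation in the proof of Lemma~\ref{lem:Sk-1} where $\xi=i\overline{(g\homega\varphi)}\,\partial_z-i(g\homega\varphi)\,\partial_{\bz}$ is applied and the holomorphicity relation $(\vf^k\varphi)_z=g\homega\varphi\,\vf^{k+1}\varphi$ (from \eqref{eq:D-ell}) is used. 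Once the factor of $i$ is pinned down correctly, the result follows immediately, so the write-up can be kept to a few lines.
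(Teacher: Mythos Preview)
Your approach is exactly the paper's: reduce to Lemma~\ref{lem:Sk-1} by peeling off the constant phase $i^k$. The paper splits into four cases ($k\bmod 4$) where you split only by parity, and your direct argument via $\xi^{\C}(\vf^k\varphi)=i\,|g\homega\varphi|^2\,\vf^{k+1}\varphi$ is a clean compression of the same computation.

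There is, however, one point you should not gloss over. The two displayed identities you derive are
\[
  \xi\{\Re(i^k\vf^k\varphi)\}=|g\homega\varphi|^2\,\Re(i^{k+1}\vf^{k+1}\varphi),
  \qquad
  \xi\{\Im(i^k\vf^k\varphi)\}=|g\homega\varphi|^2\,\Im(i^{k+1}\vf^{k+1}\varphi),
\]
with $\Re$ (resp.\ $\Im$) on \emph{both} sides. But the lemma as stated has $\Im$ on the right of \eqref{eq:Re-ik} and $\Re$ on the right of \eqref{eq:Im-ik}. Your sentence ``which are exactly \eqref{eq:Re-ik} and \eqref{eq:Im-ik}'' is therefore not literally true, and you should not write it. In fact a quick check at $k\equiv 0\pmod 4$ shows the stated form of \eqref{eq:Re-ik} would force $-\Im(\vf^{k+1}\varphi)=\Re(\vf^{k+1}\varphi)$, which is false in general; and the place where this lemma is invoked---the induction step in the proof of Proposition~\ref{prop:Sk-induction}---actually uses your version, namely $\xi\{\Re(i^\ell\vf^\ell\varphi)\}=|g\homega\varphi|^2\Re(i^{\ell+1}\vf^{\ell+1}\varphi)$. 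So your computation is the correct one; the statement of the lemma carries a typographical swap of $\Re$ and $\Im$ on the right-hand sides. Rather than asserting equality with \eqref{eq:Re-ik}--\eqref{eq:Im-ik}, note the discrepancy and record the corrected identities.
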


\begin{proof}
We set $i=\sqrt{-1}$.
In the case of $k\equiv 0 \mod 4$,
we have $i^k=1$.
By Lemma \ref{lem:Sk-1},  
\begin{align*}
  \xi\left\{ \Re(i^k\vf^k\varphi) \right\}
  =\xi\left\{ \Re(\vf^k\varphi) \right\}
  &=- |g\homega \varphi|^2 \Im(\vf^{k+1}\varphi)\\
  &=|g\homega \varphi|^2 \Re(i \vf^{k+1}\varphi)
  =|g\homega \varphi|^2 \Im(i^{k+1} \vf^{k+1}\varphi)
\end{align*}
holds, and hence we have \eqref{eq:Re-ik}.
In the cases of $k\equiv 1,2,3 \mod 4$, 
we can prove \eqref{eq:Re-ik} in a similar way.
An analogous calculation shows \eqref{eq:Im-ik}.
\end{proof}

\begin{proof}[Proof of Proposition \ref{prop:Sk-induction}]
We set $i=\sqrt{-1}$.
We prove \eqref{eq:Re-xk} by induction.
First, in the case of $k=1$,
substituting $k=0$ in Lemma \ref{lem:Sk-1},
we have
$$
  \xi( \Re \varphi )
  =- |g\homega \varphi|^2 \Im(\vf \varphi)
  =|g\homega \varphi|^2 \Re(i\vf \varphi).
$$
Next, for a positive integer $k$,
{\allowdisplaybreaks
we suppose that \eqref{eq:Re-xk} holds.
Since
\begin{align*}
&\xi^{k+1}(\Re \varphi)
= \sum_{\ell=1}^{k} \xi(\tau_{\ell}^k) \Re (i^\ell \vf^{\ell} \varphi)
  +\sum_{\ell=1}^{k} \tau_{\ell}^k \xi\{\Re (i^\ell \vf^{\ell} \varphi)\}\\
&= \sum_{\ell=1}^{k} \xi(\tau_{\ell}^k) \Re (i^\ell \vf^{\ell} \varphi)
  +\sum_{\ell=1}^{k} \tau_{\ell}^k |g\homega \varphi|^2 
  \Re(i^{\ell+1}\vf^{\ell+1}\varphi)\\
&= \xi(\tau_1^k) \Re (i \vf \varphi)
  +\sum_{\ell=2}^{k}  \left\{
    \xi(\tau_{\ell}^k) +\tau_{\ell-1}^k |g\homega \varphi|^2 
  \right\} \Re(i^{\ell}\vf^{\ell}\varphi)\\
&\hspace{60mm}  +|g\homega \varphi|^{2k+2} 
  \Re(i^{k+1}\vf^{k+1}\varphi),
\end{align*}
we}
obtain
$
\xi^{k+1}(\Re \varphi)
=\sum_{\ell=1}^{k+1} \tau_{\ell}^{k+1} \Re (i^\ell \vf^{\ell} \varphi),
$
where we set
$ \tau_{\ell}^{k+1}$ $(\ell=1,\dots,k+1)$ as
$$
  \tau_{\ell}^{k+1} 
  =\left\{
  \begin{array}{cl}
  \xi (\tau_1^k) 
    \vphantom{\dfrac1{6}} & (\text{if} ~\ell=1),\\
  \xi(\tau_{\ell}^k) +\tau_{\ell-1}^k |g\homega \varphi|^2 
    \vphantom{\dfrac1{6}} & (\text{if} ~\ell=2,\dots,k),\\
  |g\homega \varphi|^{2(k+1)}
    \vphantom{\dfrac1{6}} & (\text{if} ~\ell=k+1).
  \end{array}
  \right. 
$$
We can prove \eqref{eq:Im-xk} in a similar way
\end{proof}

\section{Duality between conelike singularities and $5/2$-cuspidal edges}
\label{sec:rhamphoid}

In this section, we derive a criterion for 
$5/2$-cuspidal edges on CMC $1$ faces in $S^3_1$
(Theorem \ref{thm:to-prove}),
which yields the duality between
generalized conelike singularities and $5/2$-cuspidal edges
(Theorem \ref{thm:A}).

\subsection{Generalized conelike singularities}

We set a smooth map
$\cone : \R^2 \to \R^3$ as
$\cone(u,v) := v(\cos u,\sin u,1)$.
Let $f:D\to M^3$ be a smooth map into a $3$-manifold $M^3$.
If the map germ $f$ at a point $p\in D$ 
is $\mathcal{A}$-equivalent to 
$\cone$ at the origin,
then $f$ is said to have a {\it conelike singular point} at $p\in D$.
In such a case, the following $({\rm c}_1)$--$({\rm c}_3)$ hold.
\begin{itemize}
\item[{\rm (${\rm c}_1$)}]
$f$ is a wave front on a neighborhood of $p$.
\item[{\rm (${\rm c}_2$)}]
$p\in \Sigma(f)$ is a non-degenerate singular point.
\item[{\rm (${\rm c}_3$)}]
Let $\gamma(t)$ $(|t|<\varepsilon)$
be a singular curve passing through $p=\gamma(0)$
where $\varepsilon>0$,
and $\eta(t)$ be a null vector field along $\gamma(t)$.
Then, there exists $\delta>0$ such that
$\det(\gamma'(t),\eta(t))=0$
holds for $t\in (-\delta,\delta)$.
\end{itemize}
The condition (${\rm c}_3$) implies that the image of the singular curve
consists of a single point.
By these conditions,
we give the following definition.

\begin{definition}
Let $f:D\to M^3$ be a frontal and $p\in \Sigma(f)$ be a singular point.
If the conditions $({\rm c}_1)$, $({\rm c}_2)$ and $({\rm c}_3)$
hold, then $f$ is said to have a {\it generalized conelike singular point}
at $p\in \Sigma(f)$.
\end{definition}

\begin{proposition}[{cf.\ \cite[Lemma 2.3]{FRUYY}}]
\label{prop:conelike-CMC1}
Let $f:D\to S^3_1$ be a CMC $1$ face with Weierstrass data $(g,\omega)$,
and $p\in D$ be a singular point.
We set $\varphi=dg/(g^2\omega)$.
Then, $f$ has a generalized conelike singular point at $p$
if and only if
$\Re \varphi(p)\ne0$
and
$\Im \varphi=0$
holds along a singular curve passing through $p$.
\end{proposition}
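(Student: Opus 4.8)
The plan is to match the three defining conditions $({\rm c}_1)$--$({\rm c}_3)$ of a generalized conelike singular point against conditions on $\varphi$, using the explicit description of $\Sigma(f)$ and of the fields $\xi,\eta$ in Fact \ref{fact:singular-FSUY}. Since a CMC $1$ face is a frontal (Fact \ref{fact:singular-FSUY}(1)), the definition applies. By Fact \ref{fact:singular-FSUY}(3) the front condition $({\rm c}_1)$ holds on a neighborhood of $p$ if and only if $\Re\varphi(p)\neq0$; as noted there this already forces $dg(p)\neq0$ (indeed $dg(p)=0$ would give $\varphi(p)=0$), i.e.\ the non-degeneracy $({\rm c}_2)$ via Fact \ref{fact:singular-FSUY}(2). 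Thus $({\rm c}_1)$ and $({\rm c}_2)$ together are equivalent to $\Re\varphi(p)\neq0$, and in particular the singular set is then a regular curve $\gamma$ through $p$ on which the remaining condition is to be tested.

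The heart of the argument is condition $({\rm c}_3)$. Because $\gamma'(t)$ is parallel to the singular direction $\xi$ along $\gamma$ and $\eta$ is the null field, I would first replace $\det(\gamma'(t),\eta(t))$ by $\det(\xi,\eta)$, which vanishes at the same points of $\gamma$. The key computation is to evaluate this determinant from \eqref{eq:xi-eta}: writing both fields in the form $c\,\partial_z+\bar c\,\partial_{\bar z}$, with $c_\xi=\sqrt{-1}\,\overline{(g_z/g)}$ and $c_\eta=\sqrt{-1}/(g\homega)$, one checks they are genuine real vector fields, and in the coordinate $z=u+\sqrt{-1}\,v$ the determinant of the two vectors with respect to $(\partial_u,\partial_v)$ equals $\Im(\overline{c_\xi}\,c_\eta)$. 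A one-line simplification gives $\overline{c_\xi}\,c_\eta=g_z/(g^2\homega)=\varphi$, so that $\det(\xi,\eta)=\Im\varphi$ as functions. Hence $({\rm c}_3)$ holds exactly when $\Im\varphi=0$ along $\gamma$.

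Combining the two reductions yields the asserted equivalence in both directions. I expect the only delicate point to be the bookkeeping in this determinant computation --- verifying that $\xi$ and $\eta$ have the real form $c\,\partial_z+\bar c\,\partial_{\bar z}$, that they are non-vanishing near $p$ (which uses $\homega\neq0$ and, for $\xi$, $g_z(p)\neq0$ coming from $\Re\varphi(p)\neq0$), and that the orientation convention makes $\det(\xi,\eta)$ equal to $\Im\varphi$ (the sign being irrelevant for the vanishing locus). Once the formulas \eqref{eq:xi-eta} and \eqref{eq:varphi} are substituted, this is a short and self-contained calculation, parallel to \cite[Lemma 2.3]{FRUYY}.
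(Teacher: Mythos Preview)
Your proof is correct and follows essentially the same route as the paper's: reduce $({\rm c}_1)$ and $({\rm c}_2)$ to $\Re\varphi(p)\neq0$ via Fact~\ref{fact:singular-FSUY}, and reduce $({\rm c}_3)$ to the identity $\det(\xi,\eta)=\Im\varphi$ along the singular curve. The only cosmetic difference is that the paper cites \cite[Theorem 2.4]{FSUY} for this identity (recorded as \eqref{eq:Im-phi}), whereas you compute $\overline{c_\xi}\,c_\eta=\varphi$ directly from \eqref{eq:xi-eta}, which is exactly the underlying calculation.
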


\begin{proof}
By Fact \ref{fact:singular-FSUY},
the condition (${\rm c}_1$) is equivalent to $\Re \varphi(p) \neq 0$.
Then the condition (${\rm c}_2$) is automatically satisfied.
Let $\xi$, $\eta$ be the vector fields defined by \eqref{eq:xi-eta}.
Then, we can choose a singular curve $\gamma(t)$
passing through $p=\gamma(0)$ so that 
$\gamma'(t)=\xi_{\gamma(t)}$
holds along $\gamma(t)$.
Then, we have
\begin{equation}\label{eq:Im-phi}
\det(\gamma'(t),\eta(t))=\Im \varphi
\end{equation}
along $\gamma(t)$
(cf.\ \cite[Theorem 2.4]{FSUY}).
Hence the condition (${\rm c}_3$)
holds if and only if $\Im \varphi=0$ along $\gamma(t)$.
\end{proof}

\subsection{$5/2$-cuspidal edges}

Let $\rce: \R^2\rightarrow \R^3$ be a map 
defined by 
$\rce(u,v) := (u,v^2,v^5)$,
which we call the {\it standard $5/2$-cuspidal edge}.
Let $f:D\to S^3_1$ be a smooth map.
We say that $f$ has {\it $5/2$-cuspidal edge}
(or {\it rhamphoid cuspidal edge}\/) at $p\in D$,
if the map germ $f$ at $p$ is $\mathcal{A}$-equivalent to 
$\rce$ at the origin.

\begin{figure}[htb]
\begin{center} 
 \begin{tabular}{{c@{\hspace{10mm}}c@{\hspace{8mm}}c}}
  \resizebox{3.2cm}{!}{\includegraphics{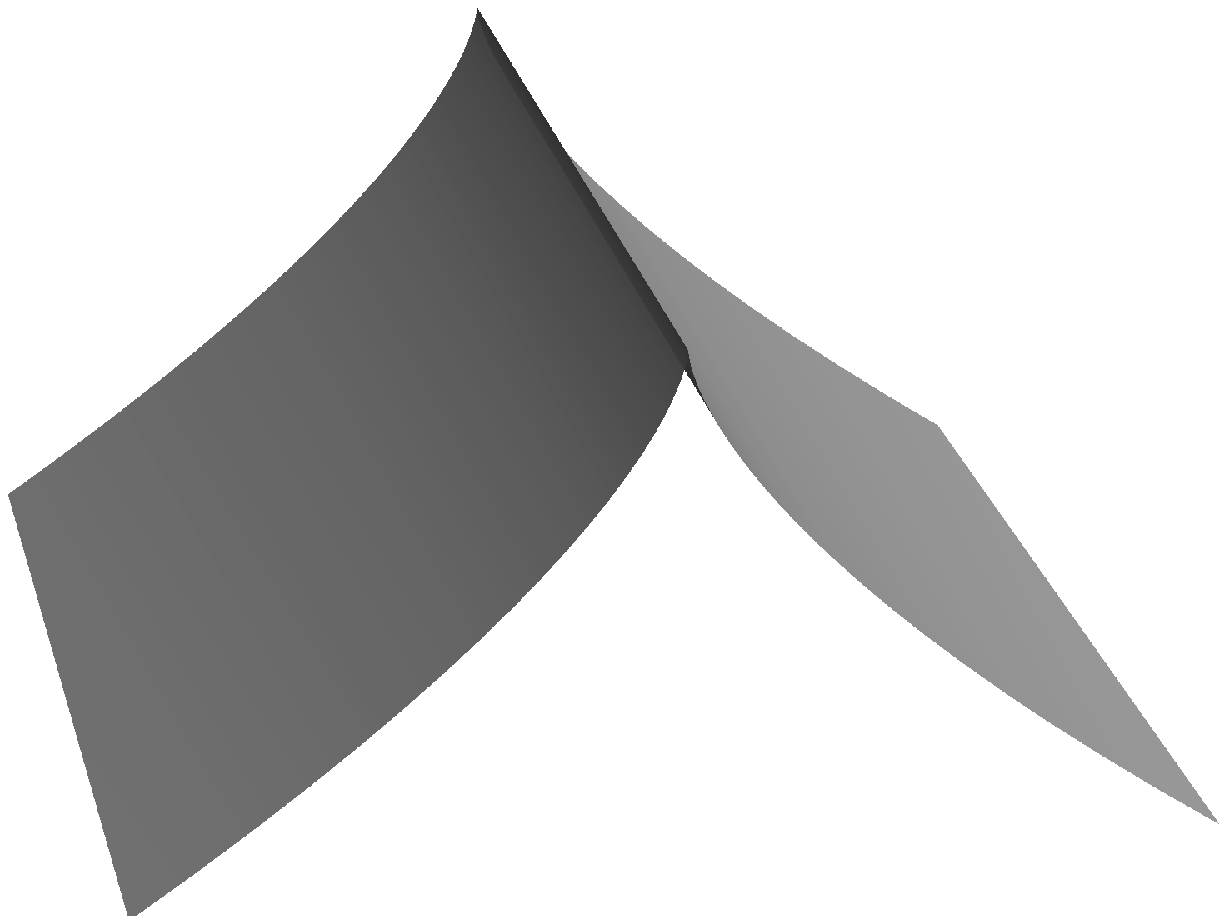}} &
  \resizebox{3.2cm}{!}{\includegraphics{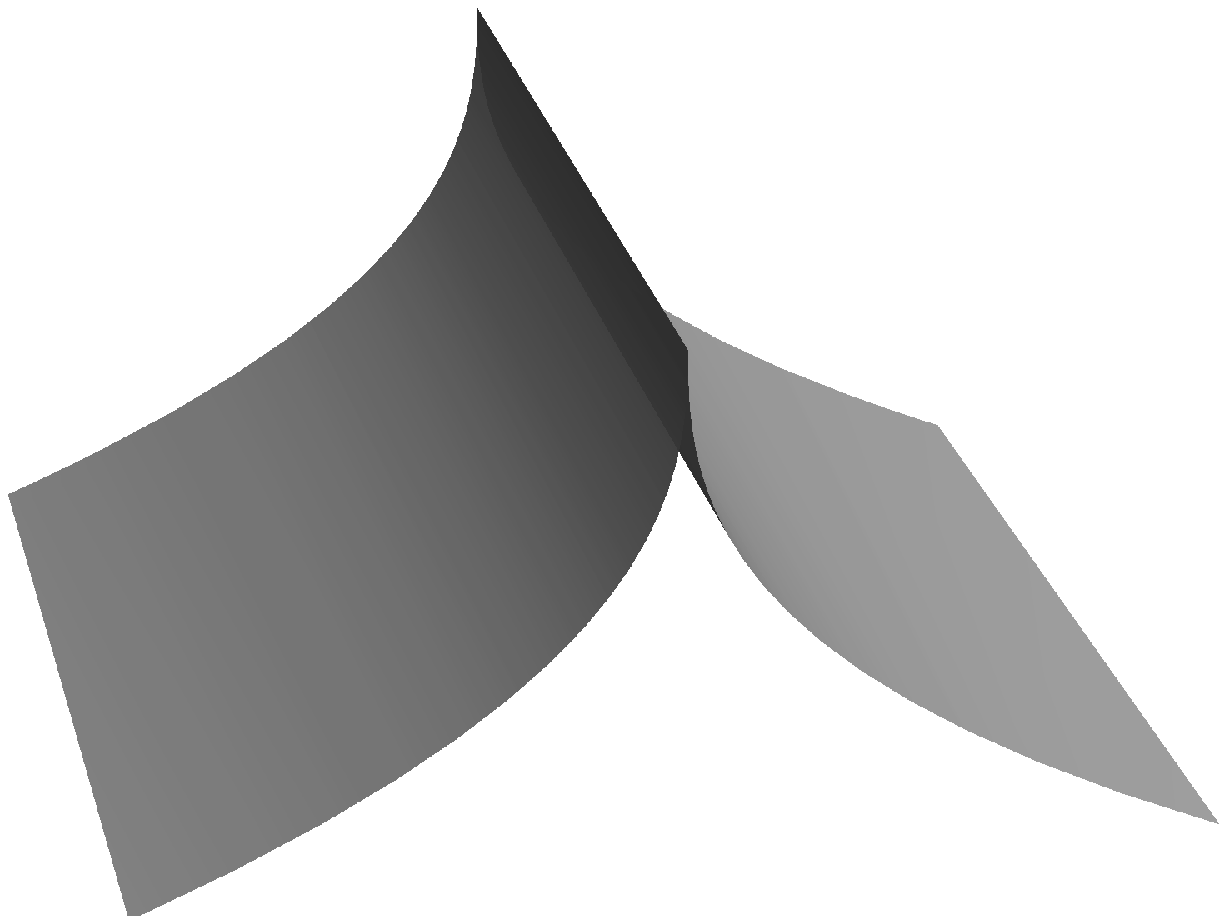}} &
  \resizebox{3cm}{!}{\includegraphics{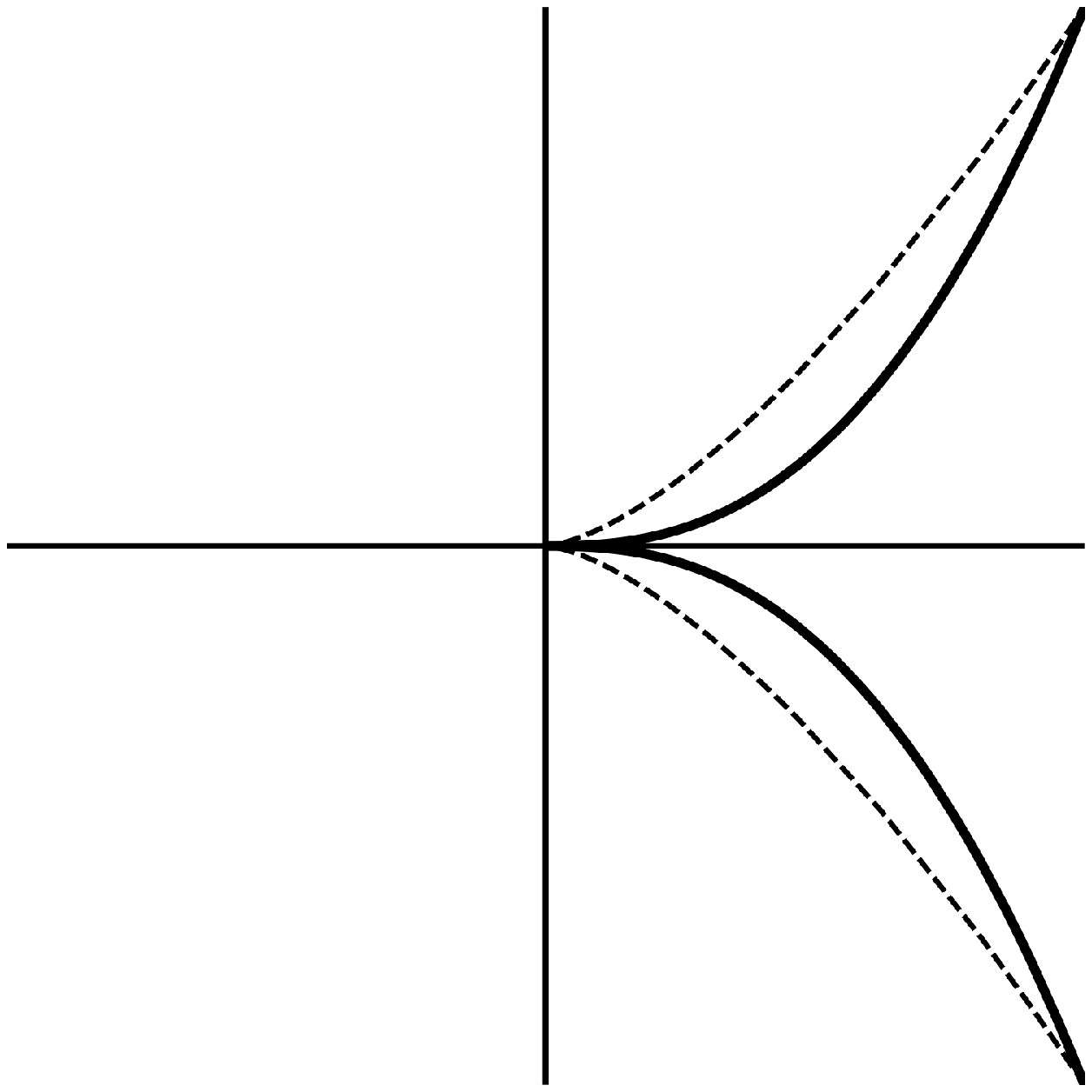}} 
 \end{tabular}
\end{center}
\caption{Left: the standard cuspidal edge 
$\ce(u,v)=(u,v^2,v^3)$.
Center:
the standard $5/2$-cuspidal edge 
$\rce(u,v)=(u,v^2,v^5)$.
Right: the images of the planar cusp $t\mapsto (t^2,t^3)$ (the dashed curve)
and the planar $5/2$-cusp $t\mapsto (t^2,t^5)$ (the bold curve).
The standard cuspidal edge and 
and standard $5/2$-cuspidal edge are 
obtained by parallel translations 
of the planar cusp and the planar $5/2$-cusp, respectively.
}
\end{figure}

We prove the following:

\begin{theorem}\label{thm:to-prove}
Let $f:D\to S^3_1$ be a CMC $1$ face with Weierstrass data $(g,\omega)$,
and $p\in D$ be a singular point.
We set $\varphi=dg/(g^2\omega)$.
Then, $f$ has $5/2$-cuspidal edge at $p$
if and only if
$\Im \varphi(p)\ne0$
and
$\Re \varphi=0$
holds along a singular curve passing through $p$.
\end{theorem}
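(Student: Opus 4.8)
The plan is to reduce the statement to the known criterion for $5/2$-cuspidal edges of frontals in $\R^3$ (Fact \ref{fact:rhamphoid}) by projecting the surface into a tangent space, and then to rewrite the resulting conditions in terms of $\varphi$ using Fact \ref{fact:singular-FSUY} and the identity \eqref{eq:Im-phi}. Concretely, I would fix the orthogonal projection $\pi$ of a neighborhood of $f(p)$ in $S^3_1$ onto the tangent space $T_{f(p)}S^3_1\subset\L^4$ (cf.\ \eqref{eq:orthogonal-proj}). Since $f(p)$ is a spacelike vector of $\L^4$, the restriction of $\pi$ to $S^3_1$ has the identity as differential at $f(p)$ and is therefore a diffeomorphism near $f(p)$; hence $\hat f:=\pi\circ f$ is $\mathcal{A}$-equivalent to $f$ as a germ at $p$, and $f$ has a $5/2$-cuspidal edge at $p$ if and only if $\hat f$ does. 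Identifying $T_{f(p)}S^3_1$ with $\R^3$ equipped with an auxiliary Euclidean metric (which is harmless, since being a $5/2$-cuspidal edge is an $\mathcal{A}$-invariant notion), $\hat f$ becomes a frontal to which Fact \ref{fact:rhamphoid} applies; because $\pi$ is a diffeomorphism, $\hat f$ shares with $f$ the singular set, the singular curve $\gamma$, and the null field $\eta$ near $p$, and is a front at a point exactly when $f$ is.

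Next I would match the hypotheses and conclusion of Fact \ref{fact:rhamphoid} with conditions on $\varphi$. The point $p$ is automatically non-degenerate, as $|g(p)|=1$ forces $dg(p)\neq0$ (Fact \ref{fact:singular-FSUY}(2)). By \eqref{eq:Im-phi}, namely $\det(\gamma',\eta)=\Im\varphi$ along $\gamma$, the first-kind hypothesis that $\gamma'$ and $\eta$ be linearly independent is equivalent to $\Im\varphi(p)\neq0$; and by Fact \ref{fact:singular-FSUY}(3), the surface fails to be a front at every point of $\gamma$ precisely when $\Re\varphi\equiv0$ along $\gamma$. Thus the two stated conditions are exactly the first-kind and degenerate-non-front hypotheses required by the $5/2$-criterion, and it remains only to verify the top-order non-degeneracy in Fact \ref{fact:rhamphoid} that singles out $(2,5)$-contact from higher rhamphoid contact, and to check that this holds automatically once $\Im\varphi(p)\neq0$.

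The heart of the argument, which I expect to be the main obstacle, is this last verification. I would expand $\hat f$ in the null direction, computing $\eta^2\hat f,\dots,\eta^5\hat f$ at $p$ and evaluating the determinants $\det\big(\tfrac{d}{dt}(\hat f\circ\gamma),\,\eta^2\hat f,\,\eta^{j}\hat f\big)$ for $j=3,4,5$ that appear in the criterion. Using the representation \eqref{eq:F^-1dF}--\eqref{eq:cmc1-face}, the explicit null field \eqref{eq:xi-eta}, and the hypothesis $\Re\varphi\equiv0$ along $\gamma$---which, through the derivative identities of Lemma \ref{lem:Sk-1}, propagates to the vanishing of the intermediate determinants $j=3,4$---I expect the surviving $j=5$ determinant to be a nonzero real multiple of $\Im\varphi(p)$ times a power of the nonvanishing factor $|g\homega\varphi|$, so that this non-degeneracy is equivalent to $\Im\varphi(p)\neq0$ and is already guaranteed by the first-kind condition. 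The difficulty is twofold: the orthogonal projection contributes nonlinear (second fundamental form) corrections that must be controlled through order five, and one must use the vanishing of $\Re\varphi$ along the \emph{entire} singular curve---not merely at $p$---to cancel the lower-order terms, so that $(2,5)$-contact rather than $(2,3)$- or $(2,7)$-contact emerges.
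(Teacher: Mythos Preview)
Your outline follows the same architecture as the paper's proof: project to $\R^3$ via \eqref{eq:orthogonal-proj}, apply Fact~\ref{fact:rhamphoid}, and translate its hypotheses through Fact~\ref{fact:singular-FSUY} and Proposition~\ref{prop:Sk-induction}. Two points, however, need correction or sharpening.

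First, a small slip: $|g(p)|=1$ does \emph{not} force $dg(p)\neq0$ (take $g(z)=1+z^2$ at $z=0$). Non-degeneracy follows instead from the hypothesis $\Im\varphi(p)\neq0$, which gives $\varphi(p)\neq0$ and hence $dg(p)\neq0$.

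Second, and more substantively, you plan to compute $\eta^j\hat f$ using the null field $\eta$ of \eqref{eq:xi-eta}. But Fact~\ref{fact:rhamphoid} requires the null field to satisfy the orthogonality condition \eqref{eq:perp}, namely $\langle\xi f(p),\eta^2f(p)\rangle_E=\langle\xi f(p),\eta^3f(p)\rangle_E=0$, so that $\eta^3f(p)=c\,\eta^2f(p)$ for a well-defined constant $c$. The paper notes explicitly that $\eta$ does \emph{not} satisfy this, and replaces it by a perturbed null field $\zeta$ (equation \eqref{eq:zeta}) engineered so that \eqref{eq:perp} holds and in fact $\zeta^3\tilde f(p)=\vect{0}$, giving $c=0$. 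Without this normalization your ``determinants for $j=3,4,5$'' do not organize the criterion correctly: the fifth-order test is $\det(\xi f,\zeta^2f,3\zeta^5f-10c\,\zeta^4f)$, not a sequence of separate determinants.

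Finally, you rightly anticipate that the fifth-order computation is the crux, but it is more delicate than a routine expansion. The paper's key Lemma~\ref{lem:T5p-body} (proved in the appendix via a chain of matrix-valued lemmas) shows that, under $\Re\varphi(p)=\Im(\vf\varphi)(p)=0$, one has
\[
\zeta^5 f(p)\equiv 4\bigl(12-\Re(\vf^2\varphi)(p)\bigr)(\Im\varphi(p))^3\,\ep_3
\pmod{f,\ \xi f,\ \zeta^2 f}.
\]
Only after invoking the \emph{further} consequence $\Re(\vf^2\varphi)(p)=0$ of $\Re\varphi\equiv0$ along $\gamma$ (via Proposition~\ref{prop:Sk-induction}) does the coefficient reduce to the nonzero constant $48(\Im\varphi(p))^3$. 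The ``$12$'' here is a genuine feature of the CMC~$1$ geometry---it is exactly what separates the $5/2$-cuspidal-edge case from the exceptional $\sigma=12$ case in Theorem~\ref{thm:C} (cf.\ Corollary~\ref{cor:cS1-sigma-invariant} and Remark~\ref{rem:rhamphoid-invariant}). Your proposal does not yet account for this mechanism.
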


For the proof of Theorem \ref{thm:to-prove},
we review the criterion for $5/2$-cuspidal edge.

\begin{fact}[Criterion for $5/2$-cuspidal edge {\cite[Theorem 4.1]{HKS}}]
\label{fact:rhamphoid}
Let $f:D\to \R^3$ be a frontal,
and $p\in D$ be a non-degenerate singular point.
We suppose that $p$ is of the first kind, and
\begin{equation}\label{eq:frontal}
\text{$f$ is not a front 
on a neighborhood of $p$.}
\end{equation}
Let $\xi$ be a singular directional vector field on $D$,
and $\eta$ be a null vector field which satisfies
\begin{equation}\label{eq:perp}
  \langle\xi f(p),\eta^2f(p)\rangle_{E}
  =\langle\xi f(p),\eta^3f(p)\rangle_{E}
  =0,
\end{equation}
where $\inner{~}{~}_E$ is the canonical Euclidean inner product of $\R^3$.
Then, $f$ has $5/2$-cuspidal edge at $p$ 
if and only if
$$
\det(\xi f, \eta^2f, 3\eta^5f-10c\,\eta^4f)(p)\ne0,
$$
where 
we set $\eta^j f = \eta(\eta^{j-1}f)$ for $j>1$,
and $c\in \R$ is a constant such that
$\eta^3f(p)=c\,\eta^2f(p)$.
\end{fact}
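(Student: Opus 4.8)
The plan is to prove this recognition criterion by reducing $f$ to the normal form $\rce(u,v)=(u,v^2,v^5)$ through adapted coordinate changes, and reading off the stated determinant as the coefficient of the surviving fifth-order term. First I would choose coordinates $(u,v)$ centered at $p$ adapted to the first-kind singular point: since $\xi_p,\eta_p$ are linearly independent, one arranges that the singular set is $\{v=0\}$, that $\eta=\partial_v$, and that $\xi=\partial_u$ along $\{v=0\}$. As $\eta$ is null, $f_v(u,0)\equiv \vect{0}$, so the $v$-expansion reads $f(u,v)=f_0(u)+\sum_{j\ge 2}v^j f_j(u)$ with $j!\,f_j(u)=\partial_v^j f(u,0)$; in particular $\eta^2 f(p)=2f_2(0)$, $\eta^3 f(p)=6f_3(0)$, $\eta^4 f(p)=24f_4(0)$, $\eta^5 f(p)=120f_5(0)$, while $\xi f(p)=f_u(0,0)\neq \vect{0}$.

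Next I would translate the hypotheses. The normalization \eqref{eq:perp} places $\eta^2 f(p)$ and $\eta^3 f(p)$ in the plane $\Pi$ orthogonal to $\xi f(p)$, so I fix a frame $\{\vect{t},\vect{e},\vect{n}\}$ with $\vect{t}\parallel \xi f(p)$, $\vect{e}\parallel\eta^2 f(p)$, and $\vect{n}=\vect{t}\times\vect{e}$. Hypothesis \eqref{eq:frontal}, via the characterization of fronts at first-kind singular points (the profile being an ordinary cusp), amounts to $\det(\xi f,\eta^2 f,\eta^3 f)(p)=0$; together with \eqref{eq:perp} this forces $\eta^3 f(p)\parallel\eta^2 f(p)$, which is exactly the relation $\eta^3 f(p)=c\,\eta^2 f(p)$ defining $c$. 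Writing the profile curve as $f(0,v)-f(0,0)=X(v)\vect{e}+Y(v)\vect{n}+(\text{$\vect{t}$-part})$, the parallelism gives $Y(v)=b_4 v^4+b_5 v^5+O(v^6)$ with $b_j=\langle f_j(0),\vect{n}\rangle$, while $X(v)=a_2v^2+\tfrac{c}{3}a_2 v^3+O(v^4)$ with $a_2=|f_2(0)|\neq0$; the question becomes whether this is a genuine $(v^2,v^5)$-cusp.

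The decisive step is the reparametrization $\tilde v=\sqrt{X(v)}$, which makes the $\vect{e}$-coordinate equal to $\tilde v^2$; the $\vect{t}$-part and every even power of $\tilde v$ in $Y$ are then smooth functions of $\tilde v^2$ and are removed by a target diffeomorphism. Expanding $v$ in $\tilde v$ from $X=a_2v^2(1+\tfrac{c}{3}v+\cdots)$ and substituting into $Y$, the first surviving odd term is the $\tilde v^5$ term, whose coefficient is proportional to $b_5-\tfrac{2c}{3}b_4$; the correction $-\tfrac{2c}{3}b_4$ is precisely the $v^4$-height fed into order five by the cubic $c$-term of $X$. Since $\langle 3\eta^5 f-10c\,\eta^4 f,\vect{n}\rangle=360b_5-240cb_4=120(3b_5-2cb_4)$, while $\xi f\perp\Pi$ and $\eta^2 f\parallel\vect{e}$, one gets $\det(\xi f,\eta^2 f,3\eta^5 f-10c\,\eta^4 f)(p)=\pm|\xi f|\,|\eta^2 f|\cdot120(3b_5-2cb_4)$; the coefficients $3$ and $-10c$ are exactly those converting the factorial-weighted $\eta^5 f,\eta^4 f$ into the invariant combination $3b_5-2cb_4$. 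Thus the reduced form is $(u,\tilde v^2,(\text{const})\tilde v^5+O(\tilde v^6))$ with nonzero leading coefficient iff the determinant is nonzero, and the $\mathcal{A}$-equivalence in both directions follows from a finite-determinacy argument for $\rce$.

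The step I expect to be the main obstacle is making this profile-curve reduction rigorous over a full neighborhood of $p$ rather than just along $\{v=0\}$: one must carry the $u$-dependence of $f_2(u),\dots,f_5(u)$ through the reparametrization and target diffeomorphisms, verify that $\det(\xi f,\eta^2 f,3\eta^5 f-10c\,\eta^4 f)(p)$ is well defined under the residual freedom in $\xi$ and $\eta$ (scalings, and shears $\eta\mapsto\eta+s\xi$ with $s|_{\{v=0\}}=0$), and then invoke a recognition/finite-determinacy theorem to upgrade the normalized $5$-jet to genuine $\mathcal{A}$-equivalence with $\rce$. An alternative that bypasses the most delicate bookkeeping is to cite a general recognition criterion for $5/2$-cuspidal edges and merely check that the hypotheses plus the determinant condition pin down the $5$-jet; even on that route, the identification of $3b_5-2cb_4$ as the governing invariant remains the crux.
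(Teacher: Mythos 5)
First, a point of reference: the paper does not prove Fact \ref{fact:rhamphoid} at all --- it is quoted from \cite[Theorem 4.1]{HKS} with the remark ``For details, see \cite[Theorem 4.1]{HKS}.'' So your attempt can only be compared with the cited source, whose normal-form strategy you do follow in outline, and your arithmetic core is correct: with $j!f_j(u)=\partial_v^jf(u,0)$, $b_j=\langle f_j(0),\vect{n}\rangle$ and $a_3=\tfrac{c}{3}a_2$, the reparametrization $\tilde v=\sqrt{X(v)}$ does make the first odd coefficient proportional to $b_5-\tfrac{2c}{3}b_4$, and $\langle 3\eta^5f-10c\,\eta^4f,\vect{n}\rangle(p)=120(3b_5-2cb_4)$, so the determinant condition is exactly the non-vanishing of the $\tilde v^5$-coefficient of the profile at $u=0$.

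There are, however, two genuine gaps. The decisive one is your translation of \eqref{eq:frontal} into the single pointwise equation $\det(\xi f,\eta^2f,\eta^3f)(p)=0$. The hypothesis must be read as failure of frontness at \emph{every} singular point near $p$ --- this is how the present paper uses it in the proof of Theorem \ref{thm:to-prove}, where \eqref{eq:frontal} is shown equivalent to $\Re\varphi=0$ \emph{along the singular curve}. With only the pointwise condition the criterion is false: take $f(u,v)=(u,v^2,v^5+uv^3)$. This is a frontal, $p=(0,0)$ is a non-degenerate singular point of the first kind, $f$ is not a front at $p$, and $\xi=\partial_u$, $\eta=\partial_v$ satisfy \eqref{eq:perp} with $\eta^3f(p)=\vect{0}$, hence $c=0$ and $\det(\xi f,\eta^2f,3\eta^5f)(p)=720\ne0$; yet the singular points $(u_0,0)$ with $u_0\neq0$ are cuspidal edges (fronts), whereas every nearby singular point of $\rce$ is again a $5/2$-cuspidal edge (not a front), so the germs are not $\mathcal{A}$-equivalent. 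Concretely, your reduction sees only the $u=0$ profile: an odd term $v^3B(u)$ with $B(0)=0$, $B\not\equiv0$, is invisible to all of your pointwise data and survives every target diffeomorphism (which removes only terms even in $\tilde v$, with $u$-dependent coefficients). Killing it is precisely what the curve-wise hypothesis provides, namely $\eta^3f(\gamma(t))\in\operatorname{span}\{\xi f(\gamma(t)),\eta^2f(\gamma(t))\}$ for all $t$, so that after normalization the odd normal part is divisible by $v^5$. The second gap is your fallback ``finite-determinacy argument for $\rce$'': the germ $\rce$ is singular, and unstable, along the whole curve $\{v=0\}$, so by the Mather--Gaffney criterion it has infinite $\mathcal{A}$-codimension and is \emph{not} finitely $\mathcal{A}$-determined; no jet at $p$ alone can certify $\mathcal{A}$-equivalence (the counterexample above shows this concretely, since all your verified conditions are jet conditions at $p$). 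The equivalence must instead be constructed by hand from the two-variable frontal normal form $(u,v^2,v^3B(u,v^2)+\text{even part})$ using the along-the-curve hypothesis, which is what \cite{HKS} does. A minor additional point: you use $\eta^2f(p)\neq\vect{0}$ to define $\vect{e}$ and $c$; this should be justified, and it does follow from non-degeneracy since $f_{vv}(u,0)=h(u,0)$ where $f_v=vh$ and $\det(f_u,h,\nu)(p)\neq0$.
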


For details, see \cite[Theorem 4.1]{HKS}.
Since the criterion (Fact \ref{fact:rhamphoid})
uses the Euclidean inner product of $\R^3$,
we cannot apply it to our setting directly.
So, we will use the orthogonal projection, 
cf.\ \eqref{eq:orthogonal-proj}.

On the other hand, since the null vector field $\eta$
defined in \eqref{eq:xi-eta} 
does not satisfy \eqref{eq:perp},
we have to use another null vector field $\zeta$
satisfying \eqref{eq:perp}.
So we set
\begin{equation}\label{eq:zeta}
  \zeta
  = \left(\frac{\sqrt{-1}}{g\homega } + \const\,(1-|g|^2)^2\right)\frac{\partial}{\partial z}
  +\left(-\frac{\sqrt{-1}}{\bar{g}\bar{\homega} } + \bar{\const}\,(1-|g|^2)^2\right)
  \frac{\partial}{\partial \bar{z}},
\end{equation}
where we set $\const:=-\sqrt{-1}\,g(p)/(2g_z(p))$.

For a positive integer $j$,
we set $T_j$ so that 
$$
  \zeta^j f = F T_j F^*.
$$
In the following Lemmas \ref{lem:Tk}, \dots, \ref{lem:xi-fp},
we set $i=\sqrt{-1}$.

\begin{lemma}\label{lem:Tk}
We set 
$$
  \lambda:=1-|g|^2,
  \quad
  \Psi:=i \begin{pmatrix} 1 & -g\\ \frac1{g}&-1 \end{pmatrix},\quad
  \Omega:=\const\homega  \begin{pmatrix} g & -g^2\\1&-g \end{pmatrix}.
$$
Then, for $j>1$, it holds that
$$
T_j=U_j + (U_j)^*\quad
\left(
U_j:=
\left(\frac{i}{g\homega } + \const\,\lambda^2\right)(T_{j-1})_z
+ \left( \Psi + \lambda^2 \Omega \right)T_{j-1}
\right).
$$
\end{lemma}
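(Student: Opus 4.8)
The plan is to compute $\zeta^j f = \zeta(\zeta^{j-1}f) = \zeta(F T_{j-1} F^*)$ directly and read off $T_j$. I would first write $\zeta = a\,\partial/\partial z + \bar a\,\partial/\partial\bar z$ with $a := i/(g\homega) + \const\lambda^2$; this is legitimate because, $\lambda$ being real, the coefficient of $\partial/\partial\bar z$ in \eqref{eq:zeta} is precisely $\bar a$. Differentiating the product $F T_{j-1} F^*$ and exploiting the holomorphy of $F$ is the core of the computation: since $F_{\bar z}=0$ we have $(F^*)_z = 0$ and $(F^*)_{\bar z} = (F_z)^*$, while the structure equation \eqref{eq:F^-1dF} gives $F_z = F\homega M$ with $M := \begin{pmatrix} g & -g^2 \\ 1 & -g\end{pmatrix}$. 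Pushing the differentiated factors of $F$ and $F^*$ to the outside of $T_{j-1}$ then yields
\begin{equation*}
  T_j = a\,(T_{j-1})_z + \bar a\,(T_{j-1})_{\bar z} + a\homega M\,T_{j-1} + \bar a\,\bar\homega\,T_{j-1} M^*.
\end{equation*}

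The second input is that each $T_{j-1}$ is Hermitian. This holds because $\zeta^{j-1}f$ is a higher derivative of the $\Herm(2)$-valued map $f$ along the \emph{real} vector field $\zeta$, so $(\zeta^{j-1}f)^* = \zeta^{j-1}f$ forces $T_{j-1}^* = T_{j-1}$; consequently $\big((T_{j-1})_z\big)^* = (T_{j-1})_{\bar z}$. This lets me recognize the right-hand side above as $U_j + U_j^*$ with $U_j := a\,(T_{j-1})_z + (\Psi+\lambda^2\Omega)T_{j-1}$, provided the coefficient matrix is identified correctly. That identification is the one genuinely computational point, and I would verify the matrix identity
\begin{equation*}
  a\homega M = \Big(\frac{i}{g} + \const\lambda^2\homega\Big)M = \Psi + \lambda^2\Omega,
\end{equation*}
which follows from $\tfrac{i}{g}M = \Psi$ and $\const\homega M = \Omega$ by direct inspection of the definitions of $\Psi$ and $\Omega$ in the statement. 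Taking the conjugate transpose turns $\bar a\,\bar\homega\,T_{j-1}M^* = T_{j-1}(a\homega M)^* = T_{j-1}(\Psi+\lambda^2\Omega)^*$ into the adjoint part, so the four-term formula collapses exactly to $T_j = U_j + U_j^*$.

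The main obstacle I anticipate is not any single hard estimate but the careful bookkeeping of which factors are holomorphic versus anti-holomorphic: one must consistently apply $F_{\bar z}=0$, $(F^*)_z=0$, and the Hermiticity relation $\big((T_{j-1})_z\big)^* = (T_{j-1})_{\bar z}$ in order to fold the four differentiated terms into the symmetric form $U_j + U_j^*$. The algebraic identity $a\homega M = \Psi + \lambda^2\Omega$ is the clean payoff that makes the recursion close and explains the particular matrices $\Psi$ and $\Omega$ appearing in the statement.
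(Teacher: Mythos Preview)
Your proposal is correct and follows essentially the same approach as the paper: the paper computes $\zeta F = F(\Psi+\lambda^2\Omega)$ (your identity $a\homega M = \Psi+\lambda^2\Omega$) and $\zeta F^* = (\Psi^*+\lambda^2\Omega^*)F^*$, applies the product rule to $\zeta(FT_{j-1}F^*)$, and then reads off $T_j = U_j + U_j^*$. Your write-up is in fact more explicit than the paper's about why $T_{j-1}$ is Hermitian and how the four terms fold into $U_j+U_j^*$, but the underlying argument is identical.
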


\begin{proof}
By a direct calculation, we have 
\begin{equation}\label{eq:zeta-F}
  \zeta F 
  = \homega \left(\frac{i}{g\homega } + \const\,\lambda^2\right)
      F\begin{pmatrix} g & -g^2\\1&-g \end{pmatrix}
  =F\left( \Psi + \lambda^2 \Omega \right)
\end{equation}
and $\zeta F^* =\left( \Psi^*+\lambda ^2 \Omega^* \right)F^*$.
Since
$$
\zeta^j f 
= \zeta(\zeta^{j-1} f)
= (\zeta F)T_{j-1} F^*+F(\zeta T_{j-1} ) F^*+FT_{j-1} (\zeta F^*)
$$
and
$T_j = F^{-1}\,\zeta^j f \,(F^*)^{-1}$,
we can check that $T_j=U_j + (U_j)^*$.
\end{proof}

\begin{lemma}\label{lem:T2-j}
It holds that 
$T_1= \lambda T_{1,1} + \lambda^2 T_{1,2}$,
where
$$
  T_{1,1} 
  := i \begin{pmatrix} 0 & -\frac1{\bar{g}}\\ \frac1{g}&0 \end{pmatrix},
  \quad
  T_{1,2}
  := \const\homega  
  \begin{pmatrix} g & g^2\\1&g \end{pmatrix} 
  +
  \bar{\const}\bar{\homega } 
  \begin{pmatrix} \bar{g} & 1\\ \bar{g}^2&\bar{g} \end{pmatrix}.
$$
Moreover,
$$
  T_2 = \sum_{j=0}^5 \lambda^j \, T_{2,j}
  \quad
  \left(
  T_{2, j}:=U_{2, j} + (U_{2, j})^*
  \right)
$$
holds,
where we set
$U_{2,0}:=
\varphi 
\begin{pmatrix}
 0 & -g  \\
 \bar{g} & 0 \\
\end{pmatrix},
$
\begin{align*}
%
U_{2,1}&:=
\begin{pmatrix}
1 & \frac{1}{\bar{g}}\\
\frac{1+\varphi}{g} & \frac{1}{\bar{g} g}
\end{pmatrix}
-2 i g \bar{g} \varphi 
\left\{
\const \homega
\begin{pmatrix}
g & g^2\\
1 & g
\end{pmatrix}
+
\bar{\const} \bar{\homega}
\begin{pmatrix}
\bar{g} & 1\\
\bar{g}^2 & \bar{g}
\end{pmatrix}
\right\}\\
%
U_{2,2}
&:=
i \const \left\{
g \homega \varphi 
\begin{pmatrix}
1 & 3g\\
-\bar{g} &1
\end{pmatrix}
+
\frac{\homega_z}{\homega}
\begin{pmatrix}
1 & g\\
\frac{1}{g} &1
\end{pmatrix}
\right\},\\
%
U_{2,3}
&:=
i\bar{\const} \bar{\homega}\left(
\begin{array}{cc}
 \bar{g}   
 & 1 \\
 \frac{\bar{g} }{g}
 & \frac{1}{g} 
\end{array}
\right)
-\const \homega\left[
2g^2 \bar{g} \varphi
\left\{
\const \homega
\begin{pmatrix}
g & g^2\\
1 & g
\end{pmatrix}
+
\bar{\const} \bar{\homega}
\begin{pmatrix}
\bar{g} & 1\\
\bar{g}^2 & \bar{g}
\end{pmatrix}
\right\}
+i
\begin{pmatrix}
g & \frac{g}{\bar{g}}\\
1+\varphi & \frac{1}{\bar{g}}
\end{pmatrix}
\right]
\\
%
U_{2,4}
&:=
\const^2\left\{
g^2 \homega^2 \varphi
\begin{pmatrix}
 1 & 2g  \\
 0 & 1 
\end{pmatrix}
+
\homega_z
\begin{pmatrix}
 g & g^2  \\
 1 & g
\end{pmatrix}
\right\},
\quad
%
U_{2,5}
:=
\const\bar{\const}|\homega|^2
\begin{pmatrix}
 g\bar{g} & g  \\
 \bar{g} & 1
\end{pmatrix}.
\end{align*}
\end{lemma}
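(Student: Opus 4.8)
The plan is to compute $T_1$ and $T_2$ directly from the recursion in Lemma \ref{lem:Tk}, expanding each quantity as a polynomial in $\lambda=1-|g|^2$ and matching coefficients against the stated matrices. Throughout I would keep $i=\sqrt{-1}$, and use the two basic relations $g_z=g^2\homega\varphi$ (from the definition of $\varphi$) and $\lambda_z=-\bar{g}\,g_z=-\bar{g}g^2\homega\varphi$ (from holomorphicity of $g$).

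First I would treat $T_1$. Applying $\zeta$ to $f=Fe_3F^*$ and substituting $\zeta F=F(\Psi+\lambda^2\Omega)$ together with $\zeta F^*=(\Psi^*+\lambda^2\Omega^*)F^*$ from \eqref{eq:zeta-F}, one gets
\[
  T_1=(\Psi+\lambda^2\Omega)e_3+e_3(\Psi^*+\lambda^2\Omega^*).
\]
A short matrix computation using $g-\tfrac{1}{\bar{g}}=-\tfrac{\lambda}{\bar{g}}$ and $\tfrac{1}{g}-\bar{g}=\tfrac{\lambda}{g}$ (both consequences of $|g|^2=1-\lambda$) gives $\Psi e_3+e_3\Psi^*=\lambda T_{1,1}$, while $\Omega e_3+e_3\Omega^*=T_{1,2}$ is immediate from the definitions of $\Omega$ and $e_3$. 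Adding these yields $T_1=\lambda T_{1,1}+\lambda^2 T_{1,2}$.

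Next I would compute $T_2$ from Lemma \ref{lem:Tk} with $j=2$, that is $T_2=U_2+U_2^*$ with $U_2=\big(\tfrac{i}{g\homega}+\const\lambda^2\big)(T_1)_z+(\Psi+\lambda^2\Omega)T_1$. Since $\lambda$ is real, the $\lambda$-expansion of $U_2^*$ is obtained by applying $\ast$ to the coefficients of that of $U_2$, so it suffices to expand $U_2$ and set $T_{2,j}=U_{2,j}+(U_{2,j})^*$. Substituting $T_1=\lambda T_{1,1}+\lambda^2 T_{1,2}$, differentiating by the product rule, and reducing with $\lambda_z$ and $g_z$ above, one expands $U_2$ as a polynomial in $\lambda$ whose coefficients are explicit expressions in $g,\bar{g},\homega,\homega_z,\varphi,\const,\bar{\const}$. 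For instance the $\lambda^0$-coefficient is $\tfrac{i}{g\homega}\lambda_z T_{1,1}=\varphi\left(\begin{smallmatrix}0&-g\\ \bar{g}&0\end{smallmatrix}\right)=U_{2,0}$, and the remaining coefficients are matched against $U_{2,1},\dots,U_{2,5}$ in the same manner.

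The hard part will be the bookkeeping in the expansion of $U_2$, and in particular a subtlety in the powers of $\lambda$. A naive degree count suggests $U_2$ stops at $\lambda^4$, but some matrix products carry a hidden factor $\lambda=1-|g|^2$. The decisive instance is the top-order term $\lambda^4\,\Omega\,T_{1,2}$ coming from $(\lambda^2\Omega)(\lambda^2T_{1,2})$: isolating the antiholomorphic part of $T_{1,2}$ and using
\[
  \begin{pmatrix} g & -g^2\\ 1 & -g\end{pmatrix}
  \begin{pmatrix} \bar{g} & 1\\ \bar{g}^2 & \bar{g}\end{pmatrix}
  =(1-|g|^2)\begin{pmatrix} g\bar{g} & g\\ \bar{g} & 1\end{pmatrix}
  =\lambda\begin{pmatrix} g\bar{g} & g\\ \bar{g} & 1\end{pmatrix},
\]
one sees that this nominally $\lambda^4$ contribution actually lands at order $\lambda^5$, producing $U_{2,5}=\const\bar{\const}|\homega|^2\left(\begin{smallmatrix} g\bar{g} & g\\ \bar{g} & 1\end{smallmatrix}\right)$. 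Care must be taken to detect every such factorization, and conversely to check that no spurious $\lambda^6$ term survives, so that the six coefficients land exactly on the stated $U_{2,0},\dots,U_{2,5}$.
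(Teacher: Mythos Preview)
Your approach is essentially the same as the paper's: compute $T_1$ from $\zeta f=(\zeta F)e_3F^*+Fe_3(\zeta F^*)$, then expand $U_2=\theta\,(T_1)_z+(\Psi+\lambda^2\Omega)T_1$ as a polynomial in $\lambda$ and match coefficients. Your identification of the hidden $\lambda$ in $\Omega T_{1,2}$ producing $U_{2,5}$ is exactly the mechanism the paper uses.

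One point worth flagging explicitly, since you phrase it only as a general warning: the paper finds a \emph{second} hidden $\lambda$ at order~$2$, coming from $\Psi\cdot\bigl(\bar\const\bar\homega\left(\begin{smallmatrix}\bar g&1\\\bar g^2&\bar g\end{smallmatrix}\right)\bigr)$ inside $\Psi T_{1,2}$. That product factors as $\lambda\,U_{2,2*}$ with $U_{2,2*}:=i\bar\const\bar\homega\left(\begin{smallmatrix}\bar g&1\\\bar g/g&1/g\end{smallmatrix}\right)$, and this extra $\lambda^3U_{2,2*}$ cancels against a $-U_{2,2*}$ already sitting inside the naive $\lambda^3$-coefficient; only after that cancellation do the order-$2$ and order-$3$ coefficients agree with the stated $U_{2,2}$ and $U_{2,3}$. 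So your bookkeeping must track two hidden-$\lambda$ events, not one.
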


\begin{proof}
Substituting \eqref{eq:zeta-F} into
$\zeta f=(\zeta F)  e_3 F^* + F  e_3 (\zeta F^*)$,
we have
$$
\zeta f
= F\left[
   i \lambda \begin{pmatrix} 0 & -\frac1{\bar{g}}\\ \frac1{g}&0 \end{pmatrix}
   + \lambda^2 \left\{
       \const\homega  
       \begin{pmatrix} g & g^2\\1&g \end{pmatrix} 
       +
       \bar{\const}\homega  
       \begin{pmatrix} \bar{g} & 1\\\bar{g}^2&\bar{g} \end{pmatrix} 
  \right\}
  \right] F^*.
$$
With respect to $T_2$,
substituting $T_1 = \lambda \, T_{1,1} +  \lambda^2 \, T_{1,2}$
into 
$U_2 = \left( \Psi + \lambda^2 \Omega \right)T_{1}
+(\frac{i}{g\homega } + \const\,\lambda^2)\,(T_{1})_z$,
we have 
$U_2= \sum_{j=0}^4\redl^j {U_2}^j$,
where we set
${U_2}^0= \frac{i}{g\homega }\lambda_z T_{1,1}$,
\begin{gather*}
{U_2}^1= \frac{i}{g\homega }(2\lambda_z T_{1,2}+(T_{1,1})_z)
+ \Psi T_{1,1},\qquad
{U_2}^2 = \frac{i}{g\homega }(T_{1,2})_z
+ \const\, \lambda_z T_{1,1} +  \Psi T_{1,2}\\
{U_2}^3
= \vphantom{\frac1{6}}
\const\,(2\lambda_z T_{1,2}+(T_{1,1})_z)
+ \Omega T_{1,1},\qquad
{U_2}^4
= \vphantom{\frac1{6}}
\const\,(T_{1,2})_z+ \Omega T_{1,2}.
\end{gather*}
By a direct calculation,
we have
${U_2}^0=U_{2,0}$,
${U_2}^1=U_{2,1}$, 
\begin{gather*}
  {U_2}^2=U_{2,2}+\lambda U_{2,2*},
  \quad
  {U_2}^3=U_{2,3}-U_{2,2*},
  \quad
  {U_2}^4 = U_{2,4} + \lambda U_{2,5},
\end{gather*}
where 
we set
$U_{2,2*}:=i \bar{\const} \bar{\homega} 
\begin{pmatrix}
\bar{g} & 1\\
\frac{\bar{g}}{g} &\frac1{g}
\end{pmatrix}$.
Then we can check that
$U_2 =\sum_{j=0}^5 \redl^j U_{2,j}$
holds.
Together with $T_2 = U_2+(U_2)^*$ (Lemma \ref{lem:Tk}),
we obtain the desired result.
\end{proof}

\begin{lemma}\label{lem:T2pT3p}
Suppose that $p\in \Sigma(f)$
is a non-degenerate singular point,
and that $f$ is not a front at $p$.
Then, $T_2(p)$ and $T_3(p)$ are written as
$$
T_2(p)
= -2i \Im \varphi
\begin{pmatrix}0&g\\ -\bar{g}&0\end{pmatrix},
\quad
T_3(p)
= 2i \Im \varphi \Im (\vf \varphi)
\begin{pmatrix}
0 & g\\ -\bar{g} &0
\end{pmatrix},
$$
where the right hand side is evaluated at $p$.
\end{lemma}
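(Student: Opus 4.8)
The plan is to evaluate $T_2$ and $T_3$ at $p$ by exploiting that $\lambda:=1-|g|^2$ vanishes there, since $|g(p)|=1$ on $\Sigma(f)$ by Fact~\ref{fact:singular-FSUY}; consequently, in any expansion in powers of $\lambda$ only the $\lambda^0$-term contributes at $p$. For $T_2$, Lemma~\ref{lem:T2-j} gives $T_2=\sum_{j=0}^{5}\lambda^j T_{2,j}$, whence $T_2(p)=T_{2,0}(p)$. Since $T_{2,0}=U_{2,0}+(U_{2,0})^*$ with $U_{2,0}=\varphi\left(\begin{smallmatrix}0&-g\\ \bar g&0\end{smallmatrix}\right)$, forming the Hermitian part and using $\bar\varphi-\varphi=-2\sqrt{-1}\,\Im\varphi$ yields $T_2(p)=-2\sqrt{-1}\,\Im\varphi\left(\begin{smallmatrix}0&g\\ -\bar g&0\end{smallmatrix}\right)$ at once, which is the first assertion.

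For $T_3$, I would apply Lemma~\ref{lem:Tk} with $j=3$, so that $T_3=U_3+(U_3)^*$ with $U_3=\bigl(\tfrac{\sqrt{-1}}{g\homega}+\const\lambda^2\bigr)(T_2)_z+(\Psi+\lambda^2\Omega)T_2$. Evaluating at $p$, the factor $\lambda(p)=0$ kills the $\const\lambda^2$ and $\lambda^2\Omega$ terms, leaving $U_3(p)=\tfrac{\sqrt{-1}}{g\homega}(T_2)_z(p)+\Psi\,T_2(p)$. Differentiating the expansion $T_2=\sum_j\lambda^jT_{2,j}$ and again using $\lambda(p)=0$, only the $j=0,1$ terms survive, so $(T_2)_z(p)=(T_{2,0})_z(p)+\lambda_z(p)\,T_{2,1}(p)$. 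Thus $T_3(p)$ is determined entirely by the explicit matrices $T_{2,0}$, $T_{2,1}$, $\Psi$ of Lemma~\ref{lem:T2-j} together with their $z$-derivatives.

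The computation would then be carried out using the identities $g_z=g^2\homega\varphi$ (so that $\bar g$ and $\varphi$ are holomorphic, giving $\bar g_z=0$), $\lambda_z=-|g|^2 g\homega\varphi$, hence $\lambda_z(p)=-g\homega\varphi(p)$, and $\varphi_z=g\homega\varphi\,\vf\varphi$ from \eqref{eq:D-ell}; after evaluation one also sets $|g(p)|=1$. The crucial input is that, since $f$ is not a front at $p$, Fact~\ref{fact:singular-FSUY}~(3) forces $\Re\varphi(p)=0$, i.e.\ $\varphi(p)$ is purely imaginary with $\bar\varphi(p)=-\varphi(p)$. I expect the main obstacle to be the matrix bookkeeping: one must multiply out $\Psi T_{2,0}$, $(T_{2,0})_z$ and $\lambda_z T_{2,1}$, substitute these identities, and verify that upon forming the Hermitian part $U_3(p)+U_3(p)^*$ the diagonal entries cancel while the off-diagonal entries collapse to exactly $2\sqrt{-1}\,\Im\varphi\,\Im(\vf\varphi)\,g$ and its conjugate (consistently with $\sqrt{-1}\left(\begin{smallmatrix}0&g\\ -\bar g&0\end{smallmatrix}\right)$ being Hermitian and the scalar factor being real). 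The delicate point is that the condition $\Re\varphi(p)=0$ is precisely what converts the products involving $\varphi$ and $\vf\varphi$ into the single real coefficient $\Im\varphi\,\Im(\vf\varphi)$; without it, additional terms carrying $\Re(\vf\varphi)$ or $|\varphi|^2$ would survive and spoil the stated form.
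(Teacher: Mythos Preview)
Your proposal is correct and follows essentially the same route as the paper's proof: both evaluate $T_2(p)=T_{2,0}(p)$ via $\lambda(p)=0$, then use Lemma~\ref{lem:Tk} to write $U_3(p)=\tfrac{i}{g\homega}(T_2)_z(p)+\Psi(p)T_2(p)$, reduce $(T_2)_z(p)$ to $(T_{2,0})_z(p)+\lambda_z(p)T_{2,1}(p)$, and finish the matrix computation using $\Re\varphi(p)=0$. The paper simply records the resulting expression for $U_3(p)$ (its equation~\eqref{eq:U3}) and then forms the Hermitian part, which is exactly the computation you outline.
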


\begin{proof}
Since $T_2(p)=T_{2,0}(p)=U_{2,0}(p)+(U_{2,0}(p))^*$,
we obtain $T_2(p)$ by Lemma \ref{lem:T2-j}.
By Lemma \ref{lem:Tk},
we have $T_3(p) = U_3(p)+U_3(p)^*$,
where
$U_3(p) 
  = \frac{i}{g\homega}(p)(T_{2})_z(p)
  + \Psi(p)T_{2}(p)$.
By Lemma \ref{lem:T2-j},
\begin{align}
\nonumber
  U_3(p)&=\frac{i}{g\homega }(T_{2,0})_z
  + \frac{i}{g\homega }\lambda_z T_{2,1}
  + \Psi T_{2,0}\\
\label{eq:U3}
  &=
2i(\Im \varphi)^2
\begin{pmatrix}
0 & 0\\ \bar{g} &0
\end{pmatrix}
+
\left(
i(\Im \varphi)^2
+
(\Im \varphi ) \vf \varphi
\right)
\begin{pmatrix}
0 & g\\ -\bar{g} &0
\end{pmatrix}.
\end{align}
Here, the right hand sides are evaluated at $p$.
Substituting \eqref{eq:U3} into $T_3(p) = U_3(p)+ U_3(p)^*$,
we obtain the desired result.
\end{proof}

We prove the following (Lemma \ref{lem:T5p-body}) in the appendix.

\begin{lemma}\label{lem:T5p-body}
Let $f:D\to S^3_1$ be a CMC $1$ face
with Weierstrass data $(g,\omega)$,
and $(\varphi,\vf)$ be the characteristic pair.
Suppose that 
$p\in D$ is a non-degenerate singular point
satisfying
$\Re (\varphi(p)) = \Im (\vf \varphi(p)) =0$.
Then, there exist $a_1,a_2,a_3\in\R$ such that
$$
  \zeta^5f(p)=
  a_1\,f+a_2\,\xi f+a_3\, \zeta^2 f
  + 4(12-\Re \vf^2\varphi)(\Im \varphi)^3 
   F\begin{pmatrix}
     1 & -g \\
     -\bar{g} & 1
   \end{pmatrix}F^*,
$$
where the right hand side is evaluated at $p$.
\end{lemma}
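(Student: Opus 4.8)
The plan is to compute $\zeta^5 f(p)$ explicitly by iterating the recursion from Lemma \ref{lem:Tk}, writing $\zeta^j f = F T_j F^*$ and tracking the matrices $T_j$ in powers of $\lambda = 1-|g|^2$. Since $\lambda(p)=0$ at a singular point, only the terms surviving after differentiation and evaluation matter, and the leading structure is governed by $\lambda_z(p)$, which is controlled by the non-degeneracy assumption $dg(p)\neq 0$. The hypotheses $\Re\varphi(p)=0$ and $\Im(\vf\varphi)(p)=0$ will collapse many contributions: recall from Fact \ref{fact:singular-FSUY} that $\Re\varphi(p)=0$ means $f$ fails to be a front at $p$ (so the first-kind/front dichotomy is fixed), while $\Im(\vf\varphi)(p)=0$ is exactly the extra condition that kills the next-order obstruction. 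The goal is to show that after subtracting off the span of $f(p)$, $\xi f(p)$, and $\zeta^2 f(p)$, the remainder is a single explicit multiple of $F\,\mathrm{diag\text{-}like}\,F^*$ with coefficient $4(12-\Re\vf^2\varphi)(\Im\varphi)^3$.

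First I would set up $T_4$ and $T_5$ using Lemma \ref{lem:Tk}, building on the explicit forms of $T_1$ and $T_2$ from Lemma \ref{lem:T2-j} and of $T_3(p)$ from Lemma \ref{lem:T2pT3p}. The key computational device is that each application of $\zeta$ acts by $U_j = (\tfrac{i}{g\homega}+\const\lambda^2)(T_{j-1})_z + (\Psi+\lambda^2\Omega)T_{j-1}$, so I can organize everything by expanding in powers of $\lambda$ and then setting $\lambda(p)=0$ at the end, keeping only terms where enough $\lambda$-factors have been consumed by differentiation. To reduce the derivatives cleanly, I would use the identities \eqref{eq:D-ell} relating $(\vf^{\ell-1}\varphi)_z$ to $g\homega\varphi\,\vf^\ell\varphi$, together with $\lambda_z = -\bar{g}g_z = -\bar g\,g^2\homega\varphi$ at $p$, so that all $z$-derivatives get rewritten in terms of the invariant quantities $\varphi,\vf\varphi,\vf^2\varphi$ evaluated at $p$.

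The main obstacle, as I expect, will be the bookkeeping in assembling $T_5(p)$: there are many matrix terms, and most must cancel or reorganize into the three ``lower'' vectors $f,\xi f,\zeta^2 f$ plus the single surviving normal-type matrix $\begin{pmatrix} 1 & -g \\ -\bar g & 1\end{pmatrix}$. To manage this, I would first identify $\xi f(p)$ and $\zeta^2 f(p)=F T_2(p) F^*$ as specific matrices via Lemma \ref{lem:T2pT3p}, and then show that the span of $\{f(p),\xi f(p),\zeta^2 f(p)\}$ together with $F\begin{pmatrix}1&-g\\-\bar g&1\end{pmatrix}F^*$ exhausts the relevant subspace of $\Herm(2)$; the coefficient extraction then becomes a linear-algebra matching problem rather than an unstructured expansion. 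The real work is verifying that, after imposing $\Re\varphi(p)=\Im(\vf\varphi)(p)=0$, every term proportional to $(\Im\varphi)^j$ with $j<3$ is absorbed into the lower vectors, so that $(\Im\varphi)^3$ is genuinely the lowest surviving order and the numerical constant $12$ emerges from the combinatorial coefficients of the iterated recursion.

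Since this term-by-term expansion of $T_5$ is lengthy but mechanical, I would relegate it to the appendix as indicated, presenting there the explicit forms of $T_4$ and $T_5$ and the reduction of their $p$-values modulo $\mathrm{span}\{f,\xi f,\zeta^2 f\}$. The constant $12$ and the factor $4$ are the only delicate numerical outputs, and I would double-check them by tracking the coefficient of $\const^0$ (the top-order-in-$\lambda$ part that survives after three differentiations hit the three $\lambda$-factors), since the $\const$-dependent terms must ultimately cancel against the chosen value $\const=-\sqrt{-1}\,g(p)/(2g_z(p))$ that makes $\zeta$ satisfy the orthogonality \eqref{eq:perp} needed later in Theorem \ref{thm:to-prove}.
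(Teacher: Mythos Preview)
Your plan is essentially the paper's approach: iterate Lemma~\ref{lem:Tk}, expand $T_j$ in powers of $\lambda$, and reduce modulo the span of $f(p)$, $\xi f(p)$, $\zeta^2 f(p)$. The paper implements this reduction cleanly by introducing a linear functional $\mu:M_2(\C)\to\C$ that picks off the component along the fourth basis matrix $E_3=\begin{pmatrix}1&-g\\-\bar g&1\end{pmatrix}$, and by proving the structural fact $\Psi(p)\,M_2(\C)\subset\calE^{\C}$ (so every term of the form $\Psi(p)T_j$ is automatically absorbed), together with $T_3(p)=O$ from the hypothesis $\Im(\vf\varphi)(p)=0$; these two observations are what make the five-fold recursion collapse to a short computation of $(T_4)_z(p)$ modulo $\calE^{\C}$, and you should adopt them rather than brute-force the full $T_4$ and $T_5$.

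One correction: your expectation that the $\const$-dependent terms cancel so that only the $\const^0$ coefficient matters is not right. The specific value $\const=-\sqrt{-1}\,g(p)/(2g_z(p))$ enters the final constant $12$ genuinely, via $\mu_3(p)=-\tfrac12$ feeding into $(T_2)_{zzz}(p)$ and hence $(T_3)_{zz}(p)$; the $\const$-terms contribute to the $E_3$-component and must be carried through with the chosen value substituted, not discarded.
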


On the other hand, 
by a direct calculation,
we have the following.

\begin{lemma}\label{lem:xi-fp}
Let $\xi$ be the singular directional vector field
defined by \eqref{eq:xi-eta},
and $p$ be a non-degenerate singular point.
Then
$$
  \xi f(p)
  = 2 |\homega|^2 \Im (\varphi) 
  F
  \begin{pmatrix} 1 & g\\ \bar{g}&1 \end{pmatrix}
  F^*,
$$
where the right hand side is evaluated at $p$.
\end{lemma}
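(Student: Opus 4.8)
The plan is to compute $\xi f$ directly from the definitions and then specialize to the singular point $p$. First I would record the two first-order derivatives of $f = F e_3 F^*$ (see \eqref{eq:cmc1-face}). Since $F$ is holomorphic we have $F_{\bar z}=0$, hence also $(F^*)_z = (\overline{F_{\bar z}})^T = 0$ while $(F^*)_{\bar z}=(F_z)^*$. Combining this with the structure equation \eqref{eq:F^-1dF} and $\omega=\homega\,dz$ gives
$$
  f_z = \homega\,F\begin{pmatrix} g & g^2 \\ 1 & g \end{pmatrix}F^*,
  \qquad
  f_{\bar z} = \bar{\homega}\,F\begin{pmatrix} \bar g & 1 \\ \bar g^2 & \bar g \end{pmatrix}F^*,
$$
where the two inner matrices are $\begin{pmatrix} g & -g^2 \\ 1 & -g \end{pmatrix}e_3$ and $e_3\begin{pmatrix} g & -g^2 \\ 1 & -g \end{pmatrix}^*$, respectively.

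Next I would substitute these into $\xi=\sqrt{-1}\,\overline{(g_z/g)}\,\partial_z-\sqrt{-1}\,(g_z/g)\,\partial_{\bar z}$ from \eqref{eq:xi-eta}. Using the identity $g_z=g^2\homega\varphi$ (which follows from the definition of $\varphi$ in \eqref{eq:varphi}, exactly as in the proof of Lemma \ref{lem:invariants}), one has $g_z/g=g\homega\varphi$, so the scalar coefficients become $\sqrt{-1}\,\bar g\bar{\homega}\bar\varphi$ and $\sqrt{-1}\,g\homega\varphi$. Collecting the common factor $|\homega|^2$ and pulling $F$ and $F^*$ outside yields
$$
  \xi f = \sqrt{-1}\,|\homega|^2\,
  F\left[\,\bar g\bar\varphi\begin{pmatrix} g & g^2 \\ 1 & g \end{pmatrix}
  - g\varphi\begin{pmatrix} \bar g & 1 \\ \bar g^2 & \bar g \end{pmatrix}\right]F^*.
$$

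Finally I would evaluate at $p$, where $|g(p)|=1$, i.e.\ $g\bar g=1$. This is the decisive simplification: the relations $\bar g g^2=g$ and $g\bar g^2=\bar g$ collapse the bracketed matrix, whose diagonal entries both reduce to $\bar\varphi-\varphi$ and whose off-diagonal entries reduce to $g(\bar\varphi-\varphi)$ and $\bar g(\bar\varphi-\varphi)$. Since $\bar\varphi-\varphi=-2\sqrt{-1}\,\Im\varphi$, the bracket equals $-2\sqrt{-1}\,\Im\varphi\begin{pmatrix} 1 & g \\ \bar g & 1 \end{pmatrix}$, and multiplying by the prefactor $\sqrt{-1}\,|\homega|^2$ produces exactly $2|\homega|^2\,\Im\varphi\,F\begin{pmatrix} 1 & g \\ \bar g & 1 \end{pmatrix}F^*$, as claimed. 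There is no genuine obstacle in this argument; the only points requiring care are the consistent use of the holomorphicity relations $F_{\bar z}=0$ and $(F^*)_z=0$, and the systematic application of $g\bar g=1$ at $p$, which is precisely what turns a generic $2\times2$ expression into the stated matrix.
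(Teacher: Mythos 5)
Your computation is correct and is exactly the "direct calculation" the paper invokes without writing out: differentiating $f=Fe_3F^*$ using holomorphicity of $F$ and the structure equation \eqref{eq:F^-1dF}, substituting $g_z/g=g\homega\varphi$ into $\xi$, and simplifying with $g\bar g=1$ at $p$. No gaps; this matches the intended argument.
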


\begin{proof}[Proof of Theorem \ref{thm:to-prove}]
We set $i=\sqrt{-1}$.
Without loss of generality,
we may assume that 
the holomorphic null lift $F$ satisfies
$
F(p)= e_0
$
by an isometry \eqref{eq:action},
where $ e_0$ is the identity matrix \eqref{eq:pauli}.
We set
\begin{equation}\label{eq:orthogonal-proj}
  {\rm pr} : \Herm(2) \ni 
  \begin{pmatrix}
    x_0+x_3 & x_1+i x_2 \\
    x_1-i x_2 & x_0-x_3 
  \end{pmatrix}
  \longmapsto 
  (x_0,x_1,x_2)^T \in \R^3.
\end{equation}
The restriction 
${\rm pr}|_{S^3_1}:S^3_1\rightarrow \R^3$
gives a local diffeomorphism 
at $f(p)= e_3\in S^3_1$.
In particular, $f:D\to S^3_1$ has $5/2$-cuspidal edge
at $p\in D$ if and only if
so does 
$\tilde{f}:={\rm pr}\circ f : D\to \R^3$.
We apply the criteria (Fact \ref{fact:rhamphoid})
to $\tilde{f}$.
By Fact \ref{fact:singular-FSUY},
$p\in D$ is a singular point of the first kind
if and only if $\Im \varphi(p)\ne0$.
Also by Fact \ref{fact:singular-FSUY},
the condition \eqref{eq:frontal} holds
if and only if
$\Re \varphi=0$ holds along the singular curve passing through $p$.
Thus, $\xi^k\Re \varphi=0$ for arbitrary non-positive integer $k$.
By Proposition \ref{prop:Sk-induction},
we have 
\begin{equation}\label{eq:V2phi}
  \Re (\varphi(p))=
  \Im (\vf \varphi(p))=
  \Re (\vf^2\varphi(p))=0.
\end{equation}
Lemmas \ref{lem:T2pT3p}, \ref{lem:xi-fp}
yield that
$$
  \xi \tilde{f}(p)=
  2|\homega(p) |^2 (\Im \varphi(p))
  \begin{pmatrix}
  1\\   \Re g(p)\\   \Im g(p)
  \end{pmatrix},
  \qquad
  \zeta^2 \tilde{f}(p)=
  2\Im \varphi(p)
  \begin{pmatrix}
  0\\   \Im g(p)\\   -\Re g(p)
  \end{pmatrix},
$$
and $\zeta^3 \tilde{f}(p)=\vect{0}$.
Hence, the constant $c$ in the criteria 
(Fact \ref{fact:rhamphoid}) is $c=0$,
and 
$\inner{\xi \tilde{f}(p)}{\eta^2 \tilde{f}(p)}_E
=\inner{\xi \tilde{f}(p)}{\eta^3 \tilde{f}(p)}_E
=0$ holds,
namely, $\zeta$ is a null vector field 
satisfying \eqref{eq:perp}.
By Lemma \ref{lem:T5p-body},
we have
$$
\zeta^5\tilde{f}(p)
={\rm pr}(\zeta^5f(p))
= a_2 \xi \tilde{f}(p) + a_3 \zeta^2 \tilde{f}(p) +
  48 (\Im \varphi(p))^3 
  \begin{pmatrix}
  1\\   -\Re g(p)\\  -\Im g(p)
  \end{pmatrix},
$$
and hence
$
\det(\xi \tilde{f},\,\eta^2 \tilde{f},\,\eta^5 \tilde{f})(p)
= -384 |\homega(p) |^2 (\Im \varphi(p))^5\ne0.
$
Therefore, we have the assertion.
\end{proof}

Theorem \ref{thm:A} is a direct conclusion of the following.

\begin{theorem}
Let $f:D\to S^3_1$ be a CMC $1$ face
defined on a simply connected domain $D$ of $\C$,
and $p\in D$ be a singular point.
Then,
$f$ has generalized conelike singularity at $p$
if and only if 
$f^\sharp$ has $5/2$-cuspidal edge at $p$.
\end{theorem}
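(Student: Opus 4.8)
The plan is to reduce everything to the two criteria already established, namely Proposition \ref{prop:conelike-CMC1} for generalized conelike singularities and Theorem \ref{thm:to-prove} for $5/2$-cuspidal edges, by tracking how the characteristic function $\varphi$ transforms under conjugation. First I would record that the conjugate CMC $1$ face $f^\sharp$ has Weierstrass data $(g,-\sqrt{-1}\,\omega)$, so that its characteristic function is
\[
  \varphi^\sharp=\frac{dg}{g^2(-\sqrt{-1}\,\omega)}=\sqrt{-1}\,\varphi .
\]
Writing $\varphi=\Re\varphi+\sqrt{-1}\,\Im\varphi$ gives immediately
\[
  \Re\varphi^\sharp=-\Im\varphi,\qquad \Im\varphi^\sharp=\Re\varphi .
\]

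Next I would observe that the singular sets of $f$ and $f^\sharp$ coincide: by Fact \ref{fact:singular-FSUY} we have $\Sigma(f)=\{p\,;\,|g(p)|=1\}=\Sigma(f^\sharp)$, since $f$ and $f^\sharp$ share the same meromorphic function $g$. In particular, a singular curve through $p$ for $f$ is simultaneously a singular curve through $p$ for $f^\sharp$, so the phrase ``along a singular curve passing through $p$'' refers to the very same curve in both criteria. This is the only point requiring a small amount of care, and it is settled purely by the fact that the singular locus depends on $g$ alone; non-degeneracy likewise matches, since $p$ is non-degenerate for $f$ if and only if $dg(p)\neq0$, hence if and only if it is non-degenerate for $f^\sharp$.

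Finally I would combine these observations. By Proposition \ref{prop:conelike-CMC1}, $f$ has a generalized conelike singular point at $p$ exactly when $\Re\varphi(p)\neq0$ and $\Im\varphi=0$ holds along the singular curve. Translating through the two displayed identities, these are precisely the conditions $\Im\varphi^\sharp(p)\neq0$ and $\Re\varphi^\sharp=0$ along the (same) singular curve, which by Theorem \ref{thm:to-prove} are equivalent to $f^\sharp$ having a $5/2$-cuspidal edge at $p$. Reading the chain of equivalences in the opposite direction gives the converse, completing the proof. Since the substantive analytic work is already contained in Theorem \ref{thm:to-prove}, no genuine obstacle remains here; the argument is a direct dictionary translation under the correspondence $\varphi\mapsto\sqrt{-1}\,\varphi$, which interchanges the roles of $\Re\varphi$ and $\Im\varphi$.
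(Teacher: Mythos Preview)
Your proof is correct and follows exactly the same approach as the paper, which simply cites Proposition~\ref{prop:conelike-CMC1} and Theorem~\ref{thm:to-prove} and leaves the translation $\varphi^\sharp=\sqrt{-1}\,\varphi$ implicit. You have merely made explicit the dictionary between $\Re\varphi,\Im\varphi$ and $\Re\varphi^\sharp,\Im\varphi^\sharp$ and the coincidence of the singular curves, which is precisely what the one-line proof in the paper relies on.
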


\begin{proof}
By Proposition \ref{prop:conelike-CMC1} and
Theorem \ref{thm:to-prove},
we have the desired result.
\end{proof}

\begin{remark}
\label{rem:rhamphoid-invariant}
By \eqref{eq:V2phi},
every $5/2$-cuspidal edge singular points 
satisfies the condition \eqref{eq:condition-S},
and hence, the $\sigma$-invariant can be defined.
However, \eqref{eq:V2phi} yields that
the $\sigma$-invariant vanishes along $5/2$-cuspidal edge singular points.
\end{remark}

\begin{example}
\label{ex:catenoid-relative}
Fix a constant $\constant\in\C\setminus\{0\}$.
Let $f:\C\to S^3_1$ be a CMC $1$ face
given by the Weierstrass data
$(g,\omega)=(e^z,\,\constant\, e^{-z}dz)$.
The singular set $\Sigma(f)$ coincides with the imaginary axis $\Re z=0$.
Since $\varphi=dg/(g^2\omega)=1/\constant$,
a singular point $z\in \Sigma(f)$ is
\begin{itemize}
\item cuspidal edge if and only if 
$ \constant\in \C\setminus (\R\cup \sqrt{-1}\R) $
(Fact \ref{fact:singular-FSUY}),
\item 
a generalized conelike singular point if and only if 
$ \constant\in \R\setminus \{0\}$
(Proposition \ref{prop:conelike-CMC1}),
\item $5/2$-cuspidal edge if and only if 
$ \constant\in \sqrt{-1}\R\setminus \{0\}$
(Theorem \ref{thm:to-prove}).
\end{itemize}
Therefore, we can observe the duality
between generalized conelike singular points
and $5/2$-cuspidal edge singular points
as in Theorem \ref{thm:A}.
See Figure \ref{fig:catenoid-cousin} for 
the images of $\constant=1$ and $\constant=-\sqrt{-1}$.
In the case of $\constant\neq-1/4$,
we can find a solution of the ODE \eqref{eq:F^-1dF} as
\begin{equation*}
  F= 
    \dfrac1{\sqrt{\rho-\tau}}
    \begin{pmatrix}
          \tau\, e^{\rho z} 
          & \rho\, e^{-\tau z}\\ 
          \rho\, e^{\tau z} 
          & \tau\, e^{-\rho z} 
    \end{pmatrix} 
    \quad\left(
      \rho=\frac{-1+\sqrt{1+4\constant}}{2},\quad
  \tau=\frac{-1-\sqrt{1+4\constant}}{2}\right),
\end{equation*}
and hence the map
$f=F e_3F^*$
can be explicitly written as
\begin{equation*}
  f= 
    \dfrac1{|\rho-\tau|}
    \begin{pmatrix} 
       |\tau|^2 e^{\rho z +\bar{\rho} \bar{z}} 
       - |\rho|^2 e^{-(\tau z +\bar{\tau} \bar{z})}
       &
       \bar{\rho}\tau\, e^{\rho z +\bar{\tau} \bar{z}} 
         - \rho\bar{\tau}\, e^{-(\tau z +\bar{\rho} \bar{z})}
       \\
       \rho\bar{\tau}\, e^{\tau z +\bar{\rho} \bar{z}} 
         - \bar{\rho}{\tau}\, e^{-(\rho z +\bar{\tau} \bar{z})}
       &
       |\rho|^2 e^{\tau z +\bar{\tau} \bar{z}} 
       - |\tau|^2 e^{-(\rho z +\bar{\rho} \bar{z})}
    \end{pmatrix}.
\end{equation*}
\end{example}

\section{Duality between $A_k$ singularities 
and cuspidal $S_k$ singular points}
\label{sec:AkSk}

\subsection{Generalized $A_k$ singular points}
For an integer $k\,(\geq 2)$, we set
${f}_{A_k} : \R^2 \to \R^3$ as
$
  {f}_{A_k}(u,v):= (u, - (k+1) v^k -2 u v, k v^{k+1}+u v^2).
$
The image of ${f}_{A_k}$ coincides with
the discriminant set
$$
  \mathcal{D}_F
  =\left\{(x,y,z)\in \R^3\,;\, 
  \text{there exists}~ t\in \R ~\text{s.t.}~ F=F_t=0\right\}
$$
of the function $F(t,x,y,z):=t^{k+1}+xt^2+yt+z.$

Let $f:D \to M^3$ be a smooth map
defined on a 2-manifold $D$
into a 3-manifold $M^3$.
A singular point $p\in \Sigma(f)$ is said to be 
an {\it $A_k$-type singular point}
(or {\it $A_k$-front singular point})
if $f$ at $p$ is $\mathcal{A}$-equivalent to 
${f}_{A_k}$ at the origin.
$A_2$- (resp.\ $A_3$-) type singular points are cuspidal edges 
(resp.\ swallowtails).
An $A_4$-type singular point is called {\it cuspidal butterfly}.

\begin{figure}[htb]
\begin{center} 
 \begin{tabular}{{c@{\hspace{8mm}}c}}
  \resizebox{3cm}{!}{\includegraphics{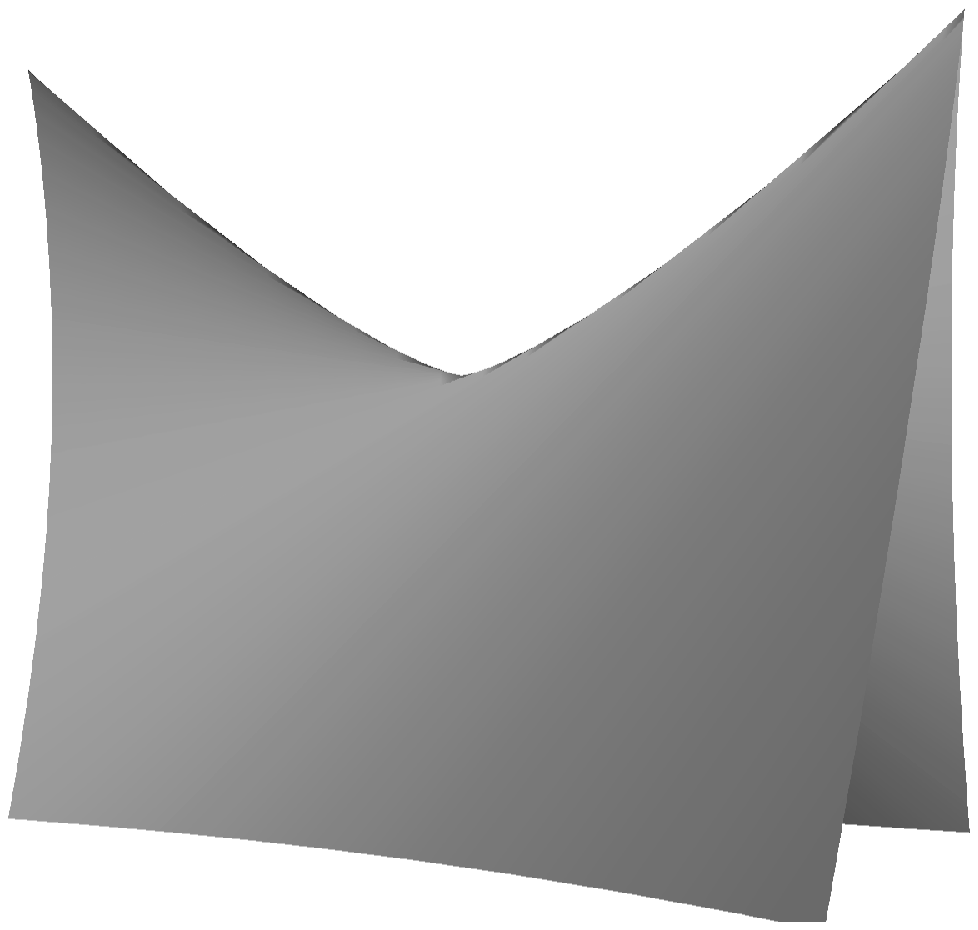}} &
  \resizebox{3cm}{!}{\includegraphics{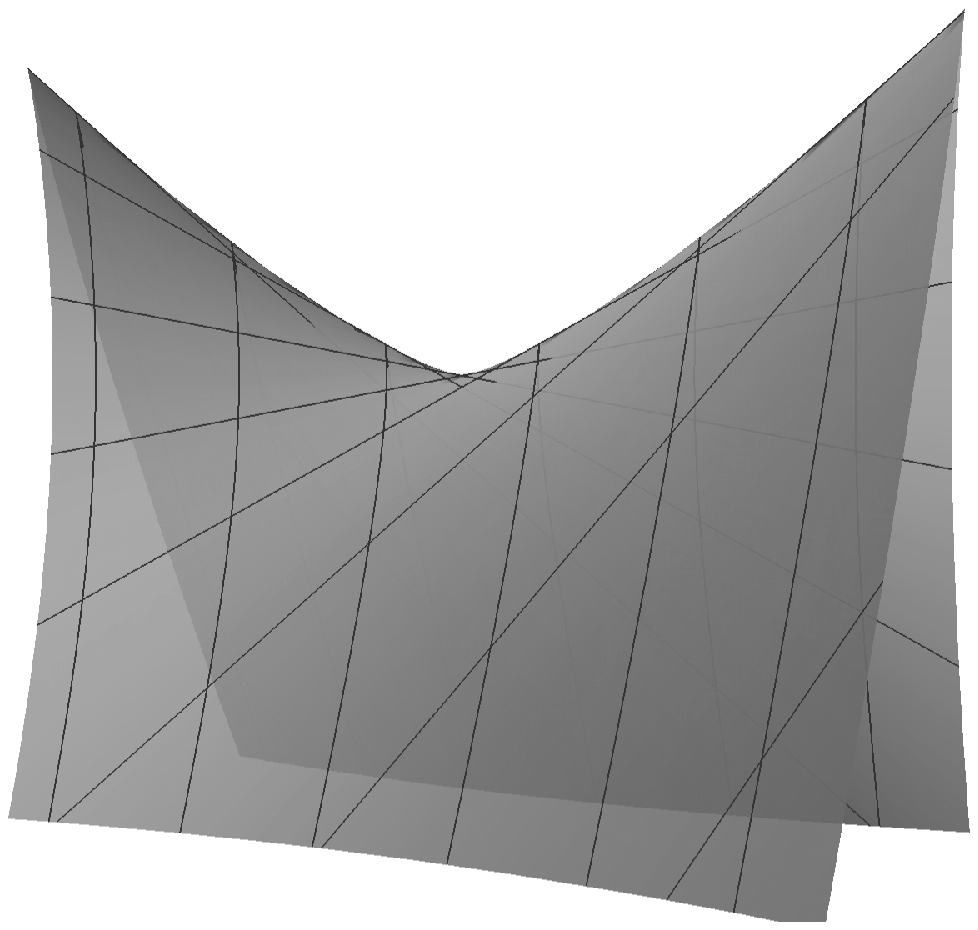}} 
 \end{tabular}
\end{center}
\caption{Cuspidal butterfly ($A_4$-type singular point).}
\end{figure}

If a smooth map $f:D \to M^3$
admits an $A_k$-type singular point $p\in D$,
then $f$ is a front on a neighborhood of $p$,
and $p$ is a non-degenerate singular point.
Moreover, if we take a singular curve $\gamma(t)$ $(|t|<\varepsilon)$ 
passing through $p=\gamma(0)$,
and a null vector field $\eta(t)$ along $\gamma(t)$,
it holds that
\begin{equation}\label{eq:Ak-like}
  \delta(0)=\delta'(0)=\dots=\delta^{(k-3)}(0)=0,
  \qquad
  \delta^{(k-2)}(0)\ne0,
\end{equation}
where we set $\delta(t):=\det(\gamma'(t),\eta(t))$.
Conversely, we have the following.

\begin{fact}[Criteria for $A_k$ singularities $(k=2,3,4)$ 
\cite{KRSUY, SUY, IS}]
\label{fact:Ak-criteria}
Let $k$ be either $2$, $3$, or $4$.
Also, let $f:D \to M^3$ be a front,
and $p\in D$ be a non-degenerate singular point.
Then, $p$ is an $A_k$-type singular point
if and only if \eqref{eq:Ak-like} holds.
\end{fact}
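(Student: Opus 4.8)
Since the statement is classical, the plan is to recall the mechanism behind it rather than to reproduce the computations of \cite{KRSUY, SUY, IS}. Throughout, non-degeneracy of $p$ guarantees, via the implicit function theorem applied to the signed area density $\lambda$, that $\Sigma(f)$ is a regular curve near $p$; I parametrize it by the singular curve $\gamma(t)$ with $\gamma(0)=p$. Since $f$ is a front, the null vector field $\eta$ is non-vanishing on a whole neighborhood of $p$, so by the flow-box theorem I may choose coordinates $(u,v)$ centered at $p$ with $\eta=\partial_v$. In such coordinates $df(\partial_v)=\vect{0}$ along $\Sigma(f)$, and $\delta(t)=\det(\gamma'(t),\eta(t))$ equals, up to sign, the $\partial_u$-component of $\gamma'(t)$. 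Hence \eqref{eq:Ak-like} says exactly that $\gamma$ is tangent to the null direction to order precisely $k-2$ at $p$: transverse when $k=2$, an ordinary tangency when $k=3$, and a degenerate tangency of the next order when $k=4$.

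For the necessity direction I would evaluate $\delta$ on the models ${f}_{A_k}$. A direct computation gives the singular curve together with the null field $\eta=\partial_v$ explicitly, namely
\[
  \gamma(v)=\Bigl(-\tfrac{k(k+1)}{2}\,v^{k-1},\,v\Bigr),
  \qquad
  \delta(v)=-\tfrac{k(k+1)(k-1)}{2}\,v^{k-2},
\]
so that $\delta$ vanishes to order exactly $k-2$. Because the vanishing order of $\delta$ at $p$ is invariant under $\mathcal{A}$-equivalence (a source diffeomorphism rescales $\delta$ by a non-vanishing factor, a reparametrization of $\gamma$ rescales it by the chain-rule factor, and a target diffeomorphism fixes both $\Sigma(f)$ and the null direction), the conditions \eqref{eq:Ak-like} hold at every $A_k$-type point; note that this half of the argument is uniform in $k$.

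For sufficiency the case $k=2$ is immediate: $\delta(0)\neq0$ means $\xi_p$ and $\eta_p$ are linearly independent, i.e.\ $p$ is of the first kind, and a front with a first-kind singular point is a cuspidal edge by \cite{KRSUY}. For $k=3$ and $k=4$ the null direction is tangent to $\Sigma(f)$, and I would pass to the singular value curve $\hat{\gamma}(t):=f(\gamma(t))$. Using $df(\eta)=\vect{0}$ together with the straightening $\eta=\partial_v$, one expresses the derivatives $\hat{\gamma}^{(j)}(0)$ through the jet of $f$ and the derivatives of $\delta$; the hypotheses \eqref{eq:Ak-like} then force $\hat{\gamma}$ to carry an ordinary $3/2$-cusp when $k=3$ and a $(3,4,5)$-cusp when $k=4$, while the Legendrian (front) condition fixes the transverse cuspidal profile transported along $\hat{\gamma}$. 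Finite determinacy of the $A_k$ germ for $k\le4$ then upgrades this jet-level agreement to an $\mathcal{A}$-equivalence with ${f}_{A_k}$, as carried out in \cite{SUY, IS}.

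The genuine difficulty is the sufficiency for $k=4$: one must control the jet of $f$ to sufficiently high order and verify that no obstruction beyond $\delta''(0)\neq0$ survives. This is precisely the point at which the criterion is special to $k\le4$. For $k\ge5$ the vanishing order of $\delta$ remains \emph{necessary} but is no longer sufficient, since additional $\mathcal{A}$-moduli appear that $\delta$ cannot detect; this is exactly why one is forced to weaken the requirement to the \emph{generalized} $A_k$ singularities of Definition \ref{def:generalized-Ak} and to introduce the $\alpha$- and $\sigma$-invariants in order to recover sharp statements.
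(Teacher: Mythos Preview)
The paper does not give its own proof of this statement; it is stated as a \emph{Fact} with citations to \cite{KRSUY, SUY, IS} and used as a black box. There is therefore nothing in the paper to compare your sketch against.

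That said, your outline is a fair summary of the mechanism in the cited references, and your necessity computation on the models ${f}_{A_k}$ is correct: the singular curve is $\gamma(v)=\bigl(-\tfrac{k(k+1)}{2}v^{k-1},v\bigr)$ and $\delta(v)=-\tfrac{k(k+1)(k-1)}{2}v^{k-2}$, so the vanishing order is exactly $k-2$. The invariance of this order under $\mathcal{A}$-equivalence is argued correctly.

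Two small inaccuracies in the commentary at the end. First, you assert that for $k\ge5$ ``additional $\mathcal{A}$-moduli appear that $\delta$ cannot detect''; the paper makes no such claim. It says only that, to the authors' knowledge, it is \emph{unknown} whether \eqref{eq:Ak-like} characterizes $A_k$ for $k\ge5$---this is why Definition~\ref{def:generalized-Ak} is introduced, not because insufficiency is established. Second, the $\alpha$- and $\sigma$-invariants of Definition~\ref{def:invariants} are not introduced to sharpen the $A_k$ criterion; they are invariants of CMC~$1$ faces used to formulate the duality between $A_{k+3}$ and cuspidal $S_k$ singularities (Theorem~\ref{thm:B}) and the admissibility condition of Definition~\ref{def:generic-A4}. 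These are points of interpretation rather than errors in the proof itself.
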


For $k\geq5$, 
to the best of the authors' knowledge, 
it is not known whether
\eqref{eq:Ak-like} characterizes
$A_k$-type singular points or not.
We give the following definition.

\begin{definition}[Generalized $A_k$ singularity]
\label{def:generalized-Ak}
Let $k\,(\geq2)$ be an integer.
Also, let $f:D \to M^3$ be a front,
and $p\in D$ be a non-degenerate singular point.
Then, $p$ is said to be a {\it generalized $A_k$ singular point}
if \eqref{eq:Ak-like} holds.
\end{definition}

By Fact \ref{fact:Ak-criteria},
generalized $A_k$ singular points are
$A_k$-type singular points for $k=2,3,4$.
The following holds.

\begin{theorem}
\label{thm:Ak-criteria-CMC1}
Let $f:D\to S^3_1$ be a CMC $1$ face
with Weierstrass data $(g,\omega)$,
and $p\in D$ be a singular point.
For an integer $k\,(>1)$,
$p$ is a generalized $A_k$ singular point
if and only if $\Re\varphi(p)\ne0$ and
$$
  \Im \varphi
  = \Im( \sqrt{-1} \vf \varphi)
  =\dots
  = \Im(\sqrt{-1}^{k-3} \vf^{k-3}\varphi)
  =0,\quad
  \Im(\sqrt{-1}^{k-2} \vf^{k-2}\varphi)\ne0
$$
at $p$.
Here $(\varphi,\vf)$ is the characteristic pair associated with $(g,\omega)$.
\end{theorem}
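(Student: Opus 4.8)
The statement to prove is Theorem \ref{thm:Ak-criteria-CMC1}, a criterion for generalized $A_k$ singularities on CMC $1$ faces. Since a generalized $A_k$ singular point requires $f$ to be a front, which by Fact \ref{fact:singular-FSUY}(3) is equivalent to $\Re\varphi(p)\neq0$, that hypothesis appears naturally. The strategy is to translate the geometric condition \eqref{eq:Ak-like} on $\delta(t)=\det(\gamma'(t),\eta(t))$ into conditions on the characteristic pair $(\varphi,\vf)$ via the invariant function $\varphi$. The crucial identity, already recorded in the proof of Proposition \ref{prop:conelike-CMC1}, is $\det(\gamma'(t),\eta(t))=\Im\varphi$ along the singular curve $\gamma$ chosen so that $\gamma'(t)=\xi_{\gamma(t)}$. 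Thus $\delta(t)=\Im\varphi$ along $\gamma$, and differentiating $\delta$ with respect to the curve parameter corresponds to applying the singular directional vector field $\xi$. Concretely I would show that the derivatives $\delta^{(j)}(0)$ are, up to nonzero factors and lower-order terms, governed by $\xi^j(\Im\varphi)(p)$.

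\textbf{Key steps.} First I would fix the singular curve $\gamma$ with $\gamma'=\xi$, so that $\delta(t)=\Im\varphi(\gamma(t))$ and hence $\delta^{(j)}(0)=\xi^j(\Im\varphi)(p)$. Second, I would invoke Proposition \ref{prop:Sk-induction}, specifically formula \eqref{eq:Im-xk}, which expresses
\begin{equation*}
  \xi^j(\Im\varphi)
  = \sum_{\ell=1}^{j} \rho_\ell^j\,\Im(\sqrt{-1}^\ell \vf^\ell\varphi),
  \qquad
  \rho_j^j=|g\homega\varphi|^{2j}.
\end{equation*}
The leading coefficient $\rho_j^j=|g\homega\varphi|^{2j}$ is nonzero at a non-degenerate singular point, since $g_z(p)\neq0$ there. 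Third, I would argue by induction on $j$ that the vanishing conditions
\begin{equation*}
  \Im\varphi(p)=\Im(\sqrt{-1}\vf\varphi)(p)=\dots=\Im(\sqrt{-1}^{k-3}\vf^{k-3}\varphi)(p)=0
\end{equation*}
are equivalent to $\delta(0)=\delta'(0)=\dots=\delta^{(k-3)}(0)=0$: the base case is $\delta(0)=\Im\varphi(p)$, and at each step the sum in \eqref{eq:Im-xk} collapses so that, given the vanishing of all lower-order terms, $\delta^{(j)}(p)=\rho_j^j\,\Im(\sqrt{-1}^j\vf^j\varphi)(p)$ with $\rho_j^j\neq0$. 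Finally, under these vanishing conditions the top-order derivative satisfies $\delta^{(k-2)}(p)=|g\homega\varphi|^{2(k-2)}\,\Im(\sqrt{-1}^{k-2}\vf^{k-2}\varphi)(p)$, so $\delta^{(k-2)}(p)\neq0$ is equivalent to $\Im(\sqrt{-1}^{k-2}\vf^{k-2}\varphi)(p)\neq0$. Combining with Fact \ref{fact:Ak-criteria} (for $k=2,3,4$) and Definition \ref{def:generalized-Ak} (for $k\geq5$) yields the equivalence.

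\textbf{Main obstacle.} The routine but delicate point is the inductive collapse of the sum in \eqref{eq:Im-xk}: one must verify that once the conditions $\Im(\sqrt{-1}^\ell\vf^\ell\varphi)(p)=0$ hold for all $\ell<j$, the only surviving term in $\delta^{(j)}(p)$ is the top one $\rho_j^j\,\Im(\sqrt{-1}^j\vf^j\varphi)(p)$. This requires checking that the lower-index summands $\rho_\ell^j\,\Im(\sqrt{-1}^\ell\vf^\ell\varphi)$ with $\ell<j$ all vanish at $p$ under the inductive hypothesis—which they do, since each such factor is assumed zero—while the coefficient $\rho_j^j=|g\homega\varphi|^{2j}$ is manifestly nonvanishing. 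A secondary subtlety is the well-definedness of the conditions: since $(g,\omega)$ is not unique, I should remark that each vanishing condition $\Im(\sqrt{-1}^\ell\vf^\ell\varphi)(p)=0$, given the vanishing of its predecessors, is independent of the Weierstrass data by Theorem \ref{thm:invariants}(2), so the criterion is intrinsic to $f$.
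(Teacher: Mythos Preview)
Your proposal is correct and follows essentially the same route as the paper's proof: identify $\delta=\Im\varphi$ along the singular curve via \eqref{eq:Im-phi}, translate the derivative conditions \eqref{eq:Ak-like} into $\xi^j(\Im\varphi)(p)$, and then invoke Proposition \ref{prop:Sk-induction} to convert these into the conditions on $\Im(\sqrt{-1}^{\ell}\vf^{\ell}\varphi)(p)$. Your added remarks on the inductive collapse of the sum in \eqref{eq:Im-xk} and on the independence of the criterion from the choice of Weierstrass data (via Theorem \ref{thm:invariants}) are correct elaborations that the paper leaves implicit.
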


\begin{proof}
We set $i=\sqrt{-1}$.
By Fact \ref{fact:singular-FSUY},
$p\in D$ is a non-degenerate singular point
and $f$ is a front on a neighborhood of $p$
if and only if $\Re\varphi(p)\ne0$.
As in \eqref{eq:Im-phi},
it holds that
$
\delta
=\det(\gamma',\eta)
=\Im\varphi.
$
Thus, \eqref{eq:Ak-like} holds if and only if
\begin{equation}\label{eq:xi-Ak}
  \Im\varphi
  =\xi(\Im\varphi)
  =\dots 
  =\xi^{(k-3)}(\Im\varphi)
  \ne0, \quad 
  \xi^{(k-2)}(\Im\varphi)\ne0
\end{equation}
at $p$.
By Proposition \ref{prop:Sk-induction},
we have that \eqref{eq:xi-Ak} is equivalent to 
$$
  \Im \varphi
  = \Im( i \vf \varphi)
  =\dots
  = \Im(i^{k-3} \vf^{k-3}\varphi)
  =0,\quad
  \Im(i^{k-2} \vf^{k-2}\varphi)\ne0
$$
at $p$.
Hence, we have the desired result.
\end{proof}

\subsection{Cuspidal $S_k$ singular points}
For a non-negative integer $k$,
we set $\csk^\pm: \R^2\rightarrow \R^3$ as
$\csk^\pm(u,v) := (u,v^2,v^3(u^{k+1}\pm v^2))$.
Let $f:D \to M^3$ be a smooth map.
If the map germ $f$ at $p$ is $\mathcal{A}$-equivalent to 
$\csk^\pm$ at the origin,
$f$ is said to have a {\it cuspidal $S^{\pm}_k$ singular point}
(or a {\it $cS_k^\pm$ singular point}, for short) at $p\in D$.
We refer to $cS_k^+$ and $cS_k^-$ singular points collectively as 
{\it $cS_k$ singular points}.
In the case that $k$ is even,
$cS_k^+$ singular points 
are $\mathcal{A}$-equivalent to $cS_k^-$.
Moreover $cS_0$ singularity is the cuspidal cross cap.

\begin{figure}[htb]
\begin{center}
 \begin{tabular}{{c@{\hspace{8mm}}c}}
  \resizebox{3.5cm}{!}{\includegraphics{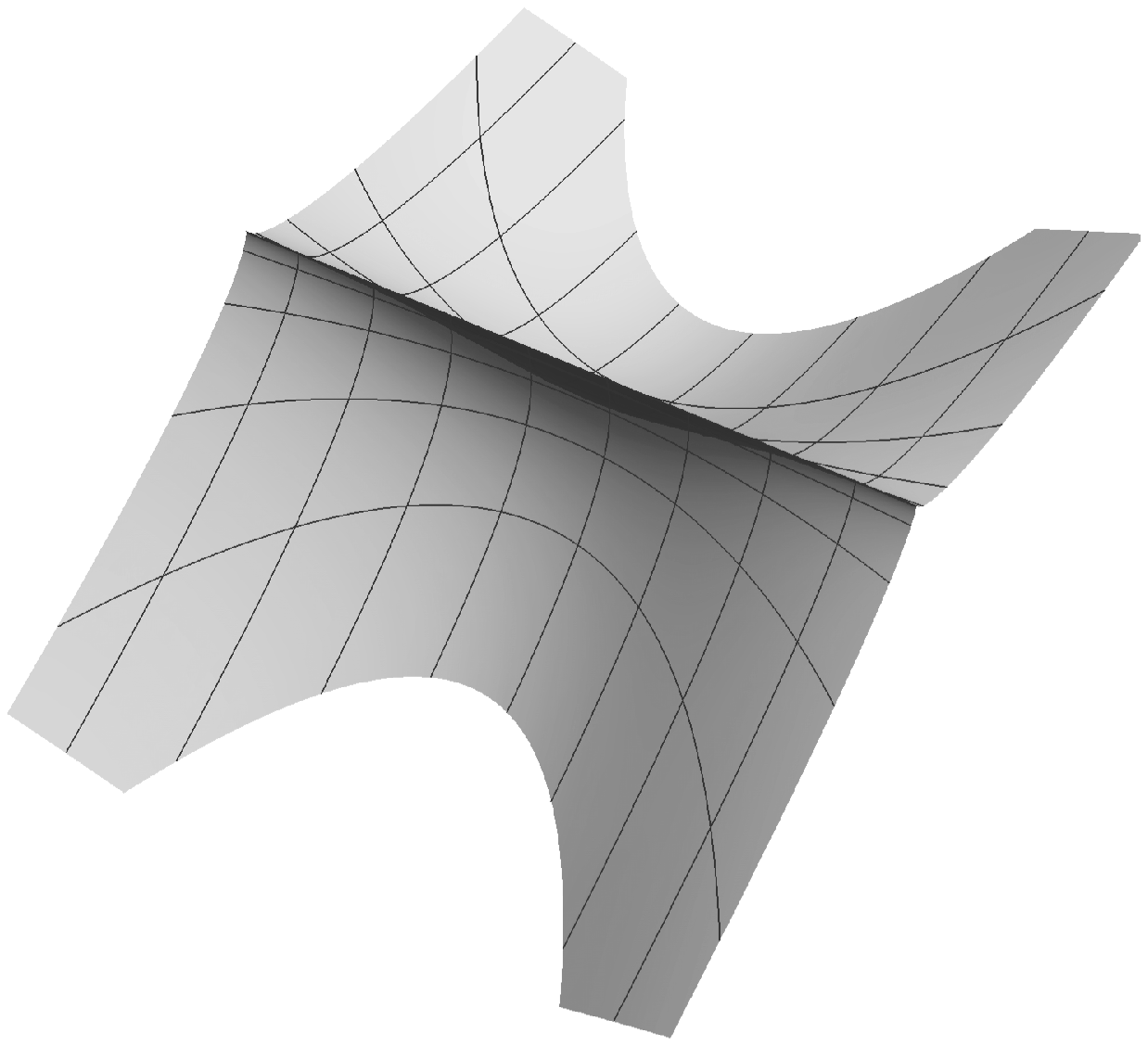}}&
  \resizebox{4cm}{!}{\includegraphics{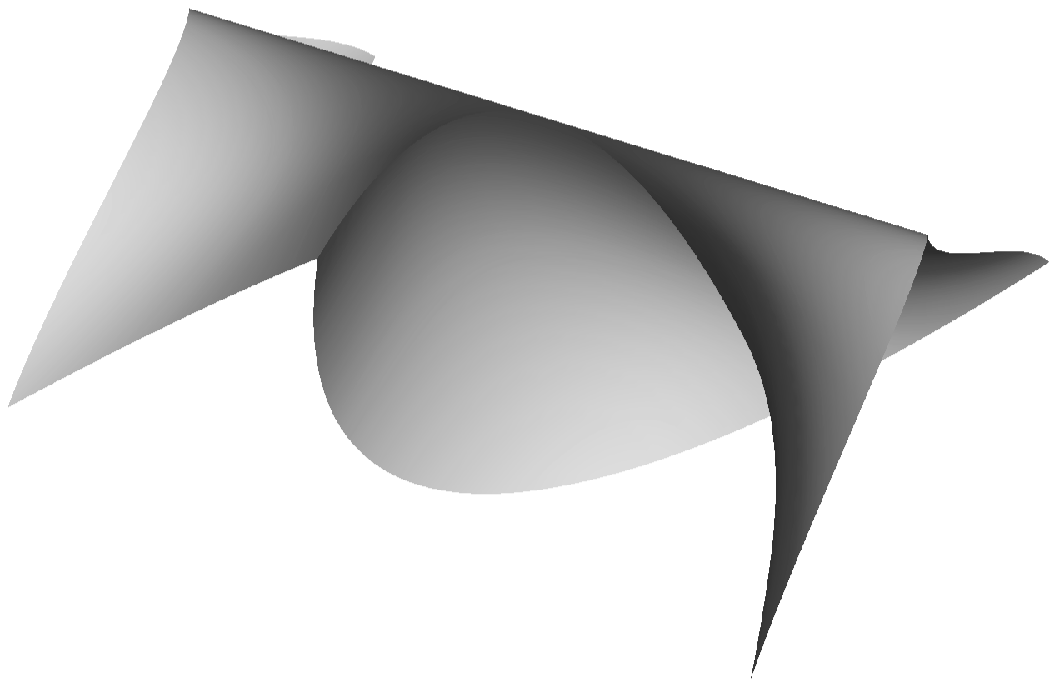}}
 \end{tabular}
\end{center}
\caption{Graphics of $cS_1$ singular points.
The left figure is the image of $\csone^+$, 
and the right one is that of $\csone^-$.}
\end{figure}

The following  is known:

\begin{fact}[Criteria for $cS_k$ singularities \cite{S, OT}]
\label{thm:S_ksing}
Let $f:D\to \R^3$ be a frontal,
$p\in D$ be a non-degenerate singular point,
and $\nu_E:D\to S^2$ be a unit normal vector field along $f$,
where $S^2$ is the unit sphere 
$S^2=\{(x,y,z)\in \R^3\,;\, x^2+y^2+z^2=1\}$.
Also let
$\gamma(t)$ $(|t|<\varepsilon)$
be a singular curve passing through $p=\gamma(0)$,
and let $\eta$ be a null vector field on $D$.
Set a function
$$
  \psi(t) := \det (\hat{\gamma}'(t),\hat{\nu}(t),d\nu(\eta(t))),
$$
where $\hat{\nu}:=\nu \circ \gamma$.
Then $f$ has $cS_k$ singularity at $p\in D$
if and only if
\begin{itemize}
\item[{\rm(1)}] 
$p$ is a singular point of the first kind,
\item[{\rm(2)}] 
$\psi(0)=\psi'(0)=\dots=\psi^{(k)}(0)=0$, $(A:=)\,\psi^{(k+1)}(0)\neq0$,
\item[{\rm(3)}] 
there exists a null vector field $\zeta$ 
defined on a neighborhood of $p$ such that
$\inner{\xi f(p)}{\zeta^2 f(p)}_E=0$,
$\zeta^3 f(p)=\vect{0}$ and 
$(B:=)\,\det(\xi f, \zeta^2f, \zeta^5 f)(p)\neq 0$,
where $\inner{~}{~}_E$ is the canonical Euclidean inner product.
\end{itemize}
Moreover, in the case that $k$ is odd,
$f$ has $cS_k^\pm$ singularity at $p\in D$
if and only if $\pm AB>0$.
\end{fact}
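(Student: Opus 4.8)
The plan is to prove this recognition criterion by reducing $f$ to the normal form $\csk^\pm$ through diffeomorphisms of the source and target, while tracking how the geometric data $\psi$, $A$ and $B$ constrain that reduction. Since condition~(1) says $p$ is of the first kind, $\xi_p$ and $\eta_p$ are linearly independent, and I would first choose adapted coordinates $(u,v)$ centred at $p$ in which $\Sigma(f)=\{v=0\}$, the singular direction is $\partial_u$, and $\eta=\partial_v$ along $\Sigma(f)$. Because $f$ is a frontal with null field $\eta$, the relation $df(\eta)=\vect{0}$ on $\Sigma(f)$ forces $f_v(u,0)\equiv\vect{0}$, so the Taylor expansion of $f$ in $v$ begins at the $v^2$ term and $f_{vv}(u,0)$ spans the cuspidal direction. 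A standard cuspidal-edge normalization in the first two components then brings them to $(u,v^2)$, reducing the problem to the third component, whose relevant part I would write as $a\,v^3u^{k+1}+b\,v^5+\cdots$.

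The next step is to read off the meaning of conditions (2) and (3) from this expansion. A direct computation of the unit normal along the singular curve gives $\hat\gamma'=(1,0,0)$, $\hat\nu=(0,0,1)$, and shows that $d\nu(\eta)(u,0)$ is a nonzero multiple of $(0,u^{k+1},0)$; hence $\psi(u)$ is a nonzero multiple of $u^{k+1}$. This makes (2) transparent: $\psi^{(j)}(0)=0$ for $j\le k$ records the vanishing of the lower-order $v^3u^{j}$-coefficients, while $A=\psi^{(k+1)}(0)$ is a nonzero multiple of the coefficient $a$ of $v^3u^{k+1}$. For (3), choosing the auxiliary null field $\zeta$ so that $\zeta^3f(p)=\vect{0}$ and $\langle\xi f(p),\zeta^2 f(p)\rangle_E=0$ isolates the pure $v^5$ contribution, and one checks that $B=\det(\xi f,\zeta^2 f,\zeta^5 f)(p)$ is a nonzero multiple of the coefficient $b$ of $v^5$; the nonvanishing of $B$ is exactly what guarantees the $v^5$ term is genuinely present, separating a cuspidal $S_k$ from flatter degenerations.

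With these identifications, I would construct the reducing diffeomorphisms in stages: source changes normalizing $f(u,0)$ to the $u$-axis and $f_{vv}(u,0)$ to $(0,1,0)$, then successive changes eliminating the intermediate coefficients of the third component until only $v^3u^{k+1}$ and $v^5$ survive, followed by a rescaling $u\mapsto\lambda u$, $v\mapsto\mu v$ with a matching target scaling that brings the $v^3u^{k+1}$-coefficient to $+1$. Tracking the $v^5$-coefficient through this last rescaling gives $(b/a)\mu^2\lambda^{-(k+1)}=\pm1$: when $k$ is even, $k+1$ is odd and the sign of $\lambda$ is free, so both signs are attainable and $\csk^+\sim\csk^-$; when $k$ is odd, $\lambda^{-(k+1)}>0$, so the surviving sign equals $\operatorname{sign}(ab)=\operatorname{sign}(AB)$, yielding $cS_k^\pm$ according as $\pm AB>0$. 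A finite-determinacy argument ensures the neglected higher-order terms can be absorbed.

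The main obstacle will be the \emph{sufficiency} direction: showing that (1)--(3) are not merely necessary but actually force $\mathcal{A}$-equivalence to $\csk^\pm$. This is the technical heart, since the order-by-order elimination of the intermediate coefficients, together with the finite-determinacy of the germ $\csk^\pm$ in the appropriate (cuspidal) sense, must be carried out carefully; it is precisely the content established in \cite{S, OT}. In practice I would invoke their reduction lemmas directly, or reprove them using the blow-up and division techniques standard for frontal singularities of the first kind.
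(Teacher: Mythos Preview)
The paper does not prove this statement at all: it is stated as a \emph{Fact} and attributed entirely to the references \cite{S, OT}. There is therefore no proof in the paper to compare your proposal against. Your outline is a plausible sketch of the normal-form reduction underlying such criteria (adapted coordinates, isolating the $v^3u^{k+1}$ and $v^5$ coefficients, finite determinacy), and indeed this is roughly the strategy carried out in Saji's paper \cite{S} for conditions (1)--(2) and the $5/2$-cuspidal-edge analysis in \cite{HKS} feeding into condition (3); but the present paper simply imports the result and applies it, rather than reproving it.

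If your goal is to supply a self-contained proof here, be aware that the genuine work lies in the sufficiency direction and in the finite-determinacy step, which you correctly flag as the main obstacle but then defer back to \cite{S, OT}. As written, your proposal is more of an annotated restatement of what needs to be done than an independent argument.
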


Applying this criteria, we prove the following.

\begin{proposition}\label{prop:Sk-criteria-CMC1}
Let $f:D\to S^3_1$ be a CMC $1$ face
with Weierstrass data $(g,\omega)$,
and $p\in D$ be a singular point.
Let us take an integer $k\,(>1)$.
Then, 
\begin{itemize}
\item 
$f$ has a $cS_1^\pm$ singular point at $p$
if and only if 
$$
  \Im\varphi\ne0,\quad
  \Re \varphi=\Im (\vf \varphi)=0,\quad
  \pm \Re(\vf^2 \varphi)\left(12-\Re(\vf^2 \varphi) \right)>0
$$
hold at $p$,
\item 
$f$ has a $cS_{2\ell}$ singular point $(\ell\geq1)$ at $p$
if and only if 
$$
  \Im\varphi\ne0,\quad
  \Re \varphi
  = \Im( \vf \varphi)
  =\dots
  = \Im (\vf^{2\ell-1}\varphi)
  = \Re (\vf^{2\ell}\varphi)
  =0,
$$
and $\Im(\vf^{2\ell+1}\varphi)\ne0$ hold at $p$,
\item 
$f$ has a $cS_{2\ell+1}^\pm$ singular point $(\ell\geq1)$ at $p$
if and only if 
$$
  \Im\varphi\ne0,\quad
  \Re \varphi
  = \Im( \vf \varphi)
  =\dots
  = \Re (\vf^{2\ell}\varphi)
  = \Im (\vf^{2\ell+1}\varphi)
  =0,
$$
and $\pm(-1)^{\ell} \Re(\vf^{2\ell+2}\varphi)>0$ hold at $p$.
\end{itemize}
Here $(\varphi,\vf)$ is the characteristic pair associated with $(g,\omega)$.
\end{proposition}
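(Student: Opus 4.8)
\noindent The plan is to reduce to $\R^3$ exactly as in the proof of Theorem~\ref{thm:to-prove} and then invoke the criterion for $cS_k$ singularities (Fact~\ref{thm:S_ksing}). After an isometry \eqref{eq:action} we may assume $F(p)= e_0$, and composing with the orthogonal projection \eqref{eq:orthogonal-proj} we obtain a frontal $\tilde f:={\rm pr}\circ f:D\to\R^3$ which has a $cS_k$ (resp.\ $cS_k^\pm$) singular point at $p$ if and only if $f$ does. By Fact~\ref{fact:singular-FSUY}, condition (1) of Fact~\ref{thm:S_ksing}---that $p$ be of the first kind---is equivalent to $\Im\varphi(p)\ne0$, the common first hypothesis in all three cases.

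For condition (3) I would use the very null vector field $\zeta$ of \eqref{eq:zeta}, whose constant $\const=-\sqrt{-1}\,g(p)/(2g_z(p))$ was chosen so that $\inner{\xi f(p)}{\zeta^2f(p)}_E=0$, as already verified in the proof of Theorem~\ref{thm:to-prove}. By Lemma~\ref{lem:T2pT3p}, when $\Re\varphi(p)=0$ one has $\zeta^3f(p)=\vect0$ precisely when $\Im(\vf\varphi)(p)=0$, so the requirement $\zeta^3f(p)=\vect0$ is exactly the condition \eqref{eq:condition-S}. With $\zeta^3f(p)=\vect0$ in force, Lemmas~\ref{lem:xi-fp}, \ref{lem:T2pT3p} and \ref{lem:T5p-body} give, after discarding the $\xi f$- and $\zeta^2f$-components which do not contribute to the determinant,
\[
  B:=\det(\xi f,\zeta^2f,\zeta^5f)(p)=-32\,|\homega(p)|^2(\Im\varphi(p))^5\bigl(12-\Re(\vf^2\varphi)(p)\bigr),
\]
so $B\ne0$ if and only if $\sigma(f,p)=\Re(\vf^2\varphi)(p)\ne12$. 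For $k\ge2$ the tower below forces $\Re(\vf^2\varphi)(p)=0$, whence $B\ne0$ automatically; only for $cS_1$ does $\sigma\ne12$ persist as a genuine restriction, which is why that case is stated apart.

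The heart of the matter is condition (2): the vanishing $\psi(0)=\dots=\psi^{(k)}(0)=0$ together with $\psi^{(k+1)}(0)\ne0$, where $\psi(t)=\det(\hat\gamma'(t),\hat\nu(t),d\nu(\eta(t)))$ and $\gamma$ is parametrised so that $\gamma'=\xi$. I would compute the Euclidean unit normal $\nu_E$ of $\tilde f$ and its null-derivative $d\nu_E(\eta)$ along $\gamma$ from $F$ and $(g,\omega)$, and show that $\psi$ equals $\Re\varphi$ up to a non-vanishing factor; this is the $cS_k$-analogue of the identity $\delta=\Im\varphi$ in \eqref{eq:Im-phi} used for the $A_k$ criteria. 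Granting this, Proposition~\ref{prop:Sk-induction}\,\eqref{eq:Re-xk}, whose top coefficient is $\tau_k^k=|g\homega\varphi|^{2k}\ne0$, converts the $\psi$-tower into
\[
  \Re(\sqrt{-1}^{\ell}\vf^{\ell}\varphi)(p)=0\quad(\ell=0,\dots,k),\qquad
  \Re(\sqrt{-1}^{k+1}\vf^{k+1}\varphi)(p)\ne0.
\]
Using $\Re(\sqrt{-1}^{\ell}\vf^{\ell}\varphi)=(-1)^{\ell/2}\Re(\vf^{\ell}\varphi)$ for even $\ell$ and $=(-1)^{(\ell+1)/2}\Im(\vf^{\ell}\varphi)$ for odd $\ell$, this tower is exactly the alternating list $\Re\varphi=\Im(\vf\varphi)=\Re(\vf^2\varphi)=\cdots=0$ in the three bullets, the first surviving term being $\Im(\vf^{2\ell+1}\varphi)$ when $k=2\ell$ and $\Re(\vf^{2\ell+2}\varphi)$ when $k=2\ell+1$.

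Finally, for odd $k$ the rule $\pm AB>0$ of Fact~\ref{thm:S_ksing} separates $cS_k^+$ from $cS_k^-$, where $A=\psi^{(k+1)}(0)$. Expressing $A$, via $\psi\propto\Re\varphi$ and Proposition~\ref{prop:Sk-induction}, as a multiple of $\Re(\sqrt{-1}^{k+1}\vf^{k+1}\varphi)(p)$ and multiplying by the value of $B$ above, the odd-order $\Im\varphi$ factors combine to an even power and the remaining constants give $\pm(-1)^{\ell}\Re(\vf^{2\ell+2}\varphi)(p)>0$ for $cS_{2\ell+1}^\pm$, and in the exceptional case $k=1$---where $\sigma$ need not vanish---$\pm\Re(\vf^2\varphi)\bigl(12-\Re(\vf^2\varphi)\bigr)(p)>0$ for $cS_1^\pm$. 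I expect the one genuine obstacle to be the explicit determination of $\psi$: one must write down $\nu_E$ and $d\nu_E(\eta)$ along $\gamma$ in terms of the Weierstrass data and verify that the determinant collapses to a non-vanishing multiple of $\Re\varphi$. Once that identity is in hand, every other step follows from Proposition~\ref{prop:Sk-induction} and the determinant computations already assembled in Lemmas~\ref{lem:T2pT3p}--\ref{lem:T5p-body}.
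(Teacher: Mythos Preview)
Your strategy coincides with the paper's: normalise $F(p)=e_0$, project via \eqref{eq:orthogonal-proj}, and verify conditions (1)--(3) of Fact~\ref{thm:S_ksing}; your treatment of (1), of (3), and of the sign bookkeeping for $AB$ is exactly what the paper does, down to the value $B=-32|\homega(p)|^2(\Im\varphi(p))^5(12-\Re\vf^2\varphi(p))$. The one place you diverge is condition~(2): rather than computing $\nu_E$ and $d\nu_E(\eta)$ directly, the paper invokes Fact~\ref{fact:MSUY} (proved in \cite{MSUY}), which identifies $\psi(t)$ with $\det(\xi f,\eta^2 f,\eta^3 f)(\gamma(t))$ up to a \emph{positive} factor. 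That $3\times3$ determinant is then evaluated along the whole singular curve using the null field $\eta$ of \eqref{eq:xi-eta} (not $\zeta$) and the identity
\[
  \det(\xi\tilde f,\eta^2\tilde f,\eta^3\tilde f)
  =-\det\bigl(\varepsilon_3,\Phi(\xi f),\Phi(\eta^2 f),\Phi(\eta^3 f)\bigr);
\]
a short computation with $\check F$ gives ${\rm sgn}(\Im\varphi)\,\rho(t)\,\Re\varphi(\gamma(t))$ for some $\rho>0$, which is precisely the proportionality you anticipated and dissolves your ``one genuine obstacle'' without ever writing $\nu_E$ explicitly.
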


In the proof of \cite[Proposition 3.11]{MSUY},
the following assertion is proved.

\begin{fact}[\cite{MSUY}]\label{fact:MSUY}
Let $f:D\to \R^3$ be a frontal,
$\gamma(t)$ $(|t|<\varepsilon)$
be a singular curve passing through 
a non-degenerate singular point $p\in D$,
and let $\eta$ be a null vector field on $D$.
Then, there exists a positive valued function $\alpha(t)$
such that 
$$
\det (\xi f,\eta^2 f,\eta^3 f)(\gamma(t))=\alpha(t) \psi(t)
$$
holds on the singular set $\Sigma(f)$ near $p$.
\end{fact}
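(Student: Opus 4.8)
The plan is to reduce the $3\times3$ determinant to the product of a planar determinant and a normal component, and then to read off $\alpha$ explicitly. Throughout I work along the singular curve and use that $f$ is a frontal, so $\inner{df(W)}{\nu}_E=0$ for every tangent $W$; in particular $\inner{\eta f}{\nu}_E\equiv0$. Since $\gamma$ is a singular curve we may take $\xi f=\hat\gamma'$, and since $\eta$ is a null vector field we have $\eta f=\vect{0}$ on $\Sigma(f)$. Differentiating $\inner{\eta f}{\nu}_E\equiv0$ once along $\eta$ and restricting to $\Sigma(f)$ gives $\inner{\eta^2 f}{\nu}_E=0$, so both $\xi f$ and $\eta^2 f$ lie in the plane $\nu^\perp$ along the singular set. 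Consequently, on $\Sigma(f)$,
$$
  \det(\xi f,\eta^2 f,\eta^3 f)
  =\det(\xi f,\eta^2 f,\nu)\,\inner{\eta^3 f}{\nu}_E ,
$$
because the first two entries are orthogonal to $\nu$. Differentiating $\inner{\eta f}{\nu}_E\equiv0$ twice and again using $\eta f=\vect0$ on $\Sigma(f)$ yields $\inner{\eta^3 f}{\nu}_E=-2\inner{\eta^2 f}{d\nu(\eta)}_E$.

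To compare with the function $\psi$ of Fact \ref{thm:S_ksing}, I would choose coordinates $(u,v)$ with $\eta=\partial_v$ and $\Sigma(f)=\{v=0\}$, which is possible since $p$ is of the first kind (the relevant case for Proposition \ref{prop:Sk-criteria-CMC1}). Then $f_v\equiv0$ along $\Sigma(f)$, hence $f_{uv}=0$ there, and differentiating $\inner{f_u}{\nu}_E\equiv0$ in $v$ gives $\inner{d\nu(\eta)}{\xi f}_E=\inner{\nu_v}{f_u}_E=-\inner{\nu}{f_{uv}}_E=0$. Thus, in the positively oriented orthonormal frame $\{\hat e,N,\nu\}$ with $\hat e=\xi f/|\xi f|$ and $N=\nu\times\hat e$ (so that $\det(\hat e,N,\nu)=1$), the vector $d\nu(\eta)$ is a multiple of $N$, whereas $\eta^2 f=\inner{\eta^2 f}{\hat e}_E\,\hat e+\inner{\eta^2 f}{N}_E\,N$.

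Substituting this decomposition into the two determinants, I expect
$$
  \det(\xi f,\eta^2 f,\eta^3 f)=-2|\xi f|\,\inner{\eta^2 f}{N}_E^2\,\inner{d\nu(\eta)}{N}_E,
  \qquad
  \psi=-|\xi f|\,\inner{d\nu(\eta)}{N}_E,
$$
so that $\det(\xi f,\eta^2 f,\eta^3 f)=\alpha\,\psi$ holds along $\Sigma(f)$ with the candidate $\alpha:=2\inner{\eta^2 f}{N}_E^2\ge0$. Note that this identity is valid even where $d\nu(\eta)$ vanishes, since then both sides are zero.

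The remaining and genuinely essential point is the positivity of $\alpha$, i.e.\ $\inner{\eta^2 f}{N}_E\ne0$, and this is exactly where non-degeneracy enters. Computing the signed area density $\lambda=\det(f_u,f_v,\nu)$ in the above coordinates and using $f_v=f_{uv}=0$ on $\Sigma(f)$, one finds $\lambda_u=0$ and $\lambda_v=\det(f_u,f_{vv},\nu)=|\xi f|\,\inner{\eta^2 f}{N}_E$ along $\Sigma(f)$. Hence $d\lambda\ne0$ is equivalent to $\inner{\eta^2 f}{N}_E\ne0$, which forces $\alpha>0$. I expect the main obstacle to be the bookkeeping in the frame computation of the two determinants, in particular keeping track of the orientation signs, together with the justification of the adapted coordinates; once the identification $\lambda_v=|\xi f|\,\inner{\eta^2 f}{N}_E$ is in hand, the positivity of $\alpha$ is immediate.
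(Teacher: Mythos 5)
Your argument is correct, and it is worth pointing out that the paper contains no proof of this statement at all: Fact~\ref{fact:MSUY} is quoted from the proof of \cite[Proposition 3.11]{MSUY}, so what you have produced is a self-contained verification rather than a variant of anything in the text. Your route checks out line by line: the frontal identity $\inner{\eta f}{\nu}_E\equiv0$ differentiated once and twice along $\eta$ gives, on $\Sigma(f)$, $\inner{\eta^2f}{\nu}_E=0$ and $\inner{\eta^3f}{\nu}_E=-2\inner{\eta^2f}{d\nu(\eta)}_E$; the reduction $\det(\xi f,\eta^2f,\eta^3f)=\inner{\eta^3f}{\nu}_E\det(\xi f,\eta^2f,\nu)$ is valid because the first two columns lie in $\nu^\perp$; and in the frame $\{\hat e,N,\nu\}$ one indeed gets $\det(\xi f,\eta^2f,\eta^3f)=-2|\xi f|\,b^2c$ and $\psi=-|\xi f|\,c$ with $b=\inner{\eta^2f}{N}_E$, $c=\inner{d\nu(\eta)}{N}_E$, hence $\alpha=2b^2$. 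The bookkeeping you worried about is fine: $\lambda_u=0$ and $\lambda_v=\det(f_u,f_{vv},\nu)=|\xi f|\,b$ along $\{v=0\}$ (using $f_v=f_{uv}=0$ there), so non-degeneracy is equivalent to $b\ne0$ along the singular curve; the adapted coordinates come from the flow-box theorem followed by straightening the transverse curve $\Sigma(f)$; and rescalings are harmless (replacing $\eta$ by $c\eta$ multiplies the determinant by $c^5$ on $\Sigma(f)$ and $\psi$ by $c$, so $\alpha$ by $c^4>0$; reparametrizing $\gamma$ or rescaling $\xi$ positively likewise only rescales $\alpha$ positively). Your explicit formula $\alpha=2\inner{\eta^2f}{N}_E^2$ is actually more information than the Fact asserts.

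One point you should state more forcefully: your restriction to singular points of the first kind is not a convenience but a necessity, and not only because the adapted coordinates fail when $\eta_p$ is tangent to $\Sigma(f)$. The positivity claim itself is false at non-first-kind points. For the standard swallowtail $f(u,v)=(u,-4v^3-2uv,\,3v^4+uv^2)$, with $\gamma(t)=(-6t^2,t)$, $\eta=\partial_v$, $\nu\propto(v^2,v,1)$, one computes along $\gamma$
\begin{equation*}
\det(\xi f,\eta^2f,\eta^3f)=13824\,t^3,
\qquad
\psi(t)=\frac{12\,t\,(1+4t^2+t^4)}{1+t^2+t^4},
\end{equation*}
so any continuous $\alpha$ satisfying the identity has $\alpha(0)=0$; indeed your $b=\inner{\eta^2f}{N}_E$ vanishes at $t=0$ even though $p$ is non-degenerate, because the implication ``$d\lambda\ne0\Rightarrow b\ne0$'' uses $\lambda_u=0$, which is exactly where transversality of $\eta$ enters. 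Thus Fact~\ref{fact:MSUY} must be read with the first-kind hypothesis implicit, which is consistent with its source (\cite{MSUY}) and with its only use here, in the proof of Proposition~\ref{prop:Sk-criteria-CMC1}, where condition (1) of Fact~\ref{thm:S_ksing} guarantees that all singular points near $p$ are of the first kind. With that reading, your proof is complete.
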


In particular, by Lemma \ref{fact:MSUY},
$\psi(0)=\psi'(0)=\dots=\psi^{(k)}(0)=0$, 
$\psi^{(k+1)}(0)\neq0$
holds if and only if 
$$
  \xi^\ell \det (\xi f,\eta^2 f,\eta^3 f)(p)=0
  \quad
  (\ell=0,\dots,k),
  \qquad
  \xi^{k+1} \det (\xi f,\eta^2 f,\eta^3 f)(p)\ne0.
$$
Moreover, then 
${\rm sgn}(\psi^{(k+1)}(0))
={\rm sgn}(\xi^{k+1} \det (\xi f,\eta^2 f,\eta^3 f)(p))$
holds.

\begin{proof}[Proof of Proposition \ref{prop:Sk-criteria-CMC1}]
Let $F:D \to S^3_1$ be a holomorphic null lift of $f$
which satisfies \eqref{eq:F^-1dF} with $F(p)=e_0$.
As in the proof of Theorem \ref{thm:to-prove},
we use the orthogonal projection \eqref{eq:orthogonal-proj}.
Since the restriction 
${\rm pr}|_{S^3_1} : S^3_1\to \R^3$
is an local diffeomorphism at $f(p)= e_3$,
the CMC $1$ face $f:D\to S^3_1$ has 
a $cS_k^\pm$ singular point at $p$
if and only if so does 
$\tilde{f}:={\rm pr}\circ f : D\to \R^3$.
With respect to the condition (1),
Fact \ref{fact:singular-FSUY} yields that
$p$ is a singular point of the first kind
if and only if $\Im \varphi(p)\ne0$.
Next, we consider the condition (2).
Let $\xi$ be the singular directional vector field,
and $\eta$ be the null vector field given by \eqref{eq:xi-eta}. 
Since
\begin{align*}
\det(\xi \tilde{f},\eta^2\tilde{f}, \eta^3\tilde{f})
=\det({\rm pr}(\xi f),{\rm pr}(\eta^2 f),{\rm pr}(\eta^3 f))
=-\det(\varepsilon_3,\Phi(\xi f), \Phi(\eta^2 f), \Phi(\eta^3 f)),
\end{align*}
we calculate $\xi f$, $\eta^2 f$ and $\eta^3 f$
along the singular set $\Sigma(f)$,
where $\varepsilon_3=(0,0,0,1)^T$.
By Lemma \ref{lem:xi-fp}, we have
\begin{align*}
\xi f(\gamma(t)) =
  2 |\homega|^2 (\Im \varphi)\,
  F\begin{pmatrix} 1&g \\ \bar{g}&1 \end{pmatrix}F^*,
\end{align*}
where the right hand side is evaluated at $\gamma(t)$.
On the other hand, 
{\allowdisplaybreaks
a direct calculation yields that
\begin{align*}
\eta^2 f(\gamma(t))
&= 2(\Im \varphi)F
  \begin{pmatrix}0&-\sqrt{-1} g \\ \sqrt{-1}\bar{g}&0 \end{pmatrix}F^*,\\
\eta^3 f(\gamma(t))
&=a(t)\xi f
+b(t)\eta^2 f
+ 4(\Im \varphi) (\Re \varphi) 
F\begin{pmatrix}0&g \\ \bar{g}&0 \end{pmatrix}F^*,
\end{align*}
where the right hand sides are evaluated at $\gamma(t)$,
and $a(t)$, $b(t)$ are some functions.}
Let $\check{F}:D\to \SO^+(1,3)$ be the map
defined by the holomorphic null lift $F$
via \eqref{eq:double-cover}.
Under the identification $\Phi$ as in \eqref{eq:Psi},
we have
\begin{gather*}
\Phi(\xi( \gamma(t) ))=
2 |\homega|^2 (\Im \varphi)\,
\check{F}\,
\begin{pmatrix}1 \\ \Re g \\\Im g \\0\end{pmatrix},\quad
\Phi( \eta^2f( \gamma(t) ) )
=2(\Im \varphi)\check{F}\,
\begin{pmatrix}0 \\\Im g \\-\Re g \\0\end{pmatrix},\\
\Phi( \eta^3f( \gamma(t) ) )
=a(t)\Phi(\xi f)
+b(t)\Phi(\eta^2 f)
+ 4(\Im \varphi) (\Re \varphi)
\check{F}\,
\begin{pmatrix}0 \\ \Re g \\ \Im g \\0\end{pmatrix}.
\end{gather*}
Since $\det \check{F}=1$,
{\allowdisplaybreaks
we have
\begin{align*}
-\det(\varepsilon_3,\Phi(\xi f), \Phi(\eta^2 f), \Phi(\eta^3 f))
&=-16|\homega|^2(\Im \varphi)^3 (\Re \varphi) 
\det
\begin{pmatrix}
\check{F}^{14}&1     &0&0 \\
\check{F}^{24}&\Re g&\Im g&\Re g \\
\check{F}^{34}&\Im g&-\Re g&\Im g \\
\check{F}^{44}&0&0&0
\end{pmatrix} \\
&=16|\homega|^2(\Im \varphi)^3\check{F}^{44}(\Re \varphi)
\end{align*}
along $\gamma(t)$.}
%
Since $\check{F}(p)$ is the identity matrix,
$\check{F}^{44}(p)=1$,
where $\check{F}^{-1}=(\check{F}^{jk})_{j,k=1,2,3,4}$.
In particular, 
$\check{F}^{44}>0$ holds on a neighborhood of $p$.
Hence, there exists 
a positive valued function $\rho(t)$
such that
\begin{equation}\label{eq:Re-phi}
  \det(\xi \tilde{f},\eta^2\tilde{f}, \eta^3\tilde{f})(\gamma(t))
  = {\rm sgn} (\Im \varphi) \, \rho(t) \,(\Re \varphi)(\gamma(t)).
\end{equation}
By Proposition \ref{prop:Sk-induction},
the condition (2) is equivalent to
\begin{equation}\label{eq:condition-2}
\Re \varphi = \dots =\Re (i^k\vf^k\varphi)= 0,\quad
(A:=)~{\rm sgn} (\Im \varphi)\, \Re (i^{k+1}\vf^{k+1}\varphi) \ne 0
\end{equation}
at $p$, where we set $i=\sqrt{-1}$.
In particular, 
since $k\geq1$,
$\Re \varphi(p)=\Im(\vf \varphi(p))=0$ 
holds.


Finally we consider the condition (3).
If we set $\zeta$ as in \eqref{eq:zeta},
Lemma \ref{lem:T2pT3p} yields that
$$
  \zeta^2 f(p)
  = 2 (\Im \varphi)
  \begin{pmatrix}0&-i g \\ i\bar{g}&0 \end{pmatrix},\quad
  \zeta^3 f(p)=O.
$$
Moreover, 
{\allowdisplaybreaks
by Lemma \ref{lem:T5p-body},
there exist $a_1,a_2,a_3\in\R$ such that
$$
  \zeta^5f(p)=
  a_1f(p)+a_2\xi f(p)+a_3 \zeta^2 f(p)
  + 4(12-\Re \vf^2\varphi)(\Im \varphi)^3 
   \begin{pmatrix}
     1 & -g \\
     -\bar{g} & 1
   \end{pmatrix}.
$$
Hence, we obtain
\begin{align*}
B&=
\det(\xi \tilde{f},\zeta^2\tilde{f},\zeta^5\tilde{f})(p)\\
&=-\det(\Phi(f),\Phi(\xi f),\Phi(\zeta^2f),\Phi(\zeta^5f))(p) \\
&=16|\homega|^2(\Im \varphi)^5(12-\Re \vf^2 \varphi)\det
\begin{pmatrix}
0&1&0&1 \\
0&\Re g&\Im g&-\Re g \\
0&\Im g&-\Re g&-\Im g \\
1&0&0&0
\end{pmatrix} \\
&=-32|\homega|^2(\Im \varphi)^5(12-\Re \vf^2 \varphi)(p).
\end{align*}}

If $k\geq2$,
we have $\Re \vf^2 \varphi(p)=0$ by \eqref{eq:condition-2},
and hence 
$
  B=-384|\homega|^2(\Im \varphi)^5(p)\ne0,
$
which yields the condition (3).
Then, 
$
  AB=-384|\homega|^2|\Im \varphi|^5 \Re(i^{k+1}\vf^{k+1}\varphi)(p)
$
holds.
If $k=1$,
we have 
$$
  AB=32|\homega|^2 |\Im \varphi|^5 
  (\Re \vf^2\varphi)(12-\Re \vf^2 \varphi)(p),
$$
which yields the desired result.
\end{proof}

As a direct conclusion of Proposition \ref{prop:Sk-criteria-CMC1},
we have the following.

\begin{theorem}\label{thm:Sk-criteria-CMC1}
Let $f:D\to S^3_1$ be a CMC $1$ face
with Weierstrass data $(g,\omega)$,
and $p\in D$ be a singular point.
Let us take an integer $k\,(>1)$.
Then, 
\begin{enumerate}
\item 
$f$ has a $cS_1$ singular point at $p$
if and only if 
$$
  \Im\varphi\ne0,\quad
  \Re \varphi=\Im (\vf \varphi)=0,\quad
  \Re(\vf^2 \varphi)\ne0,12
$$
hold at $p$,
\item 
$f$ has a $cS_k$ singular point at $p$
if and only if $\Im\varphi\ne0$ and
$$
  \Re \varphi
  = \Re( \sqrt{-1} \vf \varphi)
  =\dots
  = \Re(\sqrt{-1}^{k} \vf^{k}\varphi)
  =0,
  \quad
  \Re(\sqrt{-1}^{k+1} \vf^{k+1}\varphi)\ne0
$$
hold at $p$.
\end{enumerate}
Here $(\varphi,\vf)$ is the characteristic pair associated with $(g,\omega)$.
\end{theorem}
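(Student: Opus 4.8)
The plan is to obtain Theorem~\ref{thm:Sk-criteria-CMC1} as a direct consequence of Proposition~\ref{prop:Sk-criteria-CMC1}, by merging the two sign-variants $cS_k^+$ and $cS_k^-$ and by rewriting the parity-dependent vanishing chains in the uniform notation $\Re(\sqrt{-1}^{\,j}\vf^j\varphi)$. By definition $f$ has a $cS_k$ singular point at $p$ exactly when it has a $cS_k^+$ or a $cS_k^-$ singular point there, so the criterion for $cS_k$ is simply the disjunction of the two criteria provided by Proposition~\ref{prop:Sk-criteria-CMC1}.

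The key bookkeeping tool is the elementary identity
\[
  \Re(\sqrt{-1}^{\,j}\vf^j\varphi)=
  \begin{cases}
  (-1)^{j/2}\,\Re(\vf^j\varphi) & (j \text{ even}),\\
  (-1)^{(j+1)/2}\,\Im(\vf^j\varphi) & (j \text{ odd}),
  \end{cases}
\]
which follows from $\sqrt{-1}^{\,j}\in\{\pm1,\pm\sqrt{-1}\}$ according to $j\bmod4$. Consequently, vanishing of $\Re(\sqrt{-1}^{\,j}\vf^j\varphi)$ is equivalent to vanishing of $\Re(\vf^j\varphi)$ for even $j$ and of $\Im(\vf^j\varphi)$ for odd $j$, and likewise for the single non-vanishing condition.

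For assertion~(2) I would split on the parity of $k$. If $k=2\ell$ is even, Proposition~\ref{prop:Sk-criteria-CMC1} already states a sign-free criterion (recall $cS_{2\ell}^+$ and $cS_{2\ell}^-$ are $\mathcal{A}$-equivalent), and the dictionary above transcribes its chain $\Re\varphi=\Im(\vf\varphi)=\dots=\Re(\vf^{2\ell}\varphi)=0$, $\Im(\vf^{2\ell+1}\varphi)\ne0$ verbatim into $\Re(\sqrt{-1}^{\,j}\vf^j\varphi)=0$ $(j=0,\dots,k)$ and $\Re(\sqrt{-1}^{\,k+1}\vf^{k+1}\varphi)\ne0$. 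If $k=2\ell+1$ is odd, the two conditions for $cS_{2\ell+1}^\pm$ share the same vanishing chain, and their disjunction turns $\pm(-1)^\ell\Re(\vf^{2\ell+2}\varphi)>0$ into $\Re(\vf^{2\ell+2}\varphi)\ne0$; since $k+1=2\ell+2$ is even, this is $\Re(\sqrt{-1}^{\,k+1}\vf^{k+1}\varphi)\ne0$, while the chain again reads $\Re(\sqrt{-1}^{\,j}\vf^j\varphi)=0$ $(j=0,\dots,k)$. This gives assertion~(2) for every $k\ge2$.

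Assertion~(1) must be treated on its own, and this is where the only genuine subtlety lies: for $k=1$ the quantity $B$ in condition~(3) of Fact~\ref{thm:S_ksing} equals $-32|\homega|^2(\Im\varphi)^5(12-\Re\vf^2\varphi)$, so the factor $12-\Re\vf^2\varphi$ does not drop out as it does when $k\ge2$ (where $\Re\vf^2\varphi=0$ forces $B\ne0$ automatically). Taking the disjunction of the $cS_1^\pm$ conditions, the common part $\Im\varphi\ne0$, $\Re\varphi=\Im(\vf\varphi)=0$ survives, and $\pm\Re(\vf^2\varphi)\,(12-\Re(\vf^2\varphi))>0$ becomes $\Re(\vf^2\varphi)\,(12-\Re(\vf^2\varphi))\ne0$, i.e.\ $\Re(\vf^2\varphi)\notin\{0,12\}$, which is exactly assertion~(1). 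The argument uses no new geometry; apart from isolating the $k=1$ anomaly, the main care needed is in tracking the powers of $\sqrt{-1}$ and in checking that the union of the two strict sign inequalities collapses to a single non-vanishing condition.
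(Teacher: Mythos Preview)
Your proposal is correct and is exactly the route the paper takes: the paper states Theorem~\ref{thm:Sk-criteria-CMC1} as ``a direct conclusion of Proposition~\ref{prop:Sk-criteria-CMC1}'' without further argument, and you have simply written out that direct conclusion in detail (parity bookkeeping via $\Re(\sqrt{-1}^{\,j}\vf^j\varphi)$ and disjunction of the $\pm$ cases). There is nothing to add.
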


By Theorem \ref{thm:Ak-criteria-CMC1},
if a CMC $1$ face $f$ has a $cS_1$ singular point at $p$,
then the condition \eqref{eq:condition-S}, 
i.e., $\Re \varphi=\Im\vf\varphi=0$ at $p$, holds.
Hence, the $\sigma$-invariant $\sigma(f,p)$ can be defined
(cf.\ Definition \ref{def:invariants}).
As a corollary of Theorem \ref{thm:Ak-criteria-CMC1},
we have the following.

\begin{corollary}\label{cor:cS1-sigma-invariant}
Let $f:D\to S^3_1$ be a CMC $1$ face.
Suppose that $f$ has a $cS_1$ singular point at $p$.
Then the $\sigma$-invariant $\sigma(f,p)$
can be defined and $\sigma(f,p)\ne12$ holds.
\end{corollary}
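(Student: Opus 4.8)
The plan is to read off both assertions directly from the $cS_1$ criterion established in Theorem~\ref{thm:Sk-criteria-CMC1}(1) (equivalently Proposition~\ref{prop:Sk-criteria-CMC1}). That criterion characterizes a $cS_1$ singular point $p$ of a CMC $1$ face by the simultaneous conditions $\Im\varphi(p)\ne0$, $\Re\varphi(p)=\Im(\vf\varphi(p))=0$, and $\Re(\vf^2\varphi(p))\neq 0,12$, where $(\varphi,\vf)$ is the characteristic pair associated with $(g,\omega)$.

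First I would observe that the two equalities $\Re\varphi(p)=\Im(\vf\varphi(p))=0$ are exactly the condition \eqref{eq:condition-S}. By Definition~\ref{def:invariants}, this is precisely the hypothesis under which the $\sigma$-invariant $\sigma(f,p)=\Re(\vf^2\varphi(p))$ is defined, its independence of the choice of Weierstrass data being guaranteed by Theorem~\ref{thm:invariants}(2). This justifies the first assertion, namely that $\sigma(f,p)$ can be defined at a $cS_1$ singular point.

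Second, the remaining clause of the criterion gives $\Re(\vf^2\varphi(p))\ne12$ directly. Since $\sigma(f,p)=\Re(\vf^2\varphi(p))$ by definition, this is exactly the claim $\sigma(f,p)\ne12$, which completes the proof.

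I expect no genuine obstacle: the corollary is an immediate unpacking of the $cS_1$ criterion together with the definition of the $\sigma$-invariant. The only point requiring a little care is the logical order. One must first invoke the part of the criterion establishing \eqref{eq:condition-S}, so that the invariant is known to be well-defined, before reading off its value; the inequality $\sigma(f,p)\ne12$ is meaningless until $\sigma(f,p)$ is known to exist. It is also worth noting that the criterion excludes both the value $0$ and the value $12$ for $\Re(\vf^2\varphi(p))$, but only the exclusion of $12$ is relevant here, since the value $0$ merely corresponds to the degenerate borderline with the $5/2$-cuspidal edge case (cf.\ Remark~\ref{rem:rhamphoid-invariant}).
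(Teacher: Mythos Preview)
Your proposal is correct and follows essentially the same approach as the paper: the corollary is stated there as an immediate consequence of the $cS_1$ criterion (Theorem~\ref{thm:Sk-criteria-CMC1}(1)) together with Definition~\ref{def:invariants}, and the surrounding text in the paper spells out exactly the two steps you describe. Your additional remarks on logical order and the role of the excluded value $0$ are accurate and harmless elaborations.
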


By Theorem \ref{thm:Ak-criteria-CMC1},
if a CMC $1$ face $f$ has 
cuspidal butterfly ($A_4$-type singularity)
at $p$,
then the condition \eqref{eq:condition-A}, 
i.e., $\Im \varphi=\Re\vf\varphi=0$ at $p$, holds.
Hence, the $\alpha$-invariant $\alpha(f,p)$ can be defined
(cf.\ Definition \ref{def:invariants}).

\begin{definition}[Admissibility]
\label{def:generic-A4}
Suppose that a CMC $1$ face $f$ has cuspidal butterfly at $p\in D$.
If $|\alpha(f,p)|\neq 12$, then $p$ is called an {\it admissible} cuspidal butterfly.
On the other hand, 
if a CMC $1$ face $f$ has $cS_1$ singularity at $p\in D$
such that $\sigma(f,p)\neq -12$,
then $p$ is called an {\it admissible} $cS_1$ singularity.
\end{definition}

By Corollary \ref{cor:cS1-sigma-invariant},
a CMC $1$ face $f$ has an admissible $cS_1$ singular point $p\in D$
if and only if $|\sigma(f,p)|\neq 12$.
Theorem \ref{thm:B} is a direct conclusion of the following.

\begin{theorem}
Let $f:D\to S^3_1$ be a CMC $1$ face
defined on a simply connected domain $D$ of $\C$,
and $p\in D$ be a singular point.
Then, we have the following.
\begin{itemize}
\item[{\rm (1)}]
The CMC $1$ face $f$ has cuspidal $S_1$ singularity at $p$
if and only if 
the conjugate $f^\sharp$ has admissible cuspidal butterfly at $p$.
\item[{\rm (2)}] 
For $k\geq2$,
the CMC $1$ face $f$ has cuspidal $S_k$ singularity at $p$
if and only if 
the conjugate $f^\sharp$ has generalized $A_{k+3}$ singularity at $p$.
\end{itemize}
\end{theorem}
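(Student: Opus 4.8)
The plan is to transport the criteria of Theorems \ref{thm:Sk-criteria-CMC1} and \ref{thm:Ak-criteria-CMC1} across the conjugation by exploiting the simple way the characteristic pair changes. Since $f^\sharp$ has Weierstrass data $(g,-\sqrt{-1}\,\omega)$, its characteristic pair $(\varphi^\sharp,\vf^\sharp)$ satisfies
\[
  \varphi^\sharp=\frac{dg}{g^2(-\sqrt{-1}\,\omega)}=\sqrt{-1}\,\varphi,
  \qquad
  \vf^\sharp=g\,\frac{d}{dg}=\vf,
\]
the latter because $\vf$ depends only on $g$, which conjugation leaves unchanged. Hence $(\vf^\sharp)^j\varphi^\sharp=\sqrt{-1}\,\vf^j\varphi$ for every $j\ge0$. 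The argument then rests on the elementary identity $\Im(\sqrt{-1}\,w)=\Re w$ for $w\in\C$: taking $w=\sqrt{-1}^{\,j}\vf^j\varphi$ gives
\[
  \Im\bigl(\sqrt{-1}^{\,j}(\vf^\sharp)^j\varphi^\sharp\bigr)
  =\Im\bigl(\sqrt{-1}^{\,j+1}\vf^j\varphi\bigr)
  =\Re\bigl(\sqrt{-1}^{\,j}\vf^j\varphi\bigr),
\]
and in particular $\Re\varphi^\sharp=-\Im\varphi$. Thus the imaginary conditions that govern $A$-type singularities of $f^\sharp$ become, index for index, the real conditions that govern $cS$-type singularities of $f$, and the front/first-kind condition $\Re\varphi^\sharp\ne0$ for $f^\sharp$ becomes the first-kind condition $\Im\varphi\ne0$ for $f$.

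For assertion (2) I would simply read off this dictionary. By Theorem \ref{thm:Sk-criteria-CMC1}(2), $f$ has a $cS_k$ point ($k\ge2$) exactly when $\Im\varphi\ne0$, $\Re(\sqrt{-1}^{\,j}\vf^j\varphi)=0$ for $j=0,\dots,k$, and $\Re(\sqrt{-1}^{\,k+1}\vf^{k+1}\varphi)\ne0$. By the displayed identity this is equivalent to $\Re\varphi^\sharp\ne0$, $\Im(\sqrt{-1}^{\,j}(\vf^\sharp)^j\varphi^\sharp)=0$ for $j=0,\dots,k$, and $\Im(\sqrt{-1}^{\,k+1}(\vf^\sharp)^{k+1}\varphi^\sharp)\ne0$, which is precisely Theorem \ref{thm:Ak-criteria-CMC1} with index $k+3$ (whose top order is $(k+3)-2=k+1$) applied to $f^\sharp$, i.e.\ the generalized $A_{k+3}$ condition. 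The two criteria match term by term, so (2) is immediate.

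The $A_4$ part of assertion (1) is the same computation for $k=1$: the cuspidal butterfly (generalized $A_4$) criterion for $f^\sharp$, namely $\Re\varphi^\sharp\ne0$, $\Im\varphi^\sharp=\Im(\sqrt{-1}\,\vf^\sharp\varphi^\sharp)=0$ and $\Im(\sqrt{-1}^{\,2}(\vf^\sharp)^2\varphi^\sharp)\ne0$, translates into $\Im\varphi\ne0$, $\Re\varphi=\Im(\vf\varphi)=0$ and $\Re(\vf^2\varphi)\ne0$, which agrees with the $cS_1$ criterion of Theorem \ref{thm:Sk-criteria-CMC1}(1) except for the extra exclusion $\Re(\vf^2\varphi)\ne12$ carried there. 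The remaining and genuinely delicate point is to match the two admissibility conditions (Definition \ref{def:generic-A4}). Here the key observation is that the two invariants coincide under conjugation: since $f$ satisfies \eqref{eq:condition-S} at $p$, one computes
\[
  \alpha(f^\sharp,p)=\Im\bigl((\vf^\sharp)^2\varphi^\sharp\bigr)(p)
  =\Im\bigl(\sqrt{-1}\,\vf^2\varphi\bigr)(p)
  =\Re(\vf^2\varphi)(p)=\sigma(f,p).
\]
Consequently $f^\sharp$ has an admissible cuspidal butterfly iff $|\alpha(f^\sharp,p)|\ne12$, i.e.\ iff $\sigma(f,p)\ne\pm12$; together with $\sigma(f,p)=\Re(\vf^2\varphi)\notin\{0,12\}$ from the $cS_1$ criterion and Corollary \ref{cor:cS1-sigma-invariant}, this is exactly the admissibility $\sigma(f,p)\ne-12$ of the $cS_1$ point. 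I expect the bookkeeping of the single excluded value $\sigma=-12$ to be the main obstacle: the raw ``$cS_1$'' and ``cuspidal butterfly'' conditions agree everywhere but there, and it is the invariant identity $\alpha(f^\sharp,p)=\sigma(f,p)$ that pins down the admissibility hypothesis and yields the duality of Theorem \ref{thm:B}(1).
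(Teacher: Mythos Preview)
Your approach is exactly the paper's: invoke Theorems \ref{thm:Ak-criteria-CMC1} and \ref{thm:Sk-criteria-CMC1} and translate between them using $\varphi^\sharp=\sqrt{-1}\,\varphi$, $\vf^\sharp=\vf$. The paper's proof is the one-line sentence ``By Theorems \ref{thm:Ak-criteria-CMC1} and \ref{thm:Sk-criteria-CMC1}, we have the desired result''; you have simply written out the dictionary in full. For part (2) your argument is complete and matches the paper.

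For part (1) you have been more careful than the paper, and you have put your finger on a genuine discrepancy in the statement as written. Your identity $\alpha(f^\sharp,p)=\sigma(f,p)$ is correct, and with it the criteria match up to give exactly the symmetric duality
\[
  \text{admissible } cS_1 \text{ for } f
  \quad\Longleftrightarrow\quad
  \text{admissible cuspidal butterfly for } f^\sharp,
\]
which is Theorem \ref{thm:B}(1). The statement you were asked to prove, however, drops ``admissible'' on the $cS_1$ side, and as you suspected this does not follow: if $\sigma(f,p)=-12$ then $f$ has a $cS_1^-$ point (since $\Re(\vf^2\varphi)\notin\{0,12\}$), while $|\alpha(f^\sharp,p)|=12$ so the cuspidal butterfly of $f^\sharp$ is \emph{not} admissible. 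Thus the ``only if'' direction of (1) as stated fails at $\sigma=-12$. This appears to be a slip in the wording of this particular theorem; the intended result is the symmetric one in Theorem \ref{thm:B}(1), and your argument establishes it.
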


\begin{proof}
By Theorems \ref{thm:Ak-criteria-CMC1} and \ref{thm:Sk-criteria-CMC1},
we have the desired result.
\end{proof}

\if0
\begin{proof}[Proof of Theorem \ref{thm:B}]
Let $(g,\omega)$ be a Weierstrass data of $f$,
and set $g^\sharp:=g$, 
$\omega^\sharp:=-\sqrt{-1}\omega$.
Then $(g^\sharp,\omega^\sharp)$ 
gives a Weierstrass data 
of the conjugate $f^\sharp$.
The characteristic pair $(\varphi^\sharp,\vf^\sharp)$
of associated with $(g^\sharp,\omega^\sharp)$
is $(\sqrt{-1}\varphi,\vf)$.
Then 
$$
\Im ((\vf^\sharp)^2\varphi^\sharp)
=\Re (\vf^2\varphi),
\quad
\Re ((\vf^\sharp)^2\varphi^\sharp)
=-\Im (\vf^2\varphi)
$$
holds.
Hence, Theorems \ref{thm:Ak-criteria-CMC1} and \ref{thm:Sk-criteria-CMC1}
yield the desired result.
\end{proof}
\fi

\begin{example}
\label{ex:cuspidal-butterfly}
Fix a constant $\constant \in \C \setminus \{0\}$.
Let $f:\C\to S^3_1$ be a CMC $1$ face
given by the Weierstrass data
$
  (g,\omega)
  =\left(-e^{z}+1/\sqrt{2},\, \constant e^{-z}dz\right).
$
Then, $z_0=(-\log 2+\sqrt{-1}\pi)/2$ 
is a singular point of $f$.
Since
$$
  \varphi(z_0)=\frac{\sqrt{-1}}{2\constant},
  \quad
  \vf \varphi(z_0)=-\frac{1}{\constant},
  \quad
  \vf^2\varphi(z_0)=\frac{1-\sqrt{-1}}{\constant},
$$
it holds that
\begin{itemize}
\item 
$f$ has $cS_1^+$ (resp.\ $cS_1^-$) singularity at $z_0$
if and only if 
$\constant\in \R$ and $\constant>1/12$
(resp.\ $k\in \R\setminus\{0\}$ and $\constant<1/12$).
In particular, 
$z_0\in \Sigma(f)$ is an admissible $cS_1$ singular point
if and only if 
$\constant\in \R\setminus\{0\}$ and $\constant\ne\pm1/12$,
\item 
$f$ has cuspidal butterfly 
if and only if 
$\constant\in \sqrt{-1}\R\setminus\{0\}$.
In particular, 
$z_0\in \Sigma(f)$ is an admissible cuspidal butterfly singular point
if and only if
$\constant\in \sqrt{-1}\R\setminus\{0\}$ and $\constant\ne\pm\sqrt{-1}/12$.
\end{itemize}
Therefore, we can observe the duality
between admissible $cS_1$ singularity
and admissible cuspidal butterfly
as in Theorem \ref{thm:B}.

To visualize the figure of the surface,
we use Small's formula \eqref{eq:Small}.
Let us set
$G(z)=\tan\left(\frac{z}{2}\sqrt{4\constant-1}\right).$
Then $G$, $g$, and 
the Hopf differential $Q=\omega dg$ satisfy \eqref{eq:QgG}.
Substituting $G,g,\omega$ into Small's formula \eqref{eq:Small},
we can write down $F$ and $f=F e_3F^*$, explicitly.
See Figures \ref{fig:cS1-plus}, \ref{fig:cS1-minus}, \ref{fig:cuspidal-butterfly} 
for $\constant=1$, $\constant=-1/50$ and $\constant=-\sqrt{-1}$.
\end{example}

\begin{figure}[htb]
\begin{center}
 \begin{tabular}{{c@{\hspace{8mm}}c}}
  \resizebox{4cm}{!}{\includegraphics{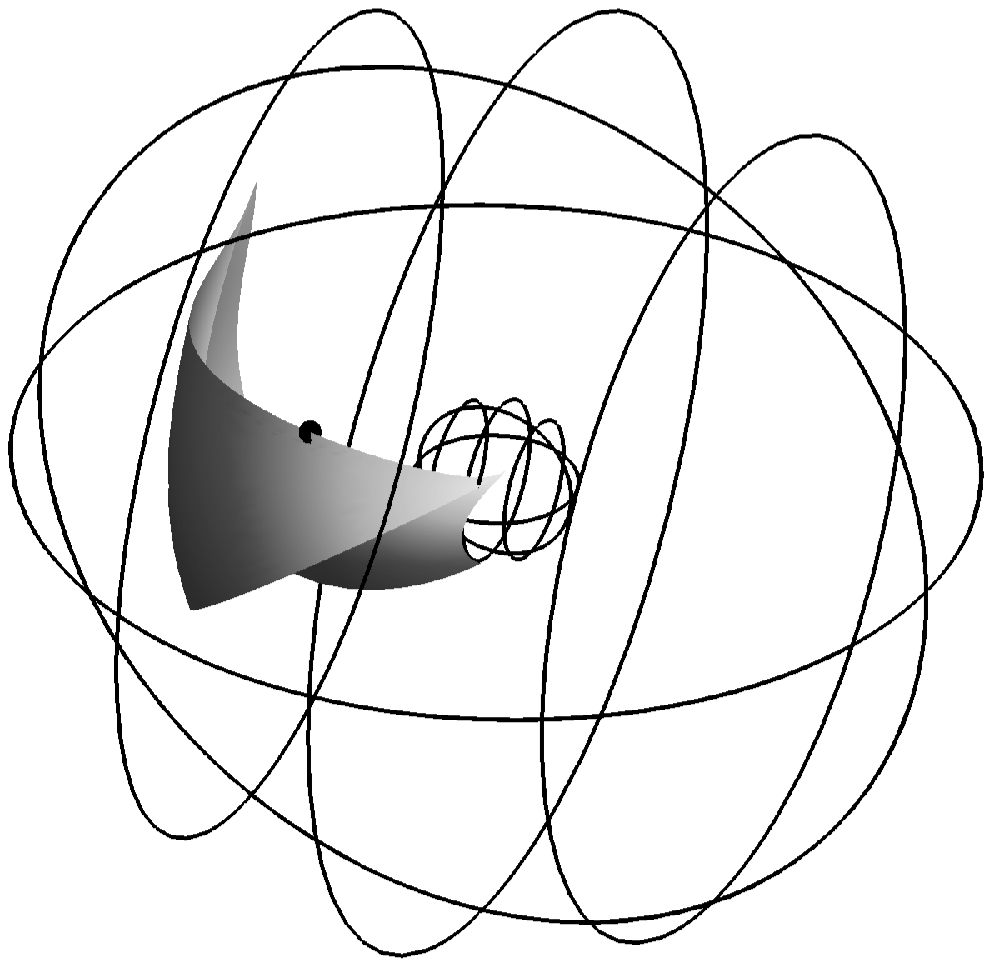}} &
  \resizebox{3cm}{!}{\includegraphics{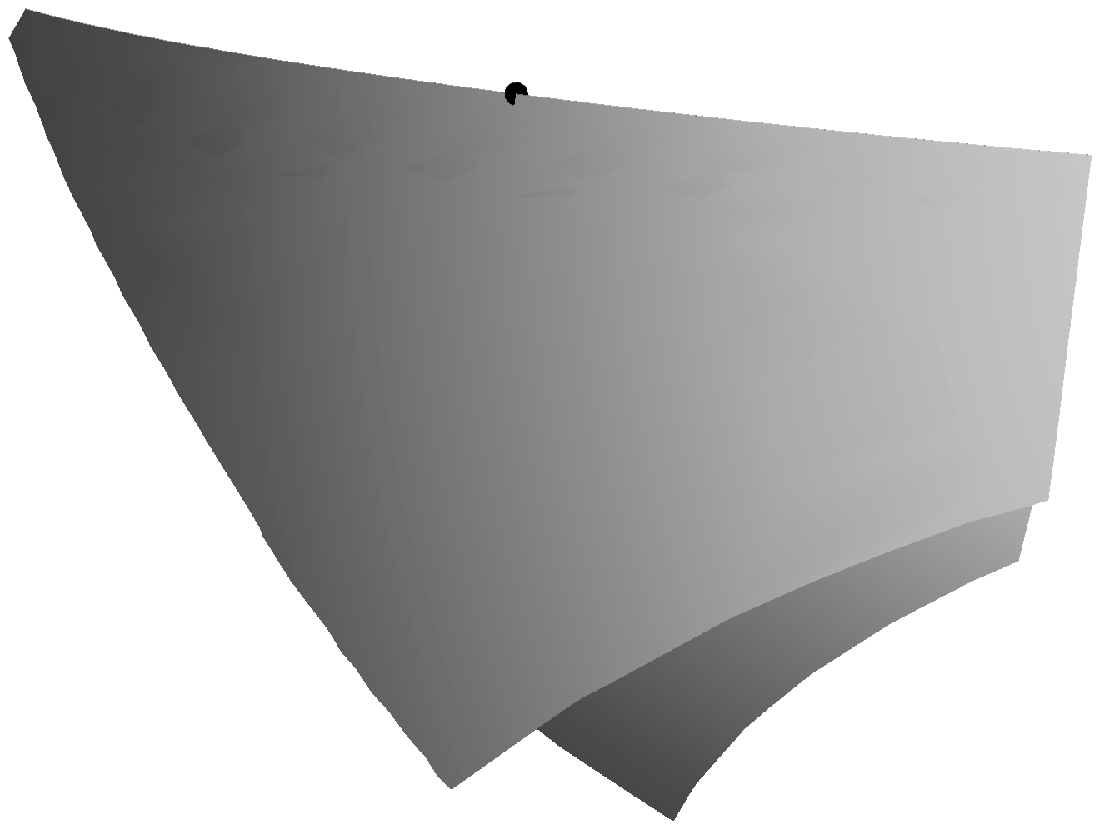}}
 \end{tabular}
\end{center}
\caption{
A CMC $1$ face with $cS_1^+$ singular point
given in Example \ref{ex:cuspidal-butterfly} with $\constant=1$.} 
\label{fig:cS1-plus}
\end{figure}

\begin{figure}[htb]
\begin{center}
 \begin{tabular}{{c@{\hspace{8mm}}c}}
  \resizebox{4cm}{!}{\includegraphics{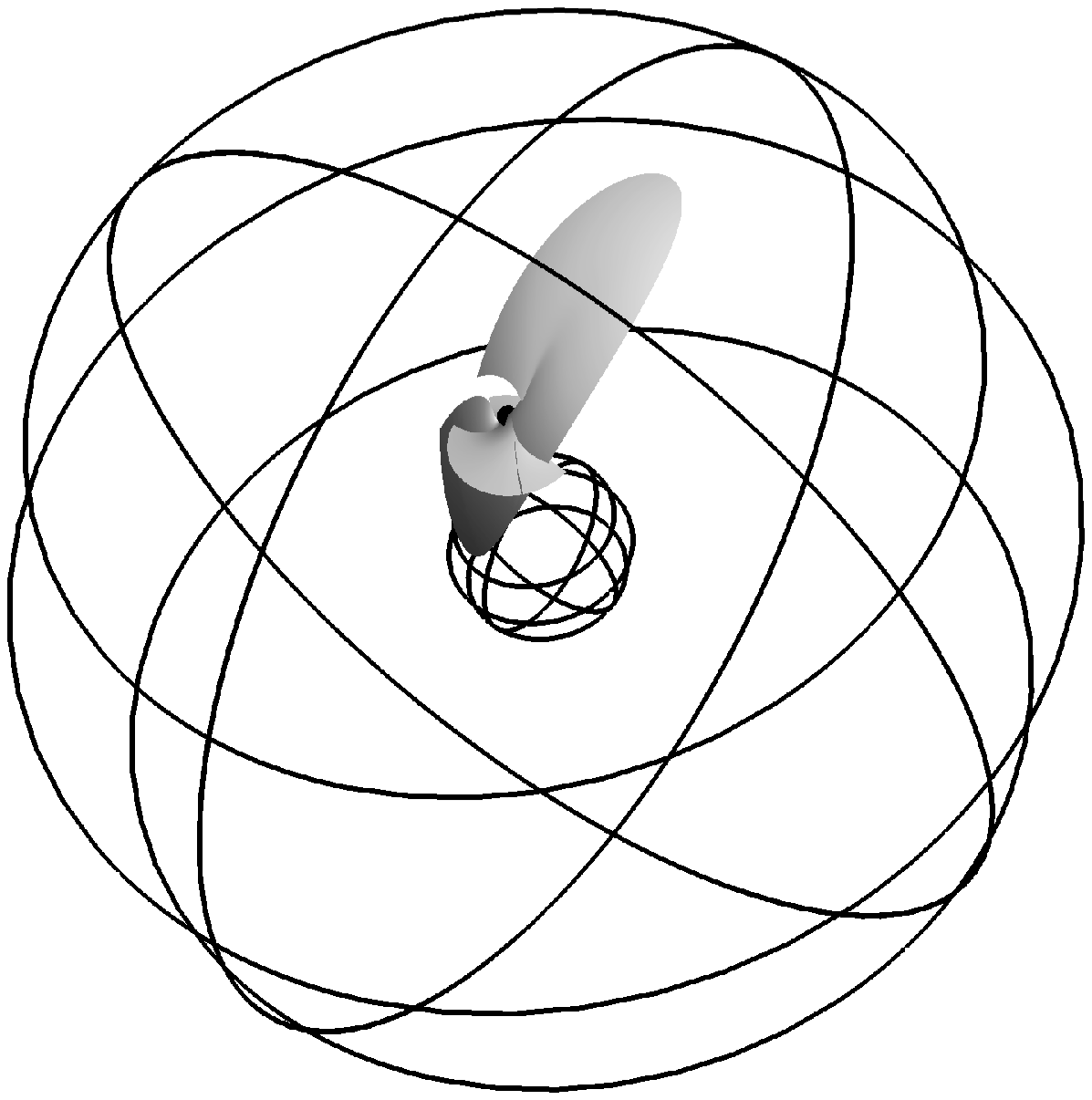}} &
  \resizebox{3cm}{!}{\includegraphics{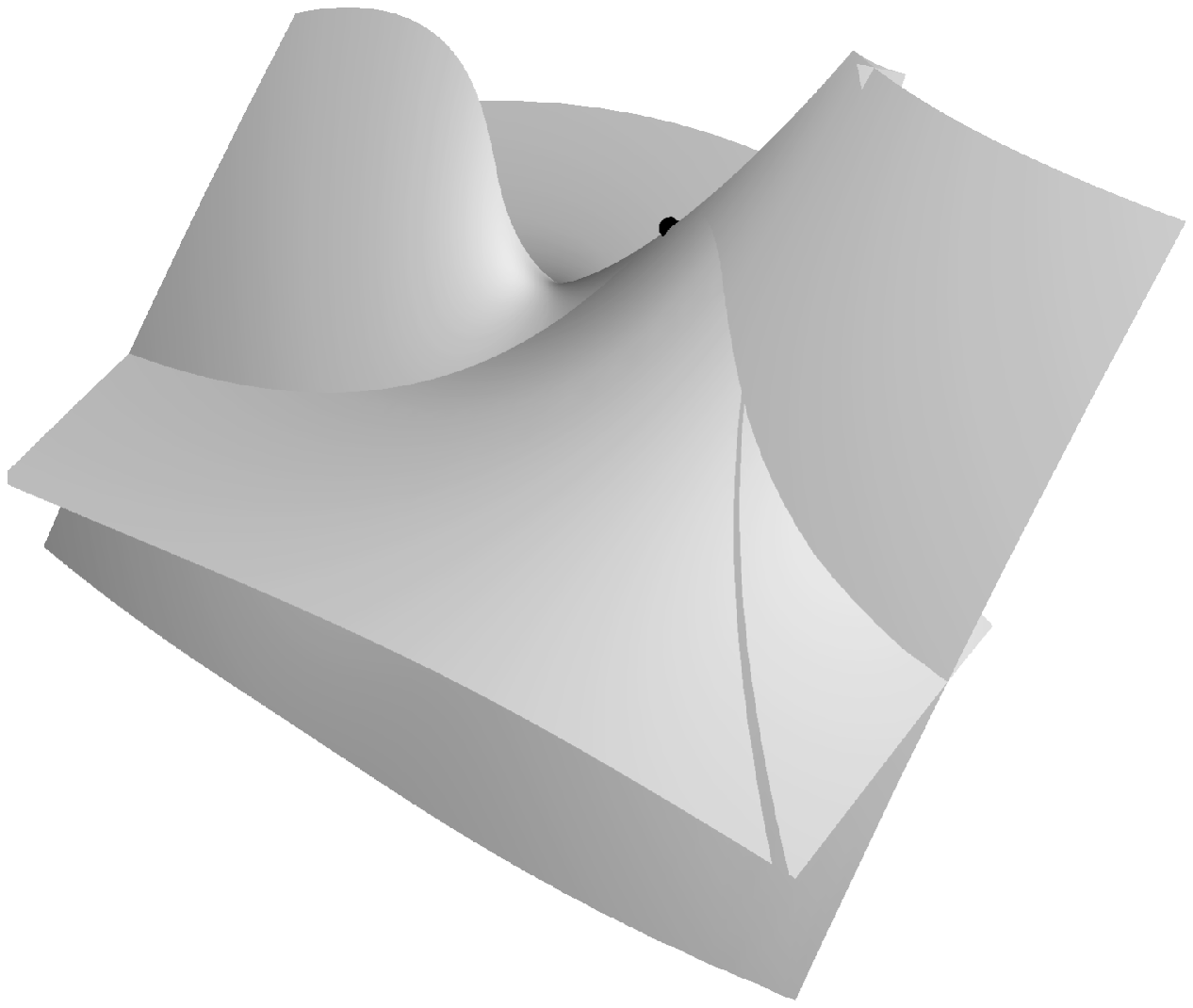}}
 \end{tabular}
\end{center}
\caption{
A CMC $1$ face with $cS_1^-$ singular point
given in Example \ref{ex:cuspidal-butterfly} with $\constant=-1/50$.} 
\label{fig:cS1-minus}
\end{figure}

\begin{figure}[htb]
\begin{center}
 \begin{tabular}{{c@{\hspace{8mm}}c}}
  \resizebox{4cm}{!}{\includegraphics{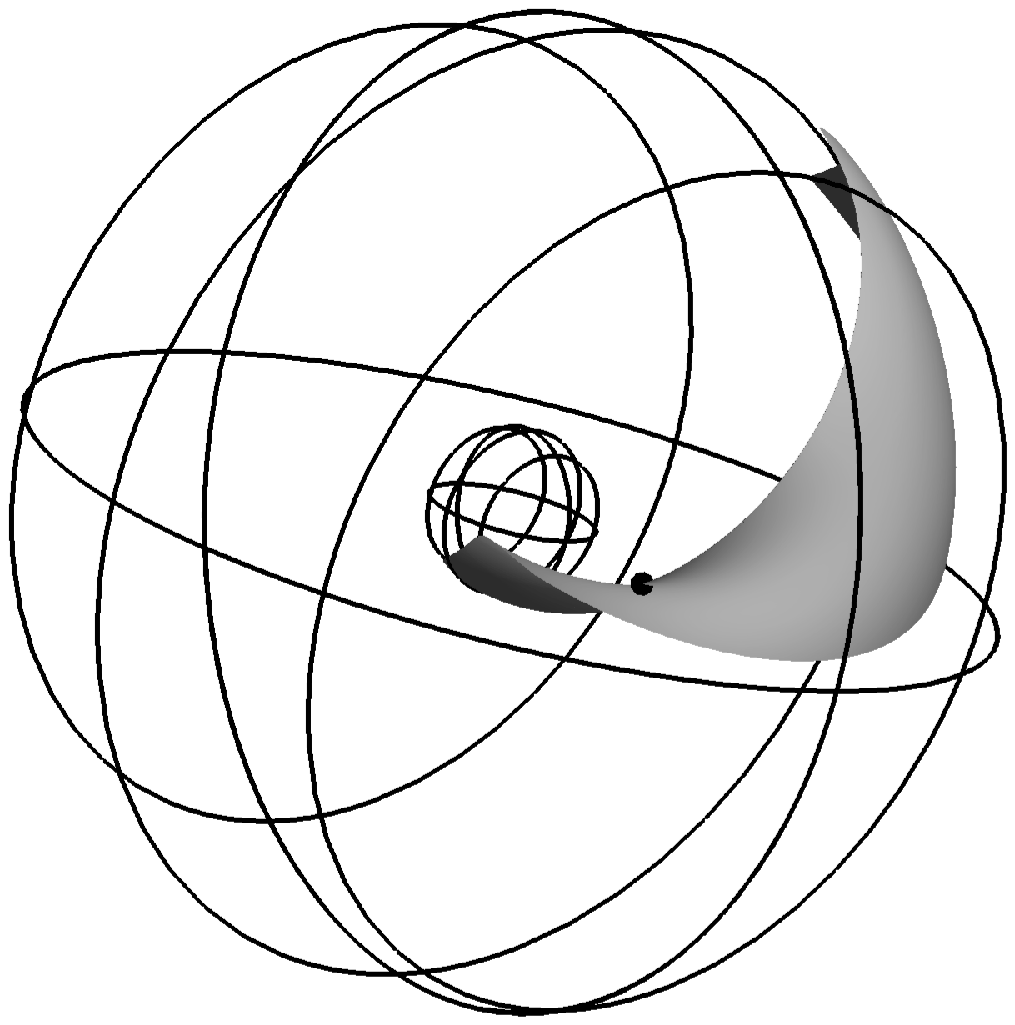}} &
  \resizebox{3cm}{!}{\includegraphics{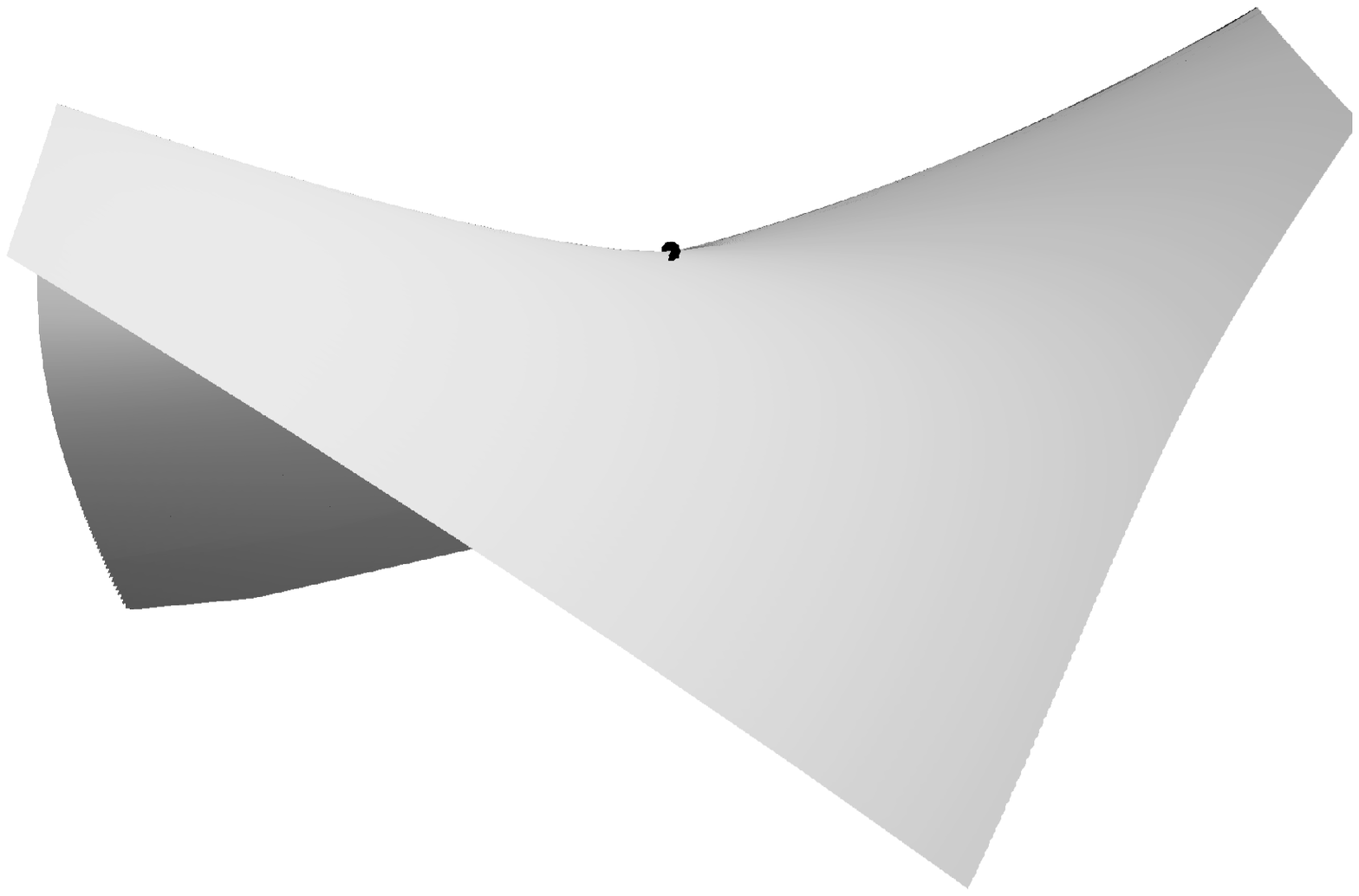}}
 \end{tabular}
\end{center}
\caption{
A CMC $1$ face with cuspidal butterfly
given in Example \ref{ex:cuspidal-butterfly} with $\constant=-\sqrt{-1}$.
This is the conjugate of the CMC $1$ face
given in Figure \ref{fig:cS1-plus}.
} 
\label{fig:cuspidal-butterfly}
\end{figure}

\section{Non-degenerate singular points on CMC $1$ faces}
\label{sec:final}

We obtained the criteria for several singular points,
such as 
$5/2$-cuspidal edge singularities
(Theorem \ref{thm:to-prove}),
cuspidal butterfly and
generalized $A_k$ singularities 
(Theorem \ref{thm:Ak-criteria-CMC1}),
and cuspidal $S_k$ singularities
(Theorem \ref{thm:Sk-criteria-CMC1}),
on CMC $1$ faces in the de Sitter $3$-space $S^3_1$
in terms of the Weierstrass data.
These criteria yield Theorems \ref{thm:A} and \ref{thm:B}.
To summarize, we conclude that
CMC $1$ faces in $S^3_1$ have the following dualities:
\begin{itemize}
\item[(i)]
The cuspidal edge singularity is self-dual.
The duality between swallowtail singularity
and cuspidal cross cap singularity
(\cite{FSUY}).
\item[(ii)]
The duality between 
generalized conelike singularity and 
$5/2$-cuspidal edge singularity
(Theorem \ref{thm:A}).
\item[(iii)]
The duality between 
admissible cuspidal butterfly singularity and 
admissible cuspidal $S_1$ singularity
(Theorem \ref{thm:B}, (1)).
The duality between 
generalized $A_{k+3}$ singularity
and 
cuspidal $S_k$ singularity,
where $k\,(\geq2)$ is an integer
(Theorem \ref{thm:B}, (2)).
\end{itemize}
These dualities (i)--(iii) yield that
CMC $1$ faces in $S^3_1$ and 
maxfaces in the Lorentz-Minkowski $3$-space $\L^3$
have the different behavior at singular points.

Furthermore, 
as a corollary of 
Theorems \ref{thm:to-prove}, \ref{thm:Ak-criteria-CMC1} and \ref{thm:Sk-criteria-CMC1},
we give a classification of non-degenerate singular points
on CMC $1$ faces in $S^3_1$ (Theorem \ref{thm:C}).
%
For the proof of Theorem \ref{thm:C},
we prepare the following.

\begin{lemma}\label{lem:wavefront-singularity}
Let $f: D\to S^3_1$ be a CMC $1$ face.
Suppose that $f$ is a wave front at a singular point $p\in D$.
Then $p$ either
cuspidal edge, swallowtail, cuspidal butterfly,
a generalized $A_k$ singular point $(k\geq5)$, or
a generalized conelike singular point.
\end{lemma}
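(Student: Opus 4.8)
The plan is to read off the local singularity type from the finite or infinite vanishing of the real numbers
\[
  a_j := \Im\bigl(\sqrt{-1}^{\,j}\,\vf^{j}\varphi\bigr)(p)\qquad (j=0,1,2,\dots),
\]
where $(\varphi,\vf)$ is the characteristic pair associated with a Weierstrass data of $f$. Since $f$ is a wave front on a neighborhood of $p$, Fact \ref{fact:singular-FSUY} (3) gives $\Re\varphi(p)\neq0$, so in particular $p$ is a non-degenerate singular point. I would then distinguish two cases according to whether the sequence $(a_j)_{j\geq0}$ is identically zero or not. These cases are clearly mutually exclusive and exhaustive.

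Suppose first that $a_j\neq0$ for some $j$, and let $k-2$ be the smallest such index, so that $k\geq2$ and $a_0=\dots=a_{k-3}=0$ while $a_{k-2}\neq0$. Together with $\Re\varphi(p)\neq0$, this is precisely the condition in Theorem \ref{thm:Ak-criteria-CMC1} for $p$ to be a generalized $A_k$ singular point. For $k=2,3,4$ this is a cuspidal edge, a swallowtail, or a cuspidal butterfly by Fact \ref{fact:Ak-criteria}, and for $k\geq5$ it is a generalized $A_k$ singular point by Definition \ref{def:generalized-Ak}; all of these appear in the list.

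It remains to treat the case $a_j=0$ for every $j\geq0$, where I claim that $p$ is a generalized conelike singular point. By Proposition \ref{prop:conelike-CMC1} and $\Re\varphi(p)\neq0$, it suffices to show that $\Im\varphi$ vanishes identically along a singular curve through $p$. I would take $\gamma$ to be the integral curve of the singular directional vector field $\xi$ with $\gamma(0)=p$, exactly as in the proof of Proposition \ref{prop:conelike-CMC1}; since $\xi$ is tangent to $\Sigma(f)$ along the singular set, this integral curve is a singular curve, and $\tfrac{d^k}{dt^k}\Im\varphi(\gamma(t))\big|_{t=0}=\xi^k(\Im\varphi)(p)$. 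By Proposition \ref{prop:Sk-induction}, $\xi^k(\Im\varphi)(p)$ is a real-linear combination of $a_1,\dots,a_k$, hence equals $0$, while $\Im\varphi(p)=a_0=0$; thus every derivative of $t\mapsto\Im\varphi(\gamma(t))$ at $t=0$ vanishes. Since $g$ is meromorphic and $\omega$ holomorphic, near $p$ (where $|g|=1$ and $h\neq0$) the function $\varphi=g_z/(g^2h)$ and the vector field $\xi$ are real-analytic, and hence so is $\gamma$ and the composite $t\mapsto\Im\varphi(\gamma(t))$. A real-analytic function all of whose Taylor coefficients at $0$ vanish is identically zero near $0$, which gives $\Im\varphi\equiv0$ along $\gamma$, as required.

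The main obstacle is this last case: one must promote the infinite-order vanishing of $\Im\varphi$ at the single point $p$ to vanishing along the whole singular curve. This is exactly where real analyticity of the CMC $1$ face (via the meromorphic Weierstrass data) is essential, and where Proposition \ref{prop:Sk-induction} does the work of converting the successive derivatives $\xi^k(\Im\varphi)(p)$ into the invariant quantities $a_j$. With the two cases settled, every wave-front singular point is accounted for, completing the proof.
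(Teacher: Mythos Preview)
Your proof is correct and follows essentially the same approach as the paper: both reduce the question to the vanishing order of $\Im\varphi$ along the singular curve through $p$, use Proposition \ref{prop:Sk-induction} to pass between $\xi^k(\Im\varphi)(p)$ and the quantities $\Im(\sqrt{-1}^k\vf^k\varphi)(p)$, and invoke Theorem \ref{thm:Ak-criteria-CMC1}, Fact \ref{fact:singular-FSUY}, and Proposition \ref{prop:conelike-CMC1}. The only organizational difference is that the paper starts from the trichotomy for $\Im\varphi\circ\gamma$ (nonzero at $p$; identically zero; isolated zero of finite order) and then translates to the $a_j$, whereas you start from the dichotomy on the $a_j$ and translate back; you are more explicit about the real-analyticity step that the paper leaves implicit when asserting that an isolated zero has finite order and that these three cases are exhaustive.
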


\begin{proof}
Let $(g,\omega)$ be a Weierstrass data of $f$,
and $(\varphi,\vf)$ be the characteristic pair associated with $(g,\omega)$.
We denote by $\xi$ the singular directional vector field given by \eqref{eq:xi-eta}.
By Fact \ref{fact:singular-FSUY}, we have $\Re \varphi(p)\ne0$.
In particular, $p$ is a non-degenerate singular point.
We consider the function $\Im \varphi$ 
along the singular set $\Sigma(f)$ near $p$.
If $\Im \varphi(p)\ne0$,
then $p$ must be a cuspidal edge by 
Fact \ref{fact:singular-FSUY}.
If $\Im \varphi$ is identically zero along $\Sigma(f)$,
then $p$ must be a generalized conelike singular point by 
Proposition \ref{prop:conelike-CMC1}.
Now we assume that $\Im \varphi$ has an isolated zero at $p$.
Then there exists an integer $k\,(\geq3)$
such that 
$\xi^\ell(\Im \varphi)(p)=0$ $(\ell=0,\dots,k-3)$
and 
$\xi^{k-2}(\Im \varphi)(p)\ne0$. 
By Proposition \ref{prop:Sk-induction},
$$
\Im (\sqrt{-1}^\ell \vf^\ell \varphi)=0 \text{~for~} \ell=0,\dots,k,
\quad\text{and}\quad 
\Im (\sqrt{-1}^{k+1} \vf^{k+1} \varphi)\ne0
$$
holds at $p$.
If $k=3$, then $p$ must be a swallowtail singular point
by Fact \ref{fact:singular-FSUY}.
If $k=4$, then $p$ must be a cuspidal butterfly singular point
by Theorem \ref{thm:Ak-criteria-CMC1}.
If $k\geq5$, 
Theorem \ref{thm:Ak-criteria-CMC1} yields that 
$p$ must be a generalized $A_k$ singular point.
\end{proof}

\begin{proof}[Proof of Theorem \ref{thm:C}]
Let $f:D\to S^3_1$ be a CMC $1$ face with Weierstrass data $(g,\omega)$,
and $p\in D$ be a non-degenerate singular point.
We denote by $(\varphi,\vf)$ the characteristic pair associated with $(g,\omega)$,
and let $\xi$ be the singular directional vector field given by \eqref{eq:xi-eta}.
Since we have treated the case that $f$ is a wave front at $p$
in Lemma \ref{lem:wavefront-singularity},
we assume that $f$ is not a wave front at $p$.
Fact \ref{fact:singular-FSUY} yields $\Re \varphi(p)=0$.
Since $p\in D$ is a non-degenerate singular point,
Fact \ref{fact:singular-FSUY} yields that $\Im \varphi(p)\ne0$, 
and hence $p$ is of the first kind.
If $\Re \varphi$ is identically $0$ along a singular curve passing through $p$,
then Theorem \ref{thm:to-prove} yields that 
$p$ must be a $5/2$-cuspidal edge singular point.
Now we assume that $\Re \varphi$ has an isolated zero at $p$.
Then there exists a non-negative integer $k$
such that 
$\xi^\ell(\Re \varphi)(p)=0$ $(\ell=0,\dots,k)$
and 
$\xi^{k+1}(\Re \varphi)(p)\ne0$. 
By Proposition \ref{prop:Sk-induction},
$$
\Re (\sqrt{-1}^\ell \vf^\ell \varphi)=0 \text{~for~} \ell=0,\dots,k,
\quad\text{and}\quad 
\Re (\sqrt{-1}^{k+1} \vf^{k+1} \varphi)\ne0
$$
holds at $p$.
If $k=0$, then $p$ must be a cuspidal cross cap singular point
by Fact \ref{fact:singular-FSUY}.
If $k\geq2$, 
Theorem \ref{thm:Sk-criteria-CMC1} yields that 
$p$ must be a cuspidal $S_k$ singular point.
If $k=1$, Theorem \ref{thm:Sk-criteria-CMC1} yields that $p$ must be 
either a cuspidal $S_1$ singular point (if $\sigma(f,p)\ne12$),
or a singular point satisfying the condition \eqref{eq:condition-S}
and $\sigma(f,p)=12$.
\end{proof}

In the case of zero mean curvature surfaces (ZMC surfaces, for short) in $\L^3$, 
degenerate lightlike points are shown to be lying on lightlike lines
(\cite{Klyachin}, \cite{UY_lightlike1}, \cite{UY_lightlike2}).
However, in our case, 
degenerate singular points are isolated, 
since they occur at the branch points of the meromorphic function $g$
(cf.\ Fact \ref{fact:singular-FSUY}).

We set a smooth map
$\fold : \R^2 \to \R^3$ as $\fold(u,v) := (u,v^2,0)$.
Let $f:D\to M^3$ be a smooth map into a $3$-manifold $M^3$.
If the map germ $f$ at a point $p\in D$ 
is $\mathcal{A}$-equivalent to 
$\fold$ at the origin,
then $f$ is said to have a {\it fold singular point} at $p\in D$.
If a maxface in $\L^3$ admits fold singular points,
it can be extended to a timelike minimal surface analytically
(\cite{KKSY, FKKRSUYY}).
The resulting surface is a zero mean curvature surface of mixed type.
See \cite{FKKRUY} for details.
However, in the case of CMC $1$ faces in $S^3_1$,
the following holds.

\begin{corollary}\label{cor:CMC1-foldsing}
CMC $1$ faces in $S^3_1$
do not admit fold singular points.
\end{corollary}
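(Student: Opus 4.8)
The plan is to argue by contradiction, reducing a hypothetical fold to a $5/2$-cuspidal edge by means of Theorem \ref{thm:to-prove}. First I would record the local properties of the standard fold $\fold(u,v)=(u,v^2,0)$ that are invariant under $\mathcal{A}$-equivalence. Computing the signed area density along $\fold$ gives $\lambda=2v$ (up to a nonvanishing factor), so $d\lambda\neq0$ and the singular point is non-degenerate; the singular set is $\{v=0\}$, the singular direction is $\xi=\partial_u$ and the null direction is $\eta=\partial_v$, which are linearly independent, so the point is of the first kind. The Legendrian lift $(\fold,\nu)$ with constant normal $\nu\equiv(0,0,1)$ is not an immersion at the origin, so the fold is a frontal that is not a front. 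The decisive observation is that \emph{every} point of the singular set of $\fold$ is again a fold point; consequently, if a CMC $1$ face $f$ has a fold at $p$, then the germ of $f$ at each singular point of the singular curve sufficiently near $p$ is again a fold.

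Next I would suppose, toward a contradiction, that a CMC $1$ face $f:D\to S^3_1$ has a fold at $p$, with Weierstrass data $(g,\omega)$ and $\varphi=dg/(g^2\omega)$. Since the fold is non-degenerate and of the first kind, Fact \ref{fact:singular-FSUY}~(2) and (4) show that $p$ is a non-degenerate singular point with $\Im\varphi(p)\neq0$, so in particular the singular-curve machinery applies at $p$. Because no fold point is a front, Fact \ref{fact:singular-FSUY}~(3) forces $\Re\varphi=0$ at every fold point near $p$; by the propagation observation these points fill out the singular curve through $p$, so $\Re\varphi$ vanishes identically along the singular curve passing through $p$.

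Then the hypotheses of Theorem \ref{thm:to-prove}, namely $\Im\varphi(p)\neq0$ together with $\Re\varphi\equiv0$ along the singular curve, are precisely met, whence $f$ has a $5/2$-cuspidal edge at $p$. This contradicts the assumption that $f$ has a fold at $p$, since $\fold$ and $\rce$ lie in distinct $\mathcal{A}$-equivalence classes: as a germ the fold is generically two-to-one, whereas $\rce$ is injective near the origin. Hence CMC $1$ faces in $S^3_1$ admit no fold singular points.

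I expect the only delicate step to be the propagation argument in the second paragraph: one must check that the fold condition at $p$ forces $\Re\varphi$ to vanish along the \emph{entire} singular curve, not merely at $p$. This is exactly what routes the analysis into the $5/2$-cuspidal edge branch of Theorem \ref{thm:to-prove}, rather than the isolated-zero branch underlying the cuspidal $S_k$ and $\sigma=12$ singularities of Theorem \ref{thm:C}; it rests on the openness of the fold condition along the singular set. The remaining verifications—that folds are non-degenerate, of the first kind, and not fronts, and that $\fold$ is not $\mathcal{A}$-equivalent to $\rce$—are routine.
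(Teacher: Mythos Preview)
Your argument is correct. The paper, however, takes a shorter route once Theorem~\ref{thm:C} is available: it simply notes that fold singular points are non-degenerate and then observes that the fold is not among the types (1)--(6) listed in Theorem~\ref{thm:C}, so it cannot occur.

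Your approach bypasses the full classification and goes straight to Theorem~\ref{thm:to-prove}: you verify directly that a fold is non-degenerate, of the first kind, and not a front at any point of its singular curve, so $\Im\varphi(p)\neq0$ and $\Re\varphi\equiv0$ along the singular curve, whence the point would have to be a $5/2$-cuspidal edge---a contradiction. This is in fact precisely the relevant branch of the case analysis inside the proof of Theorem~\ref{thm:C} (the ``$\Re\varphi$ identically zero'' case), isolated and applied directly. So your proof is more self-contained and explains \emph{why} the fold is excluded, while the paper's proof is a one-line corollary of the classification. Both ultimately rest on Theorem~\ref{thm:to-prove}; neither is circular. Your propagation step (that all nearby singular points are again fold points, hence $\Re\varphi$ vanishes along the curve) is sound, since $\fold$ is translation-invariant in $u$ and $\mathcal{A}$-equivalence transports this.
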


In \cite[Theorem 1.1]{HKS},
a similar statement is proved in the case of  
generalized spacelike CMC surfaces in $\L^3$.
While their statements are similar,
our proof of Corollary \ref{cor:CMC1-foldsing}
is different from that of \cite[Theorem 1.1]{HKS}.

On the other hand,
We set a smooth map
${f}_{(2k+1)/2} : \R^2 \to \R^3$ as 
${f}_{(2k+1)/2}(u,v) := (u,v^2,v^{2k+1})$.
Let $f:D\to M^3$ be a smooth map into a $3$-manifold $M^3$.
If the map germ $f$ at a point $p\in D$ 
is $\mathcal{A}$-equivalent to 
$\fold$ at the origin,
then $f$ is said to have a {\it fold singular point} at $p\in D$.
In \cite[Remark 4.9]{HKS}, it was pointed out that
maxfaces in $\L^3$ do not admit $5/2$-cuspidal edge.
By an argument similar to that of \cite[Remark 4.9]{HKS},
the following holds:
\[\begin{minipage}{0.8\linewidth}
{\it 
Maxfaces in $\L^3$ do not admit $(2k+1)/2$-cuspidal edges,
other than cuspidal edges {\rm (}i.e., $k=1${\rm )}.
}\end{minipage}\]
In the case of CMC $1$ faces in $S^3_1$,
the following holds.

\begin{corollary}\label{cor:CMC1-odd-ce}
CMC $1$ faces in $S^3_1$
do not admit $(2k+1)/2$-cuspidal edges,
other than cuspidal edges {\rm (}i.e., $k=1${\rm )}
and $5/2$-cuspidal edges {\rm (}i.e., $k=2${\rm )}.
\end{corollary}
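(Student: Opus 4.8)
The plan is to reduce the statement to the $5/2$-cuspidal edge criterion (Theorem~\ref{thm:to-prove}). The case $k=1$ is the ordinary cuspidal edge, which is a front and does occur, so I restrict to $k\geq 2$. First I would record the geometry of the model ${f}_{(2k+1)/2}(u,v)=(u,v^2,v^{2k+1})$. Its singular set is the regular $u$-axis $\{v=0\}$, and the vector field $(0,-\tfrac{2k+1}{2}v^{2k-1},1)$ is a smooth, nowhere-vanishing normal along ${f}_{(2k+1)/2}$; hence ${f}_{(2k+1)/2}$ is a frontal with a smooth unit normal $\nu$. Along $\{v=0\}$ the singular direction $\partial_u$ and the null direction $\partial_v$ are linearly independent, so each point of the singular curve is a non-degenerate singular point of the first kind. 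For $k\geq 2$, however, both $({f}_{(2k+1)/2})_v$ and $\nu_v$ vanish at $v=0$ (the $v$-derivative of the normal is proportional to $v^{2k-2}$, which vanishes at $v=0$ when $k\geq2$), so the Legendrian lift fails to be an immersion there; thus ${f}_{(2k+1)/2}$ is \emph{not} a front at any point of its singular curve. Being a frontal, being a front, non-degeneracy, and being of the first kind are all $\mathcal{A}$-invariant, so the same properties hold for any CMC $1$ face possessing a $(2k+1)/2$-cuspidal edge, along its whole singular curve.

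Next I would translate these properties into the Weierstrass data. Suppose a CMC $1$ face $f$ has a $(2k+1)/2$-cuspidal edge with $k\geq 2$ at $p$, and let $\varphi$ be as in Theorem~\ref{thm:to-prove}. Since $p$ is of the first kind, Fact~\ref{fact:singular-FSUY}~(4) gives $\Im\varphi(p)\neq 0$. Since $f$ is not a front at any singular point near $p$, Fact~\ref{fact:singular-FSUY}~(3) gives $\Re\varphi=0$ identically along the singular curve through $p$. These are exactly the hypotheses of Theorem~\ref{thm:to-prove}, whence $f$ has a $5/2$-cuspidal edge at $p$.

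Finally I would compare the models. The germ of $f$ at $p$ is now $\mathcal{A}$-equivalent both to ${f}_{(2k+1)/2}$ and to $\rce={f}_{5/2}$. The transverse profile of a $(2k+1)/2$-cuspidal edge is the plane-curve germ $t\mapsto(t^2,t^{2k+1})$, and these are pairwise $\mathcal{A}$-inequivalent for distinct $k$ (their singularity semigroups $\langle 2,2k+1\rangle$ differ); consequently the surface germs ${f}_{(2k+1)/2}$ are pairwise $\mathcal{A}$-inequivalent. Therefore the two $\mathcal{A}$-equivalences are compatible only when $k=2$, which rules out every $(2k+1)/2$-cuspidal edge with $k\geq 3$ and proves the corollary.

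The main obstacle is the first step: verifying rigorously that a $(2k+1)/2$-cuspidal edge ($k\geq2$) is a non-front frontal of the first kind \emph{along its entire singular curve}, so that the hypothesis $\Re\varphi\equiv 0$ (not merely $\Re\varphi(p)=0$) of Theorem~\ref{thm:to-prove} is available, together with the bookkeeping that all these are genuine $\mathcal{A}$-invariants readable through Fact~\ref{fact:singular-FSUY}. Once this is in place, the reduction to Theorem~\ref{thm:to-prove} and the inequivalence of the models finish the argument immediately.
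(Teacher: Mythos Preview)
Your argument is correct, but it takes a different route from the paper's. The paper's proof is a one-line appeal to Theorem~\ref{thm:C}: since $(2k+1)/2$-cuspidal edges are non-degenerate, any such singularity on a CMC $1$ face must already appear in the classification list of Theorem~\ref{thm:C}, and for $k\geq 3$ it matches none of the listed types. Your approach instead bypasses Theorem~\ref{thm:C} entirely and reduces directly to Theorem~\ref{thm:to-prove}: you verify on the model ${f}_{(2k+1)/2}$ that for $k\geq 2$ the map is a first-kind, non-front frontal along its whole singular curve, transport these $\mathcal{A}$-invariant properties to the CMC $1$ face via Fact~\ref{fact:singular-FSUY} to obtain $\Im\varphi(p)\neq 0$ and $\Re\varphi\equiv 0$ along the singular curve, and then conclude from Theorem~\ref{thm:to-prove} that the singularity is forced to be a $5/2$-cuspidal edge. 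This is essentially the relevant branch of the proof of Theorem~\ref{thm:C} extracted and applied directly; it is more self-contained (it does not need the full classification or the $cS_k$ criteria), at the cost of the extra explicit bookkeeping you flag in your last paragraph. Both approaches ultimately rest on the same dichotomy---whether $\Re\varphi$ vanishes identically along the singular curve---and on the pairwise $\mathcal{A}$-inequivalence of the models ${f}_{(2k+1)/2}$.
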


\begin{proof}[Proofs for Corollaries \ref{cor:CMC1-foldsing} and \ref{cor:CMC1-odd-ce}]
Since fold singular points and $(2k+1)/2$-cuspidal edges are non-degenerate,
Theorem \ref{thm:C} yields the desired results.
\end{proof}

\appendix
\section{Calculation of $\zeta^5f$ (Proof of Lemma \ref{lem:T5p-body})}
\label{sec:app}

In this appendix,
we give a proof of Lemma \ref{lem:T5p-body}.
Let us fix the notations.
We set $i=\sqrt{-1}$.
Let $f:D\to S^3_1$ be a CMC $1$ face
with Weierstrass data $(g,\omega)$,
and $(\varphi,\vf)$ be the characteristic pair.
Suppose that 
$p\in D$ is a non-degenerate singular point
satisfying the condition \eqref{eq:condition-S},
i.e., $\Re (\varphi(p)) = \Im (\vf \varphi(p)) =0$.
Then, there exist
$\Imphi ,\reDphi \in \R$ and $\DD \in \C$, 
such that $\Imphi \ne0$ and
$$
  \varphi(p) = i\Imphi ,\quad
  \vf \varphi(p) = \reDphi ,\quad 
  \vf^2\varphi(p) = \DD.
$$
Also, we set
$$
  \lambda:=1-g\bar{g},\qquad
  \theta := \frac{i}{g\homega } + \const\,\lambda^2,
$$
cf.\ \eqref{eq:zeta}.
By a direct calculation, we have the following.

\begin{lemma}\label{lem:lambda-diff}
It holds that
\begin{gather*}
\varphi_z
=g\homega \varphi\, \vf \varphi,
\quad
\varphi_{zz}
=g^2\homega^2 \varphi^2\, \vf^2\varphi + g\varphi\, \vf \varphi 
  \left\{g\homega^2 (\varphi +\vf \varphi)+\homega_z\right\},\\
g_{z} =g^2 \homega  \varphi,\quad
g_{zz}=
g^2 \varphi \left\{g \homega ^2 (\vf \varphi+2 \varphi)+\homega_{z}\right\},\\
g_{zzz}
=
g^2 \varphi\left[
g^2 \homega ^3 \left\{\varphi (\vf^2\varphi+6 \varphi)
+\vf \varphi(\vf \varphi + 7 \varphi )
\right\}+3 g \homega  (\vf \varphi+2 \varphi) \homega_{z}+\homega_{zz}
\right].
\end{gather*}
In particular, at the singular point $p$,
we have
\begin{gather*}
\varphi_z(p)
=i \Imphi  \reDphi  g \homega, 
\quad
\varphi_{zz}(p)
= \Imphi  g 
  \left[
   g \homega^2 \left\{ -\Imphi ( \reDphi +\DD) + i \reDphi ^2 \right\} 
   + i \reDphi  \homega_z 
  \right],\\
g_{z}(p)=i \Imphi  g^2 \homega,
\quad
g_{zz}(p)=
\Imphi  g^2 \left\{- (2 \Imphi -i \reDphi )g \homega  ^2 +i \homega _z  \right\},\\
g_{zzz}(p)
= \Imphi  g^2\left\{
\left(-\Imphi  \DD - 7 \Imphi \reDphi + i(-6  \Imphi ^2+ \reDphi ^2) \right)g^2 \homega  ^3
-3 g (2 \Imphi -i \reDphi ) \homega   \homega _z 
+i \homega _{zz}
\right\},
\end{gather*}
where the right hand sides are evaluated at $p$.
Moreover, the derivatives of 
$\lambda$ and $\theta$
are given by
\begin{gather*}
\lambda_{z}(p)
=
-i \Imphi  g \homega,\quad
\lambda_{zz}(p)
=
\Imphi  g
\left\{
(2 \Imphi -i \reDphi )g \homega  ^2 
-i \homega _z
\right\},\quad
\lambda_{z\bar{z}}(p)
=
-\Imphi ^2|\homega|^2,\\
\lambda_{zzz}(p)
=
\Imphi  g
\left[
\left\{ \Imphi (7 \reDphi + \DD )  + i (6 \Imphi ^2- \reDphi ^2)\right\}g^2 \homega ^3 
+3 g \homega  (2 \Imphi -i \reDphi ) \homega _z
-i \homega _{zz}
\right],
\\
\lambda_{zz\bar{z}}(p)
=
\Imphi ^2\bar{\homega }
\left\{
- (\reDphi +2 i \Imphi )g \homega ^2
-\homega _z
\right\},\\
\theta_{z}(p)
=\Imphi - i \frac{\homega_z}{g\homega^2},
\quad
\theta_{\bar{z}}(p)=0,
\quad
\theta_{z\bar{z}}(p)
= -\Imphi \frac{\bar{\homega}}{g},
\quad
\theta_{\bar{z}\bar{z}}(p)
=\Imphi \frac{\bar{\homega}^2}{g^3 \homega}
\\
\theta_{zz}(p)
=
\Imphi \left\{ (\reDphi+1) g \homega-\frac{\homega_z}{\homega}\right\}
+\frac{i}{g \homega ^3}\left(2 \homega_z^2-\homega \homega_{zz}\right),
\end{gather*}
where the right hand sides are evaluated at $p$.
\end{lemma}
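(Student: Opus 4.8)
The statement is entirely computational, so the plan is to reduce every derivative to repeated applications of two differentiation rules and then substitute the singular-point data. The first rule is the defining relation $g_z = g^2\homega\varphi$ coming from \eqref{eq:varphi} (with $\omega = \homega\,dz$), and the second is the ladder identity \eqref{eq:D-ell}, namely $(\vf^{\ell-1}\varphi)_z = g\homega\varphi\,\vf^{\ell}\varphi$, which expresses each $z$-derivative of $\vf^{\ell-1}\varphi$ as $g\homega\varphi$ times the next iterate $\vf^\ell\varphi$. Together with the Leibniz rule these two relations close the computation among the quantities $\varphi,\vf\varphi,\vf^2\varphi,g,\homega,\homega_z,\homega_{zz}$, so that no genuinely new object appears at any order.

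First I would compute the holomorphic derivatives. The formula for $\varphi_z$ is immediate from \eqref{eq:D-ell} with $\ell=1$; differentiating once more, using $(g\homega)_z = g^2\homega^2\varphi + g\homega_z$ together with $(\vf\varphi)_z = g\homega\varphi\,\vf^2\varphi$, yields $\varphi_{zz}$. The derivatives of $g$ follow the same pattern: $g_z = g^2\homega\varphi$ is the defining relation, and $g_{zz}$, $g_{zzz}$ are obtained by differentiating and each time rewriting $g_z$, $\varphi_z$, $(\vf\varphi)_z$ and $(\vf^2\varphi)_z$ via the two rules above. This step is bookkeeping-heavy but mechanical.

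Next I would treat $\lambda = 1 - g\bar g$ and $\theta = i/(g\homega) + \const\,\lambda^2$. Since $\lambda$ is real, its $z$- and $\bar z$-derivatives are complex conjugates of one another, and every derivative of $\lambda$ is a polynomial in the already computed holomorphic derivatives of $g$ and their conjugates; in particular $\lambda_z = -g_z\bar g$, while mixed derivatives such as $\lambda_{z\bar z}$ and $\lambda_{zz\bar z}$ couple a holomorphic $g$-derivative with an antiholomorphic one. For $\theta$ the crucial point is that $\const$ is a constant, so $\theta_z = -i(g\homega)_z/(g\homega)^2 + 2\const\,\lambda\lambda_z$ and the higher derivatives follow from the quotient and product rules; the holomorphic part $i/(g\homega)$ is annihilated by $\partial_{\bar z}$, which considerably shortens the mixed derivatives.

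Finally I would evaluate at $p$. Here I use $\varphi(p) = i\Imphi$, $\vf\varphi(p)=\reDphi$, $\vf^2\varphi(p)=\DD$ with $\Imphi\ne0$, together with the two simplifications coming from $|g(p)|=1$: namely $\lambda(p)=0$, so that every term carrying an explicit factor of $\lambda$ drops out while factors of $\lambda_z$ survive, and $\bar g(p)=1/g(p)$. For the $\theta$-derivatives one must also substitute the value of the constant, which at $p$ equals $\const = -1/(2\Imphi g\homega)$ by its definition $\const=-ig(p)/(2g_z(p))$ and $g_z(p)=i\Imphi g^2\homega$; this substitution is exactly what collapses the $\lambda_z$-products into the clean expressions for $\theta_{z\bar z}(p)$, $\theta_{\bar z\bar z}(p)$ and $\theta_{zz}(p)$. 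The main obstacle is not conceptual but the sheer volume of third-order and mixed terms in $g_{zzz}$, $\lambda_{zzz}$, $\lambda_{zz\bar z}$ and $\theta_{zz}$, together with the care needed to discard the right $\lambda$-multiplied terms at $p$ while retaining those built from $\lambda_z$; writing each derivative in the closed basis $\{\varphi,\vf\varphi,\vf^2\varphi,g,\homega,\homega_z,\homega_{zz}\}$ before specializing to $p$ is what keeps these errors controllable.
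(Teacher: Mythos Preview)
Your proposal is correct and matches the paper's approach exactly: the paper disposes of this lemma with the single phrase ``by a direct calculation,'' and what you have outlined is precisely that calculation, organized around the defining relation $g_z=g^2\homega\varphi$ and the ladder identity \eqref{eq:D-ell}. Your observations that $\lambda(p)=0$ kills every undifferentiated $\lambda$-factor while $\lambda_z(p)$ survives, and that $\const=-1/(2\Imphi g(p)\homega(p))$ is the value needed to simplify the $\theta$-derivatives, are exactly the bookkeeping points that make the evaluation at $p$ come out cleanly.
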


Let $\calE$ be the $3$-dimensional real subspace of $\Herm(2)$
given by
$\calE:={\rm Span}_{\R}\{\ep_0,\ep_1,\ep_2\}$,
where
$$
  \ep_0 := 
  \begin{pmatrix} 
     1 & 0 \\ 
     0 & -1
   \end{pmatrix},\quad
  \ep_1 := 
  \begin{pmatrix} 
     1 & g(p) \\ 
     \overline{g(p)} & 1
   \end{pmatrix},\quad
  \ep_2 := i
   \begin{pmatrix}
     0 & g(p) \\
     -\overline{g(p)} & 0
   \end{pmatrix}.
$$
For $X_1,X_2\in \Herm(2)$, we define
$$
  X_1\equiv X_2
  \iff  
  X_1- X_2 \in \calE.
$$
If we set
$$
  \ep_3 := 
   \begin{pmatrix}
     1 & -g(p) \\
     -\overline{g(p)} & 1
   \end{pmatrix},\qquad
  Y:=
   \begin{pmatrix}
     0 & g(p) \\
     \overline{g(p)} & 0
   \end{pmatrix},
$$
$\ep_3\equiv -2Y$ since $\ep_3+2Y=\ep_1$.
Moreover 
${\rm Span}_{\R}\{\ep_0,\ep_1,\ep_2,\ep_3\}=\Herm(2)$.
The complexification 
$\calE^{\C}={\rm Span}_{\C}\{\ep_0,\ep_1,\ep_2\}$ of 
$\calE$ is a $3$-dimensional complex subspace of $M_2(\C)$.
For $ X_1,X_2\in M_2(\C)$, we define
$$
  X_1\sim X_2
  \iff  
  X_1- X_2 \in \calE^{\C}.
$$
We also denote by $\sim$ the equivalence relation `$\equiv$'.
We set a complex linear map
$\mu: M_2(\C) \to \C$ as
$$
  \mu(X) := \frac{1}{2}\left( \overline{g(p)}\,b+g(p)\,c -a-d \right)
  \qquad
  \left(
    X=\begin{pmatrix} a&b\\c&d\end{pmatrix}\in M_2(\C)
  \right).
$$
Then, we have $\mu(X^*)=\overline{\mu(X)}$.
Moreover, for arbitrary $X\in M_2(\C)$,
$$
  X=\frac{a-d}{2}\ep_0
  +\frac{a+d}{2}\ep_1
  +\frac{1}{2i}\left( \overline{g(p)}\,b - g(p)\, c \right)\ep_2
  +\mu(X)\,Y
$$
holds, which implies the following.

\begin{lemma}\label{lem:modulo}
For $X\in M_2(\C)$,
it holds that
$X\equiv \mu (X) \,Y$.
\end{lemma}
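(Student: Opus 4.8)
The plan is to establish the displayed decomposition that immediately precedes the statement, namely that every $X=\begin{pmatrix}a&b\\c&d\end{pmatrix}\in M_2(\C)$ can be written as
\[
  X=\frac{a-d}{2}\ep_0+\frac{a+d}{2}\ep_1
   +\frac{1}{2i}\bigl(\overline{g(p)}\,b-g(p)\,c\bigr)\ep_2+\mu(X)\,Y .
\]
Once this identity is in hand the lemma is immediate: the first three summands are complex (and, for Hermitian $X$, real) multiples of $\ep_0,\ep_1,\ep_2$, hence lie in $\calE^{\C}$ (resp.\ $\calE$), so $X-\mu(X)\,Y\in\calE^{\C}$, which is precisely the assertion $X\equiv\mu(X)\,Y$.

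To verify the identity I would simply expand the right-hand side entry by entry, writing $w:=g(p)$ for brevity. The diagonal entries are immediate: the $\ep_2$- and $Y$-terms contribute nothing on the diagonal, while $\tfrac{a-d}{2}\ep_0+\tfrac{a+d}{2}\ep_1$ reproduces $a$ in the $(1,1)$-slot and $d$ in the $(2,2)$-slot. For the off-diagonal entries one collects the contributions of $\ep_1$, $\ep_2$ and $Y$; the pieces proportional to $a+d$ coming from $\ep_1$ and from $\mu(X)\,Y$ cancel, and the surviving terms combine to give $|w|^2 b$ in the $(1,2)$-slot and $|w|^2 c$ in the $(2,1)$-slot.

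The single point that is not purely formal — and the only place where the hypotheses enter — is exactly that these off-diagonal entries emerge as $|g(p)|^2 b$ and $|g(p)|^2 c$ rather than as $b$ and $c$. The identity therefore holds precisely because $|g(p)|=1$, which is guaranteed by Fact \ref{fact:singular-FSUY} since $p$ is a singular point. With $|g(p)|^2=1$ all four entries match and the decomposition is proved. Finally, since $\mu$ is complex linear and satisfies $\mu(X^*)=\overline{\mu(X)}$, the three coefficients of $\ep_0,\ep_1,\ep_2$ are real whenever $X$ is Hermitian, so in that case the congruence in fact holds modulo the real span $\calE$, consistent with the definition of $\equiv$.
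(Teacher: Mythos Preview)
Your proof is correct and follows exactly the route the paper takes: the paper states the decomposition
\[
  X=\frac{a-d}{2}\ep_0+\frac{a+d}{2}\ep_1
   +\frac{1}{2i}\bigl(\overline{g(p)}\,b-g(p)\,c\bigr)\ep_2+\mu(X)\,Y
\]
immediately before the lemma and simply notes that it implies the result; you have filled in the entry-by-entry verification and made explicit the one non-formal point, namely that the off-diagonal entries match only because $|g(p)|^2=1$.
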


Let $T_{2, j}$ $(j=0,1,2,3,4,5)$ be 
the matrices defined in Lemma \ref{lem:T2-j}.
Setting $\mu_j:=\mu(T_{2,j})$,
we have
$$
\mu\left( T_2 \right)
=
\mu_0+ \lambda \mu_1+  \lambda^2 \mu_2
+  \lambda^3 \mu_3
+  \lambda^4 \mu_4
+  \lambda^5 \mu_5.
$$

\begin{lemma}\label{lem:mu-diff}
It holds that
$\mu_1(p)=\mu_2(p)=0$,
$\mu_3(p)=-1/{2}$,
and
\begin{gather*}
  (\mu_0)_{z}(p) \vphantom{\frac1{6}}
  = \Imphi ^2 g \homega,\quad
  (\mu_1)_{z}(p) \vphantom{\frac1{6}}
  = \frac{1}{2} \Imphi  (\Imphi +i \reDphi ) g \homega,\quad
  (\mu_2)_z(p) \vphantom{\frac1{6}}=\frac{1}{2} i \Imphi  g \homega,\\
  (\mu_0)_{z\bar{z}}(p) \vphantom{\frac1{6}}
  =\Imphi ^2 \reDphi  |\homega|^2,\qquad
  (\mu_1)_{z\bar{z}}(p)  \vphantom{\frac1{6}}
  =-\Imphi ^2 |\homega|^2,\\
  (\mu_0)_{zz}(p) \vphantom{\frac1{6}}
  = \Imphi ^2 g \left\{ 2 (\reDphi +i \Imphi ) g \homega^2 + \homega_z\right\},\\
%
  (\mu_1)_{zz}(p) \vphantom{\frac1{6}}
  = \frac{1}{2} \Imphi g \left[ 
        \left\{\Imphi  (2 \reDphi +4- \DD) +i \reDphi ^2\right\}g \homega^2 
        +(\Imphi +i \reDphi ) \homega_z
      \right],\\
  (\mu_0)_{zz\bar{z}}(p) \vphantom{\frac1{6}}
  =\frac{1}{2} \Imphi ^2 \bar{\homega} 
  \left[
    \left\{2 \reDphi ^2+ i \Imphi ( \DD +3 \reDphi) \right\}g \homega ^2 
    + 2 \reDphi  \homega_z
  \right],\\
  (\mu_0)_{zzz}(p) \vphantom{\frac1{6}}
  =\frac{1}{2} \Imphi ^2 g 
  \left[
    \left\{-12 \Imphi ^2+8 \reDphi ^2
    + i\Imphi ( 5  \DD +23  \reDphi) \right\}g^2 \homega ^3 
    +12 (\reDphi +i \Imphi )g \homega \homega_z +2 \homega_{zz}
  \right],
\end{gather*}
where the right hand sides are evaluated at $p$.
\end{lemma}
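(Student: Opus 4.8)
The plan is to reduce everything to the explicit matrix formulas already in hand and then differentiate, exploiting that $\mu$ is a complex-linear functional whose coefficients $g(p), \overline{g(p)}$ are \emph{constants}. Consequently differentiation commutes with $\mu$, i.e.\ $(\mu_j)_z=\mu\bigl((T_{2,j})_z\bigr)$ and likewise for the higher and mixed derivatives. Moreover, since $T_{2,j}=U_{2,j}+(U_{2,j})^*$ and $\mu(X^*)=\overline{\mu(X)}$, one has $\mu_j=\mu(U_{2,j})+\overline{\mu(U_{2,j})}=2\Re\,\mu(U_{2,j})$; and for any derivative $\partial=\partial_z^{a}\partial_{\bar z}^{b}$ one gets $\partial\mu_j=\mu(\partial U_{2,j})+\overline{\mu(\bar\partial\,U_{2,j})}$, where $\bar\partial=\partial_z^{b}\partial_{\bar z}^{a}$ swaps the numbers of holomorphic and antiholomorphic derivatives. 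Thus it suffices to compute the scalar function $\mu(U_{2,j})$ and a few of its $z$- and $\bar z$-derivatives from the explicit $U_{2,j}$ listed in Lemma \ref{lem:T2-j}.

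First I would record the action of $\mu$ on the building blocks: at the singular point $\mu(Y)=|g(p)|^2=1$, while $\mu(\ep_0)=\mu(\ep_1)=\mu(\ep_2)=0$, so by Lemma \ref{lem:modulo} only the $Y$-component of a matrix survives. Applying $\mu$ to the formulas for $U_{2,0},U_{2,1},U_{2,2},U_{2,3}$ turns each $\mu(U_{2,j})$ into an explicit scalar in $\varphi,g,\bar g,\homega,\const,\lambda$. The vanishing claims $\mu_1(p)=\mu_2(p)=0$ and the value $\mu_3(p)=-1/2$ then follow by evaluating at $p$ using $|g(p)|=1$, $\varphi(p)=i\Imphi$, $\vf\varphi(p)=\reDphi$: the $j=1,2$ contributions come out purely imaginary, so twice their real part vanishes, whereas the $j=3$ term contributes the real constant $-1/2$.

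To obtain the listed derivatives I would substitute the formulas of Lemma \ref{lem:lambda-diff} for $\varphi_z,\varphi_{zz},g_z,g_{zz},g_{zzz},\lambda_z,\lambda_{zz},\lambda_{z\bar z},\lambda_{zzz},\lambda_{zz\bar z}$ together with the singular-point values of $\varphi,\vf\varphi,\vf^2\varphi$, keeping $\homega_z,\homega_{zz}$ as free quantities. Differentiating $\mu(U_{2,j})$ the requisite number of times and only then specializing to $p$ produces each entry. The first-order quantities $(\mu_j)_z(p)$ and the mixed $(\mu_j)_{z\bar z}(p)$ need only $\varphi_z,g_z,\lambda_z$ and $\lambda_{z\bar z}$, so they emerge quickly; here one must track which terms carry a factor of $\lambda$, since $\lambda(p)=0$ annihilates many contributions while $\lambda_z(p)\neq0$ keeps their derivatives alive.

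The genuine labor — and the place where an arithmetic slip is most likely — will be the two third-order quantities $(\mu_0)_{zz\bar z}(p)$ and especially $(\mu_0)_{zzz}(p)$. These require the full strength of Lemma \ref{lem:lambda-diff} (including $g_{zzz},\lambda_{zzz},\homega_{zz}$) and the combination of many products, and it is precisely in $(\mu_0)_{zzz}(p)$ that the delicate coefficient $-12\Imphi^2$ appears, which ultimately generates the constant $12$ in Lemma \ref{lem:T5p-body}. The strategy there is to write $\mu(U_{2,0})$ as a scalar in $\varphi$ and $g$, differentiate three times symbolically, and insert the singular-point values only at the very end so that the real and imaginary parts separate cleanly. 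There is no conceptual obstruction, only the need for disciplined bookkeeping.
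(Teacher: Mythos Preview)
Your plan is correct and coincides with the paper's own proof: the paper also writes $\mu_j=\mu(U_{2,j})+\overline{\mu(U_{2,j})}$, extracts the explicit scalar functions $\mu(U_{2,0}),\mu(U_{2,1}),\mu(U_{2,2}),\mu(U_{2,3})$ from the matrix formulas of Lemma~\ref{lem:T2-j}, and then differentiates and evaluates at $p$ via Lemma~\ref{lem:lambda-diff}. Your observation that $\mu$ commutes with $\partial_z,\partial_{\bar z}$ and the conjugate-swapping rule $\partial\mu_j=\mu(\partial U_{2,j})+\overline{\mu(\bar\partial\,U_{2,j})}$ are exactly the mechanisms implicitly used, and your identification of $(\mu_0)_{zzz}(p)$ as the crux (producing the coefficient $-12\Imphi^2$) is on target.
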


\begin{proof}
First of all,
we have
$
\mu_j
=\mu(T_{2,k})
=\mu(U_{2,k})+\mu((U_{2,k})^*)
=\mu(U_{2,k})+\overline{\mu(U_{2,k})}.
$
By Lemma \ref{lem:T2-j}, we obtain
\begin{multline*}
\mu(U_{2,3})=
-\const g^2 \bar{g} \homega \varphi 
\left\{ 
\const \homega  \left( \overline{g(p)} g^2 -2g +g(p) \right)
+\bar{\const} \bar{\homega} (\overline{g(p)} -2\bar{g} +g(p) \bar{g}^2)
\right\}\\
-\frac{i \const \homega}{2}
\left(
 g(p)(\varphi+1) - g 
 +\frac{ \overline{g(p)} g -1}{\bar{g}}
\right)
+\frac{i \bar{\const} \bar{\homega}}{2}
\left(
 \overline{g(p)}-\bar{g}
 +\frac{g(p)\bar{g}-1}{g}
\right).
\end{multline*}
Thus we have $\mu(U_{2,3})(p)=-1/4$,
and hence
$
\mu_3(p)
=2\Re \mu(U_{2,3})
=-1/2
$
holds.
{\allowdisplaybreaks
Next,
Lemma \ref{lem:T2-j} yields 
$\mu(U_{2,0})=\frac1{2}(g(p)\bar{g}-\overline{g(p)} g)\varphi $,
\begin{align*}
\mu(U_{2,1})&=
- i g \bar{g} \varphi 
\left\{
\const \homega \left(\overline{g(p)} g^2-2g+g(p)\right)
+\bar{\const} \bar{\homega} \left(\overline{g(p)}- 2 \bar{g} +g(p) \bar{g}^2 \right) 
\right\}\\
&{}\hspace{30mm}+\frac1{2}\left(
\frac{\overline{g(p)}}{\bar{g}}+\frac{g(p)(\varphi+1)}{g}
-1-\frac{1}{g\bar{g}}
\right),
\\
\mu(U_{2,2})&=\frac{i\const}{2}
\left\{
g \homega \varphi \left(3 \overline{g(p)} g - g(p) \bar{g} -2\right) 
+\frac{\homega_{z}}{\homega } \left(\overline{g(p)} g +\frac{g(p)}{g}-2 \right)
\right\}.
\end{align*}
Hence,
$\mu_0=\frac1{2}
(g(p)\bar{g}-\overline{g(p)} g)\left(\varphi -\bar{\varphi}\right) $,
\begin{align*}
\mu_1&=
- i g \bar{g}(\varphi -\bar{\varphi}) 
\left\{
\const \homega \left(\overline{g(p)} g^2-2g+g(p)\right)
+\bar{\const} \bar{\homega} \left(\overline{g(p)}- 2 \bar{g} +g(p) \bar{g}^2 \right) 
\right\}\\
&\hspace{8mm}
+\frac{\overline{g(p)}}{\bar{g}}
+\frac{g(p)}{g}
-1-\frac{1}{g\bar{g}}
+\frac1{2}
\left(
\frac{g(p)\varphi}{g}
+\frac{\overline{g(p)}\bar{\varphi}}{\bar{g}}
\right)
\\
\mu_2&=
\frac{i\const}{2}
\left\{
g \homega \varphi \left(3 \overline{g(p)} g - g(p) \bar{g} -2\right) 
+\frac{\homega_{z}}{\homega } \left(\overline{g(p)} g +\frac{g(p)}{g}-2 \right)
\right\}\\
&\hspace{8mm}
-
\frac{i\bar{\const}}{2}
\left\{
\bar{g} \bar{\homega} \bar{\varphi} \left(3 g(p) \bar{g} - \overline{g(p)} g -2\right) 
+\frac{\overline{\homega_{z}}}{\bar{\homega} } 
\left(g(p) \bar{g} +\frac{\overline{g(p)}}{\bar{g}}-2 \right)
\right\}.
\end{align*}
A} straightforward calculation, we have the desired results.
\end{proof}

\begin{lemma}
\label{lem:T2-diff}
The derivatives of $T_2$ satisfies
\begin{gather*}
  (T_2)_{z}(p)
  \equiv \Imphi ^2 g \homega  Y,
  \quad
  (T_2)_{zz}(p)
  \equiv 
   \Imphi ^2 g \left\{
   g \homega ^2 \left(3 \reDphi  + i \Imphi  \right)+\homega _z
   \right\}  Y,
   \quad
  (T_2)_{z\bar{z}}(p)
  \equiv O\\
  (T_2)_{zzz}(p)
  \equiv 
  \Imphi ^2 g \left[
  g^2 \homega ^3 \left\{-3 \Imphi ^2 + 7 \reDphi ^2 
  +2 i \Imphi  (5 \reDphi -6+ 2\DD)\right\}
  +3g \homega  (3 \reDphi +i \Imphi ) \homega_z
  +\homega_{zz}
  \right] Y,
\end{gather*}
and $  (T_2)_{zz\bar{z}}(p)
  \equiv 
  10 i \Imphi ^3 |\homega |^2 \homega  g 
  \, Y,
$
where the right hand sides are evaluated at $p$.
\end{lemma}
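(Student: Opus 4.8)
The plan is to reduce every assertion to a scalar computation via Lemma \ref{lem:modulo}. Since $\mu:M_2(\C)\to\C$ is a fixed complex linear map whose coefficients involve only the constants $g(p)$ and $\overline{g(p)}$, it commutes with the derivations $\partial_z,\partial_{\bar z}$; hence for any string of derivatives $\ast$ one has $\mu\big((T_2)_\ast\big)=(\mu(T_2))_\ast$, and together with Lemma \ref{lem:modulo} this gives $(T_2)_\ast(p)\equiv(\mu(T_2))_\ast(p)\,Y$. Using $\mu(T_2)=\sum_{j=0}^5\lambda^j\mu_j$, each claim therefore amounts to differentiating this explicit sum and evaluating at $p$.

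First I would record the structural facts that force the cancellations: $\lambda(p)=0$ (because $|g(p)|=1$ at a singular point), while $\mu_1(p)=\mu_2(p)=0$ and $\mu_3(p)=-1/2$ come from Lemma \ref{lem:mu-diff}. Moreover every $T_{2,j}$ is Hermitian and $\mu$ sends Hermitian matrices to $\R$, so each $\mu_j$ is real and $(\mu_j)_{\bar z}=\overline{(\mu_j)_z}$; likewise $\lambda=1-|g|^2$ is real, so $\lambda_{\bar z}=\overline{\lambda_z}$. With these in hand the low-order cases are immediate. In $(\mu(T_2))_z(p)$ every term carrying a surviving factor $\lambda$ or $\mu_1$ drops, leaving $(\mu_0)_z(p)=c_1^2gh$. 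In $(\mu(T_2))_{zz}(p)$ only $(\mu_0)_{zz}+2\lambda_z(\mu_1)_z$ survives (all $\lambda^{\ge2}$ terms need more than two derivatives to clear the $\lambda$-factors), which I simplify with $\lambda_z(p)=-ic_1gh$ to the stated $c_1^2g\{gh^2(3c_2+ic_1)+h_z\}$. For the mixed second derivative the surviving terms are $(\mu_0)_{z\bar z}+\lambda_z\overline{(\mu_1)_z}+\overline{\lambda_z}\,(\mu_1)_z$ (the $\lambda^2\mu_2$ block vanishes since $\mu_2(p)=0$ and $(\lambda^2)_z(p)=0$); using $|g(p)|^2=1$ the two cross terms combine to $-c_1^2c_2|h|^2$, cancelling $(\mu_0)_{z\bar z}(p)$ and yielding $O$.

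The two third-order cases run on the same template, now with the $\lambda^3\mu_3$ block contributing precisely because $\mu_3(p)\ne0$. For $(T_2)_{zzz}$ the only surviving part is
\[
(\mu_0)_{zzz}+3\lambda_{zz}(\mu_1)_z+3\lambda_z(\mu_1)_{zz}+6\lambda_z^2(\mu_2)_z+6\lambda_z^3\mu_3
\]
(evaluated at $p$), into which I substitute the values from Lemmas \ref{lem:lambda-diff} and \ref{lem:mu-diff}. For $(T_2)_{zz\bar z}$ I would assemble the surviving Leibniz terms, weighted by the correct multinomial coefficients,
\begin{multline*}
(\mu_0)_{zz\bar z}
+\lambda_{zz}\overline{(\mu_1)_z}+2\lambda_{z\bar z}(\mu_1)_z+2\lambda_z(\mu_1)_{z\bar z}+\overline{\lambda_z}(\mu_1)_{zz}\\
+2\lambda_z^2\overline{(\mu_2)_z}+4|\lambda_z|^2(\mu_2)_z+6\lambda_z^2\overline{\lambda_z}\,\mu_3
\end{multline*}
(again at $p$), reduce using reality of the $\mu_j$ and $|g(p)|=1$, and simplify to $10ic_1^3|h|^2h\,g$.

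The main obstacle will be this last $(T_2)_{zz\bar z}$ computation: eight terms of mixed holomorphic/antiholomorphic type must be collected with the right weights and the numerous contributions involving $c_2$, $\beta$, $h_z$ and $h_{zz}$ must all cancel so that the result collapses onto the single monomial $10ic_1^3|h|^2hg$. It is the engineering of these cancellations, rather than the (routine) algebra itself, that requires care.
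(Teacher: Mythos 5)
Your proposal is correct and follows essentially the same route as the paper: reduce each derivative to a scalar via Lemma \ref{lem:modulo}, use that $\mu$ commutes with $\partial_z,\partial_{\bar z}$ to differentiate $\mu(T_2)=\sum_{j}\lambda^j\mu_j$ by Leibniz, discard terms using $\lambda(p)=0$ together with $\mu_1(p)=\mu_2(p)=0$, $\mu_3(p)=-1/2$, and substitute the values from Lemmas \ref{lem:lambda-diff} and \ref{lem:mu-diff}; your surviving-term lists for each of the five derivatives agree with the paper's. The only cosmetic difference is that you invoke the reality of $\lambda$ and of the $\mu_j$ to write the $\bar z$-derivatives as complex conjugates, which the paper leaves implicit.
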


\begin{proof}
By Lemma \ref{lem:modulo},
$
\frac{\partial^{k+l}}{\partial z^j \partial \bz^l }(T_2)(p)
\equiv
\mu\left(
\frac{\partial^{k+l}}{\partial z^j \partial \bz^l }(T_2)(p)
\right)\,Y.
$
Hence it suffices to calculate
$\mu\left((T_2)_{z}(p)\right)$,
$\mu\left((T_2)_{zz}(p)\right)$,
$\mu\left((T_2)_{z\bz}(p)\right)$,
$\mu\left((T_2)_{zzz}(p)\right)$,
$\mu\left((T_2)_{zz\bz}(p)\right)$.
Since
$
\mu\left(
\frac{\partial^{k+l}}{\partial z^j \partial \bz^l }T_2
\right)
=
\frac{\partial^{k+l}}{\partial z^j \partial \bz^l }
\mu\left(
T_2
\right),
$
we have
$\mu\bigl((T_2)_z(p)\bigr)= (\mu_0)_{z} +\lambda_z \mu_1$,
\begin{align*}
  \mu\bigl((T_2)_{zz}(p)\bigr)
  &= (\mu_0)_{zz} +\lambda_{zz} \mu_1+2\lambda_{z} (\mu_1)_z
       + 2(\lambda_z)^2 \mu_2\\
  \mu\bigl((T_2)_{z\bar{z}}(p)\bigr)
  &= (\mu_0)_{z\bar{z}} +\lambda_{z\bar{z}} \mu_1
    +\lambda_{z} (\mu_1)_{\bar{z}}+\lambda_{\bar{z}} (\mu_1)_{z}
    + 2\lambda_z\lambda_{\bar{z}}\mu_2\\
  \mu\bigl((T_2)_{zzz}(p)\bigr)
  &= (\mu_0)_{zzz}
    +\lambda_{zzz} \mu_1+3\lambda_{zz} (\mu_1)_z + 3\lambda_z (\mu_1)_{zz}
    + 6\lambda_z \lambda_{zz} \mu_2 + 6 (\lambda_z)^2 (\mu_2)_z
    + 6(\lambda_z)^3 \mu_3 \\
  \mu\bigl((T_2)_{zz\bz}(p)\bigr)
  &= (\mu_0)_{zz\bz} 
    +\lambda_{zz\bz} \mu_1+\lambda_{zz} (\mu_1)_{\bz} 
    + 2\left\{\lambda_{z\bz}(\mu_1)_z + \lambda_z(\mu_1)_{z\bz} \right\}\\
    &\hspace{4mm}
    + 2(2\lambda_z \lambda_{z\bz}+\lambda_{\bz} \lambda_{zz}) \mu_2 
    + 2(\lambda_z)^2 (\mu_2)_{\bz} 
    +4\lambda_z\lambda_{\bz} (\mu_2)_z
    + 6(\lambda_z)^2\lambda_{\bz} \mu_3 
+\lambda_{\bar{z}}(\mu_1)_{zz},
\end{align*}
where the right hand sides are evaluated at $p$.
Substituting the results of Lemmas \ref{lem:lambda-diff},
\ref{lem:mu-diff} 
into the above, we obtain the assertion.
\end{proof}


\begin{lemma}\label{lem:Vp}
Let $\Psi$, $\Omega$ be the matrices
as in Lemma \ref{lem:Tk}.
We have the following.
\begin{itemize}
\item[$(1)$]
$\Psi(p) M_2(\C) \subset \calE^{\C}$,
\item[$(2)$]
$\Psi_z(p) \calE^{\C}\subset \calE^{\C}$,
\item[$(3)$]
$
\Psi_z(p)Y \equiv  - \Imphi  g(p)\homega (p) Y,
$
\item[$(4)$]
$\Omega(p) \ep_2 \equiv O$,
$\Psi_{zz}(p)\ep_2 \equiv -\Imphi ^2g(p)^2\homega (p)^2 Y$.
\end{itemize}
\end{lemma}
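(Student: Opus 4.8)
All four assertions are statements about the complex-linear functional $\mu$ defined just before Lemma \ref{lem:modulo}. The decomposition displayed there shows that $\{\ep_0,\ep_1,\ep_2,Y\}$ is a $\C$-basis of $M_2(\C)$ whose $Y$-coordinate of a matrix $X$ is exactly $\mu(X)$; consequently $\calE^\C=\ker\mu$, so that $X\in\calE^\C$ iff $\mu(X)=0$, while in general $X\equiv\mu(X)\,Y$ by Lemma \ref{lem:modulo}. Thus the plan is to reduce everything to evaluating $\mu$ on a few explicit $2\times2$ matrices. The one fact that makes each evaluation collapse is that $p$ is a singular point, so $|g(p)|=1$; writing $g=g(p)$ this reads $g^{2}\overline{g(p)}=g$ and $|g|^{2}=1$, which I use throughout.

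The crux for (1) and the first identity of (4) is a single observation: if every column of $X$ lies in the complex line $\ell:={\rm Span}_{\C}\{(g,1)^{T}\}$, then a direct substitution gives $\mu(X)=\tfrac12\,t\,(|g|^{2}-1)$ for a suitable entry $t$, which vanishes at $p$; hence such $X$ lie in $\calE^\C$. Now $\Psi(p)=\sqrt{-1}\left(\begin{smallmatrix}1&-g\\ 1/g&-1\end{smallmatrix}\right)$ and $\Omega(p)=\const\homega\left(\begin{smallmatrix}g&-g^{2}\\ 1&-g\end{smallmatrix}\right)$ are both rank one with column space $\ell$; since left multiplication sends every column into the column space, $\Psi(p)X$ (for any $X$) and $\Omega(p)\ep_2$ have all columns in $\ell$, giving (1) and $\Omega(p)\ep_2\equiv O$ at once. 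For (2) and (3) I would compute $\Psi_z$ from $g_z=g^{2}\homega\varphi$ and $\varphi(p)=\sqrt{-1}\,\Imphi$, obtaining $\Psi_z(p)=\Imphi\,\homega\left(\begin{smallmatrix}0&g^{2}\\ 1&0\end{smallmatrix}\right)$. Applying this matrix to $\ep_0,\ep_1,\ep_2$ and evaluating $\mu$ yields, after $g^{2}\overline{g(p)}=g$, only cancelling terms, proving (2); and since $\left(\begin{smallmatrix}0&g^{2}\\ 1&0\end{smallmatrix}\right)Y=gI$ at $p$ and $\mu(I)=-1$, Lemma \ref{lem:modulo} gives $\Psi_z(p)Y\equiv-\Imphi\,g\,\homega\,Y$, which is (3).

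The remaining identity $\Psi_{zz}(p)\ep_2\equiv-\Imphi^{2}g^{2}\homega^{2}\,Y$ is the only laborious step and the main obstacle, since it is the only place where second derivatives appear. Differentiating $\Psi$ twice gives an anti-diagonal matrix $\Psi_{zz}(p)$ whose entries I would evaluate by substituting the values of $g_z(p)$ and $g_{zz}(p)$ recorded in Lemma \ref{lem:lambda-diff} (together with the induced $(1/g)_{zz}=-g_{zz}/g^{2}+2g_z^{2}/g^{3}$). Multiplying by $\ep_2$ produces a diagonal matrix, and on applying $\mu$ the contributions involving $\homega_z$ and $\reDphi$ cancel once $g^{2}\overline{g(p)}=g$ is used, leaving precisely $\mu(\Psi_{zz}(p)\ep_2)=-\Imphi^{2}g^{2}\homega^{2}$; Lemma \ref{lem:modulo} then gives the claimed congruence. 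All the cancellations in (1)--(3) are immediate from the rank-one structure and $|g(p)|=1$, whereas here one must carry several competing first- and second-order terms through to the end before the simplification occurs.
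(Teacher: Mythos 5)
Your proposal is correct and follows essentially the same route as the paper: every assertion is reduced to evaluating the functional $\mu$ on explicit matrices (using $\calE^{\C}=\ker\mu$ and Lemma \ref{lem:modulo}), with $|g(p)|=1$ driving all the cancellations, which is exactly what the paper's ``direct calculation'' proof does. Your rank-one observation that matrices with both columns in ${\rm Span}_{\C}\{(g,1)^{T}\}$ lie in $\ker\mu$ is a slightly slicker way to dispatch (1) and $\Omega(p)\ep_2\equiv O$, but it is a repackaging of the same computation rather than a different method, and your explicit values $\Psi_z(p)=\Imphi\,\homega\left(\begin{smallmatrix}0&g^{2}\\ 1&0\end{smallmatrix}\right)$ and $\mu(\Psi_{zz}(p)\ep_2)=-\Imphi^{2}g^{2}\homega^{2}$ check out against Lemma \ref{lem:lambda-diff}.
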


\begin{proof}
For any $X\in M_2(\C)$,
a direct calculation shows that
$\mu(\Psi(p) X)=0$,
and hence (1) holds.
Since
$\mu(\Psi_z(p)\ep_0) =\mu(\Psi_z(p)\ep_1) =\mu(\Psi_z(p)\ep_2) =0$,
we have (2).
By
$\mu(\Psi_z(p)Y) = - \Imphi  g(p)\homega (p)$,
(3) holds.
Finally,
since
$\mu(\Omega(p) \ep_2) = 0$,
$\mu(\Psi_{zz}(p)\ep_2) = -\Imphi ^2g(p)^2\homega (p)^2$,
we obtain (4).
\end{proof}

\begin{lemma}\label{lem:T3zz}
The derivatives of $T_3$ satisfies
\begin{align*}
  (T_3)_{z}(p)
  &\equiv
  3i \Imphi ^2 \reDphi  g\homega Y,\\
  (T_3)_{zz}(p)
  &\equiv
  \Imphi ^2g\left\{
   g \homega ^2 \left(-3 \Imphi  \reDphi +24 \Imphi- 4  \Imphi  \DD   
   +i(-  \Imphi ^2  + 7  \reDphi ^2)\right)
  +3 i \reDphi  \homega_z \right\} Y,\\
  (T_3)_{z\bar{z}}(p)
  &\equiv
  -24 \Imphi ^3 |\homega |^2 Y,
\end{align*}
where the right hand sides are evaluated at $p$.
\end{lemma}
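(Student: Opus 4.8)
The plan is to begin from the recursion of Lemma \ref{lem:Tk}, writing $T_3=U_3+U_3^*$ with
\[
  U_3=\theta\,(T_2)_z+\left(\Psi+\lambda^2\Omega\right)T_2,\qquad
  \theta=\frac{i}{g\homega}+\const\lambda^2,
\]
and to extract the required derivatives only modulo $\calE^\C$ by applying the functional $\mu$. Since $p$ is a singular point we have $|g(p)|=1$, so $\mu(\ep_0)=\mu(\ep_1)=\mu(\ep_2)=0$ while $\mu(Y)=1$; thus Lemma \ref{lem:modulo} turns the relation $X\equiv\mu(X)Y$ into pure scalar bookkeeping. Because $T_2$ and $T_3$ are Hermitian I will repeatedly use $\mu(X^*)=\overline{\mu(X)}$, which gives $\mu(\partial_z^j\partial_{\bar z}^l U_3^*)=\overline{\mu(\partial_{\bar z}^j\partial_z^l U_3)}$ together with $(T_2)_{\bar z}(p)=((T_2)_z(p))^*$ and $(T_2)_{z\bar z\bar z}(p)=((T_2)_{zz\bar z}(p))^*$; combined with $|g(p)|=1$ (which replaces $\bar g$ by $1/g$), these reduce every conjugate-type quantity to the ones already tabulated in Lemma \ref{lem:T2-diff}.

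The first task is to discard the terms that vanish under $\mu$. The matrices $\Psi$ and $\Omega$ are holomorphic near $p$, so all their $\bar z$-derivatives are zero. By Lemma \ref{lem:Vp}(1) every term containing an undifferentiated $\Psi(p)$ dies; by (2)--(3), $\Psi_z(p)$ preserves $\calE^\C$ and sends $Y$ to $-\Imphi\,g\homega\,Y$; and (4) records $\Omega(p)\ep_2\equiv O$ and the value of $\Psi_{zz}(p)\ep_2$. Because $\lambda(p)=0$ and $(\lambda^2)_z(p)=(\lambda^2)_{\bar z}(p)=0$ (Lemma \ref{lem:lambda-diff}), the term $\lambda^2\Omega T_2$ can survive a second derivative only through $(\lambda^2)_{zz}(p)=2\lambda_z^2$ (or its mixed analogue), and that surviving piece is a multiple of $\Omega(p)\ep_2$, hence vanishes by (4); so $\lambda^2\Omega T_2$ drops out completely. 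For $\Psi T_2$, only $\mu(\Psi_{zz}(p)T_2(p))$ and $2\mu(\Psi_z(p)(T_2)_z(p))$ survive in the second derivative, and using $T_2(p)=-2\Imphi\,\ep_2$ together with Lemma \ref{lem:Vp}(3)--(4) these two cancel. I therefore expect the entire contribution to come from $\theta\,(T_2)_z$.

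With these reductions the computation becomes substitution of the $\theta$- and $\lambda$-derivatives from Lemma \ref{lem:lambda-diff} and the $T_2$-derivatives from Lemma \ref{lem:T2-diff}. For the first derivative, $\mu((U_3)_z(p))=\theta_z(p)\mu((T_2)_z(p))+\theta(p)\mu((T_2)_{zz}(p))$ collapses, after the $\homega_z$-terms cancel, to $3i\Imphi^2\reDphi\,g\homega$, while $\mu((U_3)_{\bar z}(p))=0$, giving $(T_3)_z(p)$. For the mixed derivative, $\mu((U_3)_{z\bar z}(p))$ splits into the $\theta(T_2)_z$-contribution $-11\Imphi^3|\homega|^2$ and the surviving $\Psi_z(p)(T_2)_{\bar z}(p)$-contribution $-\Imphi^3|\homega|^2$, and doubling the real part yields $-24\Imphi^3|\homega|^2$. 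For $(T_3)_{zz}(p)$, the holomorphic part $\mu((U_3)_{zz}(p))$ assembles from $\theta_{zz}\mu((T_2)_z)$, $2\theta_z\mu((T_2)_{zz})$ and $\theta\mu((T_2)_{zzz})$, while the antiholomorphic part contributes $\overline{\mu((U_3)_{\bar z\bar z}(p))}=11\Imphi^3 g^2\homega^2$, which is precisely what shifts the coefficient from $13$ to $24$ in the stated formula.

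I expect the main obstacle to be the bookkeeping inside $(T_3)_{zz}(p)$: one must verify that in the sum $\theta_{zz}\mu((T_2)_z)+2\theta_z\mu((T_2)_{zz})+\theta\mu((T_2)_{zzz})$ all terms involving $\homega_z^2/\homega^2$ and $\homega_{zz}/\homega$ cancel, leaving only $g^2\homega^2$ and $g\homega_z$ terms, and then fuse this with the conjugate antiholomorphic contribution (obtained through Hermiticity and $|g(p)|=1$) to produce the exact coefficient $-3\Imphi\reDphi+24\Imphi-4\Imphi\DD+i(-\Imphi^2+7\reDphi^2)$. The cancellations of the $\Psi$- and $\lambda^2\Omega$-terms described above are exactly what make this final assembly tractable.
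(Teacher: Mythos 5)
Your proposal is correct and follows essentially the same route as the paper: both start from the recursion $T_3=U_3+U_3^*$ of Lemma \ref{lem:Tk}, differentiate, reduce modulo $\calE^{\C}$ via $\mu$, and substitute the tabulated derivatives from Lemmas \ref{lem:lambda-diff}, \ref{lem:T2-diff} and \ref{lem:Vp}; the only (cosmetic) difference is that the paper expands $T_3$ into all four terms and differentiates them directly, whereas you differentiate only $U_3$ and recover the adjoint contributions through $\mu(X^*)=\overline{\mu(X)}$ and Hermiticity. The intermediate values you report ($-11\Imphi^3|\homega|^2$ and $-\Imphi^3|\homega|^2$ for the mixed derivative, the cancellation of the $\Psi$- and $\lambda^2\Omega$-terms in $(U_3)_{zz}$, and the antiholomorphic shift $11\Imphi^3 g^2\homega^2$ taking the coefficient from $13$ to $24$) all check out.
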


\begin{proof}
Lemma \ref{lem:Tk} yields that
$
  T_3=
  \left( \Psi + \lambda^2 \Omega \right)T_{2}
  + \theta(T_{2})_z + \bar{\theta}(T_{2})_{\bar{z}}
  + T_{2} \left( \Psi^*+\lambda ^2 \Omega^* \right).
$
By a direct calculation,
we obtain
\begin{align*}
  (T_3)_z&=
  \left( \Psi + \lambda^2 \Omega \right)_zT_{2}
  +\left( \Psi + \lambda^2 \Omega \right)(T_{2})_z
  + \theta_z(T_{2})_z 
  + \theta(T_{2})_{zz}\\ 
  &\hspace{50mm}
  + \bar{\theta}_z(T_{2})_{\bar{z}}
  + \bar{\theta}(T_{2})_{z\bar{z}}
  + (T_{2})_z \left( \Psi^*+\lambda ^2 \Omega^* \right),\\
  (T_3)_{zz}&=
  \left( \Psi + \lambda^2 \Omega \right)_{zz}T_{2}
  +2\left( \Psi + \lambda^2 \Omega \right)_z(T_{2})_z
  +\left( \Psi + \lambda^2 \Omega \right)(T_{2})_{zz}\\ 
  &\hspace{20mm}
  + \theta_{zz}(T_{2})_{z} 
  + 2\theta_z(T_{2})_{zz}
  + \theta(T_{2})_{zzz}
  + \bar{\theta}_{zz}(T_{2})_{\bar{z}}\\ 
  &\hspace{40mm}
  + 2\bar{\theta}_z(T_{2})_{z\bar{z}}
  + \bar{\theta}(T_{2})_{zz\bar{z}}
  + (T_{2})_{zz} \left( \Psi^*+\lambda ^2 \Omega^* \right),\\
  (T_3)_{z\bar{z}}&=
  \left( \Psi + \lambda^2 \Omega \right)_z(T_{2})_{\bar{z}}
  +\left( \Psi + \lambda^2 \Omega \right)(T_{2})_{z\bar{z}}
  + \theta_{z\bar{z}}(T_{2})_z 
  + \theta_z(T_{2})_{z\bar{z}} \\
  &\hspace{10mm}
  + \theta_{\bar{z}}(T_{2})_{zz}
  + \theta(T_{2})_{zz\bar{z}}
  + \bar{\theta}_{z\bar{z}}(T_{2})_{\bar{z}}
  + \bar{\theta}_z(T_{2})_{\bar{z}\bar{z}}\\ 
  &\hspace{12mm}
  + \bar{\theta}_{\bar{z}}(T_{2})_{z\bar{z}}
  + \bar{\theta}(T_{2})_{z\bar{z}\bar{z}}
  + (T_{2})_{z\bar{z}} \left( \Psi^*+\lambda ^2 \Omega^* \right)
  + (T_{2})_z \left( \Psi^*+\lambda ^2 \Omega^* \right)_{\bar{z}}.
\end{align*}
Substituting the results of Lemmas 
\ref{lem:lambda-diff},  \ref{lem:T2-diff}, \ref{lem:Vp}
into the above,
and evaluating this at $p$, we obtain the assertion.
\end{proof}

Now,
Lemma \ref{lem:T5p-body} is a direct conclusion of the following.

\begin{proposition}\label{prop:T5p}
It holds that
$
  T_5(p)
\equiv  4(12 - \Re \DD)\Imphi ^3\ep_3.
$
\end{proposition}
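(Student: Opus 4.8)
The plan is to run the recursion of Lemma~\ref{lem:Tk} two more steps and to carry out the entire computation modulo $\calE^\C$, exploiting the fact that the target $4(12-\Re\DD)\Imphi^3\ep_3$ is, via $\ep_3\equiv -2Y$, simply a multiple of $Y$; by Lemma~\ref{lem:modulo} it therefore suffices to track the $\mu$-component of each matrix that appears. Writing $\theta=\tfrac{i}{g\homega}+\const\lambda^2$ as in the statement, Lemma~\ref{lem:Tk} gives $T_j=U_j+U_j^*$ with $U_j=\theta(T_{j-1})_z+(\Psi+\lambda^2\Omega)T_{j-1}$, and since $(U^*)_z=(U_{\bar z})^*$, I would first expand
$$
  U_4=\theta(T_3)_z+(\Psi+\lambda^2\Omega)T_3,\qquad
  (T_4)_z=(U_4)_z+\big((U_4)_{\bar z}\big)^*,
$$
so that
$$
  U_5=\theta(T_4)_z+(\Psi+\lambda^2\Omega)T_4
$$
is expressed entirely through $T_3$, $(T_3)_z$, $(T_3)_{zz}$, $(T_3)_{z\bar z}$ and the derivatives of $\theta$, $\lambda$, $\Psi$, $\Omega$, all of which have already been recorded at $p$ in Lemmas~\ref{lem:T2pT3p}, \ref{lem:T3zz} and~\ref{lem:lambda-diff}.

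The evaluation at $p$ simplifies dramatically because $\lambda(p)=1-|g(p)|^2=0$, so every undifferentiated factor of $\lambda^2\Omega$ drops out and $(\Psi+\lambda^2\Omega)(p)=\Psi(p)$, $(\Psi+\lambda^2\Omega)_z(p)=\Psi_z(p)$; moreover the hypothesis $\Im(\vf\varphi(p))=0$ forces $T_3(p)=O$ by Lemma~\ref{lem:T2pT3p}, which kills every term in which $T_3$ appears undifferentiated. Lemma~\ref{lem:Vp} then does the essential bookkeeping: by part~(1), $\Psi(p)$ maps everything into $\calE^\C$, hence $\Psi(p)X\equiv O$; by parts~(2)--(3), $\Psi_z(p)$ preserves $\calE^\C$ and sends the one surviving direction to $\Psi_z(p)Y\equiv -\Imphi\,g\homega\,Y$. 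Substituting the values of $\theta,\theta_z,\theta_{\bar z},\theta_{zz},\theta_{z\bar z}$ and of $\lambda$ and its derivatives from Lemma~\ref{lem:lambda-diff}, together with the $Y$-coefficients of $(T_3)_z(p)$, $(T_3)_{zz}(p)$, $(T_3)_{z\bar z}(p)$ from Lemma~\ref{lem:T3zz}, collapses $U_5(p)$ to a single scalar multiple of $Y$. Finally $T_5(p)=U_5(p)+U_5(p)^*$ contributes $2\Re$ of that scalar, which must be matched against $-8(12-\Re\DD)\Imphi^3$ so that, using $\ep_3\equiv-2Y$, one recovers $T_5(p)\equiv 4(12-\Re\DD)\Imphi^3\,\ep_3$.

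The main obstacle is purely organizational: many intermediate terms carry factors of $\homega_z$ and $\homega_{zz}$ (derivatives of the metric coefficient) and of $\Im\DD$, and the content of the proposition is precisely that all of these cancel, leaving only the invariant combination $(12-\Re\DD)\Imphi^3$. I expect the numerical constant $12$ to trace back to the factors of $24$ appearing in $(T_3)_{zz}(p)$ and $(T_3)_{z\bar z}(p)$ in Lemma~\ref{lem:T3zz} (ultimately from the $-6$ inside $(T_2)_{zzz}(p)$ of Lemma~\ref{lem:T2-diff} propagating through the two recursion steps), while the $-\Re\DD$ originates from the $-4\Imphi\DD$ term in $(T_3)_{zz}(p)$; after multiplication by the $\theta$-factors $\tfrac{i}{g\homega}$ and passing to the real part, the powers of $i$ conspire to select exactly $\Re\DD$ and to discard $\Im\DD$. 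Verifying these cancellations and pinning down the exact coefficient is the bulk of the work, but each individual substitution is mechanical given the preceding lemmas.
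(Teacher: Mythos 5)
Your proposal follows essentially the same route as the paper's proof: apply the recursion of Lemma \ref{lem:Tk} two more steps, use $\lambda(p)=0$, $T_3(p)=O$ and Lemma \ref{lem:Vp} to reduce everything modulo $\calE^{\C}$ so that $U_5(p)\equiv \tfrac{\sqrt{-1}}{g\homega}(T_4)_z(p)$ with $(T_4)_z(p)\equiv \theta_z(T_3)_z+\theta(T_3)_{zz}+\bar{\theta}(T_3)_{z\bar z}$ at $p$, and then substitute Lemmas \ref{lem:lambda-diff} and \ref{lem:T3zz} and take $T_5=U_5+U_5^*$. Your final sign bookkeeping (matching $2\Re$ of the $Y$-coefficient against $-8(12-\Re\DD)\Imphi^3$ via $\ep_3\equiv -2Y$) is also exactly what the paper does, so the plan is correct.
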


\begin{proof}
By Lemma \ref{lem:Tk},
we have
$$
  U_5=
  \left( \Psi + \lambda^2 \Omega \right)T_4 + \theta(T_4)_z,
  \qquad
  U_5(p)
  = \Psi(p)T_4(p) + \frac{i}{g(p)\homega(p)} (T_4)_z(p).
$$
Lemma \ref{lem:Vp} yields that
$ U_5(p) \equiv \frac{i}{g(p)\homega(p)} (T_4)_z(p) $.
With respect to $(T_4)_z(p)$,
Lemma \ref{lem:Tk} implies
$
  T_4=
  \left( \Psi + \lambda^2 \Omega \right)T_{3}
  + \theta(T_{3})_z + \bar{\theta}(T_{3})_{\bar{z}}
  + T_{3} \left( \Psi^*+\lambda ^2 \Omega^* \right).
$
Substituting the results of Lemmas 
\ref{lem:lambda-diff}, \ref{lem:Vp} and $T_3(p)=O$
into
\begin{multline*}
  (T_4)_z=
  \left( \Psi + \lambda^2 \Omega \right)_zT_{3}
  +\left( \Psi + \lambda^2 \Omega \right)(T_{3})_z
  + \theta_z(T_{3})_z 
  + \theta(T_{3})_{zz}\\ 
  + \bar{\theta}_z(T_{3})_{\bar{z}}
  + \bar{\theta}(T_{3})_{z\bar{z}}
  + (T_{3})_z \left( \Psi^*+\lambda ^2 \Omega^* \right),
\end{multline*}
we have
\begin{equation}\label{eq:T4z}
  (T_4)_z(p)
  \equiv  \theta_z(p)(T_{3})_z(p) 
  + \theta(p)(T_{3})_{zz}(p)  
  + \bar{\theta}(p)(T_{3})_{z\bar{z}}(p).
\end{equation}
Hence by Lemmas
\ref{lem:lambda-diff} and \ref{lem:T3zz},
it holds that
\begin{align*}
  (T_4)_z(p)
=-i \Imphi^2 g(p) \homega (p)\left\{ 4 \Imphi  (\DD -12 )+ i \left(\Imphi ^2 -7 \reDphi ^2\right) \right\} Y.
\end{align*}
Therefore we obtain
$
  T_5(p)
  \equiv  8(-12 + \Re \DD) \Imphi ^3 Y  \equiv  4(12 - \Re \DD)\Imphi ^3\ep_3.
$
\end{proof}

\begin{acknowledgements}
The authors 
would like to thank 
Shintaro Akamine,
Kentaro Saji, 
Yuta Ogata,
Keisuke Teramoto,
Masaaki Umehara and Kotaro Yamada for helpful comments.
\end{acknowledgements}



\begin{thebibliography}{99}

\bibitem{Akutagawa}
K.~Akutagawa,
{\it On spacelike hypersurfaces with constant mean curvature 
in the de Sitter space}, 
Math.\ Z.\ {\bf 196} (1987), 13--19.


\bibitem{AA}
  R.~Aiyama, K.~Akutagawa,
  {\it Kenmotsu-Bryant type representation formulas for 
  constant mean curvature surfaces in 
  \(\mathbb{H}^3 (-c^2)\) and \(\mathbb{S}_1^3 (c^2)\)}, 
  Ann.\ Global Anal.\ Geom.\ \textbf{17} (1999), 49--75.

\bibitem{B}
  R.~L.~Bryant,
  {\it Surfaces of mean curvature one in hyperbolic space}, 
  Th\'eorie des vari\'et\'es minimales et applications, 
  S\'emin. Palaiseau/France, 1983/1984,
  Ast\'erisque 154-155, 321--347 (1988).

\bibitem{C}
 E.~Calabi, 
  {\it Examples of Bernstein problems for some nonlinear equations}, 
  Global Analysis, Proc. Sympos. Pure Math. 15, 223-230 (1970).

\bibitem{Fujimori}
  S.~Fujimori, 
  {\it Spacelike CMC 1 surfaces with elliptic ends in de Sitter 3-space}, 
  Hokkaido Math.\ J.\ {\bf 35} (2006), 289--320.

\bibitem{FKKRSUYY}
  S.~Fujimori, Y.W.~Kim, S.-E.~Koh, W.~Rossman, H.~Shin, 
  M.~Umehara, K.~Yamada and S.-D.~Yang, 
  {\it Zero mean curvature surfaces in Lorentz-Minkowski 3-space 
  and 2-dimensional fluid mechanics}, 
  Math.\ J.\ Okayama Univ.\ \textbf{57} (2015), 173--200.

\bibitem{FKKRUY}
S.~Fujimori, Y.~Kawakami, M.~Kokubu, 
W.~Rossman, M.~Umehara and K.~Yamada,
{\it Entire zero-mean curvature graphs of mixed type 
in Lorentz-Minkowski $3$-space},
Quart.\ J.\ Math.\ {\bf 67} (2016), 801--837.

\bibitem{FNSSY}
  S.~Fujimori, M.~Noro, K.~Saji, T.\ Sasaki and M.~Yoshida, 
  {\it Schwarz maps for the hypergeometric differential equation}, 
  Int.\ J.\ Math.\ \textbf{26} (2015), 1541002.

\bibitem{FRUYY}
  S.~Fujimori, W.~Rossman, M.~Umehara, K.~Yamada and S.-D.~Yang, 
  {\it New maximal surfaces in Minkowski 3-space with 
  arbitrary genus and their cousins in de Sitter 3-space}, 
  Result.\ Math.\ \textbf{56} (2009), 41--82.

\bibitem{FSUY}
  S.~Fujimori, K.~Saji, M.~Umehara and K.~Yamada, 
  {\it Singularities of maximal surfaces}, 
  Math.\ Z.\ \textbf{259} (2008), 827--848.


%
\bibitem{Honda}
A.~Honda,
{\it Duality of singularities for spacelike CMC surfaces},  
Kobe J.\ Math.\ {\bf 34} (2017), 1--11.

%
\bibitem{Honda2}
A.~Honda,
{\it Duality of singularities for flat surfaces in Euclidean space},  
J.\ Singul.\ {\bf 21} (2020), 132--148.

\bibitem{HKS}
  A.~Honda, M.~Koiso, and K.~Saji,
  {\it Fold singularities on spacelike CMC surfaces in Lorentz-Minkowski space}, 
  Hokkaido Math.\ J.\ \textbf{47} (2018), 245--267.

%
\bibitem{HNSUY}
A.~Honda, K.~Naokawa, K.~Saji, M.~Umehara and K.~Yamada,
{\it Duality on generalized cuspidal edges preserving singular set images and first fundamental forms},  
J.\ Singul.\ {\bf 22} (2020), 59--91.

\bibitem{INS}
  S.~Izumiya, T.~Nagai and K.~Saji,
  {\it Great circular surfaces in the three-sphere},
  Differ.\ Geom.\ Appl.\ {\bf 29} (2011), 409--425.

\bibitem{IS} 
  S.~Izumiya\ and K.~Saji,
  {\it The mandala of Legendrian dualities for pseudo-spheres 
  in Lorentz-Minkowski space and ``flat'' spacelike surfaces},
  J. Singul. {\bf 2} (2010), 92--127.

\bibitem{KKSY}
  Y.W.~Kim, S.-E.~Koh, H.~Shin and S.-D.~Yang,
  {\it Spacelike maximal surfaces, timelike minimal surfaces, 
             and {B}j\"orling representation formulae}, 
  J.\ Korean Math.\ Soc.\ \textbf{48} (2011), 1083--1100.

\bibitem{KimYang1}
  Y.W.~Kim and S.-D.~Yang,
  {\it A family of maximal surfaces in Lorentz-Minkowski three-space}, 
  Proc.\ Amer.\ Math.\ Soc.\ \textbf{134} (2006), 3379--3390.

\bibitem{KimYang}
  Y.W.~Kim and S.-D.~Yang,
  {\it Prescribing singularities of maximal surfaces 
             via a singular {B}j\"orling representation formula}, 
  J.\ Geom.\ Phys.\ \textbf{57} (2007), 2167--2177.

\bibitem{Klyachin}
  V.A.~Klyachin,
  {\it Surfaces of zero mean curvature of mixed type in Minkowski space}, 
  Izv.\ Math.\ \textbf{67} (2003), 209--224.

\bibitem{Kobayashi}
  O.~Kobayashi,
  {\it Maximal surfaces in the 3-dimensional Minkowski space \(L^ 3\)}, 
  Tokyo J.\ Math.\ \textbf{6} (1983), 297--309.

\bibitem{KRSUY}
  M.~Kokubu, W.~Rossman, K.~Saji, M.~Umehara, and K.~Yamada,
  {\it Singularities of flat fronts in hyperbolic 3-space}, 
  Pacific \ J.\ Math.\ \textbf{221} (2005), 303--351;
  addendum \textbf{294} (2018), 505--509.

\bibitem{KUY}
  M.~Kokubu, M.~Umehara, and K.~Yamada,
  {\it An elementary proof of Small's formula for null curves in 
  \(\text{PSL}(2,\mathbb{C})\) 
  and an analogue for Legendrian curves in \(\text{PSL}(2,\mathbb{C})\)}, 
  Osaka J.\ Math.\ \textbf{40} (2003), 697--715.

\bibitem{Lawson}
B.~Lawson,
 {\it Complete minimal surfaces in $S^3$}, 
Ann.\ of Math.\ (2) \textbf{92} (1970), 335--374.

\bibitem{MSUY}
L.F.~Martins, K.~Saji, M.~Umehara and K.~Yamada,
{\itshape Behavior of Gaussian curvature and
mean curvature near non-degenerate singular
points on wave fronts},
Geometry and Topology of Manifold,
Springer Proc.\ in Math.\ \& Stat. {\bf 154},
2016, Springer, 247--282.

\bibitem{MNT}
Y.~Matsushita, T.~Nakashima and K.~Teramoto,
 {\it Geometric properties near singular points of surfaces 
given by certain representation formulae}, 
preprint (arXiv:2003.10635).

\bibitem{OT}
  Y.~Ogata, and K.~Teramoto,
  {\it Duality between cuspidal butterflies and 
  cuspidal $S^{-}_{1}$ singularities on maximal surfaces}, 
  Note Mat.\ \textbf{38} (2018), 115--130.

\bibitem{Ramanathan}
J.~Ramanathan,
{\it Complete spacelike hypersurfaces of constant mean curvature 
in de Sitter space}, 
Indiana Univ.\ Math.\ J.\ (2) \textbf{36} (1987), 349--359.

\bibitem{S} 
  K.~Saji,
  {\it Criteria for cuspidal $S_k$ singularities and their applications},
  J.\ G\"okova Geom.\ Topol.\ GGT {\bf 4} (2010), 67--81.

\bibitem{SUY_annals}
  K.~Saji, M.~Umehara and K.~Yamada, 
  {\it The geometry of fronts}, 
  Ann.\ Math.\ (2) {\bf 169} (2009), 491--529.

\bibitem{SUY}
  K.~Saji, M.~Umehara and K.~Yamada, 
  {\it $A_k$ singularities of wave fronts}, 
  Math.\ Proc.\ Cambridge Philos.\ Soc.\ {\bf 146} (2009), 731--746.

\bibitem{Small}
A.J.~Small, 
{\it Surfaces of constant mean curvature $1$ in $H^3$ 
and algebraic curves on a quadric}, 
Proc.\ Amer.\ Math.\ Soc.\ {\bf 122} (1994), 1211--1220.

\bibitem{Takahashi}
  H.~Takahashi,
  {\it Timelike minimal surfaces with singularities in three-dimensional spacetime}
  (Japanese),
  Master thesis, Osaka University, 2012.

\bibitem{UY_complete}
  M.~Umehara and K.~Yamada, 
  {\it Complete surfaces of constant mean curvature 1 in the hyperbolic 3-space}, 
  Ann.\ of Math.\ (2) \textbf{137} (1993), 611--638.

\bibitem{UY_maxface}
  M.~Umehara and K.~Yamada, 
  {\it Maximal surfaces with singularities in {M}inkowski space}, 
  Hokkaido Math.\ J.\ \textbf{35} (2006), 13--40.

\bibitem{UY_lightlike1}
  M.~Umehara and K.~Yamada, 
  {\it Hypersurfaces with light-like points in a Lorentzian manifold}, 
  J.\ Geom.\ Anal.\ \textbf{29} (2019), 3405--3437.

\bibitem{UY_lightlike2}
  M.~Umehara and K.~Yamada, 
  {\it Hypersurfaces with light-like points in a Lorentzian manifold II}, 
  Kodai Math. J.\ \textbf{44} (2021), 69--76.

\bibitem{Yasumoto}
M.~Yasumoto, 
{\it Weierstrass-type representations for timelike surfaces}, 
Singularities in generic geometry, 
449--469, 
Adv.\ Stud.\ Pure Math., 78, Math.\ Soc.\ Japan, Tokyo, 2018.

\end{thebibliography}
\end{document}